\newcommand\del[1]{}
\newcommand\think[1]{}
\newcommand\new[1]{}
\newcommand\zus[1]{}
\newcommand\comd[1]{} %% Correction agreed upon and unseen
\newcommand\Redd[1]{} %% Correction agrred upon  and unseen
\def\bdm{\begin{displaymath}}
\def\edm{\end{displaymath}}
\def\bea{\begin{eqnarray}}
\def\eea{\end{eqnarray}}
\newtheorem{theorem}{Theorem}[section]
\newtheorem{lem}[theorem]{Lemma}
\newtheorem{defn}[theorem]{Definition}
\newtheorem{prop}[theorem]{Proposition}
\newtheorem{coro}[theorem]{Corollary}
\newtheorem{remark}{Remark}
\numberwithin{equation}{section}
\begin{document}

\date{\today}

\title[FSNSEs  \today]
{Large deviations for 2D-fractional stochastic Navier-Stokes equation on the torus.\\-Short Proof-}
%{Invariant Measure for  Fractional Stochastic Burgers Equation}

\vspace{-0.1cm}

\author[L.{} Debbi]{Latifa Debbi.\\ {}\\  \del{\SMALL This work is supported by  Alexander von Humboldt Foundation.}}

\del{\email{ldebbi@ymath.uni-bielefeld.de}
\address{Fakultat fur Mathematik, Universitat Bielefeld. Universitatsstratsse 25, 33615, Bielefeld. Germany.}}

\address{Department of Mathematics, University of York, Heslington Road, York YO10 5DD, UK.}

\email{latifa.debbi@york.ac.uk}

\maketitle
\vspace{-2cm}
\begin{abstract}
In this note, we prove the large deviation principle for the 2D-fractional stochastic Navier-Stokes equation on the torus under the dissipation order $ \alpha \in [\frac43, 2]$.  \\

\vspace{-0.25cm}
{\bf R\'esum\'e.}

Dans cette note, on presente une demonstration courte du principe  des garandes d\'eviations pour l'equation fractionnaire stochastique de Navier-Stokes definie sur le 2D-torus sous l'order de dissipation $ \alpha \in [\frac43, 2]$.

%\vspace{0.25cm}
Keywords: Fractional stochastic Navier-Stokes equation, 
Q-Wiener process, weak-strong solutions, large deviation principle, farctional Sobolev spaces.
%\vspace{0.15cm}

Subjclass[2000]: {58J65, 60H15, 35R11.}
\end{abstract}

\vspace{-0.25cm}
\section{Introduction}\label{sec-intro}

In this note, we present a short proof of the large deviation principle (LDP) for the 2D-fractional stochastic Navier-Stokes equation (2D-FSNSE) on the torus:\del{ $ \mathbb{T}^2$, }
\del{\begin{equation}\label{Eq-classical-SNSE-O-p}
\Bigg\{
\begin{array}{lr}
\partial_tu= -\nu (-\Delta)^\frac\alpha2 u + (u.\nabla) u - \nabla\pi + G(t, u)\frac{\partial^2}{\partial t\partial x}W(t), \;\; t>0, \\
div u=0,\;\;\; \del{ \text{(incompressible condition)},}\\
u(0) = u_0.\\
\end{array}
\end{equation}}
\begin{equation}\label{Eq-classical-SNSE-O-p}
\Bigg\{
\begin{array}{lr}
du= (-\nu (-\Delta)^\frac\alpha2 u + (u.\nabla) u - \nabla\pi)dt + G(t, u)dW(t), \;\; t>0, \\
div u=0,\;\;\; \del{ \text{(incompressible condition)},}\\
u(0) = u_0,\\
\end{array}
\end{equation}
where the unknown is the vector $ u =(u_1, u_2 )$ and the scalar $\pi$, $ \nu>0$ is the coefficient of viscosity, $ (-\Delta)^\frac\alpha2, \; \alpha \in (0, 2]$ is the fractional power of minus-Laplcian, $ W$ is an external Gaussian noise, $ G$ and $ u_0$ are a diffusion term respectively an initial data, both to be precise later. Physically, NSE on the torus is not a realistic model, but it is used for some idealizations and for homogenization problems in turbulence see e.g \cite{Foias-book-2001}. Recently, the deterministice FNSE has attracted a great attention, see for short list \cite{Cannone-Fract-NS-08, WUJ-Global-regu-2011, WUJ-06, Zhang-FNSE-stocastic-tool-2012}. The dD-FSNSEs on the torus and on bounded domains have been studied in a general  framework in \cite{Debbi-FSNSE-13}. In particular, the proof presented in this note is based on the fractional calculus developed and the results obtained in this work and on the result about the  LDP in \cite{Millet-Chueshov-Hydranamycs-2NS-10}. In this latter, the authors established an unified approach for the LDP for a wide class of SPDEs, among them the classical 2D-SNSE. The results in this note  extend this class to cover the 2D-FSNSE on the torus. The author does not know any result concerning the LDP for the FSNSE. Results concerning the LDP for the classical 2D-SNSE, can be found in \cite{Millet-Chueshov-Hydranamycs-2NS-10, Sundar-Sri-large-deviation-NS-06}.\del{zeitoun-book-93, Freidlin-LD-book,} \\

\vspace{-0.35cm}

 It is well documented that to deal mathematically with NSE, we have to split up the equation\del{Problem \eqref{Eq-classical-SNSE-O-p}} via Helmholtz projection. In this aim, we introduce the spaces
\vspace{-0.07cm}
\begin{equation*}\label{eq-def-Torus-Lq} 
\mathbb{L}^2(\mathbb{T}^2):= \{ u \in L_2^2(\mathbb{T}^2) := (L^q(\mathbb{T}^2))^2, div u=0 \},\;\; \mathbb{H}^{\beta, 2}(\mathbb{T}^2):= H_2^{\beta, 2}(\mathbb{T}^2)\cap \mathbb{L}^2(\mathbb{T}^2),\; \beta\in \mathbb{R}_+, 
\end{equation*}
\del{\begin{equation}\label{eq-def-Torus-Hs}
\mathbb{H}^{\beta, 2}(\mathbb{T}^d):= H_d^{\beta, 2}(\mathbb{T}^d)\cap \mathbb{L}^2(\mathbb{T}^d),\; \beta\in \mathbb{R}_+,
\end{equation}}
where $ (L^2(\mathbb{T}^2))^2 $ and $ (H^{\beta, 2}(\mathbb{T}^2))^2,\;\; \beta\in \mathbb{R}$, are the 
corresponding vectorial spaces  of the following null average Lebesgue and periodic Riesz potential spaces, see e.g.\del{ \cite[ps.44-48]{Foias-book-2001}}
\cite{Debbi-scalar-active, Foias-book-2001,  
Sickel-periodic spaces-85, Sinai-Mattg-Gibbs-2001},
\begin{eqnarray*}\label{def-H-s-q}
 H^{\beta, 2}(\mathbb{T}^d):= \{\!\!\!\!&{}&\!\!\!\!\!f\in D'(\mathbb{T}^d),\; s.t.\; \hat{f}(0)=0,\; \text{and}\; |f|_{H^{\beta, 2}}:=
\sum_{k\in\mathbb{Z}_0^2}\del{(1+|k|^2)^\frac \beta2}|k|^{2\beta}\hat{f}(k)^{2}<\infty\},
\end{eqnarray*}
where $ D'(\mathbb{T}^2)$ is the topological dual of $ D(\mathbb{T}^2)$; the set of infinitely differentiable scalar-valued functions on $ \mathbb{T}^2$,  $ (\hat{f}(k)=(2\pi)^{-2} f(e^{ik\cdot}))_{k\in\mathbb{Z}^2}$ is the sequence of Fourier
coefficients corresponding to  $ f$ and $ H^{0, 2}(\mathbb{T}^2):= L^2(\mathbb{T}^2)$.
\del{\begin{equation}\label{Fourier-coeff-Distr-}
c_k=\hat{f}(k):= (2\pi)^{-d} f(e^{ik\cdot}).
\end{equation}}\del{If $ f \in L^2(\mathbb{T}^2)\subset D'(\mathbb{T}^2)$, then $(c_k)_k$ is given by 
\begin{equation}\label{Fourier-coeff}
c_k=\hat{f}(k):= (2\pi)^{-d}\langle f, e^{ik\cdot}\rangle = (2\pi)^{-d}\int_{\mathbb{T}^d}f(x)e^{-ixk}dx,
\end{equation}
where the brackets in \eqref{Fourier-coeff} stand for\del{ the duality, in particular, it also denotes the}
scalar product in the Hilbert space  $ L^2(\mathbb{T}^2)$.} The Helmholtz decomposition is given by $  L^2_2(\mathbb{T}^2) = \mathbb{L}^2(\mathbb{T}^2) \oplus\mathcal{Y}(\mathbb{T}^2), $
\del{\begin{equation}\label{direct-sum}
  L^2_2(\mathbb{T}^2):= L^2(\mathbb{T}^2)\times L^2(\mathbb{T}^2) = \mathbb{L}^2(\mathbb{T}^2) \oplus\mathcal{Y}(\mathbb{T}^2), 
\end{equation}}
where $ \mathcal{Y}(\mathbb{T}^2):= \{\nabla p,\;\; p\in H^{1, 2}(\mathbb{T}^2)\}$ and the notation $ \oplus$ stands the orthogonal sum. 
The projection $ \Pi$ on  the space $ \mathbb{L}^2(\mathbb{T}^2)$ is called 
Helmholtz projection. The Stokes operator $ A$ is defined by $ A:= -\Pi \Delta: D(A) = \mathbb{H}^{2, 2}(\mathbb{T}^2)\rightarrow \mathbb{L}^2(\mathbb{T}^2)$, see e.g. \cite{Debbi-scalar-active, Foias-book-2001,  Temam-NS-Functional-95, Temam-NS-Main-79}. Now, we introduce the stochastic term. We fix the stochastic basis $ (\Omega, \mathcal{F}, P, \mathbb{F}, W)$, where
$ (\Omega, \mathcal{F}, P) $  is a complete probability space, $\mathbb{F} := (\mathcal{F}_t)_{t\geq 0}$
is a filtration satisfying the usual conditions, i.e. $(\mathcal{F}_t)_{t\geq 0}$ is an increasing right continuous filtration containing all null sets.
The stochastic process $ W:= (W(t), t\in [0, T])$ is a Wiener process with covariance operator $ Q $ being a positive symmetric trace 
class on $ \mathbb{H}^{1, 2}(\mathbb{T}^2)$. By a  Wiener process on an abstract Hilbert space $ H$, we mean, see e.g.
\cite[Definition 2.1]{Sundar-Sri-large-deviation-NS-06} and \cite{Millet-Chueshov-Hydranamycs-2NS-10, DaPrato-Zbc-92},
\begin{defn}
A stochastic process $ W:= (W(t), t\in [0, T])$ is said to be an $H$-valued 
$ \mathcal{F}_t-$adapted  Wiener process with covariance operator $ Q$, if
\begin{itemize}
\item for all $ 0\neq h\in H$, the process $  (|Q^\frac12 h|^{-1}\langle W(t), h\rangle, t\in [0, T])$ is a standard one dimensional Brownian motion,
\item for all $  h\in H$, the process $  (\langle W(t), h\rangle, t\in [0, T])$ is a martingale adapted to $\mathbb{F}$.
\end{itemize}
\end{defn}
\noindent\del{ Otherwise,  the process
$ W:= (W(t), t\in [0, T])$ is a mean zero
 Gaussian process defined on the filtered probability space $ (\Omega,
\mathcal{F}, P, \mathbb{F} )$ with time  stationary independent  increments and covariance function given by $ \mathbb{E}[\langle W(t), f\rangle \langle W(s), g\rangle]= (t\wedge s)\langle Qf, g\rangle,   \;\;\;   t,s \geq 0,\; f, g \in H.
$
\begin{equation}\label{Eq-Cov-W}
\mathbb{E}[\langle W(t), f\rangle \langle W(s), g\rangle]= (t\wedge s)\langle Qf, g\rangle,   \;\;\;   t,s \geq 0,\; f, g \in H.
\end{equation}
\noindent } Formally,  we write $ W$ as the sum of an infinite series $W(t):= \sum_{j\in\mathbb{Z}_0^2}\beta_j(t)Q^\frac12e_j$,\del{\begin{equation}\label{Seri-W}
W(t):= \sum_{j\in\mathbb{Z}^2}\beta_j(t)Q^\frac12e_j,
\end{equation}} where $(\beta_j)_{j\in \mathbb{Z}^2_0}$ is an i.i.d.  sequence of real Brownian motions and $ (e_j)_{j\in \mathbb{Z}^2_0}$ is any orthonormal basis in $ H$.\del{(On can consider the torus $ (0, 2\pi)^2$ and basis of the Stokes eigenvalues  $ (\frac1{2\pi}\frac{k^\perp}{|k|}e^{ik\cdot})_{k\in \mathbb{Z}_0^2}$).}\del{For more illustration one can assume that the basis $ (e_j)_{j\in \Sigma}$ diagonalizes simultaneously the Stokes operator $ A$
and the Covariance  $ Q$ with  $ (q_j)_{j\in \Sigma}$  is the sequence
of the eigenvalues of  $ Q$($ Qe_j = q_j e_j, \;\;\; \text{and} \;\;\; tr(Q):= \sum_{j\in \Sigma} q_j<\infty.
$).
\del{\begin{equation}\label{cond-Q-tr}
 Qe_j = q_j e_j, \;\;\; \text{and} \;\;\; tr(Q):= \sum_{j\in \Sigma} q_j<\infty.
\end{equation}} We deal with stochastic integrals in Hilbert spaces. 
In particular, } We define (here we follow the custom to do not make a difference between a class of processes and a process representing this class) $ H_0:= Q^{\frac12}(H) $, with the scalar product $\langle \phi, \psi\rangle_{H_0}:=
\langle Q^{-\frac12} \phi, Q^{-\frac12}\psi\rangle_{H},\; \forall \phi, \psi \in H_0$,  
$ L_Q(H):=\{S: H \rightarrow H:\; SQ^\frac12 \;\text{ is  Hilbert Schmidt}\}$ and $\langle S_1, S_2\rangle_{L_Q}:= tr (S_2^*QS_1)\; \forall S_1, S_2 \in L_Q(H)$ and finaly the space 
$ \mathcal{P}_T(H):=\{\sigma \in L^2(\Omega\times [0, T];\; L_Q(H)),\text{ predictable processes\del{progressively measurable  processes}}\}$  endowed with $ \langle \sigma_1, \sigma_2 \rangle_{\mathcal{P}_T} := \mathbb{E}\int_0^T    \langle \sigma_1(s), \sigma_2(s) \rangle_{L_Q}ds.
$
\del{
\begin{eqnarray}\label{eq-def-H-0}
 H_0:&=& Q^{\frac12}(H) \;\;\;  \text{with the scalar product}\; \langle \phi, \psi\rangle_{H_0}:=
\langle Q^{-\frac12} \phi, Q^{-\frac12}\psi\rangle_{H}\del{\nonumber\\
 &{}& \; \langle \phi, \psi\rangle_{H_0}:=
\langle Q^{-\frac12} \phi, Q^{-\frac12}\psi\rangle_{H}, \;\; \forall \phi,\;  \psi\in H_0,}
\end{eqnarray}
\begin{eqnarray*}\label{eq-def-L-Q}
 L_Q(H):&=& \{S: H \rightarrow H:\; SQ^\frac12 \;\text{ is  Hilbert Schmidt}\}\; \text{and}\; \langle S_1, S_2\rangle_{L_Q}:= tr (S_2^*QS_1)\del{,\nonumber\\
&{}& \langle S_1, S_2\rangle_{L_Q}:= tr (S_2^*QS_1), \; \forall S_1, S_2 \in L_Q(H)}
\end{eqnarray*}
\begin{eqnarray}\label{eq-def-P-T}
\mathcal{P}_T(H):&=&\{\sigma \in L^2(\Omega\times [0, T];\; L_Q(H)),\;\;\text{ predictable processes\del{progressively measurable  processes}}\}, \nonumber\\
&{}& \langle \sigma_1, \sigma_2 \rangle_{\mathcal{P}_T} := \mathbb{E}\int_0^T    \langle \sigma_1(s), \sigma_2(s) \rangle_{L_Q}ds.
\end{eqnarray}}
\del{\noindent To be more precise, $ \mathcal{P}_T(H)$ is the set of equivalence classes of predictable processes, but, here we follow the custom to do not make a difference between a class of processes and a process representing this class, see e.g. \cite{Karatzas-Book}.
It is to be noted that as the operator $ Q$ is of trace class, then the canonical injection $ i: H_0\rightarrow H$ is a Hilbert-Schmidt operator. Moreover, we have  $ ii^*=Q$.} It is well known that the stochastic integral, $ (\int_0^t \sigma(s)dW(s), t\in [0, T])$,
is well defined  for all $ \sigma \in \mathcal{P}_T(H)$, see e.g. \cite{DaPrato-Zbc-92}. When projecting Eq. \eqref{Eq-classical-SNSE-O-p} on $ \mathbb{L}^2(\mathbb{T}^2)$, we get\del{ the following stochastic evolution equation } (we take $ \nu =1$ and keep the notations $ \Pi G=G$ and $ u_0:= \Pi u_0$)
\begin{equation}\label{Main-stoch-eq}
\Bigg\{
\begin{array}{lr}
 du(t)= \left(-
A_{\alpha}u(t) + B(u(t))\right)dt+ G(t, u(t))dW(t), \; 0< t\leq T,\\
u(0)= u_0,
\end{array}
\end{equation}
where $ A_\alpha:= A^\frac\alpha2$,  $ B(u):=\Pi((u.\nabla) u)$,\del{\eqref{eq-B-projection},  $ u_0:= \Pi u_0\del{= \Pi u(0, \cdot)}$, and $ W$\del{$:= (W(t), t\in [0, T])$} is the  Wiener process and $ \Pi G=G$ is still denoted by $G$.\del{, maps $ \mathbb{H}^{1, 2}(\mathbb{T}^2)$ on $L_Q(\mathbb{H}^{1, 2})$.\del{ $ G: \mathbb{H}^{1, 2}(\mathbb{T}^2)\rightarrow L_Q(\mathbb{H}^{1, 2})$. F}}} for more details about this equation, see \cite{Debbi-FSNSE-13}.

{\bf Assumptions.} Assume that $ G \in C([0, T]\times \mathbb{H}^{1, 2}; L_Q(\mathbb{H}^{1, 2})$ and that there exist positive constants $ c, \gamma$, such that for every $ t, t'\in [0, T]$ and every $ u, v \in \mathbb{H}^{1, 2}(\mathbb{T}^2)$, see\del{ these conditions in} \cite{Millet-Chueshov-Hydranamycs-2NS-10, Sundar-Sri-large-deviation-NS-06},
 \begin{equation}\label{Eq-Cond-Lipschitz-Q-G}
\!\!\!\!\!(C_1)  \; \text{Lipschitz condition:}\;\;\;\;\;\;\;\;\;\;||G(t, u)-G(t, v)||_{L_Q}\leq c|u-v|_{\mathbb{H}^{1, 2}}.
\end{equation}
\begin{equation}\label{Eq-Cond-Linear-Q-G}
\!\!\!\!\!\!\!\!\!\!\!\!\!\!\!\!\!\!\!\!\!\!\!\!\!(C_2)\;  \text{Linear growth:}\; \;\;\;\;\;\;\;\;\;\;\;\;\;||G(t, u)||_{L_Q}\leq c(1+|u|_{\mathbb{H}^{1, 2}}).
\end{equation}
 \begin{equation}\label{Eq-Holder-cond-G}
(C_3) \; \text{Time H\"older regularity:}\;\;\;\;\;\;\; ||G(t, u)-G(t', u)||_{L_Q}\leq c(1+ |u|_{\mathbb{H}^{1, 2}})|t-t'|^\gamma.
\end{equation}
\del{\begin{equation}\label{Eq-initial-cond}
\!\!\!\!\!\!\!\!\!\!\!\!\!\!(C_4) \;\;\; \text{Initial condition:} \;\;\;\;\;\;\;\;\;\;\;\;\;\;\;\;\;\;\; u_0\in L^{p}(\Omega, \mathcal{F}_0, P; \mathbb{H}^{1, 2}(\mathbb{T}^2)).
\end{equation}}

\section{Wellposedness.}\label{wellposedness}
%\subsection{Definitions of solutions.}\label{Definitions}
\del{In this section, we recall the  definition of solutions we have proved in \cite{Debbi-FSNSE-13}. }

\begin{defn}\label{def-variational solution}
Let us fix the stochastic basis $ (\Omega, \mathcal{F}, P, \mathbb{F}, W)$ and give\del{ $ H$ be a separable Hilbert space and } the Gelfand triple $ V \subset H\cong H^* \subset V^*$,  where $ H$  and $ V$ are  separable Hilbert respectively separable reflexive Banach spaces and $ V^*$ is the topological dual of $V$.\del{Assume that $ u_0\in L^p(\Omega, \mathcal{F}_0, P, H)$.} A continuous $H$-valued  predictabe stochastic process $(u(t), t\in [0, T])$  is called a weak solution (strong in probability) of Equation \eqref{Main-stoch-eq} with initial condition $ u_0$, if $u(\cdot, \omega) \in X:=C([0, T]; H) \cap L^2(0, T; V)\;\;\; P-a.s.$
 \del{ \begin{equation}\label{eq-set-solu-weak}
 u(\cdot, \omega) \in L^\infty(0, T; H) \cap L^2(0, T; V)\cap C([0, T]; V_1)\;\;\; P-a.s.
 \end{equation}}
and $ P-a.s.$  the following identity holds, for all $ t\in [0, T]$ and $ \varphi \in V$,
%\vspace{-0.5cm}
\begin{eqnarray}\label{Eq-weak-Solution}
\langle u(t), \varphi\rangle_H &=& \langle u_0, \varphi\rangle_{H} + \int_0^t {}_{V^*}\langle A^\frac\alpha2 u(s), \varphi \rangle_{V} ds+
\int_0^t {}_{V^*}\langle B(u(s)), \varphi)\rangle_{V} ds\nonumber\\
& + & {}_{V^*}\langle\int_0^tG
(u(s))dW(s), \varphi\rangle_{V}.
\end{eqnarray}
%The space $ X$ could be $ L^q(O)$, with $ q>1$, or $ D(A^\delta)$.
\end{defn}
\del{\noindent The following results have been proved in  \cite{DEbbi-FSNSE-13}.}
\begin{theorem}\label{Main-theorem-strog-Torus}
Let $ \alpha \in [\frac43, 2]$ and $ u_0\in L^{p}(\Omega, \mathcal{F}_0, P; \mathbb{H}^{1, 2}(\mathbb{T}^2))$, for $p\geq 4$\del{$ u_0$  satisfying Assumption $(C_4)$}. Assume that $ G$ satisfies  $ (C_1) \& (C_2)$. Then Equation \eqref{Main-stoch-eq} admits a unique weak solution (strong in probability) $ (u(t), t\in [0, T])$ in the sense of Definition \ref{def-variational solution}, with the corresponding Gelfand triple $ \mathbb{H}^{1+\frac\alpha2, 2} (\mathbb{T}^2)\subset \mathbb{H}^{1, 2}(\mathbb{T}^2)\subset
\del{(\mathbb{H}^{1+\frac\alpha2, 2}(\mathbb{T}^2))^*=}\mathbb{H}^{1-\frac\alpha2, 2}(\mathbb{T}^2)$. 
\end{theorem}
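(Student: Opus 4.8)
The plan is to place \eqref{Main-stoch-eq} in the variational framework for stochastic evolution equations with a \emph{locally monotone} drift, relative to the Gelfand triple $V:=\mathbb{H}^{1+\frac\alpha2,2}(\mathbb{T}^2)\subset H:=\mathbb{H}^{1,2}(\mathbb{T}^2)\subset V^*=\mathbb{H}^{1-\frac\alpha2,2}(\mathbb{T}^2)$, with drift $\mathcal{A}(u):=-A_\alpha u+B(u)$. It then suffices to verify the standard structural hypotheses --- hemicontinuity, a $V^*$-growth bound, coercivity, and local monotonicity --- and to invoke the corresponding abstract existence-and-uniqueness theorem. The coercivity of the linear part is immediate from the spectral calculus: since $A_\alpha=A^{\frac\alpha2}$ commutes with powers of $A$ and $|u|_H=|A^{\frac12}u|_{\mathbb{L}^2}$, one gets $\langle A_\alpha u,u\rangle_H=|A^{\frac{2+\alpha}{4}}u|_{\mathbb{L}^2}^2=|u|_V^2$, so the dissipation is exactly coercive for this triple. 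Hemicontinuity and the bound $|\mathcal{A}(u)|_{V^*}\le C(1+|u|_V)(1+|u|_H)$ are routine consequences of the bilinearity of $B$.

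The analytic content therefore reduces to two trilinear estimates for $B(u)=\Pi((u\cdot\nabla)u)$. The first is the energy estimate $|\langle B(u),u\rangle_H|\le\tfrac12|u|_V^2+C(1+|u|_H^2)|u|_H^2$, which combines with coercivity and the growth bound $(C_2)$ to close the a priori bounds: running a Galerkin scheme over the span of the Stokes eigenfunctions and applying It\^o's formula to $|u_n(t)|_H^2$ yields $\mathbb{E}\sup_{[0,T]}|u_n|_H^p+\mathbb{E}(\int_0^T|u_n|_V^2\,ds)^{p/2}\le C$ for $p\ge4$; the hypothesis $u_0\in L^p$, $p\ge4$, is used exactly to propagate these higher moments.

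The second estimate is the local monotonicity bound
\[
2\,{}_{V^*}\langle \mathcal{A}(u)-\mathcal{A}(v),u-v\rangle_{V}+|u-v|_V^2\le C\,\eta(s)\,|u-v|_H^2,\qquad \eta(s):=1+|u(s)|_V^2+|v(s)|_V^2,
\]
with $\eta\in L^1(0,T)$ along solutions. Given this, uniqueness follows by applying It\^o to $|u-v|_H^2$ for two solutions, absorbing the $G$-term by the Lipschitz condition $(C_1)$ and concluding with a stochastic Gronwall argument; existence is obtained by passing to the limit in the Galerkin approximations --- the compact embedding $V\embed H$ on the torus gives the strong $L^2(0,T;H)$ compactness needed to identify the nonlinear limit --- and the two together produce the unique strong-in-probability solution.

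The step I expect to be the main obstacle is establishing these two trilinear estimates in the energy space $\mathbb{H}^{1,2}$ rather than the usual $\mathbb{L}^2$, where the antisymmetry $\langle B(u),u\rangle_{\mathbb{L}^2}=0$ is available but is lost at the $\mathbb{H}^{1,2}$ level. Writing $B(u)-B(v)=B(w,u)+B(v,w)$ with $w=u-v$ and $B(a,b):=\Pi((a\cdot\nabla)b)$, and estimating the resulting forms in the $\mathbb{H}^{1,2}$-inner product, requires sharp 2D product and interpolation inequalities (of $\mathbb{H}^{s}\embed L^\infty$ type for $s>1$, interpolated between $H$ and $V$). It is precisely the requirement that the dissipation $|w|_{\mathbb{H}^{1+\frac\alpha2,2}}^2$ dominate these trilinear terms with a time-integrable Gronwall factor $\eta$ that forces $\alpha\ge\frac43$; for smaller $\alpha$ the fractional smoothing is too weak to close the estimate in the energy space $\mathbb{H}^{1,2}$.
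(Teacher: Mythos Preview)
Your overall variational strategy is sound, but there is a genuine gap at the endpoint $\alpha=\tfrac43$, and it stems from a factual error: on the 2D torus the cancellation is \emph{not} lost at the $\mathbb H^{1,2}$ level. The identity
\[
\langle B(u),u\rangle_{\mathbb H^{1,2}}=\langle B(u),Au\rangle_{\mathbb L^2}=b(u,u,Au)=0,\qquad u\in D(A),
\]
is a classical enstrophy-type conservation law specific to $\mathbb T^2$ (see e.g.\ Temam, \emph{Infinite-Dimensional Dynamical Systems}, Lemma~VI.3.1). This is exactly what the paper invokes for its a~priori bounds in $H=\mathbb H^{1,2}$, and it is what makes your ``first trilinear estimate'' $|\langle B(u),u\rangle_H|\le \tfrac12|u|_V^2+C(1+|u|_H^2)|u|_H^2$ hold---trivially, with left side zero. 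If you try to prove that inequality by the only trilinear bound available, namely Lemma~\ref{lem-further-estimation}, you get $|\langle B(u),u\rangle_H|\le c\,|u|_V\,|u|_{\mathbb H^{2-\frac\alpha2}}^2\le c\,|u|_H^{2\theta}|u|_V^{3-2\theta}$ with $\theta=\tfrac{2(\alpha-1)}{\alpha}$; at $\alpha=\tfrac43$ this is $c\,|u|_H|u|_V^2$, which cannot be absorbed by the dissipation. The same obstruction reappears in local monotonicity: writing $B(u)-B(v)=B(w,v)+B(v,w)+B(w)$ with $w=u-v$, the cross terms are controllable with a Gr\"onwall weight $\rho(v)$ precisely when $\alpha\ge\tfrac43$, but the diagonal term $\langle B(w),w\rangle_H$ again produces $c\,|w|_H|w|_V^2$ at the endpoint unless you use the torus cancellation to kill it.

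So your outline works for $\alpha\in(\tfrac43,2]$, but to include $\alpha=\tfrac43$ you must invoke the identity $\langle B(u),u\rangle_{\mathbb H^{1,2}}=0$. Once you do, coercivity becomes exact and the a~priori bounds hold for \emph{all} $\alpha\in(0,2]$; the threshold $\alpha\ge\tfrac43$ then enters only through the growth/monotonicity estimates on $B$ in $V^*=\mathbb H^{1-\frac\alpha2,2}$ (equivalently, through $\mathbb H^{1+\frac\alpha4,2}\hookrightarrow\mathbb H^{2-\frac\alpha2,2}$). The paper in fact takes a slightly different route for the limit identification and uniqueness: rather than local monotonicity in $H=\mathbb H^{1,2}$, it uses the standard $\mathbb L^2$-pairing (where the antisymmetry $b(w,v,w)=0$ is available generically) together with the higher a~priori regularity already secured in $\mathbb H^{1,2}\cap L^2_t\mathbb H^{1+\frac\alpha2,2}$. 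Either route closes once the torus cancellation is in hand.
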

\begin{proof}
The proof of the existence and uniqueness of the solution\del{ $u(\cdot, \omega) \in L^\infty(0, T; \mathbb{H}^{1, 2}(\mathbb{T}^2)) \cap L^2(0, T; \mathbb{H}^{1+\frac\alpha2, 2}(\mathbb{T}^2))\;a.s.$} is given in \cite{Debbi-FSNSE-13} (see also arXiv:1307.5758). The continuity\del{,i.e. $ u\in C([0, T]; \mathbb{H}^{1, 2}(\mathbb{T}^2))$} follows by application of \cite[Theorem 4.2.5]{Rockner-Pevot-06}.
\del{we follow the method used in \cite{Millet-Chueshov-Hydranamycs-2NS-10}. We give here a skech of the proof. In fact, using \eqref{Eq-weak-Solution} and the properties of $(u(t), t\in [0, T])$ above,  we conclude that $ e^{-\delta A}u(t) \in C([0, T]; \mathbb{H}^{1, 2}(\mathbb{T}^2))$ for all $ \delta>0$. To show that $ \lim_{\delta\rightarrow 0}\mathbb{E}\sup_{[0, T]}|(I- e^{-\delta A})\del{C_\delta }u(t)|^2_{\mathbb{H}^{1, 2}}=0$\del{, where $C_\delta := I- e^{-\delta A}$,} we apply Ito formula to $ |(I- e^{-\delta A})u(t)|_{\mathbb{H}^{1,2}}$, argue as in the proof of \cite[Lemma 6.1]{Debbi-FSNSE-13} and follow the steps as in  \cite{Millet-Chueshov-Hydranamycs-2NS-10}. In particular, we use the following estimation
\begin{eqnarray}
\mathbb{E}
\end{eqnarray} }  
\end{proof}

%%%%%%%%%%%%%%%%%%%%%%%%%%%%%%%%%%%%%%%%%%%
\section{Large deviations.}
Let $ \epsilon>0$ and let  $(u^\epsilon)_{\epsilon>0}$ be the family of solutions of the FSNSEs\del{fractional stochastic Navier-Stokes equations}
\begin{equation}\label{Eq-epsilon-LD}
\Bigg\{
\begin{array}{lr}
 du^\epsilon(t)= \left(-
A_{\alpha}u^\epsilon(t) + B(u^\epsilon(t))\right)dt+ \sqrt{\epsilon} G(t, u^\epsilon(t))dW(t), \; 0< t\leq T,\\
u^\epsilon(0)= \xi \in \mathbb{H}^{1,2}(\mathbb{T}^2).
\end{array}
\end{equation}
Recall that thanks to Theorem \ref{Main-theorem-strog-Torus}, $(u^\epsilon(t), t\in[0, T])$ exists and is unique with values in $ X:= C([0, T]; \mathbb{H}^{1, 2}(\mathbb{T}^2))\cap L^2(0, T;  \mathbb{H}^{1+\frac\alpha2, 2}(\mathbb{T}^2))$, $ \forall \epsilon>0$. Following the same proof as in \cite{Debbi-FSNSE-13}, we can prove that the above wellposedness result remains true for the corresponding control equation, i.e. the equation obtained from \eqref{Eq-epsilon-LD}  by replacing $ \epsilon =1$ and $ W$ by any $ \int_0^\cdot v(s)ds$, with $ v\in L^2(0, T; H_0)$. Let us denote the solution of the control equation by $u^v$ and define $ g^0:C([0, T]; H_0) \rightarrow X$ by 
$ g^0(h):=u^v$, if $ h=\int_0^\cdot v(s)ds$ and $ g^0(h)=0$, otherwise.
\del{
and consequently, there exists Moreover, there  Now, we prove that
$(u^\epsilon)_{\epsilon>0}$ satisfies LPD\del{ the large deviation principle} on $  X:=C([0, T]; \mathbb{H}^{1,2}(\mathbb{T}^2)) \cap L^2(0, T; \mathbb{H}^{1+\frac\alpha2,2}(\mathbb{T}^2))$, see e.g. for short list \cite{Millet-Chueshov-Hydranamycs-2NS-10, zeitoun-book-93, Freidlin-LD-book, Sundar-Sri-large-deviation-NS-06}.}

\del{for all $ \epsilon>0$, Equation \ref{Eq-epsilon-LD} admits a unique weak (strong in probability) solution $(u^\epsilon(t), t\in [0, T])$. The main result of this paper is to prove that the family $ (u^\epsilon, \; \epsilon \in [0, 1])$ satisfies the large deviation principle on $  X:=C([0, T]; H) \cap L^2(0, T; V) $ Now, we give the definition of the large deviation principle.
The main result of this paper is to prove that the family $ (u^\epsilon, \; \epsilon \in [0, 1])$ satisfies the large deviation principle on $  X:=C([0, T]; \mathbb{H}^{1,2}(\mathbb{T}^2)) \cap L^2(0, T; \mathbb{H}^{1+\frac\alpha2,2}(\mathbb{T}^2)) $. This result is given by the following theorem}

\begin{defn}
Let $ (u^\epsilon)_{\epsilon>0}$ be a random family on a Banach space $ X$ and let the rate function $ I: X \rightarrow [0, +\infty]$ be good, i.e. satisfying that the level set $ \{ \phi \in X: I(\phi)\leq M\}$ is a compact subset of $ X$.\del{ Let us define for $ F \in \mathcal{B}(X)$, $ I(S):= \inf_{x\in S}I(x)$.} The family $ (u^\epsilon)_{\epsilon>0}$ is said to satisfy a LDP\del{a large deviation principle} on $ X$ with rate function $ I$ if for each $ F \in \mathcal{B}(X)$, we have 
\begin{equation}
-\inf_{x\in\mathring{F}} I(x)\leq 
\liminf_{\epsilon \rightarrow 0}\epsilon \log P(u^\epsilon \in F)\leq
\limsup_{\epsilon \rightarrow 0}\epsilon \log P(u^\epsilon \in F)\leq -\inf_{x\in\bar{F}}I(x),
\end{equation}
where $ \mathring{F} $ and $\bar{F}$ are the interior respectively the closure of $ F$ in $ X$.
\del{\begin{itemize}
\item For each closed subset $ F$ of $ X$, we have (Large deviation uper bound)
\begin{equation}
\limsup_{\epsilon \rightarrow 0}\epsilon \log P(u^\epsilon \in F)\leq -I(F),
\end{equation}
\item For each open subset $ G$ of $ X$, we have (Large deviation lower bound)
\begin{equation}
\limsup_{\epsilon \rightarrow 0}\epsilon \log P(u^\epsilon \in G)\geq -I(G).
\end{equation} 
\end{itemize}}
\end{defn}
\begin{theorem}\label{Main-Theorem-LD}
Assume that $G$ satisfies $ (C_1)-(C_3)$, then the family $ (u^\epsilon)_{\epsilon>0}$ satisfies the LDP\del{large deviation principle} on $  X:=C([0, T]; H) \cap L^2(0, T; V)$, with the good rate function (by convention $ \inf(\emptyset) =\infty$)
\begin{equation}
I(f):= \inf_{\{v\in L^2(0, T; H_0):f(\cdot)=g^0(\int_0^\cdot v(s)ds)\}}\{\frac12\int_0^T|v(s)|^2_{H_0}ds\}.
\end{equation}
\del{with the convention $ \inf(\emptyset) =\infty$ and where $ g^0(\int_0^\cdot v(s)ds)$ is the solution of
\begin{equation}\label{Eq-epsilon-LD}
\Bigg\{
\begin{array}{lr}
 du^v(t)= \left(-
A_{\alpha}u^v(t) + B(u^v(t))\right)dt+ G(t, u^v(t))v(t)dt, \; 0< t\leq T,\\
u^v(0)= \xi. \del{\in \mathbb{H}^{1,2}(\mathbb{T}^2).}
\end{array}
\end{equation}}
\end{theorem}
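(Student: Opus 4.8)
The plan is to follow the weak convergence (Budhiraja--Dupuis) approach to large deviations, organized exactly as it is for hydrodynamical systems in \cite{Millet-Chueshov-Hydranamycs-2NS-10}. Since $X$ is Polish and the candidate $I$ is a good rate function, it suffices to establish the equivalent Laplace principle, and for this one reduces the problem to two sufficient conditions on the solution maps. For $M<\infty$ set
\[
S_M:=\Big\{v\in L^2(0,T;H_0):\ \int_0^T|v(s)|^2_{H_0}\,ds\leq M\Big\},
\]
equipped with the weak topology of $L^2(0,T;H_0)$, under which $S_M$ is compact and metrizable, and let $\mathcal{A}_M$ denote the $H_0$-valued $\mathbb{F}$-predictable processes taking values in $S_M$ almost surely. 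Writing $u^\epsilon=g^\epsilon(\sqrt\epsilon\,W)$ for the solution map of \eqref{Eq-epsilon-LD}, the conditions to verify are: (i) for every $M$, the set $\{g^0(\int_0^\cdot v(s)\,ds):v\in S_M\}$ is a compact subset of $X$; and (ii) for every $M$ and every $\{v^\epsilon\}\subset\mathcal{A}_M$ with $v^\epsilon\to v$ in distribution as $S_M$-valued random elements, the controlled solutions $g^\epsilon(\sqrt\epsilon\,W+\int_0^\cdot v^\epsilon\,ds)\to g^0(\int_0^\cdot v\,ds)$ in distribution in $X$. Once (i) and (ii) hold, the LDP with rate function $I$ follows from the abstract criterion quoted in \cite{Millet-Chueshov-Hydranamycs-2NS-10}.

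The backbone of both conditions is a family of a~priori estimates for the skeleton equation $du^v=(-A_\alpha u^v+B(u^v)+G(t,u^v)v)\,dt$, $u^v(0)=\xi$, uniform over $v\in S_M$. First I would test with $u^v$ in the Gelfand triple $\mathbb{H}^{1+\frac\alpha2,2}\subset\mathbb{H}^{1,2}\subset\mathbb{H}^{1-\frac\alpha2,2}$, using the coercivity of $A_\alpha$, the bilinear estimate for $B$ absorbed by the dissipation, the linear growth $(C_2)$ of $G$, and Gronwall, to obtain
\[
\sup_{v\in S_M}\Big(\sup_{t\in[0,T]}|u^v(t)|^2_{\mathbb{H}^{1,2}}+\int_0^T|u^v(s)|^2_{\mathbb{H}^{1+\frac\alpha2,2}}\,ds\Big)\leq C(M,|\xi|_{\mathbb{H}^{1,2}}).
\]
For (i) I would then take $v_n\rightharpoonup v$ in $S_M$ and prove $u^{v_n}\to u^{v}$ in $X$: the uniform bound gives weak limits in $L^2(0,T;\mathbb{H}^{1+\frac\alpha2,2})$ and weak-$*$ limits in $L^\infty(0,T;\mathbb{H}^{1,2})$, a bound on $\partial_t u^{v_n}$ in $L^2(0,T;\mathbb{H}^{1-\frac\alpha2,2})$ together with the compact embedding $\mathbb{H}^{1+\frac\alpha2,2}\hookrightarrow\hookrightarrow\mathbb{H}^{1,2}$ and an Aubin--Lions argument yields strong convergence in $L^2(0,T;\mathbb{H}^{1,2})$, which is what is needed to pass to the limit in the quadratic term $B(u^{v_n})$ and in the control term $G(u^{v_n})v_n$ (a product of a strongly and a weakly convergent factor). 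Uniqueness of the skeleton solution identifies the limit, and the energy equality upgrades the convergence to the strong topology of $C([0,T];\mathbb{H}^{1,2})$.

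For (ii) I would study the stochastic controlled equation produced by Girsanov,
\[
du^\epsilon=\big(-A_\alpha u^\epsilon+B(u^\epsilon)+G(t,u^\epsilon)v^\epsilon\big)\,dt+\sqrt\epsilon\,G(t,u^\epsilon)\,dW,
\]
and show its law converges to that of $u^v$. The same energy method, now combined with the Burkholder--Davis--Gundy inequality and $(C_2)$, gives moment bounds uniform in $\epsilon$ and in $v^\epsilon\in\mathcal{A}_M$; a Hölder-in-time estimate, for which the regularity $(C_3)$ of $G$ is used, then yields tightness of the laws of $(u^\epsilon,v^\epsilon)$. By Skorokhod's representation theorem I would pass to a common probability space, let $\epsilon\to0$ (the martingale term vanishes since $\sqrt\epsilon\,\|G(\cdot,u^\epsilon)\|_{L_Q}\to0$), identify the limit as the unique skeleton solution driven by the weak limit of $v^\epsilon$, and use uniqueness to promote the convergence to the whole family.

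The hard part, and the place where the restriction $\alpha\in[\frac43,2]$ is decisive, is the treatment of the convective nonlinearity $B(u)=\Pi((u\cdot\nabla)u)$ against the fractional dissipation $A_\alpha=A^{\frac\alpha2}$. Every step above rests on bilinear estimates of the type $\|B(u)\|_{\mathbb{H}^{1-\frac\alpha2,2}}\lesssim|u|_{\mathbb{H}^{1,2}}\,|u|_{\mathbb{H}^{1+\frac\alpha2,2}}$ and a matching local monotonicity bound for $B(u)-B(w)$; these close within the fractional Sobolev scale only when the $\frac\alpha2$ derivatives gained from $A_\alpha$ compensate the derivative lost in $B$, and in dimension two $\alpha=\frac43$ is precisely the borderline value at which the required embeddings and interpolation inequalities hold. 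These fractional estimates are furnished by the calculus developed in \cite{Debbi-FSNSE-13}, and once they are in hand the remaining compactness and limit-identification arguments proceed as for the classical case treated in \cite{Millet-Chueshov-Hydranamycs-2NS-10}.
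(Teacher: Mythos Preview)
Your outline is correct and is precisely the weak convergence machinery that underlies \cite{Millet-Chueshov-Hydranamycs-2NS-10}, but the paper does not redo it. It applies \cite[Theorem~3.2]{Millet-Chueshov-Hydranamycs-2NS-10} as a black box: that theorem already contains the Budhiraja--Dupuis conditions~(i)--(ii), the a~priori bounds, the Aubin--Lions compactness and the Skorokhod limit argument you sketch, and it reduces the whole LDP to checking one structural hypothesis on the nonlinearity. Namely, one has to exhibit an interpolation space $\mathcal{H}$ with $V\subset\mathcal{H}\subset H$ satisfying $|v|_{\mathcal{H}}^{2}\le c\,|v|_{H}\,|v|_{V}$ and, for every $\lambda>0$,
\[
|\langle B(v^{1},v^{2}),v^{3}\rangle_{H}|\le \lambda\,|v^{3}|_{V}^{2}+c_{\lambda}\,|v^{1}|_{\mathcal{H}}^{2}\,|v^{2}|_{\mathcal{H}}^{2}.
\]
The paper's entire proof consists in choosing $\mathcal{H}=\mathbb{H}^{1+\frac{\alpha}{4},2}(\mathbb{T}^{2})$ and verifying these two inequalities: the first is plain interpolation between $H=\mathbb{H}^{1,2}$ and $V=\mathbb{H}^{1+\frac{\alpha}{2},2}$, and for the second one uses Lemma~\ref{lem-further-estimation} to bound $|B(v^{1},v^{2})|_{\mathbb{H}^{1-\frac{\alpha}{2},2}}\le c\,|v^{1}|_{\mathbb{H}^{2-\frac{\alpha}{2},2}}\,|v^{2}|_{\mathbb{H}^{2-\frac{\alpha}{2},2}}$ and then the Sobolev embedding $\mathbb{H}^{1+\frac{\alpha}{4},2}\hookrightarrow\mathbb{H}^{2-\frac{\alpha}{2},2}$, which holds exactly when $1+\tfrac{\alpha}{4}\ge 2-\tfrac{\alpha}{2}$, i.e.\ $\alpha\ge\tfrac{4}{3}$. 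This makes the role of the threshold completely explicit, whereas in your route the same restriction is buried inside the time-derivative bound needed for Aubin--Lions (one must have $B(u^{v})\in L^{2}(0,T;\mathbb{H}^{1-\frac{\alpha}{2},2})$, and the interpolated estimate closes in $L^{2}_{t}$ precisely for $\alpha\ge\tfrac{4}{3}$). Your argument would go through, but it reproves the abstract theorem rather than invoking it; the paper's approach buys a five-line proof at the cost of pinpointing the right interpolation space $\mathcal{H}$.
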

\noindent The following results are necessary to prove Theorem \ref{Main-Theorem-LD}. They are special cases of \cite[Theorem 3.10 \& Lemma 3.11]{Debbi-FSNSE-13}.\del{, see \cite{Debbi-FSNSE-13} for details and proofs,}
\begin{theorem}\label{theo-gelfand-gene}
Let $\eta \geq 0$ and $ \alpha(\eta) :=
\max\{\frac{4-2\eta}{3},\; 2\eta\}$ if $\eta \in [0, 1)$ and equal to 
1 if  $\eta\geq 1$. Assume that $ \alpha \in [\alpha(\eta), 2]$, with $\eta \in [0, \frac{1}{2}) \cup [1, \infty)$ or
$ \alpha \in (\alpha(\eta), 2)$ with $\eta \in [\frac{1}{2}, 1)$.\del{where $ \alpha(\eta)$ is defined by \eqref{critical-value-alpha}.} Then $ B: V_\eta\times V_\eta:= 
 (\mathbb{H}^{\eta+\frac\alpha2, 2}(\mathbb{T}^2))^2\rightarrow V_\eta^* =
\mathbb{H}^{\eta-\frac\alpha2, 2}(\mathbb{T}^2)$
 is bounded.\del{ Moreover, there exists a constant $ c:=c_{\alpha, \eta}>0$, such that
\begin{equation}\label{Eq-B-u-v-V-dual}
|B(u, v)|_{\mathbb{H}^{\eta-\frac\alpha2, 2}} \leq  c |u|_{\mathbb{H}^{\eta+\frac\alpha2, 2}}
|v|_{\mathbb{H}^{\eta+\frac\alpha2, 2}}.
\end{equation}
In particular, we have the following useful cases, $ \alpha(\eta =0)= \frac43$ and $ \alpha(\eta =1)= 1$.}
\del{\begin{itemize}
\item  $ \eta =0$, $ d\in \{2, 3, 4\}$ and $ \alpha\in [\frac{d+2}{3}, 2]$.
\item  $ \eta =1$ and either  $ d=2$  and  $ \alpha \in [1, 2] $ or  $d=3$ and  $ \alpha \in (1, 2]$ or
$d\in \{4, 5, 6\}$ and  $ \alpha \in [\frac{d}{3}, 2]$.
\end{itemize}}
\end{theorem}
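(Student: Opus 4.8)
The plan is to establish the bilinear bound $|B(u,v)|_{V_\eta^*}\le C\,|u|_{V_\eta}\,|v|_{V_\eta}$, of which the asserted boundedness of $B$ is an immediate consequence, by a duality reduction followed by a fractional Hölder--Sobolev estimate whose exponent bookkeeping produces the threshold $\alpha(\eta)$. Since $V_\eta=\mathbb{H}^{\eta+\frac\alpha2,2}(\mathbb{T}^2)$, $V_\eta^*=\mathbb{H}^{\eta-\frac\alpha2,2}(\mathbb{T}^2)$, and the pivot of the Gelfand triple is $\mathbb{H}^{\eta,2}(\mathbb{T}^2)$, I would first write $|B(u,v)|_{V_\eta^*}=\sup\{|\langle B(u,v),\varphi\rangle| : \varphi\in V_\eta,\ |\varphi|_{V_\eta}\le 1\}$. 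Because the test functions $\varphi$ are divergence free, the Helmholtz projection in $B=\Pi((u\cdot\nabla)v)$ is transparent to the pairing, so $\langle B(u,v),\varphi\rangle=\langle (u\cdot\nabla)v,\varphi\rangle_{\mathbb{H}^{\eta,2}}$. Setting $\psi:=(-\Delta)^{\eta}\varphi$ converts this into the plain $L^2$ pairing, and since $|\psi|_{\mathbb{H}^{\frac\alpha2-\eta,2}}=|\varphi|_{V_\eta}$, the whole statement reduces to the single trilinear estimate
\[
\Big|\int_{\mathbb{T}^2}(u\cdot\nabla)v\cdot\psi\,dx\Big|\le C\,|u|_{\mathbb{H}^{\eta+\frac\alpha2,2}}\,|v|_{\mathbb{H}^{\eta+\frac\alpha2,2}}\,|\psi|_{\mathbb{H}^{\frac\alpha2-\eta,2}}.
\]

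Next I would prove this estimate by expressing the integral through Fourier coefficients as a sum over $l+m+n=0$, the gradient producing a factor $|m|$ on the $v$-mode. Inserting the weights $|l|^{\eta+\frac\alpha2}$, $|m|^{\eta+\frac\alpha2}$, $|n|^{\frac\alpha2-\eta}$ and applying Cauchy--Schwarz reduces convergence to a Schur/Young-type convolution bound, equivalently to a physical-space Hölder inequality $|u|_{L^{p_1}}|\nabla v|_{L^{p_2}}|\psi|_{L^{p_3}}$ combined with the fractional embeddings $\mathbb{H}^{s,2}(\mathbb{T}^2)\hookrightarrow L^p$. The effective regularities of $u$, $\nabla v$ and $\psi$ are $\eta+\frac\alpha2$, $\eta+\frac\alpha2-1$ and $\frac\alpha2-\eta$, summing to $\eta+\frac{3\alpha}{2}-1$; forcing this sum up to the critical value $d/2=1$ needed for integrability of the product yields exactly $\alpha\ge\frac{4-2\eta}{3}$, the first branch of $\alpha(\eta)$. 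The antisymmetry $\int(u\cdot\nabla)v\cdot\psi=-\int(u\cdot\nabla)\psi\cdot v$, valid since $\mathrm{div}\,u=0$, provides the freedom to move the derivative onto the most regular factor and is the device that makes the optimal distribution of weights possible.

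The second branch $\alpha\ge 2\eta$ arises as the requirement that the modified test slot keep nonnegative regularity, $\frac\alpha2-\eta\ge0$, equivalently that $B(u,v)$ genuinely lands below the pivot. For $\eta\ge\frac12$ this is the binding condition, and it is tied to the need to place $u$ in $L^\infty$ via $\mathbb{H}^{\eta+\frac\alpha2,2}(\mathbb{T}^2)\hookrightarrow L^\infty$, which in dimension two holds only when $\eta+\frac\alpha2>1$ strictly, since the critical embedding $\mathbb{H}^{1,2}(\mathbb{T}^2)\hookrightarrow L^\infty$ fails. This is precisely why the interval must be taken open, $\alpha\in(\alpha(\eta),2)$, when $\eta\in[\frac12,1)$, whereas for $\eta\in[0,\frac12)\cup[1,\infty)$ the critical value $\alpha(\eta)$ keeps every factor strictly away from the borderline embedding and the closed interval is admissible; for $\eta\ge1$ both branches are dominated and $\alpha(\eta)$ saturates at $1$. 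Taking the maximum over the two regimes reproduces the stated threshold $\alpha(\eta)=\max\{\frac{4-2\eta}{3},\,2\eta\}$.

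\emph{Main obstacle.} I expect the delicate point to be neither the duality nor the gross scaling count, but the sharp endpoint analysis: identifying which single factor is pushed to the critical 2D Sobolev exponent and showing that the convolution sum converges up to, but in the range $\eta\in[\frac12,1)$ not including, the borderline, so that the open/closed dichotomy of the hypotheses is matched exactly rather than merely up to an $\varepsilon$ loss of regularity.
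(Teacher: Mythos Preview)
Your overall strategy is sound and lands on the same mathematics as the paper's source \cite{Debbi-FSNSE-13}: both routes reduce to a fractional product estimate whose exponent balance produces the threshold $\alpha(\eta)$. The paper's argument is organized slightly differently and more directly. It writes $B(u,v)=\Pi\big(\partial_j(u_jv)\big)$, invokes the boundedness of $A^{-1/2}\Pi\partial_j$ on $H^{\beta,2}(\mathbb{T}^2)$ to obtain $|B(u,v)|_{\mathbb{H}^{\eta-\frac\alpha2,2}}\le c\,|u_jv|_{\mathbb{H}^{\eta+1-\frac\alpha2,2}}$, and then applies a Sobolev multiplication theorem to the product $u_jv$. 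For $\eta\ge 1$ the target space is an algebra, giving $|u_jv|_{\mathbb{H}^{\eta+1-\frac\alpha2,2}}\le c|u|_{\mathbb{H}^{\eta+1-\frac\alpha2,2}}|v|_{\mathbb{H}^{\eta+1-\frac\alpha2,2}}$, and the embedding $\mathbb{H}^{\eta+\frac\alpha2,2}\hookrightarrow\mathbb{H}^{\eta+1-\frac\alpha2,2}$ needs only $\alpha\ge 1$. For $0\le\eta<1$ the multiplication theorem yields $|u_jv|_{\mathbb{H}^{\eta+1-\frac\alpha2,2}}\le c|u|_{\mathbb{H}^{(4+2\eta-\alpha)/4,2}}|v|_{\mathbb{H}^{(4+2\eta-\alpha)/4,2}}$, after which the embedding $\mathbb{H}^{\eta+\frac\alpha2,2}\hookrightarrow\mathbb{H}^{(4+2\eta-\alpha)/4,2}$ forces $\alpha\ge\frac{4-2\eta}{3}$.

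There is, however, a concrete error in your derivation of the second branch and of the open/closed dichotomy. You attribute the strict inequality $\alpha>2\eta$ to the failure of $\mathbb{H}^{\eta+\frac\alpha2,2}\hookrightarrow L^\infty$ at the borderline, but that embedding requires $\eta+\frac\alpha2>1$, i.e.\ $\alpha>2-2\eta$, which is \emph{not} the same constraint as $\alpha>2\eta$ (they coincide only at $\eta=\tfrac12$). For $\eta\in(\tfrac12,1)$ and $\alpha=2\eta$ one has $\eta+\frac\alpha2=2\eta>1$, so the $L^\infty$ embedding is comfortably available and cannot be the obstruction. In the paper's route the strictness comes instead from the hypothesis of the multiplication theorem itself: one needs the target index $\eta+1-\frac\alpha2$ to lie \emph{strictly} below $d/2=1$, i.e.\ $\alpha>2\eta$, to stay in the subcritical product regime. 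When $\eta\in[0,\tfrac12)$ the binding branch is $\frac{4-2\eta}{3}>2\eta$, so the strict constraint $\alpha>2\eta$ is automatically met once $\alpha\ge\frac{4-2\eta}{3}$, and the closed interval is admissible; when $\eta\in[\tfrac12,1)$ the branch $2\eta$ dominates and its strictness forces the open interval. Your ``main obstacle'' intuition is right that the endpoint is the crux, but the mechanism you propose is not the one that actually governs it.
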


\begin{lem}\label{lem-further-estimation}
 For  $ \eta \geq 1$ there exits $ c_\eta=c>0$, such that
for all   $u, v \in \mathbb{H}^{\eta+1-\frac\alpha2, 2}(\mathbb{T}^2)$, 
\begin{eqnarray}\label{est-B-estimatoion-q=2-eta-to-use}
 |B(u, v)|_{\mathbb{H}^{\eta-\frac\alpha2, 2}} &\leq& c |u|_{\mathbb{H}^{\eta+1-\frac{\alpha}{2}, 2}} 
 |v|_{\mathbb{H}^{\eta+1-\frac{\alpha}{2}, 2}}.
\end{eqnarray}

%\end{itemize}
\end{lem}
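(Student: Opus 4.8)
The plan is to strip off the Helmholtz projection, to use the divergence-free structure of the arguments in order to rewrite the nonlinearity in conservation (divergence) form, and thereby to reduce the asserted bilinear bound to the single fact that $H^{\eta+1-\frac\alpha2,2}(\mathbb T^2)$ is a multiplication algebra. Throughout I write $s:=\eta-\frac\alpha2$, so that $\eta+1-\frac\alpha2=s+1$; since $\eta\ge1$ and $\alpha\le2$ one has $s\ge0$, with $s=0$ only at the single point $(\eta,\alpha)=(1,2)$. Because $u\in\mathbb H^{s+1,2}(\mathbb T^2)$ is divergence free, the $i$-th component $\sum_j u_j\partial_jv_i$ of $(u\cdot\nabla)v$ equals $\sum_j\partial_j(u_jv_i)$, so $(u\cdot\nabla)v$ is a finite sum of first-order derivatives of the scalar products $u_jv_i$.

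First I would record, directly from the Fourier description $|f|_{H^{\beta,2}}^2=\sum_{k\in\mathbb Z_0^2}|k|^{2\beta}|\hat f(k)|^2$, two zero-cost mapping facts: the projection $\Pi$ is a Fourier multiplier of order $0$, hence bounded on every $\mathbb H^{\beta,2}(\mathbb T^2)$; and each $\partial_j$ maps $H^{\beta+1,2}$ boundedly into $H^{\beta,2}$, since on the Fourier side it is multiplication by a symbol of size $|k|$. Applying both with $\beta=s$ to the representation above gives $|B(u,v)|_{\mathbb H^{s,2}}\le c\sum_{i,j}|u_jv_i|_{H^{s+1,2}}$, so it remains only to prove the symmetric scalar product estimate $|fg|_{H^{s+1,2}}\le c\,|f|_{H^{s+1,2}}|g|_{H^{s+1,2}}$; summing the finitely many indices $i,j$ then recombines the right-hand side into $|u|_{\mathbb H^{s+1,2}}|v|_{\mathbb H^{s+1,2}}$.

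For this product estimate I would argue on Fourier coefficients. Writing $\beta:=s+1$ and $\widehat{fg}(k)=\sum_m\hat f(k-m)\hat g(m)$, the elementary inequality $|k|^{\beta}\le C(|k-m|^{\beta}+|m|^{\beta})$, valid for $\beta\ge0$, splits $|k|^\beta|\widehat{fg}(k)|$ into one part carrying the weight on $\hat f$ and one carrying it on $\hat g$. Each part is a convolution of a weighted $\ell^2$ sequence with an unweighted one, so by $\ell^2\ast\ell^1\hookrightarrow\ell^2$ followed by Cauchy--Schwarz each is controlled by $|f|_{H^{\beta,2}}|g|_{H^{\beta,2}}$ up to the factor $\big(\sum_{p\in\mathbb Z_0^2}|p|^{-2\beta}\big)^{1/2}$. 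This last series converges exactly when $2\beta>d=2$, i.e. $\beta>1$, equivalently $\eta>\frac\alpha2$, which holds for all $\eta\ge1$, $\alpha\le2$ away from the borderline point; this is precisely the statement that $H^{\beta,2}(\mathbb T^2)$ is an algebra for $\beta>\frac d2=1$, and it yields the constant $c=c_\eta$.

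I expect the only genuine obstacle to be the critical endpoint $(\eta,\alpha)=(1,2)$, where $\beta=\frac d2=1$, the series $\sum|p|^{-2}$ diverges logarithmically, and indeed $H^{1}(\mathbb T^2)$ fails to be a multiplication algebra (reflecting the failure of $H^1(\mathbb T^2)\hookrightarrow L^\infty$). The cleanest resolution is to observe that the large-deviation application is run with $\beta=\eta+1-\frac\alpha2>1$, so the endpoint never occurs and the three steps above give a complete proof; should the endpoint be required, one would replace the crude convolution split by a Littlewood--Paley decomposition, handling the low-high and high-low interactions through $H^{\beta}\hookrightarrow L^\infty$ and summing the high-high interaction using the one spare derivative furnished by the divergence form. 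It is exactly this divergence structure---turning the estimate into a symmetric algebra bound in $H^{s+1,2}$ rather than the asymmetric module bound $H^{s+1,2}\cdot H^{s,2}\to H^{s,2}$---that lets the elementary Fourier split suffice for every $s>0$, and I would regard making that reduction the key step.
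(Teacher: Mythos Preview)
Your proof follows exactly the route the paper imports from \cite{Debbi-FSNSE-13}: rewrite $B(u,v)=\Pi\,\partial_j(u_jv)$ using $\operatorname{div}u=0$, use the boundedness of $A^{-1/2}\Pi\partial_j$ on the Sobolev scale (equivalently, your separate bounds on $\Pi$ and on $\partial_j$) to obtain $|B(u,v)|_{\mathbb H^{\eta-\alpha/2,2}}\le c\,|u_jv_i|_{H^{\eta+1-\alpha/2,2}}$, and conclude via the algebra property of $H^{\eta+1-\alpha/2,2}(\mathbb T^2)$, which you establish by a direct Fourier argument where the paper simply cites the literature. Your flagging of the borderline $(\eta,\alpha)=(1,2)$ is in fact more careful than the referenced argument, though note that $\alpha=2$ \emph{is} in the paper's stated range, so the endpoint is not literally avoided by the application---there one simply falls back on the classical 2D case already covered in \cite{Millet-Chueshov-Hydranamycs-2NS-10}.
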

\noindent {\bf Proof of Theorem \ref{Main-Theorem-LD}.} We apply \cite[Theorem 3.2]{Millet-Chueshov-Hydranamycs-2NS-10}. In fact, $ (C_1)-(C_3)$ are satisfied by assumptions. Moreover, thanks to Theorem \ref{theo-gelfand-gene}, with $ \eta =1$, the bilinear form $ B: \mathbb{H}^{1+\frac\alpha2, 2}(\mathbb{T}^2)\times \mathbb{H}^{1+\frac\alpha2, 2}(\mathbb{T}^2) \rightarrow \mathbb{H}^{1-\frac\alpha2, 2}(\mathbb{T}^2)$ is continuos for all $ \alpha \in [1, 2]$. Now, it is sufficient to find an interpolation space $ \mathcal{H}$ possessing \cite[the properties (i)-(iii)]{Millet-Chueshov-Hydranamycs-2NS-10}. We consider $ \mathcal{H} = \mathbb{H}^{1+\frac\alpha4, 2}(\mathbb{T}^2)$. By interpolation, it is easy to see that there exists a constant $c>0 $ such that for all $ v\in \mathbb{H}^{1+\frac\alpha2, 2}(\mathbb{T}^2)$, we have
\begin{equation}
|v|^2_{\mathbb{H}^{1+\frac\alpha4, 2}}\leq c |v|_{\mathbb{H}^{1, 2}}|v|_{\mathbb{H}^{1+\frac\alpha2, 2}}.
\end{equation}
Furthermore, using \eqref{est-B-estimatoion-q=2-eta-to-use}, the Sobolev embedding, the condition  $ \frac43\leq \alpha\leq 2$ and Young inequality, we infer that for every $ \lambda>0$ there exists a positive constant $ c_\lambda>0$ such that for all $ v^i \in \mathbb{H}^{1+\frac\alpha2, 2}(\mathbb{T}^2), \; i\in \{1,2,3\}$, we have
\begin{eqnarray}
|\langle B(v^1, v^2), v^3\rangle_{\mathbb{H}^{1, 2}}| &\leq& |v^3|_{ \mathbb{H}^{1+\frac\alpha2, 2}}|B(v^1, v^2) |_{\mathbb{H}^{1-\frac\alpha2, 2}} \leq c|v^3|_{\mathbb{H}^{1+\frac\alpha2, 2}}|v^1|_{\mathbb{H}^{2-\frac\alpha2, 2}}|v^2 |_{\mathbb{H}^{2-\frac\alpha2, 2}}\nonumber\\
&\leq & c|v^3|_{\mathbb{H}^{1+\frac\alpha2, 2}}|v^1|_{\mathbb{H}^{1+\frac\alpha4, 2}}|v^2 |_{\mathbb{H}^{1+\frac\alpha4, 2}} 
\leq \lambda |v^3|^2_{\mathbb{H}^{1+\frac\alpha2, 2}}+ c_\lambda |v^1|^2_{\mathbb{H}^{1+\frac\alpha4, 2}}|v^2 |^2_{\mathbb{H}^{1+\frac\alpha4, 2}}.
\end{eqnarray}
The proof is achieved.
\del{\begin{Rem}

\end{Rem}}

%%%%%%%%%%%%%%%%%%%%%%%%%%%%%%%%%
\del{\section{Properties of the nonlinear term}\label{sec-nonlinear-prop}
\noindent Our aim in this section is to study the nonlinear operator $ B$  defined by \eqref{eq-B-projection}. 
Here $ O$ denotes either the torus
$ \mathbb{T}^d$ or a bounded domain from $ \mathbb{R}^d$ with smooth boundary as mentioned  above. 
We define the bilinear operator
$ B:(\mathcal{D}(O))^2 \rightarrow \mathbb{L}^2(O)$ and the tri-linear form
$ b:(\mathcal{D}(O))^3 \rightarrow \mathbb{R}$ by,
\begin{equation}\label{Eq-def-B-theta1-Theata2}
B(u, v): = \Pi ((u\cdot \nabla) v), \;\;\;\; \forall (u, v),
\in (\mathcal{D}(O))^2
\end{equation}
respectively,
\begin{equation}\label{Eq-3lin-form}
b(u, \theta, v):= \langle B(u, v), v\rangle,\;\;\; \forall (u, \theta, v) \in (\mathcal{D}(O))^3,
\end{equation}
where the brackets in RHS of \eqref{Eq-3lin-form} stand for the scalar product in $ \mathbb{L}^2(O)$, see e.g. \cite{Amann-solvability-NSE-2000, Farwig-Sohr-L-p-theory-2005},
\begin{eqnarray}
 \mathcal{D}(O):&=& \{u\in (C^\infty(O))^d, div u=0\,\,  \text{ and $ u$ has a compact support when } O\,
 \text{is a bounded domain}\}\nonumber\\
 &=& \left\{
 \begin{array}{lr}
 (C_0^\infty(O))^d\cap \mathbb{L}^q(O), \;\; \text{when}\;\; O \;\; \text{is bounded},\nonumber\\
 (C^\infty(O))^d\cap \mathbb{L}^q(O), \;\; \text{when}\;\; O =\mathbb{T}^d.\nonumber\\
 \end{array}
 \right.
\end{eqnarray}
\del{\mathcal{D}(O):= \{\text{the set of infinitely differential functions, with compact support in the case } O
 \text{is a bounded domain and such that }  div u=0\}.}
The bilinear operator $ B $ and the trilinear form  $ b$\del{  as defined above \eqref{Eq-def-B-theta1-Theata2} and  
\eqref{Eq-3lin-form}} have several extensions based on the $ \mathbb{}H^{\beta, q}-$norm, with $ \beta\geq 1$, 
 see e.g. \cite{Temam-NS-Functional-95} and \cite[p. 97]{Foias-book-2001} for Hilbert spaces and
\cite{Giga-Solu-Lr-NS-85}  for Banach spaces and for a general survey.\del{ of many other results.} Unfortunately, 
 due to the weakness of the fractional dissipation in our equation these extensions are
useless for  our case.
 Let us before dealing with the extensions we are interested in here, recall the following intrinsic properties
\begin{equation}\label{Eq-3lin-propsym}
b(u, \theta, v)= -b(u, v, \theta), \;\; \forall u, v, \theta  \in \mathbb{H}^{1, 2}(O).
\end{equation}
Hence
\begin{equation}\label{Eq-3lin-propnull}
b(u, v, v)= 0, \;\; \forall u, v \in \mathbb{H}^{1, 2}(O).
\end{equation}
In particular, for $ O= \mathbb{T}^2$, we have also, see e.g. either \cite[Lemma 3.1]{Temam-NS-Functional-95} or  \cite[Lemma VI.3.1]{Temam-Inf-dim-88}.
\begin{equation}\label{vanishes-bilinear-tous-H1}
 \langle B(u), u\rangle_{\mathbb{H}^{1, 2}} = \langle B(u), u\rangle_{H_2^{1, 2}} =0,\;\;\; \forall u \in D(A):= \mathbb{H}^{2, 2}(\mathbb{T}^2).
\end{equation}
Now, we cite some basic lemmas.
\begin{lem}\label{Lem-classic}
For all  $ 1\leq j\leq d$, $ \eta\geq 0$ and  $1<q<\infty$,  the operators $ A^{-\frac12}\Pi\partial_{j}$ extends uniquely to a bounded
linear operator from  $ H_d^{\eta, q}(O)$ to $ \mathbb{H}^{\eta, q}(O)$.\del{Furthermore $ \partial_{j}IA^{-\frac12}$
is a bounded operator from $ \mathbb{L}^q(\mathbb{T}^d)$ to $ L_d^q(\mathbb{T}^d)$, where $ I $ denotes the injection
$ \mathbb{L}^q(\mathbb{T}^d)\subset L_d^q(\mathbb{T}^d)$.}
\end{lem}
\begin{proof}
For $ \eta \geq 0 $ and  $ O= \mathbb{T}^d$, we 
use Marcinkiewicz's theory for the pseudodifferential  operator $ A^{-\frac12}\Pi\partial_{j}$. In fact,
the  symbol of this latter in  Fourier modes is given by the matrix $ i|k|^{-1}k_j(\delta_{m, n}-|k|^{-2}k_mk_n)_{mn}$.
See also \cite{Debbi-scalar-active} and also \cite{Kato-Ponce-86} for similar calculus for the  case $ O= \mathbb{R}^d$. 
The case $ O$ bounded and $ \eta =0 $  has been proved in 
\cite[Lemma 2.1]{Giga-Solu-Lr-NS-85}. We claim here that the method in \cite{Giga-Solu-Lr-NS-85} and also 
the proof bellow are also valid 
for $ O= \mathbb{T}^d$.
For $ \eta \geq 1$  and $ O$ is either a bounded domain of $ \mathbb{R}^d$ or  $ O= \mathbb{T}^d$,
thanks to the properties of 
Helmholtz projection, we prove for all $ \beta\geq 0$ and $1< q<\infty$,
that $ \Pi: H_d^{\beta, q}(O) \rightarrow \mathbb{H}^{\beta, q}(O)$ is well defined and bounded. 
Using this statement and arguing
 as in the proof of \cite[Lemma 2.1]{Giga-Solu-Lr-NS-85}, we get the result for $ \eta\geq 1$. The result for 
 $ 0< \eta <1$ is a consequence of the interpolation for $ \eta= 0$ and $ \eta= 1$. 
\end{proof}
\del{\begin{remark}
 Following the same ideas in Lemma \ref{Lem-classic} and in \cite[Lemma 2.1]{Giga-Solu-Lr-NS-85}, 
 we can prove the result for $ \eta\geq -1$. To be checked.
\end{remark}}

\begin{lem}\label{Giga-Mikayawa-solutionLr-NS}
Let $1<r_1\leq r_2<\infty$ and $ \frac{d}{2}(\frac{1}{r_1}-\frac{1}{r_2})\leq \epsilon <\frac{d}{2r_1}$. Then, the operator
$ A_{r_1}^{-\epsilon}$  extends uniquely to a bounded operator from
$  \mathbb{L}^{r_1}(O)$ to $ \mathbb{L}^{r_2}(O)$.
\end{lem}

\begin{proof}
Thanks to Theorem \ref{theorem-domains-A-D-A-S}\del{{eq-embedded-domain-A-beta}}, it is sufficient to show that
$$  \mathbb{H}^{2\epsilon, r_1}(O) \hookrightarrow H_d^{2\epsilon, r_1}(O)\cap \mathbb{L}^{r_1}(O)  \hookrightarrow \mathbb{L}^{r_2}(O).$$
This last follows from the Sobolev  embedding; $ H_d^{2\epsilon, r_1}(O) \hookrightarrow L_d^{r_2}(O)$,
\del{$  \mathbb{H}^{2\epsilon, r_1}(O) \hookrightarrow \mathbb{L}^{r_2}(O)$,} which is
satisfied provided $ \frac{d}{2}(\frac{1}{r_1}-\frac{1}{r_2})\leq \epsilon <\frac{d}{2r_1}$,
see e.g. \cite[Theorem 7.63 p 221 and 7.66 p 222]{Adams-Hedberg-94} for bounded domain and
 see either \cite[Theorem 3.5.4.ps.168-169 and Corollary 3.5.5.p 170]{Schmeisser-Tribel-87-book} or \cite{Sickel-periodic spaces-85} for  the torus. See also \cite{Amann-solvability-NSE-2000} for solenoidal Sobolev spaces.
\end{proof}

\noindent We call the property in Lemma \ref{Giga-Mikayawa-solutionLr-NS}, the $ L^{r_1}\rightarrow  L^{r_2}$
smoothing property of  $ A_{r_1}^{-\epsilon}$.  As an application of Lemma \ref{Giga-Mikayawa-solutionLr-NS}, we have
\begin{coro}\label{coro-lem-Giga-epsilon}
 For all $ \epsilon $, such that $ \frac{d}{2q}\leq \epsilon < \frac{d}{q}$, the operator $ A^{-\epsilon}: \mathbb{L}^{\frac q2}(O)  \rightarrow
\mathbb{L}^{q}(O)$ is bounded.
\end{coro}

\noindent The following Lemma has been proved in \cite{Giga-Solu-Lr-NS-85} for bounded domain. Our claim is that
the same proof is also valid for $ O= \mathbb{T}^d$,\del{as  the Stokes operator in the case $ O= \mathbb{T}^d$ coincides with the Laplacian, the same proof is also valid for this case,} see
similar calculus in \cite{Debbi-scalar-active} and for $ q=2 $, see e.g. \cite[p 13]{Temam-NS-Functional-95}.

\begin{lem}\label{Lemma-bound-op-RGradient-Via-gamma} \cite[Lemma 2.2]{Giga-Solu-Lr-NS-85}
Let $ 0\leq \delta < \frac12+\frac d2(1-\frac1q)$. Then
\begin{eqnarray}
|A^{-\delta}\Pi (u. \nabla) v|_{\mathbb{L}^q} \leq M|A^{\nu} u|_{\mathbb{L}^q}
|A^{\rho} v|_{\mathbb{L}^q}.
\end{eqnarray}
with some constant $ M:= M_{\delta, \rho, \nu, q}$,\del{ the operator $ (u, v) \mapsto u . \nabla v $ is
continuously extended to $B: D(A^\nu) \times D(A^\rho)
\rightarrow D(A^{-\delta})$} provided that $ \nu, \rho >0$, $\delta+ \rho > \frac12$,
$\delta+ \nu + \rho \geq \frac{d}{2q}+ \frac12$. 
\end{lem}

\noindent As a corollary  of Lemma \ref{Lemma-bound-op-RGradient-Via-gamma}, we cite the following results, which will be generalized later on.\del{These results will be for a general
framework, to prove Theorem \ref{theo-gelfand-gene}.}
\del{\begin{coro}\label{coro-lem-Giga}
\noindent Let either $ O\subset \mathbb{R}^2$ a bounded domain with smooth boundary or
\begin{itemize}
 \item For all  $( u, v) \in \mathbb{H}^{1-\frac\alpha2,2}(O^2)\times \mathbb{H}^{1+\frac\alpha2,2}(O^2)$,
there exists a constant $ c>0$, such that
\begin{eqnarray}\label{Eq-B-L-2-est}
|u\nabla v|_{\mathbb{L}^2} &\leq& c|u|_{H^{1-\frac\alpha2,2}}|v
|_{H^{1+\frac\alpha2,2}}.
\end{eqnarray}
For all  $( u, v) \in (\mathbb{H}^{1-\frac\alpha4,2}(O^2))^2$,
there exists a constant $ c>0$, such that
\begin{eqnarray}\label{Eq-B-H-alpha-2-est}
|u\nabla v|_{\mathbb{H}^{-\frac\alpha2, 2}} &\leq& c|u|_{\mathbb{H}^{1-\frac\alpha4, 2}}|u|_{\mathbb{H}^{1-\frac\alpha4, 2}}.
\end{eqnarray}
\end{itemize}
\end{coro}
\begin{proof}
We can use several methods such as,
\begin{itemize}
 \item Lemma \ref{Lem-classic}. In this proof, we  take $  \delta =0, \rho = \frac12+\frac\alpha4$ and $ \nu = \frac12-\frac\alpha4 $  to get
\eqref{Eq-B-L-2-est} and  we take $  \delta =\frac\alpha4, \rho = \nu = \frac12-\frac\alpha8$ to get \eqref{Eq-B-H-alpha-2-est}.
\item or  \cite[Theorem 4.6.1 p 190 \& Proposition Tr. 2.3.5 p 14]{R&S-96} and Lemma \eqref{Theo-pointwiseMulti-Bounded-Domain},
for $ O\subset \mathbb{R}^2$ being a bounded domain. For the case $O= \mathbb{T}^2 $, we use
\cite[Theorem IV.2.2 (ii)]{Sickel-Pontwise-Torus}, the monotonicity property  \cite[Remark 4, p 164]{Schmeisser-Tribel-87-book} and
\cite[Theorem 3.5.4.ps.168-169]{Schmeisser-Tribel-87-book}.
\item or \cite[Lemma 2.1 and in p 13]{Temam-NS-Functional-95} with $  m_3=0, m_2= \frac\alpha2, m_1= 1-\frac\alpha2$  to prove  \eqref{Eq-B-L-2-est}.
\end{itemize}
\end{proof}}

\begin{coro}\label{coro-lem-Giga}
\noindent Let either $ O= \mathbb{T}^d$ or $ O\subset \mathbb{R}^d$ be a bounded domain. Then
\begin{itemize}
 \item For $ \alpha\in (0,2) $, there exists a constant $ c:= c(\alpha, d)>0$ such that for all
$( u, v) \in \mathbb{H}^{\frac d2-\frac\alpha2,2}(O)\times \mathbb{H}^{1+\frac\alpha2,2}(O)$
\begin{eqnarray}\label{Eq-B-L-2-est}
|B(u, v)|_{\mathbb{L}^2} &\leq& c|u|_{\mathbb{H}^{\frac d2-\frac\alpha2,2}}|v
|_{\mathbb{H}^{1+\frac\alpha2,2}}.
\end{eqnarray}
\item  For $ \alpha\in (0,2] $ there exists a constant $ c:= c(\alpha, d)>0$ such that for all $( u, v) \in (\mathbb{H}^{\frac{2+d-\alpha}{4},2}(O))^2$,
\begin{eqnarray}\label{Eq-B-H-alpha-2-est}
|B(u, v)|_{\mathbb{H}^{-\frac\alpha2, 2}} &\leq& c|u|_{\mathbb{H}^{\frac{2+d-\alpha}{4},2}}
|v|_{\mathbb{H}^{\frac{2+d-\alpha}{4},2}}.
\end{eqnarray}
\end{itemize}
\end{coro}
\del{\begin{proof}
Using  Lemma \ref{Lemma-bound-op-RGradient-Via-gamma} with $  \delta =0, \rho = \frac12+\frac\alpha4$ and $ \nu = \frac d4-\frac\alpha4 $  to get
\eqref{Eq-B-L-2-est} and  with $  \delta =\frac\alpha4, \rho = \nu = \frac{2+d-\alpha}{8}$ to get 
\eqref{Eq-B-H-alpha-2-est},
see also other proof in \cite[Lemma 2.1 and p 13]{Temam-NS-Functional-95}.\del{
 with $  m_3=0, m_2= \frac\alpha2, m_1= 1-\frac\alpha2$  to prove  \eqref{Eq-B-L-2-est}.}
\end{proof}}
\noindent The following results generalize 
\cite[Lemma 2.2]{Giga-Solu-Lr-NS-85} for $ O\subset \mathbb{R}^d$ and
\cite[Lemma 1.4]{Kato-Ponce-86} for $ O=\mathbb{R}^d$.

\begin{prop}\label{Prop-Main-B}
Let $ 2<q<\infty$ and $ \eta\geq 0$. The bilinear form $B :(\mathcal{D}(O))^2
\rightarrow \mathbb{L}^q(O)$ given by Formula
\eqref{eq-B-projection}, extends uniquely, (we keep the same notation) to
\begin{eqnarray}\label{Eq-def-Bilinear-form}
B:&(\mathbb{H}^{\eta, q}(O))^2& \rightarrow \mathbb{H}^{\eta-1-\frac dq, q}(O)\nonumber\\
&(u, v) & \mapsto B(u,v):= \Pi((u\cdot\nabla) v).
\end{eqnarray}
Moreover, there exists a constant $ c:= c_{\eta, d, q}>0$  such that for all 
$ (u,  v) \in (\mathbb{H}^{\eta, q}(O))^2$,
\begin{equation}\label{eq-B-estimatoion}
 | B(u, v)|_{\mathbb{H}^{\eta-1-\frac dq, q}}\leq c |u|_{\mathbb{H}^{\eta, q}} |v|_{\mathbb{H}^{\eta, q}}.
\end{equation}
\end{prop}

\begin{proof}
Let us first mention that  to justify that the bilinear form $B :(\mathcal{D}(O))^2
\rightarrow \mathbb{L}^q(O)$  is well defined, one can use H\"older and Gagliardo-Nirenberg inequalities.
Remark also that, thanks to the free divergence,  we can rewrite  
$ B(u, v)$, for smooth functions $ u$ and $ v$, e.g. $ (u, v)\in (\mathcal{D}(O))^2$, as
\begin{equation}\label{B-theta-divergence-1}
B(u, v) = \Pi(\sum_{j=1}^d \partial_j(u_j v))= \Pi(\partial_j(u_j v)).
\end{equation}
\noindent Using  Corollary \ref{coro-lem-Giga-epsilon} and Lemma \ref{Lem-classic}, we get for  $ \eta =0$, after applying  H\"older inequality,
\begin{eqnarray}\label{est-1-semi-group-div-delta2not2-B}
|B(u, v)|_{\mathbb{H}^{-1-\frac dq, q}} &=& |A^{-\frac12-\frac d{2q}} \Pi\partial_j(u_j v)|_{\mathbb{L}^{q}}
\leq C| A^{-\frac12} \Pi\partial_j(u_j v)|_{\mathbb{L}^{\frac q2}} \nonumber\\
&\leq & C  |u_j v|_{\mathbb{L}^{\frac q2}} \leq  C |u|_{\mathbb{L}^q}| v|_{\mathbb{L}^q}
\end{eqnarray}
 and for $ 0<\eta < \frac dq$, thanks to
\cite[Theorem IV.2.2 (iii) \& Theorem III. 11. (ii)]{Sickel-Pontwise-Torus}  and
\cite[Theorem 3.5.4.ps.168-169]{Schmeisser-Tribel-87-book}, for $ O=\mathbb{T}^d$ and thanks to
\cite[Theorem 1, p. 176 and Proposition Tr 6, 2.3.5, p 14]{R&S-96} and Theorem 
\ref{Theo-pointwiseMulti-Bounded-Domain} bellow,  for the bounded domain,
\begin{eqnarray}\label{est-1-semi-group-div-delta2not2-B}
|B(u, v)|_{\mathbb{H}^{\eta-1-\frac dq, q}} &=& |A^{-\frac d{2q}} A^{\frac\eta2-\frac12}\Pi(\partial_j(u_j v))|_{\mathbb{L}^{q}}
\leq C\del{| A^{\frac\eta2}A^{-\frac12} \Pi\partial_j(u_j v)|_{\mathbb{L}^{\frac q2}} \nonumber\\
&\leq & C  }|u_j v_i|_{H^{\eta, \frac q2}} \leq  C |u|_{\mathbb{H}^{\eta, q}}| v|_{\mathbb{H}^{\eta, q}}.\nonumber\\
\end{eqnarray}

\noindent For $ \eta = \frac dq$, we use  Lemma \ref{Lem-classic},
\cite[Theorem IV.2.2 (ii)]{Sickel-Pontwise-Torus}, \cite[Theorem 3.5.4.ps.168-169]{Schmeisser-Tribel-87-book} 
and the monotonicity property in
\cite[Remark 4.p.164]{Schmeisser-Tribel-87-book} for $ O=\mathbb{T}^d$ and \cite[Theorem 4.6.1, p. 190 and Proposition Tr 6, 2.3.5, p 14]{R&S-96} and Theorem \ref{Theo-pointwiseMulti-Bounded-Domain}, we infer that
\begin{eqnarray}\label{est-1-semi-group-div-delta2not2-B-+2}
|B(u, v)|_{\mathbb{H}^{\eta-1-\frac dq, q}} \leq
 c|u_jv_i|_{L^{q}}
\leq  C  |u|_{\mathbb{H}^{\frac{d}{2q}, q}}|v|_{_{\mathbb{H}^{\frac{d}{2q}, q}}}
\leq  C  |u|_{\mathbb{H}^{\eta, q}}|v|_{\mathbb{H}^{\eta, q}}.
\end{eqnarray}
\noindent For $ \eta > \frac dq $, we use Lemma \ref{Lem-classic} (here we do not use 
Corollary \ref{coro-lem-Giga-epsilon}), the fact that 
$ \mathbb{H}^{\eta, q}(O)$  is an algebra, see e.g.
\cite[Theorem IV.2.2 (ii)]{Sickel-Pontwise-Torus} and \cite[Theorem 3.5.4.ps.168-169]{Schmeisser-Tribel-87-book} for 
$ O=\mathbb{T}^d$ and see e.g. 
\cite[Theorem 1, p. 221 and Proposition Tr 6, 2.3.5, p. 14]{R&S-96} for 
$ O\subset \mathbb{T}^d$ bounded, then we conclude \eqref{eq-B-estimatoion}.  Finally, by  density of the
space $ \mathcal{D}(O)$ in $ \mathbb{H}^{\eta, q}(O)$\del{, see the definition in Section \ref{sec-formulation},} the result is proved.
\end{proof}

\noindent The proof of Proposition \ref{Prop-Main-B} does not cover the case $ q=2$.
This is due to the fact that the results in
Lemmas \ref{Lem-classic}-\ref{Giga-Mikayawa-solutionLr-NS}  and other related tools like 
\cite[Theorem 3.5.4.ps.168-169]{Schmeisser-Tribel-87-book}
do not cover the space $ \mathbb{L}^1(O)$.
We give bellow other estimates for $ q=2$.\del{ This result is used to prove Theorem \ref{Main-theorem-martingale-solution-d}.}
\begin{prop}\label{prop-Ben-q=2}
Let $\epsilon>0$, then
the bilinear form $ B$ extends uniquely $
B:(\mathbb{L}^{2}(O))^2 \rightarrow \mathbb{H}^{-1-\epsilon-\frac d2, 2}(O)$
and there exists a constant $ c:= c_{\alpha, \epsilon, d}$ such that  for all $ (u, v) \in (\mathbb{L}^{2}(O))^2$,
\begin{equation}\label{eq-B-estimatoion-q=2}
|B(u, v)|_{H^{-1-\epsilon-\frac d2, 2}}\leq c |u|_{\mathbb{L}^{2}} |v|_{\mathbb{L}^{2}}.
\end{equation}
\end{prop}
\noindent  We omit the proof here,  as a more general one will be given
in the proof of Lemma \ref{lem-bounded-W-gamma-p}.

\begin{prop}\label{Prop-Main-I}
Assume that  $ 2< q<\infty$, $ \eta\geq 0$ and  $1 + \frac dq <\alpha\leq 2$
and  let $ u, v \in L^\infty(0, T; D(A^\frac{\eta}{2}_q))$. Then
\begin{equation}
 \int_0^. e^{-A_\alpha (.-s)}B(u(s), v(s)) ds\in L^\infty(0, T; D(A^\frac{\beta}{2}_q)),
\end{equation}
for all $ \beta $  satisfies
\begin{equation}
  0\leq \beta <\eta+ \alpha-1-\frac dq.
\end{equation}
Moreover, there exist  $ c, \mu >0$ (depend on $ d, q, \alpha, \beta$), such that
 the following estimate holds
\begin{equation}\label{est-semigroup-B-main-2}
 | A^\frac{\beta}{2}\int_0^t e^{-A_\alpha (t-s)}B(u(s), v(s)) ds|_{\mathbb{L}^q}\leq c t^{\mu}
|u|_{L^\infty(0, t; D(A^{\frac{\eta}{2}}_q))}|v|_{L^\infty(0, t; D(A^{\frac{\eta}{2}}_q))}.
\end{equation}
\end{prop}

\begin{proof}
Using Proposition \ref{Prop-Main-B} and the semigroup property \eqref{eq-semigp-property}, we infer that
\begin{eqnarray}\label{est-1-semi-group-second-term-1}
|\int_0^t e^{-A_\alpha (t-s)}B(u(s)\!\!\!\!&,& \!\!\!\!v(s))ds|_{D(A^{\frac\beta2}_q)}\nonumber\\
&\leq& C\int_0^t |A^{\frac{\beta-\eta+1+\frac dq}2} e^{-A_\alpha (t-s)}|_{\mathcal{L}(\mathbb{L}^q)}
|B(u(s), v(s))|_{\mathbb{H}^{\eta-1-\frac dq, q}} ds \nonumber\\
&\leq &  C  \int_0^t (t-s)^{-\frac{ 1}{\alpha}(\beta-\eta+1+\frac dq)}|u(s)|_{D(A_q^{\frac\eta2})}|v(s)|_{D(A_q^{\frac\eta2})}ds
\nonumber\\
&\leq &  C T^{1-\frac{ d}{\alpha}(\frac{1+\beta-\eta}{d}+\frac1q)}|u|_{L^\infty(0, T; D(A_q^{\frac\eta2}))}
|v|_{L^\infty(0, T; D(A_q^{\frac\eta2}))}.
\end{eqnarray}
\end{proof}

\begin{prop}\label{prop-est-B-eta-q=2}
Let $\eta \geq 0$ and
 \begin{equation}\label{critical-value-alpha}
\alpha(d, \eta) :=
\Bigg\{
\begin{array}{lr}
\max\{\frac{d+2-2\eta}{3},\; 2\eta+2-d\},\;\;\; \  if\; \;  \eta \in [0, \frac{d}{2})\cap 
(\frac d2-2, \frac{d}{2}), \\
1, \;\;\; if \,\,\, \eta\geq \frac d2.
\end{array}
\end{equation}
\del{\footnote{Recall $ \max^1_>\{a, b\} $ equals $ a$, if $ a>b$ and greater than $ b$ if  $ a\leq b$}}
Then for either $ \alpha \in [\alpha(d, \eta), 2)$ with $(\eta \in [0, \frac{d-1}{2})\cap 
(\frac d2-2, \frac{d-1}{2})) 
\cup [\frac d2, \infty)$ or
$ \alpha \in (\alpha(d, \eta), 2)$ with $\eta \in [\frac{d-1}{2},  \frac{d}{2})$,
 the bilinear operator $ B$ extends uniquely
$$
B:(\mathbb{H}^{\eta +\frac\alpha2, 2}(O))^2 \rightarrow \mathbb{H}^{\eta -\frac\alpha2, 2}(O)$$
and there exists a constant $ c:= c_{\alpha, \eta, d}$ such that  for all 
$ (u, v) \in (\mathbb{H}^{\eta -\frac\alpha2, 2}(O))^2$,
\begin{equation}\label{eq-B-estimatoion-q=2-eta}
|B(u, v)|_{\mathbb{H}^{\eta -\frac\alpha2, 2}}\leq c |u|_{\mathbb{H}^{\eta +\frac\alpha2, 2}} |v|_{\mathbb{H}^{\eta +\frac\alpha2, 2}}.
\end{equation}
\end{prop}

\begin{proof}
Thanks to  Lemma \ref{Lem-classic}, there exists a constant $ c>0$, such that
\begin{eqnarray}\label{eq-fact-b-u-v}
 |B(u, v)|_{\mathbb{H}^{\eta-\frac\alpha2, 2}} &\leq& c |u_jv|_{\mathbb{H}^{\eta+1-\frac{\alpha}{2}, 2}}.
\end{eqnarray}
First, let us suppose that $ \eta \geq \frac d2$. Then  $\mathbb{H}^{\eta+1-\frac\alpha2, 2}(O)$ is an algebra, 
therefore
\del{$ \eta+1-\frac{\alpha}{2} >\frac d2$, then}
\begin{eqnarray}\label{est-B-estimatoion-q=2-eta}
 |B(u, v)|_{\mathbb{H}^{\eta-\frac\alpha2, 2}} &\leq& c |u|_{\mathbb{H}^{\eta+1-\frac{\alpha}{2}, 2}} 
 |v|_{\mathbb{H}^{\eta+1-\frac{\alpha}{2}, 2}}.
\end{eqnarray}
Then  Estimate \eqref{eq-B-estimatoion-q=2-eta} follows from \eqref{est-B-estimatoion-q=2-eta} 
by using the Sobolev embedding
$ \mathbb{H}^{\eta+\frac{\alpha}{2}, 2}(O) \hookrightarrow \mathbb{H}^{\eta+1-\frac{\alpha}{2}, 2}(O)$. 
This last is guaranteed thanks to the condition
$ \alpha\geq 1= \alpha(d, \eta)$.\\
\noindent For $ 0\leq \eta < \frac d2$, we combine \eqref{eq-fact-b-u-v} and either
\cite[Theorem 4.6.1.1, Proposition Tr 6, 2.3.5]{R&S-96} (see also Appendix
\ref{Sobolev pointwise multiplication-Bounded-Domain}) for $ O\subset \mathbb{R}^d$ being a  bounded domain or
\cite[Theorem IV.2.2]{Sickel-Pontwise-Torus} and \cite[Theorem 3.5.4 \& Remark 4 p 164]{Sickel-periodic spaces-85}
for $ O= \mathbb{T}^d$, then we get,
\begin{eqnarray}\label{eq-fact-b-u-v-last-est}
|B(u, v)|_{\mathbb{H}^{\eta-\frac\alpha2, 2}} &\leq& c |u|_{\mathbb{H}^{\frac{d+2+2\eta-\alpha}{4}, 2}}
| v|_{\mathbb{H}^{\frac{d+2+2\eta-\alpha}{4}, 2}}
\end{eqnarray}
provided that $ 2\eta+2-d<\alpha$. Moreover, under the condition $ \frac{d+2-2\eta}{3}\leq \alpha$, Estimate
\eqref{eq-B-estimatoion-q=2-eta} follows from \eqref{eq-fact-b-u-v-last-est} by using the
Sobolev embedding $ \mathbb{H}^{\eta+\frac{\alpha}{2}, 2}(O) \hookrightarrow 
\mathbb{H}^{\frac{d+2+2\eta-\alpha}{4}, 2}(O)$. This achieves the proof of \eqref{eq-B-estimatoion-q=2-eta}. The intervals in the definition of $\alpha(d, \eta) $ in Formula  \eqref{critical-value-alpha}
emerge thanks to the condition $ \eta >\frac d2-2$ which guaranties that $ \frac{d+2-2\eta}{3} <2$ and to the equivalence 
$ 2+2\eta-d<\frac{d+2-2\eta}{3}\Leftrightarrow \eta< \frac{d-1}{2}$.\del{, we deduce the corresponding 
values of $ \alpha$ and $ \eta$ as cited in the proposition.}
\del{ we infer the existence of a constant $ c>0$, such that
\begin{eqnarray}
 |B(u, v)|_{\mathbb{H}^{\eta-\frac\alpha2, 2}} &\leq& c |u|_{\mathbb{H}^{\frac{d+2+2\eta-\alpha}{4}, 2}}
| v|_{\mathbb{H}^{\frac{d+2+2\eta-\alpha}{4}, 2}} \leq c |u|_{\mathbb{H}^{\eta -\frac\alpha2, 2}} |v|_{\mathbb{H}^{\eta -\frac\alpha2, 2}}.
\end{eqnarray}}
\end{proof}

\noindent The investigation of the Gelfand triple corresponding to
the fractional Navier-Stokes equation for which $ B$ can be extended to a  bounded operator,
is\del{, as mentioned in the introduction,} one of the delicate questions of the theory of fractional nonlinear equations.
\del{here, we apply Proposition \ref{prop-est-B-eta-q=2} to discuss this question.}
To characterize this feature, let us first recall the following
classical Gelfand triple
 \begin{equation}
 V_c= D((A^\frac12))=\mathbb{H}^{1, 2}(O)\hookrightarrow  H:=\mathbb{L}^2(O) \tilde{=}  H^*\hookrightarrow  V_c^*,
 \del{:= \mathbb{H}^{-1, 2}(O)}
 \end{equation}
 where $ V_c^*$ is the dual of  $ V_c$.
The operators  $ A: V_c \rightarrow  V_c^*$ and  $ B: D(B):=V_c\times  H \rightarrow  V_c^*$ are bounded.
\del{ then both the operator $ A$ and the restriction of
$ B $ on $ V_c$ (denoted also by $ B$ ) are well defined and bounded from $V_c$ to $ V_c^*$.}
For the fractional case, we have for  $ \alpha >0$,
$A_\alpha: V:=D(A_\alpha^{\frac12})=\mathbb{H}^{\frac\alpha2, 2}(O) \rightarrow 
V^*=(\mathbb{H}^{\frac\alpha2, 2}(O))^*$  is bounded.  However, for $ \alpha<2$,
the space $ V\times H $ is larger than $ D(B)$. In particular,
$ \mathbb{H}^{1}(O) \subsetneq V$. Consequently, we need to extend uniquely the operator $ B $ to a
bounded operator from  $ V$ to $ V^*$. This extension is not possible for all values of 
$ \alpha \in (0, 2)$.\del{ The result is given in the following theorem}

\begin{theorem}\label{theo-gelfand-gene}
Let  $ \alpha \in [\alpha(d, \eta), 2]$, with $\eta \in ([0, \frac{d-1}{2})\cap (\frac{d}{2}, \frac{d-1}{2})) \cup [\frac d2, \infty)$\del{$\eta \in [\max\{0, \frac{d}{2}-2\},  
\frac{d-1}{2}) \cup [\frac d2, \infty)$} or
$ \alpha \in (\alpha(d, \eta), 2]$ with $\eta \in [\frac{d-1}{2},  \frac{d}{2})$
where $ \alpha(d, \eta)$ is defined by \eqref{critical-value-alpha}.
We introduce the following  Gelfand triple
\begin{equation}\label{gelfant-triple-eta}
 V_\eta:= \mathbb{H}^{\eta+\frac\alpha2, 2}(O)\hookrightarrow \mathbb{H}^{\eta, 2}(O)\hookrightarrow
 \mathbb{H}^{\eta-\frac\alpha2, 2}(O).
\end{equation}
Then
\begin{equation}
 B: V_\eta\times V_\eta:= (D(A^{\frac\eta2+\frac\alpha4}))^2= 
 (\mathbb{H}^{\eta+\frac\alpha2, 2}(O))^2\rightarrow V_\eta^* =
\mathbb{H}^{\eta-\frac\alpha2, 2}(O).
\end{equation}
 is bounded. Moreover, there exists a constant $ c:=c_{d, \alpha, \eta}>0$, such that
\begin{equation}\label{Eq-B-u-v-V-dual}
|B(u, v)|_{\mathbb{H}^{\eta-\frac\alpha2, 2}} \leq  c |u|_{\mathbb{H}^{\eta+\frac\alpha2, 2}}
|v|_{\mathbb{H}^{\eta+\frac\alpha2, 2}}.
\end{equation}
In particular, we have the following useful cases,
\begin{itemize}
\item  $ \eta =0$, $ d\in \{2, 3, 4\}$ and $ \alpha\in [\frac{d+2}{3}, 2]$.
\item  $ \eta =1$ and either  $ d=2$  and  $ \alpha \in [1, 2] $ or  $d=3$ and  $ \alpha \in (1, 2]$ or
$d\in \{4, 5, 6\}$ and  $ \alpha \in [\frac{d}{3}, 2]$.
\end{itemize}
\end{theorem}
\begin{proof}
 The proof is a straightforward application of Proposition \ref{prop-est-B-eta-q=2} and 
the classical case for $ \alpha =2$.
\end{proof}

\del{\noindent We mention here the following useful  cases,
\begin{coro}
\begin{itemize}
 \item For $ \eta =0$, $ d\in \{2, 3\}$, $ \alpha_0(d) := \alpha_0(d, 0) =\frac{d+2}{3}\leq \alpha <2 $, we consider the Gelfant triple
\begin{equation}\label{gelfant-triple-eta=0}
 \mathbb{H}^{\frac\alpha2, 2}(O)\hookrightarrow \mathbb{L}^{ 2}(O)\hookrightarrow\mathbb{H}^{-\frac\alpha2, 2}(O).
\end{equation}
Then the operator $ B $ is extended uniquely to a bounded operator
\begin{equation}
 B:  (V_0:= \mathbb{H}^{\frac\alpha2, 2}(O))^2\rightarrow V_0^* =
\mathbb{H}^{-\frac\alpha2, 2}(O).
\end{equation}
\item For $ \eta =1$, we consider the Gelfant triple
\begin{equation}\label{gelfant-triple-eta=1}
 \mathbb{H}^{1+\frac\alpha2, 2}(O)\hookrightarrow \mathbb{H}^{1, 2}(O)\hookrightarrow\mathbb{H}^{1-\frac\alpha2, 2}(O).
\end{equation}
Then for either  $ d=2$  and  $ \alpha \in [1, 2) $ or  $d=3$ and  $ \alpha \in (1, 2) $ or
$d\in \{4, 5\}$ and  $ \alpha \in (\frac{d}{3}, 2) $
the operator $ B $ is extended uniquely to a bounded operator.
\end{itemize}
\end{coro}}

%%%%%%%%%%%%%%%%%%%%%%%%%%%%%%%%%%%%%%%%%%%%%
%%%%%%%%%%%%%%%%%%%%%%%%%%%%%%%%%%%%%%%%%%%%%%%%%%%%%%%%%%%%%%%%%%%%%%%%%%%%%%%%%%%%%%%%%%%
%%%%%%%%%%%%%%%%%%%%%%%%%%%%%%%%%%%%%%%%%%%%%%%%%%%%%%%%%%%%%%%%%%%%%%%%%%%%%%%%%%%%%%%%%%%%%%%%%%%%%%%%%%%%%%%%%%%%%
\del{
In the following lemma, we give further estimations
for the nonlinear term and for the trilinear form
\begin{lem}\label{lem-B-different-q}
\begin{itemize}
 \item $(i)$ For $ 0\leq \eta <\frac{d}{2}$ and $ \alpha \in (2+2\eta-d, 2]$ there exists a constant $ c>0$, such that
\begin{equation}\label{main-est-b-B-eta-1}
 |B(u, v)|_{\mathbb{H}^{\eta-\frac\alpha2, 2}}\leq c |u|_{\mathbb{H}^{\frac{d+2+2\eta-\alpha}{4}, 2}}
| v|_{\mathbb{H}^{\frac{d+2+2\eta-\alpha}{4}, 2}}.
\end{equation}
 \item $(i')$ Let $ d\in \{2, 3\}$, $ 1<q<\frac{d}{d-1}$  and $ 0<\alpha\leq 2$. Then, there exists a constant $ c>0$, such that
\begin{equation}
 |B(u, v)|_{\mathbb{H}^{-\frac\alpha2, q}}\leq c |u|_{\mathbb{H}^{\frac\alpha2, 2}} | v|_{\mathbb{H}^{1-\frac\alpha2, 2}},
\end{equation}
provided $ \frac dq-d+\frac\alpha2>0$.
\item $(ii)$ For $ \frac{d-2}{3}\leq \alpha<2$,
\begin{eqnarray}\label{est-nonlinear-general}
| B(u, v)|_{{\mathbb{H}^{1-\frac{\alpha}{2}, 2}} } &\leq & | u|_{{\mathbb{H}^{\frac{d+2-\alpha}4, 2}}}| v|_{{\mathbb{H}^{\frac{d+2-\alpha}4, 2}}}.
\end{eqnarray}
\del{\item $(iii)$ For $ \frac{d-2}{3}\leq \alpha<2$ and $ \frac d2 <s<\alpha$,
\begin{eqnarray}\label{est-nonlinear-general}
| \langle B(u), v\rangle|_{{\mathbb{H}^{-\frac{\alpha}{2}, 2}} } &\leq & | u|_{{\mathbb{H}^{1-\frac\alpha2, 2}}}| v|_{{\mathbb{H}^{\frac{d+2-\alpha}4, 2}}}.
\end{eqnarray}}
\item $(iii)$ For $ \frac{d-2}{3}\leq \alpha<2$ and $ u\in \mathbb{H}^{1+\frac\alpha2, 2}$,
\begin{eqnarray}\label{est-tree-linear-H1}
|\langle B(u), u\rangle_{\mathbb{H}^{1, 2}}|&\leq & c\big(| u|_{{\mathbb{H}^{1+\frac\alpha2, 2}}}^2 + | u|^2_{{\mathbb{H}^{1, 2}}}\big).
\end{eqnarray}
\item $(iv)$  Let $ 1\leq \alpha \leq 2$. For all $ (u, w)\in \mathbb{H}^{1, 2}(O)\times \mathbb{H}^{\frac\alpha2, 2}(O)$,
\begin{eqnarray}\label{3linear-H1-H-1}
|\langle B(w), u\rangle_{\mathbb{L}^{2}}|&\leq & c| u|_{\mathbb{H}^{1, 2}}|w|^{\frac2\alpha}_{\mathbb{H}^{\frac{\alpha}{2}, 2}}|w|^{2\frac{1-\alpha}\alpha}_{{\mathbb{L}^{2}}}.
\end{eqnarray}
\end{itemize}
\end{lem}

\begin{proof}
\begin{itemize}
 \item $(i)$ Using Lemma \ref{Lem-classic} and
\cite[Theorem 4.6.1.1, Proposition Tr 6, 2.3.5]{R&S-96} for bounded domain (see also Appendix
\ref{Sobolev pointwise multiplication-Bounded-Domain}). For the Torus, we use again Lemma \ref{Lem-classic} and
\cite[Theorem IV.2.2]{Sickel-Pontwise-Torus} and \cite[Theorem 3.5.4 \& Remark 4 p 164]{Sickel-periodic spaces-85}, we infer the existence of
 a constant $ c>0$, such that
\begin{eqnarray}
 |B(u, v)|_{\mathbb{H}^{\eta-\frac\alpha2, 2}} &\leq& c |u_jv|_{\mathbb{H}^{1+\eta-\frac{\alpha}{2}, 2}}
\leq c |u|_{\mathbb{H}^{\frac{d+2+2\eta-\alpha}{4}, 2}}
| v|_{\mathbb{H}^{\frac{d+2+2\eta-\alpha}{4}, 2}}.
\end{eqnarray}

 \item $(i')$ Let $ 0\leq \eta <\frac{d}{2}$ and $ \alpha \in (2+2\eta-d, 2$, Using Lemmas \ref{Lem-classic}, \cite[Theorem 1.4.4.4 ]{R&S-96} and the condition $ \frac dq-d+\frac\alpha2>0$, we infer that
\begin{eqnarray}\label{Eq-B-H-alpha-2-est-d}
|B(u, v)|_{\mathbb{H}^{-\frac\alpha2, q}}&\leq& c |u_iv_j|_{\mathbb{H}^{1-\frac\alpha2, q}} \leq c |u_i|_{\mathbb{H}^{\frac\alpha2, 2}}
 | v_j|_{\mathbb{H}^{1-\frac\alpha2, 2}}.
\end{eqnarray}
\item $(ii)$ Thanks to the condition $ \frac{d-2}{3}\leq \alpha<2$, Estimation in \eqref{est-nonlinear-general} follows from
\cite[Theorem 4.6.1.1 (iii)p 190 \& Proposition Tr 6, 2.3.5 (vii)]{R&S-96}, \cite[p.177]{Adams-Hedberg-94} and Appendix
\ref{Sobolev pointwise multiplication-Bounded-Domain} in the case of
$ O\subset \mathbb{R}^d$ is bounded and \cite[Theorem iv.2.2 (ii)]{Sickel-Pontwise-Torus},
\cite[Theorem 3.5.4 (v) p. 108 \& Remark 4 p. 104]{Schmeisser-Tribel-87-book} for the case $ O= \mathbb{T}^d$.
\item $(iii)$ Using Lemma \ref{Lem-classic},\del{ \cite[Theorem 4.6.1.1]{R&S-96} (see also Appendix \ref{Sobolev pointwise multiplication-Bounded-Domain} )}
the fact that $ \mathbb{H}^{2-\frac\alpha2, 2}$ is a multiplicative algebra
\begin{eqnarray}\label{est-tree-linear-H1-first}
|\langle B(u), u\rangle_{\mathbb{H}^{1, 2}}|&\leq &| u|_{{\mathbb{H}^{1+\frac\alpha2, 2}}}^2| B(u)|_{{\mathbb{H}^{1-\frac\alpha2, 2}}}
\leq  c| u|_{{\mathbb{H}^{1+\frac\alpha2, 2}}}\sum_{j=1}^d| u^ju|_{{\mathbb{H}^{2-\frac\alpha2, 2}}}
\leq c| u|_{{\mathbb{H}^{1+\frac\alpha2, 2}}}|u|_{{\mathbb{H}^{2-\frac\alpha2, 2}}}^2,
\end{eqnarray}
where $ \max\{\frac d2, 2-\frac\alpha2\}<s<\alpha$. Recall that $ s$ exists thanks to the conditions >>>>>>>>>>>>>>>>>
\item $(iv)$  Let $ 1\leq \alpha \leq 2$ and  $ (u, w) \in (\mathcal{D}(O))^2$. Recall that  $ (u, w) \in (\mathcal{D}(O))^2$ is dense in
$\mathbb{H}^{1, 2}(O)\times \mathbb{H}^{\frac\alpha2, 2}(O)$. Using Lemma \ref{Lem-classic},
 H\"older inequality and  Gagliardo-Nirenberg inequality, we infer
\begin{eqnarray}\label{3linear-H1-H-1-est-1}
|\langle B(w), u\rangle_{\mathbb{L}^{2}}|&\leq &
| u|_{{\mathbb{H}^{1, 2}}}|B(w)|_{{\mathbb{H}^{-1, 2}}}\leq
c| u|_{{\mathbb{H}^{1, 2}}}|w_jw|_{{\mathbb{L}^{2}}}\leq
c| u|_{{\mathbb{H}^{1, 2}}}|w|^2_{{\mathbb{L}^{4}}}\leq
c| u|_{\mathbb{H}^{1, 2}}|w|^{\frac2\alpha}_{\mathbb{H}^{\frac{\alpha}{2}, 2}}|w|^{2\frac{\alpha-1}\alpha}_{{\mathbb{L}^{2}}}.\nonumber\\
\end{eqnarray}
\end{itemize}
\end{proof}

\del{Scours:
Using Lemma \ref{Lem-classic} and
\cite[Theorem 4.6.1.1, Proposition Tr 6, 2.3.5]{R&S-96} for bounded domain (see also Appendix
\ref{Sobolev pointwise multiplication-Bounded-Domain}). For the Torus, we use again Lemma \ref{Lem-classic} and
\cite[Theorem IV.2.2]{Sickel-Pontwise-Torus} and \cite[Theorem 3.5.4 \& Remark 4 p 164]{Sickel-periodic spaces-85}}

\subsection{Applications: the Gelfand triple corresponding to the FSNSE}
The notion of Gelfand triple is  one of the main tools in the study of nonlinear partial differential equations,
in particular in the study of deterministic and stochastic
Navier-Stokes  equation and in the application of the  monotonicity method see e.g \cite{Temam-NS-Main-79}.
To characterize the specificity of the  Gelfand triple corresponding to
the fractional Navier-Stokes  equation, let us first recall the following
classical Gelfand triple
 \begin{equation}
 V_c= D((A^\frac12))\hookrightarrow  H:=\mathbb{L}^2(O) \tilde{=}  H^*\hookrightarrow  V_c^*:= \mathbb{H}^{-1, 2}(O).
 \end{equation}
The operators  $ A: V_c \rightarrow  V_c^*$ and  $ B: V_c\times  V_c \rightarrow  V_c^*$ are bounded.
\del{ then both the operator $ A$ and the restriction of
$ B $ on $ V_c$ (denoted also by $ B$ ) are well defined and bounded from $V_c$ to $ V_c^*$.}
For the fractional case, we have for  $ \alpha >0$,
$A_\alpha: V=D(A_\alpha^{\frac12}) \rightarrow V^*=H^{-\frac\alpha2, 2}(O)$  is bounded.  However for $ \alpha<2$,
the space $ V $ is larger than the domain of definition of $ B$. In particular,
$ \mathbb{H}^{1}(O) \subsetneq V$. Consequently, we need to extend uniquely the operator $ B $ to a
bounded operator from  $ V$ to $ V^*$. This  is the content of the following proposition

\begin{theorem}\label{theo-gelfand-gene}
Let $ d\in \{2, 3\}$, $ 0\leq \eta\frac{d-4}{2}$ and\del{ such that $ \frac{d+2-6\eta}{3}< 2$. We define}
\del{\begin{equation}
\alpha_G(d, \eta) :=
\Bigg\{
\begin{array}{lr}
\frac{d+2-6\eta}{3},\;\;\; \  if\; \;  d+2-6\eta \geq 0, \\
0, \;\;\; otherwise.
\end{array}
\end{equation}}
\begin{equation}\label{alpha-critical}
\alpha_G(d, \eta) := 1+ \frac{d-1+2\eta}{3}.
\end{equation}
Assume $\alpha \in [\alpha_G(d, \eta), 2]$.
Then the operator $ B $ is extended uniquely to a bounded operator
(we keep the same notation)
\begin{equation}
 B: V_\eta:= D(A^{\frac\eta2+\frac\alpha4})= \mathbb{H}^{\eta+\frac\alpha2, 2}(O)\rightarrow V_\eta^* = D(A^{\frac\eta2-\frac\alpha4})=
\mathbb{H}^{\eta-\frac\alpha2, 2}(O).
\end{equation}
\end{theorem}

\begin{proof}
\del{Using Lemma \ref{Lemma-bound-op-RGradient-Via-gamma} with $ \delta := \frac\eta2 +\frac{\alpha}{4}$,
$ \rho=\nu = \frac{d+2--2\eta-\alpha}{8}$ and $ p= 2$,  we conclude
the existence of a constant $ c>0$, such that
\begin{equation}\label{Eq-B-u-v-V-dual}
|B(u, v)|_{V_\eta^*} \leq  c|u |_{D(A^{\frac{d+2-2\eta- \alpha}4})} |v|_{D(A^{\frac{d+2 -2\eta-\alpha}4})}.
\end{equation}
Thanks to the condition $ \alpha \geq \alpha_0(d)$, we get
\begin{equation}\label{Eq-B-u-v-V-dual}
|B(u, v)|_{V_\eta^*} \leq  c |u|_{V_\eta}|v|_{V_\eta}.
\end{equation}}

\noindent Using Estimation \eqref{main-est-b-B-eta-1} and the Sobolev embedding ( remark that ) we conclude
the existence of a constant $ c>0$, such that
\begin{equation}\label{Eq-B-u-v-V-dual}
|B(u, v)|_{V_\eta^*}\leq  c|u |_{D(A^{\frac{d+2-2\eta- \alpha}4})} |v|_{D(A^{\frac{d+2 -2\eta-\alpha}4})}.
\end{equation}
Thanks to the condition $ \alpha \geq \alpha_0(d)$, we get
\begin{equation}\label{Eq-B-u-v-V-dual}
|B(u, v)|_{V_\eta^*} \leq  c |u|_{V_\eta}|v|_{V_\eta}.
\end{equation}
\end{proof}
In particular, we have
\begin{coro}
\begin{itemize}
 \item For $ d\in \{2, 3\}$, $ \alpha_G(d) := \alpha_G(d, 0) =\frac{d+2}{3}\leq \alpha <2 $,
the operator $ B $ is extended uniquely to a bounded operator
\begin{equation}
 B:  V_\eta:= D(A^{\frac\alpha4})= \mathbb{H}^{\frac\alpha2, 2}(O)\rightarrow V_\eta^* = D(A^{-\frac\alpha4})=
\mathbb{H}^{-\frac\alpha2, 2}(O).
\end{equation}
\item For $ d\in \{2, 3, \cdots, 9\}$, $ \alpha_G(d, 1) := \min\{0, \frac{d-4}{3}\}\leq \alpha <2 $, the operator $ B $
is extended uniquely to a bounded operator
\begin{equation}
 B:  V_\eta:= D(A^{\frac12+\frac\alpha4})= \mathbb{H}^{1+\frac\alpha2, 2}(O)\rightarrow V_\eta^* = D(A^{-\frac12-\frac\alpha4})=
\mathbb{H}^{-1-\frac\alpha2, 2}(O).
\end{equation}
\end{itemize}
\end{coro}}
%%%%%%%%%%%%%%%%%%%%%%%%%%%%%%%%%%%%%%%%%%%%%%%%%%%%%%%%%%%%%%%%%%END%%%%%%%%%%%%%%DEL

\noindent Further estimations for the bilinear operator $ B$ and the trilinear form $ b$ are summarized in the following lemma
\begin{lem}\label{lem-further-estimation}
\begin{itemize}

\item $(i)$ Assume $0\leq  \eta <\frac d2$, and $ \alpha \in (2\eta+2-d, 2]$. Then for all  
$ (u, v)\in (\mathbb{H}^{\frac{d+2+2\eta-\alpha}{4}, 2}(O))^2$, we have 
 \begin{eqnarray}\label{B-u-v-h-alpha-2-d}
|B(u, v)|_{\mathbb{H}^{\eta-\frac\alpha2, 2}} &\leq& c |u|_{\mathbb{H}^{\frac{d+2+2\eta-\alpha}{4}, 2}}
| v|_{\mathbb{H}^{\frac{d+2+2\eta-\alpha}{4}, 2}}.
\end{eqnarray}
\item $(ii)$ For  $ \eta \geq \frac d2$ and   $u, v \in \mathbb{H}^{\eta+1-\frac\alpha2, 2}(O)$, we have
\begin{eqnarray}\label{est-B-estimatoion-q=2-eta-to-use}
 |B(u, v)|_{\mathbb{H}^{\eta-\frac\alpha2, 2}} &\leq& c |u|_{\mathbb{H}^{\eta+1-\frac{\alpha}{2}, 2}} 
 |v|_{\mathbb{H}^{\eta+1-\frac{\alpha}{2}, 2}}.
\end{eqnarray}
\item $(iii)$ Assume $ d \in \{2, 3, 4\}$ and  $ \frac d2\leq \alpha\leq 2$. For all 
$ (u, w)\in \mathbb{H}^{1, 2}(O)\times \mathbb{H}^{\frac\alpha2, 2}(O)$,
\begin{eqnarray}\label{3linear-H1-H-1}
|\langle B(w), u\rangle_{\mathbb{L}^{2}}|&\leq & c| u|_{\mathbb{H}^{1, 2}}
|w|^{\frac d{\alpha}}_{\mathbb{H}^{\frac{\alpha}{2}, 2}}
|w|^{\frac{2\alpha-d}\alpha}_{{\mathbb{L}^{2}}}.
\end{eqnarray}
\item $(iv)$   Assume $ d \in \{2,\cdots, 5\}$ and  $ \frac d3\leq \alpha<d$. 
For all $ (u, w)\in \mathbb{H}^{1+\frac\alpha2, 2}(O)\times \mathbb{H}^{\frac\alpha2, 2}(O)$,
\begin{eqnarray}\label{3linear-H1+alpha2-H-1}
|\langle B(w), u\rangle_{\mathbb{L}^{2}}|&\leq & c| u|_{\mathbb{H}^{1+\frac\alpha2, 2}}|w|^{\frac{d-\alpha}{\alpha}}_{\mathbb{H}^{\frac{\alpha}{2}, 2}}
|w|^{\frac{3\alpha-d}{\alpha}}_{{\mathbb{L}^{2}}}.
\del{|\langle B(w), u\rangle_{\mathbb{L}^{2}}|&\leq & c| u|_{\mathbb{H}^{1+\frac\alpha2, 2}}|w|^{\frac2\alpha-1}_{\mathbb{H}^{\frac{\alpha}{2}, 2}}
|w|^{\frac{3\alpha-2}\alpha}_{{\mathbb{L}^{2}}}.}
\end{eqnarray}
\item $(v)$  For all $ (u, v) \in \mathbb{H}^{\frac d{4q}, q}(O)$ with $ q>2$,
\begin{equation}\label{eq-est-H-1-d-q}
 |B(u, v)|_{\mathbb{H}^{-1, q}}\leq c |u|_{\mathbb{H}^{\frac d{2q}, q}}|v|_{\mathbb{H}^{\frac d{2q}, q}}. 
\end{equation}

\item $(vi)$  The following estiamte is a classical result. For all $ u\in \mathbb{H}^{1, 2}(O)$,
\begin{equation}\label{classical-B-H-1}
 |B(u)|_{\mathbb{H}^{-1, 2}}\leq c |u|_{\mathbb{H}^{1, 2}}|u|_{\mathbb{L}^{2}}. 
\end{equation}
\end{itemize}
\end{lem}

\begin{proof}

\begin{itemize}
\item $ (i)$-$(ii)$ Estimates \eqref{B-u-v-h-alpha-2-d} and \eqref{est-B-estimatoion-q=2-eta-to-use} are copies  of the estimates \eqref{eq-fact-b-u-v-last-est} respectively \eqref{est-B-estimatoion-q=2-eta} proved above without restriction conditions. We only 
emphasize them here.\del{ as they will be needed later, in particular, for $ \eta=0$ and $ \eta=1$.}
\item $(iii)$  Let  $ d \in \{2, 3, 4\}$,  $ \frac d2\leq \alpha\leq 2$ and  $ (u, w) \in (\mathcal{D}(O))^2$. Recall that  $ (u, w) \in (\mathcal{D}(O))^2$ is dense in
$\mathbb{H}^{1, 2}(O)\times \mathbb{H}^{\frac\alpha2, 2}(O)$. Using the group property\del{ of 
the powers of Stokes operator} $ (A^\beta)_{\beta\in\mathbb{R}}$,
H\"older inequality, Lemma \ref{Lem-classic},
 again H\"older inequality and  Gagliardo-Nirenberg inequality in the case $ \alpha>\frac d2$ and Sobolev embedding 
 in the case $ \alpha=\frac d2$, see e.g. \cite[Theorem 7.63 \& 7.66]{Adams-Hedberg-94} for bounded domain and 
 \cite[Theorem 3.5.4. \& Theorem 3.5.5.]{Schmeisser-Tribel-87-book} for the torus, we infer that
\begin{eqnarray}\label{3linear-H1-H-1-proof}
|\langle B(w), u\rangle_{\mathbb{L}^{2}}|&\leq &
| u|_{{\mathbb{H}^{1, 2}}}|B(w)|_{{\mathbb{H}^{-1, 2}}}\leq
c| u|_{{\mathbb{H}^{1, 2}}}|w_jw|_{{\mathbb{L}^{2}}}
\leq c| u|_{{\mathbb{H}^{1, 2}}}|w|^2_{{\mathbb{L}^{4}}} \nonumber\\
&\leq & c| u|_{\mathbb{H}^{1, 2}}
|w|^{\frac d{\alpha}}_{\mathbb{H}^{\frac{\alpha}{2}, 2}}
|w|^{\frac{2\alpha-d}\alpha}_{{\mathbb{L}^{2}}}.
\end{eqnarray}
\item $(iv)$  Let  $ d \in \{2, \cdots, 5\}$, $ \frac d3\leq \alpha<d$ and  $ (u, w) \in (\mathcal{D}(O))^2$. 
Using the group property of $ (A^\beta)_{\beta\in\mathbb{R}}$, H\"older inequality, Lemma \ref{Lem-classic},
 \cite[Theorem 4.6.1]{R&S-96} for the bounded domain and \cite[Theorem iv.2. ii]{Sickel-Pontwise-Torus} and 
\cite[Remark 4 p 164]{Schmeisser-Tribel-87-book} for $ O=\mathbb{T}^d$ and by interpolation, we infer that
\begin{eqnarray}\label{3linear-H1+alpha-H-1-proof}
|\langle B(w), u\rangle_{\mathbb{L}^{2}}|&\leq &
| u|_{{\mathbb{H}^{1+\frac\alpha2, 2}}}|B(w)|_{{\mathbb{H}^{-1-\frac\alpha2, 2}}}\leq
c| u|_{{\mathbb{H}^{1+\frac\alpha2, 2}}}|w_jw|_{{\mathbb{H}^{-\frac\alpha2, 2}}}
\leq c| u|_{{\mathbb{H}^{1+\frac\alpha2, 2}}}|w|^2_{{\mathbb{H}^{\frac{d-\alpha}4, 2}}} \nonumber\\
&\leq& 
c| u|_{\mathbb{H}^{1+\frac\alpha2, 2}}|w|^{\frac{d-\alpha}{\alpha}}_{\mathbb{H}^{\frac{\alpha}{2}, 2}}
|w|^{\frac{3\alpha-d}{\alpha}}_{{\mathbb{L}^{2}}}.
\end{eqnarray}
\item $(v)$ The proof of \eqref{eq-est-H-1-d-q}, follows from the first two esimates in 
\eqref{est-1-semi-group-div-delta2not2-B-+2}, with $ \eta=\frac dq$. 

\item $(vi)$ We use Lemma \ref{Lemma-bound-op-RGradient-Via-gamma}, H\"older inequality and  than Gaglairdo-Nirenberg inequality, we get 
 \begin{eqnarray}
 |B(u)|_{\mathbb{H}^{-1, 2}}&\leq& c |u_ju|_{\mathbb{L}^{2}} \leq c |u|^2_{\mathbb{L}^{4}}\leq 
|u|_{\mathbb{H}^{1, 2}}|u|_{\mathbb{L}^{2}}.
\end{eqnarray}
\end{itemize}

\end{proof}}

%%%%%%%%%%%%%%%%%%%%%%%%%%%%%%%%%%%%%
%%%%%%%%%%%%%%%%%%%%%%%%%%%%%%%%%%%%%%%%%%%%%%%%%%%%%%%%%%%%%%%%%%%%%%%%%
%%%%%%%%%%%%%%%%%%%%%%%%%%%%%%%%%%%%%%%%%%%%%%%%%%%%%%%%%%%%%%%%%%%%%%%%%%%%%%%%%%%%%%%%%%%%%%%%%%%%%%%%%%%%%%%%%%%%%%%%%%%%%5

\del{\section{2D-Fractional stochastic Navier-Stokes equation on the Torus with smooth data}\label{sec-Torus}}

\del{\section{Local mild solutions for the multi-dimensional FSNSEs.}\label{sec-1-approx-local-solution}

In this Section, we prove\del{ the first part of Theorem \ref{Main-theorem-mild-solution-d}} (\ref{Main-theorem-mild-solution-d}.1). The scheme to prove
 the existence of maximal local mild solutions for nonlinear deterministic or stochastic partial differential equations is now standard, see e.g. \cite{DaPrato-Zbc-92,  Neerven-Evolution-Eq-08}. The key idea is to apply a fixed point theorem or equivalently  Picard iterations. In \cite{Kunze-Neereven-Cont-parm-reaction-diffusion-2012},  the authors  constructed  an approximative scheme\del{ to prove the existence of a local solution} for an abstract semilinear stochastic evolution equation on a Banach space. The scheme is based on the assumption that the linear, the nonlinear and the diffusion terms of this equation\del{, $ A, B, G$,} can be approximated in appropriate way.  In particular, they applied this scheme on some\del{ to prove the global existence for the} reaction-diffusion equations.\del{ and other related models.} In the deterministic case, one of the elegant schemes for NSE is based on the approximation of the integral representation of the equation, more precisely of the mild solution, see e.g. \cite{ Farwig-Sohr-L-p-theory-2005, Lemari-book-NS-probelems-02}. It is resumed in finding a space $\mathcal{E}_T$, called the admissible path space on which  the 
bilinear operator $ B_*$ defined by
 \begin{eqnarray}\label{B-*}
B_*\!\!\!\!\!\!\!\!\!\!&:&\!\!\!\!\!\!\!  \mathcal{E}_T \times  \mathcal{E}_T \rightarrow  \mathcal{E}_T \nonumber\\
&(u, v)& \mapsto \int_0^{\cdot}e^{-(\cdot-s)A_\alpha}B(u(s), v(s))ds = \int_0^{\cdot}e^{-(\cdot-s)A_\alpha}\Pi((u(s)\nabla) v(s))ds
 \end{eqnarray}
is locally bounded. The associated space of admissible initial conditions $E_T$ is then defined by
$ u_0 \in \mathcal{D}'(O)$ such that $(e^{-tA_\alpha}u_0, t\in[0, T])\in \mathcal{E}_T$.
The space $E_T$ is called adapted value space.\del{To the best knowledge of the author, such scheme does not exist for the stochastic case.} Our aim here is to extend this scheme for the stochastic case, in particular for the stochastic Navier-Stokes equation. We shall use the same notations, but we shall delete 
the terminologies "path space"  and "adapted", as these latter are used, in the stochastic framework, for other probabilistic notions. 
We construct the admissible space $\mathcal{E}_T$
 such as the operators $ B_*$ and $ G_*$ are locally  bounded, where $ B_*$ is given by \eqref{B-*} and $ G_*$ is defined by 
 \begin{eqnarray}\label{G-*}
G_*\!\!\!\!\!\!&:&\!\!\!\mathcal{E}_T \rightarrow  \mathcal{E}_T \nonumber\\
&u& \!\!\!\!\mapsto \int_0^{\cdot}e^{-(\cdot-s)A_\alpha}G(u(s)W(ds).
 \end{eqnarray}
\noindent For the fixed parameters $ 2< p <\infty$, $ 2< q<\infty$, $ \delta\geq 0$ and $ T>0$, we introduce the 
admissible space
\begin{eqnarray}\label{E-cal-T}
\mathcal{E}_T =\{\!\!\!&{}&\!\!\! (u(t), t\in [0, T]), \;\; D(A_q^{\frac {\delta}2})-\text{continuous} \; \mathcal{F}_t-\text{adapted stochastic
processes},  s.t.\nonumber\\
  &{}&|u |_{\mathcal{E}_{T}}:= \left(\mathbb{E}\sup_{[0, T]}|u(t)|^p_{D(A_q^{\frac{\delta}2})}\right)^\frac 1p<\infty\}
 \end{eqnarray}
and the admissible initial value space 
\begin{equation}\label{E-T}
 E_T:=\{\text{random variables}\; u_0 \text{ with values in} \; \mathcal{D}'(O), \; s.t.\;\; \del{(e^{-tA_\alpha}u_0)_{t\in[0, T]}\in \mathcal{E}_T e^{-\cdot A_\alpha}u_0 } (e^{-tA_\alpha}u_0, t\in[0, T])\in \mathcal{E}_T\}.
\end{equation}
\noindent Now, it is easy, using Lemma \ref{lem-est-z-t}, Propositions \ref{Prop-Main-I}, with $ \beta=\eta=\delta$ and Assumption $(\mathcal{B})$
to prove the existence of a maximal local mild solution. The time and space regularity of the local mild solution are consequences of the construction of the solution as an element of the space \eqref{E-cal-T}. For the convenience of the reader, we give details of this proof in Appendix \ref{appendix-local-solution}, see also \cite{Debbi-scalar-active}.
\begin{remark}\label{Rem-initial-data-mild-solu}
It is obvious that, in the frame work presented above, Assumption $(\mathcal{B})$ is not optimal and may be replaced by the condition $ u_0\in E_T$, where $ E_T$ is given by \eqref{E-T}.
\end{remark}

\section{2D-FSNSEs on the Torus with smooth data.}\label{sec-Torus}
\del{\subsection{Introduction}}  In this section, we assume\del{ study the fractional stochastic Navier-Stokes equations on the 
2D-Torus,} $ O = \mathbb{T}^2$ and prove Theorem \ref{Main-theorem-strog-Torus}. As mentioned above, we have $ (A^S)^{\frac\alpha2}= (-\Delta)^\frac\alpha2$, $0<\alpha\leq 2$. Furthermore, $ (A^S)^{\frac\alpha2}$ can  also be
defined by \eqref{construction-of-fract-bounded} and  \eqref{basis} with the explicit orthonormal
basis of eigenvectors $ (e_k(\cdot):= \frac{k^\perp}{|k|}e^{ik\cdot})_{k\in \mathbb{Z}_0^d}$ and 
$(\langle v, e_k\rangle :=\hat{v}_{k})_{k\in \mathbb{Z}_0^d}$ being the sequence of Fourier coefficients, 
see e.g. \cite[p316]{Temam-Inf-dim-88}.\del{This last equality could also be deduced easily by using an orthonormal
basis of eigenvectors of $ A$, e.g. $ (e_k(\cdot):= \frac{k^\perp}{|k|}e^{ik\cdot})_{k\in \mathbb{Z}_0^d}$
and formulas \eqref{construction-of-fract-bounded} and  \eqref{basis}, see for similar remark \cite[p316]{Temam-Inf-dim-88}.
In this case  $(\langle v, e_k\rangle :=\hat{v}_{k})_{k\in \mathbb{Z}^d}$
is the sequence of Fourier coefficients). Therefore, the fractional version of the stochastic Navier-Stokes equation and
the stochastic version of the fractional Navier-Stokes equation are equivalent.}

%\noindent {\bf The case $ G$ satisfies Assumption $ (\mathcal{C})$. Here, we follow the proof in
%\cite{Millet-Chueshov-Hydranamycs-2NS-10  ,Roeckner-Zhang-tamedNS-12, Roeckner-Zhang-tamedNS-09, Sundar-Sri-large-deviation-NS-06}.

For $ \alpha \in [1, 2]$, we fix the densely,
continuous embedding Gelfand triple \eqref{Gelfand-triple-Torus} and 
we use the following Faedo-Galerkin approximation.
Let us fix $ n\geq 1$ and introduce the projection $ P_n$, $ n\geq 1$ on the finite space 
$ H_n \subset \mathbb{L}^2(\mathbb{T}^2)$ generated by $\{e_k, k\in \mathbb{Z}_0^d,\;s.t.\; |k|\leq n\}$.
\del{We define,
\begin{equation}\label{eq-def-B-n-G-n}
 B_n(u):= \mathcal{X}_n(|u|_{\mathbb{H}^1})B(u), \;\;\; \text{and}\;\;\;\;  G_n(u):= \mathcal{X}_n(|u|_{\mathbb{H}^1})G(u).
\end{equation}}
The Faedo-Galerkin approximation scheme is defined for the process $(u_n(t), t\in [0, T])\in H_n $ by
\begin{equation}\label{FSBE-Galerkin-approxi}
\Bigg\{
\begin{array}{lr}
du_n(t)= (-A_\alpha u_n(t) + P_nB(u_n(t))dt + P_nG(u_n(t))\,dW_n(t), \; 0< t\leq T,\\
u(0) = P_nu_0 = u_{0n},
\end{array}
\end{equation}
where $ W_n(t):= P_nW(t)= \sum_{|j|\leq n} Q^\frac12e_j\beta_j(t)$ (for example 
$ (\sum_{|j|\leq n} q^\frac12_je_j\beta_j(t))$). Let us mention here that using a similar proof as in 
 \cite{Millet-Chueshov-Hydranamycs-2NS-10, Flandoli-3DNS-Dapratodebussche-06}, we can prove that 
 $ W_n(t)$ converges to $ W(t)$  in  the space
$ L^2(\Omega, \mathbb{H}^{1, 2}(\mathbb{T}^2))$, provided  $ A^\frac12Q^\frac12$ is a  Hilbert-Schmidt in
$ \mathbb{L}^2(\mathbb{T}^2)$.
Since the finite dimensionnal space stochastic differential equation  \eqref{FSBE-Galerkin-approxi} has locally  Lipschitz and  linear growth coefficients, then  Equation \eqref{FSBE-Galerkin-approxi} admits a unique strong solution
$(u_n(t), t\in [0, T])\in L^2(\Omega; C([0, T]; H_n) )$, see e.g. \cite{Millet-Chueshov-Hydranamycs-2NS-10, Krylov-simple-proof-90, Roeckner-Zhang-tamedNS-12} and the reference therien. Furthermore, we have the following result,
%%%%%%%%%%%%%%
\begin{lem}\label{lem-unif-bound-theta-n-H-1}
Let $ \alpha\in (0, 2)$ and $ u_0\in L^{p}(\Omega, \mathbb{H}^{1, 2}(\mathbb{T}^d))$  with $ p\geq 4$. Then
the solutions $ (u_n(t), t\in [0, T])$ of equations \eqref{FSBE-Galerkin-approxi},  $ n\in \mathbb{N}_0$,
satisfy the following estimates 
%\begin{itemize}
% \item\del{ For all $ p'\leq p$,}
 \begin{eqnarray}\label{Eq-Ito-n-weak-estimation-1-Torus}
\sup_{n}\mathbb{E}\Big(\sup_{[0, T]}|u_n(t)|^{p}_{\mathbb{H}^{1, 2}}&+&
\int_0^T|u_n(t)|^{p-2}_{\mathbb{H}^{1, 2}}\Big(|u_n(t)|^2_{\mathbb{H}^{1+\frac\alpha2, 2}} +
|u_n(t)|_{\mathbb{H}^{\beta, q_1}}^2 \Big)dt \nonumber\\
&+& \int_0^T|u_n(t)|^4_{\mathbb{H}^{1, 2}}dt+ \int_0^T|u_n(t)|^{\frac{\alpha}{\eta}}_{\mathbb{H}^{1+\eta, 2}}dt\Big)<\infty,
\end{eqnarray}
where $ \beta \leq 1+\frac\alpha2-\frac d2+\frac{d}{q_1}$, $ 2\leq q_1<\infty$ and $ \frac\alpha p<\eta\leq \frac\alpha2$.
%\item 
\begin{eqnarray}\label{Eq-B-n-weak-estimation-1-Torus}
\sup_{n}\left(\mathbb{E}\int_0^T (|P_nB(u_n(t))|^{2}_{\mathbb{H}^{1-\frac\alpha2, 2}} +|A^\frac\alpha2 u_n(t)|^{2}_{\mathbb{H}^{1-\frac\alpha2, 2}} )dt\right)<\infty.
\end{eqnarray}
%\end{itemize}
\end{lem}
%%%%%%%%%%%%%%%

\begin{proof}
First, we prove the following estimate
\begin{eqnarray}\label{Eq-Ito-n-weak-estimation-torus}
\sup_{n}\mathbb{E}\left(\sup_{[0, T]}|u_n(t)|^{2}_{\mathbb{H}^{1, 2}}
+\int_0^T|u_n(t)|^2_{\mathbb{H}^{1+\frac\alpha2, 2}} dt \right)&\leq&C<\infty.
\end{eqnarray}
\del{Thanks to the definitions of $B_n$ and $ G_n$ in \eqref{eq-def-B-n-G-n}
and Assumption $ (\mathcal{A})$, we  can see that $ u_n$ satisfies the assumptions of
 \cite[Lemma 5.1]{Krylov-L-p-2010}, then} By application of Ito's formula,\del{see also  \cite[Theorem 2.1]{Krylov-L-p-2010},}  we have
\begin{eqnarray}\label{Eq-first-norm-l-2n-torus}
|u_n(t)|^{2}_{\mathbb{H}^{1, 2}}&=&|u_{n0}|^{2}_{\mathbb{H}^{1, 2}}- 2 \int_0^t \langle
u_n(s), A^\frac\alpha2u_n(s)-P_nB(u_n(s))\rangle_{\mathbb{H}^{1, 2}} ds \nonumber\\
&+& 2 \int_0^t \langle u_n(s),
P_nG(u_n(s))dW_n(s)\rangle_{\mathbb{H}^{1, 2}} +\int_0^t\sum_{|j|\leq n}|P_nG(u_n(s))Q^\frac12 e_j|_{\mathbb{H}^{1, 2}}^2 ds.
\end{eqnarray}
\del{Using the commutator estimate, see e.g. \cite{Cordoba2-Pointwise-commautator, WUJ-06}, we have

\vspace{-0.35cm}

\begin{eqnarray}\label{Eq-commutator-estimate-torus}
\langle u_n(s),A_\alpha u_n(s)\rangle_{\mathbb{H}^{1, 2}} = \langle u_n(s),A_\alpha u_n(s)\rangle_{{H}_d^1} =\int_{\mathbb{T}^2}
u_n(s)(-\Delta)^{1+\frac\alpha2}u_n(s)dx\nonumber\\ &\geq& \int_{\mathbb{T}^2}
|(-\Delta)^{\frac12+\frac\alpha4}u_n(s)|^2dx\geq c |u_n(s)|^2_{\mathbb{H}^{1+\frac\alpha2, 2}}.\nonumber\\
\end{eqnarray}}
Using the semigroup property of $ (A^\beta)_{\beta\geq0}$ and the definition of the Sobolev spaces in Section \ref{sec-formulation}, we get

\vspace{-0.35cm}

\begin{eqnarray}\label{Eq-commutator-estimate-torus}
\langle u_n(s), A_\alpha u_n(s)\rangle_{\mathbb{H}^{1, 2}} =\del{ \langle u_n(s), A_\alpha u_n(s)\rangle_{{H}_d^1} = \int_{\mathbb{T}^2}
u_n(s)(-\Delta)^{1+\frac\alpha2}u_n(s)dx=} |u_n(s)|^2_{\mathbb{H}^{1+\frac\alpha2, 2}}.
\end{eqnarray}
The term $ \langle u_n(s),  P_nB(u_n(s))\rangle_{\mathbb{H}^{1, 2}}$ 
\del{($ \langle u_n(s),  P_nB(u_n(s))\rangle_{H_n^{1, 2}}$)} in the RHS of \eqref{Eq-first-norm-l-2n-torus} vanishes thanks to
\eqref{vanishes-bilinear-tous-H1}.\del{ pages $ \langle u_n(s),  P_nB_n(u_n(s))\rangle_{\mathbb{H}^1}=0$
$ \langle u,  B(u, u)\rangle_{\mathbb{H}^1}=0$.} To estimate the stochastic term in \eqref{Eq-first-norm-l-2n-torus}, 
we  use the stochastic isometry, Minkowski and H\"older inequalities, the contraction property of $ P_n$ and
Assumption $ (\mathcal{C})$ (\eqref{Eq-Cond-Linear-Q-G}, with $ q=2$ and $ \delta =1$),   we get
\begin{eqnarray}\label{Eq-second-norm-l-2n-start-Torus}
\mathbb{E}\sup_{[0, T]}|\int_0^t \langle u_n(s)\!\!\!\!\!\!&,&\!\!\!\!
P_nG(u_n(s))dW_n(s)\rangle_{\mathbb{H}^{1, 2}} |\nonumber \\
&\leq & c\mathbb{E}\left(\int_0^T\sum_{k\leq n}
\big(\int_{\mathbb{T}^d}|A^\frac12u_n(s)|| A^\frac12P_nG(u_n(s))Q^\frac12e_k|dx\big)^2 ds\right)^\frac12\nonumber\\
&\leq & c\mathbb{E}\left(\int_0^T|u_n(s)|^2_{\mathbb{H}^{1, 2}}|| G(u_n(s))Q^\frac12||_{HS(\mathbb{H}^{1, 2})}^2ds\right)^\frac12\nonumber\\
&\leq & c\mathbb{E}\left(\int_0^T
\big(|u_n(s)|^2_{\mathbb{H}^{1, 2}}+ |u_n(s)|^4_{\mathbb{H}^{1, 2}})ds\right)^\frac12.
\end{eqnarray}

\vspace{-0.35cm}

\noindent Than, we use Young and H\"older inequalities ($\epsilon <1$), we infer that
\begin{eqnarray}\label{Eq-third-norm-l-2n-torus}
\mathbb{E}\sup_{[0, T]}|\int_0^t \langle u_n(s)\!\!\!\!\!\!&,&\!\!\!\!
P_nG(u_n(s))dW_n(s)\rangle_{\mathbb{H}^{1, 2}} |\nonumber \\
&\leq & c\mathbb{E}\left(\sup_{[0,
T]}|u_n(s)|_{\mathbb{H}^{1, 2}}\left[\int_0^T (1+
|u_n(s)|^{2}_{\mathbb{H}^{1, 2}})
ds\right]^\frac12\right)\nonumber\\
&\leq& \epsilon\mathbb{E}\sup_{[0, T]}|u_n(s)|^{2}_{\mathbb{H}^{1, 2}}+
C\mathbb{E}\int_0^T (1+ |u_n(s)|^{2}_{\mathbb{H}^{1, 2}})
ds\nonumber\\
&\leq& \epsilon\mathbb{E}\sup_{[0, T]}|u_n(s)|^{2}_{\mathbb{H}^{1, 2}}+
C\mathbb{E}\int_0^T \sup_{\tau\in[0, s]}|u_n(\tau)|^{2}_{\mathbb{H}^{1, 2}}
ds +C.
\end{eqnarray}
\noindent For the last term in the RHS of  \eqref{Eq-first-norm-l-2n-torus}, we use Assumption $
(\mathcal{C})$ (\eqref{Eq-Cond-Linear-Q-G}, with $ q=2$ and $ \delta =1$), we infer the existence of a 
positive constant $ c>0$ such that,
\begin{eqnarray}\label{Eq-last-Term-Last-estimate-torus}
|\int_0^t\int_{\mathbb{T}^2} \sum_{|j|\leq n}|A^\frac12 P_nG(u_n(s))Q^\frac12 e_j|^2 dx ds|
&\leq& \int_0^t\|G(u_n(s))Q^\frac12\|^2_{HS(\mathbb{H}^{1, 2})}ds\nonumber\\
&\leq & c\int_0^t(1+\sup_{\tau\in[0, s]}|u_n(\tau)|_{\mathbb{H}^{1, 2}}^2)ds.
\end{eqnarray}
Now, we replace \eqref{Eq-commutator-estimate-torus}, \eqref{Eq-third-norm-l-2n-torus} and
\eqref{Eq-last-Term-Last-estimate-torus} in \eqref{Eq-first-norm-l-2n-torus}, we get
\begin{eqnarray}\label{Eq-second-norm-l-2n-torus}
\mathbb{E}\left[\sup_{[0, T]}|u_n(t)|^{2}_{\mathbb{H}^{1, 2}}+   \int_0^T|u_n(s)|^2_{\mathbb{H}^{1+\frac\alpha2, 2}}ds \right]&\leq &C\left(1+
\mathbb{E}|u_{n0}|^{2}_{\mathbb{H}^{1, 2}} + 
C\int_0^T\mathbb{E}\sup_{\tau\in[0, s]}|u_n(\tau)|_{\mathbb{H}^{1, 2}}^{2}ds\right).\nonumber\\
\end{eqnarray}
By application of Gronwall's lemma for the function  $ \mathbb{E}\sup_{[0, T]}|u_n(t)|^{2}_{\mathbb{H}^{1, 2}}$, 
we get the estimation of the first term in the LHS of \eqref{Eq-Ito-n-weak-estimation-torus}, 
(recall that $\mathbb{E}|u_{n0}|^{2}_{\mathbb{H}^{1, 2}}\leq \mathbb{E}|u_{0}|^{2}_{\mathbb{H}^{1, 2}}$). 
The second term in \eqref{Eq-Ito-n-weak-estimation-torus} is
then deduced from  \eqref{Eq-second-norm-l-2n-torus} and the uniform boundedness of 
 $ \mathbb{E}\sup_{[0, T]}|u_n(t)|^{2}_{\mathbb{H}^{1, 2}}$. 
Now,\del{ we assume without lose of generality that  $ p=p'$ (
for $ p'<p$ the result is easily obtained from Estimate  \eqref{eq-estim-energy-Gene-2} bellow, 
by using H\"older inequality).} we prove
\begin{equation}\label{eq-estim-energy-Gene-2}
\mathbb{E}\sup_{[0, T]}|u_n(t)|^p_{\mathbb{H}^{1, 2}} +
\mathbb{E}\int_0^{T}|u_n(t)|^{p-2}_{\mathbb{H}^{1, 2}}|u_n(s)|_{\mathbb{H}^{1+\frac\alpha2, 2}}^2 ds \leq c(1+ \mathbb{E}|u_0|_{\mathbb{H}^{1, 2}}^p).
\end{equation}
By application of Ito's formula to the process
$ (|u_n(t)|^2_{\mathbb{H}^{1, 2}}, t\in[0, T])$ given by \eqref{Eq-first-norm-l-2n-torus}
we get, see for similar calculus e.g 
\cite{Millet-Chueshov-Hydranamycs-2NS-10, Flandoli-Gatarek-95, Sundar-Sri-large-deviation-NS-06},
\begin{eqnarray}\label{stoch-eq-product-Ito-Form-Lp-norm-SDE-torus}
| u_n(t)|^p_{\mathbb{H}^{1, 2}} &+& p
\int_0^t| u_n(s)|^{p-2}_{\mathbb{H}^{1, 2}}|u_n(s)|_{\mathbb{H}^{1+\frac\alpha2, 2}}^2 ds\nonumber\\
&\leq& |u_0|^p_{\mathbb{H}^{1, 2}}
+ \frac p2\int_0^t | u_n(s)|^{p-2}_{\mathbb{H}^{1, 2}}||P_n G(u_n(s))Q^\frac12||_{HS(\mathbb{H}^{1, 2})}^2 ds \nonumber\\
&+& \frac p2 (\frac p2-1)\int_0^t| u_n(s)|^{p-4}_{\mathbb{H}^{1, 2}}|Q^\frac12G^*(u_n(s))u_n(s)|_{\mathbb{H}^{1, 2}}^2ds\nonumber\\
&+&\frac p2 \int_0^t| u_n(s)|^{p-2}_{\mathbb{H}^{1, 2}}\langle u_n(s), P_nG(u_n(s))dW(s)\rangle_{\mathbb{H}^{1, 2}}.
\end{eqnarray}
We argue as above and use Assumption $ (\mathcal{C})$ ( \eqref{Eq-Cond-Linear-Q-G}, with $ q=2$ and $ \delta=1$). 
In particular, for the third term in the RHS of
\eqref{stoch-eq-product-Ito-Form-Lp-norm-SDE-torus}, we follow a similar calculus as in
\eqref{Eq-second-norm-l-2n-start-Torus} and \eqref{Eq-third-norm-l-2n-torus},
we infer that
\begin{eqnarray}\label{Eq-last-part-L-2power-p-torus}
\mathbb{E}\sup_{[0,T]}&{}&\left|\int_0^t | u_n(s)|^{p-2}_{\mathbb{H}^{1, 2}}\langle u_n(s), P_nG(u_n(s))u_n(s),
dW(s)\rangle_{\mathbb{H}^{1, 2}} \right|\nonumber\\
&\leq & C\mathbb{E}\left(\sup_{[0,T]}|u_n(s)|^\frac p2_{\mathbb{H}^{1, 2}}\left[  \int_0^{T} |u_n(s)|^{p-4}_{
\mathbb{H}^{1, 2}} (1+ |u_n(s)|^2_{\mathbb{H}^{1, 2}})ds\right]^\frac12\right)\nonumber\\
&\leq & C_1\mathbb{E}\sup_{[0,T]}|u_n(s)|^p_{
\mathbb{H}^{1, 2}}+ C_2\mathbb{E}\int_0^{T} |u_n(s)|^\frac p2_{
\mathbb{H}^{1, 2}}(1+ |u_n(s)|^2_{\mathbb{H}^{1, 2}})ds\nonumber\\
&\leq & C\mathbb{E}\sup_{[0,T]}|u_n(s)|^p_{
\mathbb{H}^{1, 2}}+ C\mathbb{E}\int_0^{T} \sup_{[0, s]}|u_n(r)|^p_{\mathbb{H}^{1, 2}}ds.
\end{eqnarray}
Hence,
\begin{eqnarray}\label{Eq-1-term-Lp-norm-SDE-Grn-torus}
\mathbb{E}\sup_{[0, T]}|u_n(t)|^p_{\mathbb{H}^{1, 2}} &+& c
\mathbb{E}\int_0^{T}| u_n(s)|^{p-2}_{\mathbb{H}^{1, 2}}|u_n(s)|_{\mathbb{H}^{1+\frac\alpha2, 2}}^2 ds\nonumber\\
&\leq& \mathbb{E}|u_0|^p_{\mathbb{H}^{1, 2}}
+ c\int_0^{T} \mathbb{E}\sup_{[0, s]}|u_n(r)|^{p}_{\mathbb{H}^{1, 2}}ds.
\end{eqnarray}
By application of Gronwall's lemma, we conclude that the first term in the LHS of \eqref{eq-estim-energy-Gene-2} is uniformly bounded in $n$. 
The total estimate  \eqref{eq-estim-energy-Gene-2}  follows easily from the above statement and from \eqref{Eq-1-term-Lp-norm-SDE-Grn-torus}.
The uniformity boundedness of the third, fourth and last  terms in
LHS of \eqref{Eq-Ito-n-weak-estimation-1-Torus} is a consequence of the application of Sobolev embedding
see e.g. \cite{Adams-Hedberg-94, R&S-96, Schmeisser-Tribel-87-book, Sickel-periodic spaces-85} and Appendix \ref{Appendix-Sobolev}
 respectively  H\"older inequality respectively Sobolev interpolation and
Estimate \eqref{Eq-Ito-n-weak-estimation-torus}.

\del{By application of Gronwall's lemma, we conclude that the first term in the LHS of \eqref{Eq-Ito-n-weak-estimation-1-Torus}, with $ p'=p$,
is uniformly bounded. The uniformity boundedness of the third, fourth and last  terms in
LHS of \eqref{Eq-Ito-n-weak-estimation-1-Torus} is a consequence of the application of Sobolev embedding
see e.g. see \cite{Adams-Hedberg-94, R&S-96, Schmeisser-Tribel-87-book, Sickel-periodic spaces-85} and Appendix \ref{appendix-Sobolev-Embd}
 respectively  H\"older inequality respectively Sobolev interpolation and
Estimation \eqref{Eq-Ito-n-weak-estimation-torus}.}

\vspace{0.15cm}
%%%%%%%%%%%%%%%%%%%%%%%%%%%%%
\del{\noindent Now we prove Estimate \eqref{Eq-B-n-weak-estimation-1-Torus}.
Thanks to the contraction of $ P_n$ on $ \mathbb{L}^2(\mathbb{T}^2)$, \eqref{Eq-B-L-2-est} and the Sobolev embedding, 
we infer that for all sequence 
 $(u_n)_n$ satisfying \eqref{Eq-Ito-n-weak-estimation-1-Torus}, we have 
\begin{eqnarray}\label{Eq-B-n-weak-estimation-1-Torus-deter-2step}
\int_0^T |P_nB(u_n(t))|^{2}_{\mathbb{L}^2} dt&\leq& C \int_0^T |u_n(t)|^2_{\mathbb{H}^{1, 2}}
|u_n(t)|^2_{\mathbb{H}^{1+\frac\alpha2, 2}}dt\leq c<\infty.
\end{eqnarray}
Moreover, it is easy to see that for $ \alpha \leq 2$,
\begin{eqnarray}\label{Eq-A-alpha-weak-estimation-1-Torus-deter-2step}
\int_0^T |A^\frac\alpha2 u_n(t)|^{2}_{\mathbb{L}^2} dt&\leq& C \int_0^T |u_n(t)|^2_{\mathbb{H}^{\alpha,2}} dt\leq
 c\int_0^T|u_n(t)|^2_{\mathbb{H}^{1+\frac\alpha2,2}}dt\leq c<\infty.
\end{eqnarray}}
%%%%%%%%%%%%%%%%%%%%%%%%%%%%%%%%
\noindent Now we prove Estimate \eqref{Eq-B-n-weak-estimation-1-Torus}.
Thanks to the contraction of $ P_n$ on $ \mathbb{H}^{1-\frac\alpha2, 2}(\mathbb{T}^2)$, \eqref{est-B-estimatoion-q=2-eta} with $ \eta=1$ and \del{the Sobolev embedding,} the interpolation,   
we infer that for all sequence 
 $(u_n)_n$ satisfying \eqref{Eq-Ito-n-weak-estimation-1-Torus}, we have 
\begin{eqnarray}\label{Eq-B-n-weak-estimation-1-Torus-deter-2step}
\int_0^T |P_nB(u_n(t))|^{2}_{\mathbb{H}^{1-\frac\alpha2, 2}} dt&\leq& C \int_0^T |u_n(t)|^4_{\mathbb{H}^{2-\frac\alpha2, 2}}dt\nonumber\\
&\leq& \int_0^T |u_n(t)|^{8\frac{\alpha-1}{\alpha}}_{\mathbb{H}^{1, 2}} |u_n(t)|^{4\frac{(2-\alpha)}{\alpha}}_{\mathbb{H}^{1+\frac\alpha2, 2}}dt\leq c<\infty,
\end{eqnarray}
provided $ \alpha \in [\frac43, 2]$. Moreover, it is easy to see that 
\begin{eqnarray}\label{Eq-A-alpha-weak-estimation-1-Torus-deter-2step}
\int_0^T |A^\frac\alpha2 u_n(t)|^{2}_{\mathbb{H}^{1-\frac\alpha2, 2}} dt&\leq& \del{C \int_0^T |u_n(t)|^2_{\mathbb{H}^{\alpha,2}} dt\leq}
 c\int_0^T|u_n(t)|^2_{\mathbb{H}^{1+\frac\alpha2,2}}dt\leq c<\infty.
\end{eqnarray}

\end{proof}

\subsection*{Proof of the existence}\label{sec-subsection-existence-Torus} We shall follow for this proof a quiet standard scheme, see e.g.
\cite{Millet-Chueshov-Hydranamycs-2NS-10, Krylov-Rozovski-monotonocity-2007, Rockner-Pevot-06,  Roeckner-Zhang-tamedNS-12,
Sundar-Sri-large-deviation-NS-06}, but we shall use completely different  estimates. These latter are of fractional type and have been developed in Section \ref{sec-nonlinear-prop}.\del{completely different than the classical case} We shall focus more on the key estimates and on the main features of our equation. In deed, thanks to Lemma \ref{lem-unif-bound-theta-n-H-1} and Assumption $(\mathcal{C})$, with $ q=2$ and $ \delta =1$,
we conclude the existence of
a subsequence (we keep the same notation ) $(u_n)_n$,\del{ and adapted measurable processes $ u, F_1$ and $G_1$, such that} 
\begin{equation}\label{eq-u-first-belonging}
 u\in L^2(\Omega\times [0, T]; \mathbb{H}^{1+\frac\alpha2, 2}(\mathbb{T}^2))\cap
L^p(\Omega, L^\infty([0, T];
 \mathbb{H}^{1, 2}(\mathbb{T}^2))),
\end{equation}
\begin{equation}
F_1 \in L^2(\Omega\times [0, T]; \mathbb{L}^{2} (\mathbb{T}^2)) \;\; \text{and}\;\; 
G_1\in L^2(\Omega\times [0, T]; L_Q(\mathbb{H}^{1, 2} (\mathbb{T}^2)), s.t.
\end{equation}
\begin{itemize}
\item (1) $u_n \rightarrow u$ weakly in $ L^2(\Omega\times [0, T],; \mathbb{H}^{1+\frac\alpha2, 2}(\mathbb{T}^2))$.
\item (2) $u_n \rightarrow u$ weakly-star in $ L^p(\Omega, L^\infty([0, T]; \mathbb{H}^{1, 2}(\mathbb{T}^2)))$.
\item (3) $P_n(F(u_n):= (A^\frac\alpha2 + B)(u_n))\rightarrow F_1$ weakly in $ L^2(\Omega\times [0, T]; 
\mathbb{H}^{1-\frac\alpha2, 2} (\mathbb{T}^2))$.
\item (4) $u_n \rightarrow u$ weakly in $ L^{\frac{\alpha}{\eta}}(\Omega\times [0, T]; \mathbb{H}^{1+\eta, 2}(\mathbb{T}^2))$, for all
$ \frac\alpha p<\eta \leq \frac\alpha2 $.
\del{\item (5) $u_n(T) \rightarrow \xi$ weakly in $ L^2(\Omega, \mathbb{H}^{1, 2}(\mathbb{T}^2)))$.}
\item (5) $P_nG(u_n)\rightarrow G_1$ weakly in $ L^2(\Omega\times [0, T]; L_Q(\mathbb{H}^{1, 2} (\mathbb{T}^2))$.
\end{itemize}
The statements $ (1)-(3)$ are straightforward consequence of Lemma \ref{lem-unif-bound-theta-n-H-1}. 
Statement $ (4)$ is a consequence of  the combination of Lemma \ref{lem-unif-bound-theta-n-H-1} 
and the Sobolev interpolation.  Statement $ (5)$  holds 
thanks to the fact that $ P_n$ contracts the $\mathbb{H}^{1, 2}$-norm, 
Assumption $(\mathcal{C})$ (with $ q=2$ and $ \delta=1$) and the uniform boundedness of
 $ u_n$ in $ L^2(\Omega\times[0, T], \mathbb{H}^{1, 2}(\mathbb{T}^2))$.

Now, we construct a process $ (\tilde{u}(t), t\in [0, T])$ as
\begin{equation}\label{eq-def-u-tilde}
\tilde{u}(t) = u_0+ \int_0^t F_1(s)ds + \int_0^tG_1(s)dW(s)
\end{equation}
and prove that $ u= \tilde{u}, dt\times dP- a.e.$. Indeed, using Statement $ (1)$, Equation \eqref{FSBE-Galerkin-approxi} and  Fubini theorem, we infer that for all
$ \varphi \in L^\infty(\Omega\times[0, T], \mathbb{R})$ and $v\in \cup_{n}H_n $,
\begin{eqnarray}\label{eq-u=u-tilde-1}
 \mathbb{E}\int_0^T\langle u(t), \varphi(t) v\rangle_{\mathbb{L}^2} dt &=&
\lim_{n\rightarrow +\infty} \mathbb{E}\int_0^T\langle u_n(t), \varphi(t) v\rangle_{\mathbb{L}^2} dt\nonumber\\
&=& \lim_{n\rightarrow +\infty}\Big[\mathbb{E}\int_0^T \Big(\langle u_n(0), \varphi(t) v\rangle_{\mathbb{L}^2} +
\langle P_nF(u_n(t)), \int_t^T\varphi(s)ds v\rangle_{\mathbb{L}^2}\\
&+&\langle \int_0^t P_nG(u_n(s))dW_n(s), \varphi(t) v\rangle_{\mathbb{L}^2} \Big)dt\Big].\nonumber
\end{eqnarray}
The convergence of the terms in the RHS of \eqref{eq-u=u-tilde-1} to the terms in the RHS of \eqref{eq-def-u-tilde} is as follow. The first term is a consequence of the convergence of 
$ P_n \rightarrow I_{\mathbb{L}^2}$ with respect to the bounded linear operator topology on 
$ \mathbb{L}^2(\mathbb{T}^2)$ and the application of Lebesgue dominated convergence theorem. The second 
term converges thanks to Statement $ (3)$ and the last one converges thanks to the stochastic isometry,
Statement $ (5)$,  Lebesgue dominated  convergence theorem and Lemma \ref{lem-unif-bound-theta-n-H-1},
\del{ and to
the fact that the map $ \mathcal{T}: \mathcal{P}_T(\mathbb{L}^2) \ni \sigma \mapsto \int_0^T\sigma(s)dW(s) \in L^2(\Omega)$ is 
linear continuous} see e.g.
\cite{Millet-Chueshov-Hydranamycs-2NS-10}. Therefore,
\begin{eqnarray}\label{eq-u=u-tilde-2}
\mathbb{E}\int_0^T\langle u(t)- \Big(u(0) +
\int_0^t F_1(s)ds +  \int_0^t G_1(s)dW(s)\Big), \varphi(t) v\rangle dt=0.
\end{eqnarray}
To achieve the proof of the existence, we have to prove that  $F_1= A^\frac\alpha2 \tilde{u}+ B(\tilde{u})$ and $ G_1= G(\tilde{u})$. 
First, we prove the following key estimates
\begin{itemize}
 \item $ (\mathcal{K}_1)$  The local monotonicity property: There exists a constant $ c>0$ such that 
$\forall u, v \in \mathbb{H}^{1+\frac\alpha2}(\mathbb{T}^2)$,
\begin{eqnarray}\label{eq-a-alpha-B-G}
 -2\langle A_\alpha(u-v), u-v\rangle_{\mathbb{L}^2} &+&  2\langle B(u)-B(v), u-v\rangle_{\mathbb{L}^2} +
|| G(u) - G(v)||^2_{L_Q(\mathbb{L}^{2})}\nonumber\\
&\leq & c (1+ | v|^{\frac{2\alpha}{3\alpha-2}}_{\mathbb{H}^{1+\frac\alpha2, 2}})| u-v|^2_{\mathbb{L}^2}.
\end{eqnarray}
\item $ (\mathcal{K}_2)$ For all 
$ \psi\in L^\infty([0, T], \mathbb{R}_+)$ and\del{$ v\in L^2(\Omega\times [0, T]; H_n)$}
$ v\in L^2(\Omega\times [0, T]; \mathbb{H}^{1+\frac\alpha2}(\mathbb{T}^2))$,
\begin{eqnarray}\label{eq-def-Z-n}
 Z_n:= \int_0^T\psi(t)dt\mathbb{E}\big\{\int_0^te^{-r(s)}\big(\!\!\!\!\!&-&\!\!\!\!\!r'(s)|u_n(s)- v(s)|^2_{\mathbb{L}^2}+ 
|| P_nG(u_n(s)) - P_nG(v(s))||^2_{L_Q(\mathbb{L}^{2})}\nonumber\\
&+&2\langle F(u_n(s)) - F(v(s)), u_n(s)- v(s))\rangle_{\mathbb{L}^2}\big)ds\big\}\leq 0,
\end{eqnarray}
\end{itemize}
where $ r'(t):= c(1+ | v(t)|^{\frac{2\alpha}{3\alpha-2}}_{\mathbb{H}^{1+\frac\alpha2, 2}})$ and $c>0$ is a 
constant relevantly chosen.
In fact,  by using an elementary calculus as in \eqref{formula-B-v1-B-v2}, Property \eqref{Eq-3lin-propnull},  
H\"older inequality, Estimate 
\eqref{Eq-B-L-2-est}\del{{Eq-B-H-alpha-2-est}}, interpolation in the case $ 1< \alpha <2 $ (recall that
$ 1\leq \alpha \leq 2 \Rightarrow \mathbb{H}^{\frac\alpha2, 2}(\mathbb{T}^2) \hookrightarrow \mathbb{H}^{1-\frac\alpha2, 2}(\mathbb{T}^2)$)
and Young inequality, we infer that
\begin{eqnarray}
|\langle B(u)-B(v), u-v\rangle_{\mathbb{L}^2}| &= & |\langle B(u-v, v), u-v\rangle_{\mathbb{L}^2}|\leq
| B(u-v, v)|_{\mathbb{L}^2}|u-v|_{\mathbb{L}^2}\nonumber\\
&\leq& c|v|_{\mathbb{H}^{1+\frac\alpha2, 2}} |u-v|_{\mathbb{L}^2} |u-v|_{\mathbb{H}^{1-\frac\alpha2, 2}}\nonumber\\
&\leq& c|v|_{\mathbb{H}^{1+\frac\alpha2, 2}} |u-v|^{\frac{3\alpha-2}{\alpha}}_{\mathbb{L}^2}
|u-v|^{\frac{2-\alpha}{\alpha}}_{\mathbb{H}^{\frac\alpha2, 2}}\nonumber\\
&\leq& c|v|^{\frac{2\alpha}{3\alpha-2}}_{\mathbb{H}^{1+\frac\alpha2, 2}}|u-v|^{2}_{\mathbb{L}^2} + \frac12|u-v|^{2}_{\mathbb{H}^{\frac\alpha2, 2}}.
%\leq c (1+ | v|_{\mathbb{H}^{1+\frac\alpha2, 2}})| u-v|^2_{\mathbb{L}^2}.
\end{eqnarray}
Moreover, thanks to the semigroup property of $ (A^\beta)_{\beta\geq0}$, we infer that 
\begin{eqnarray}
 -\langle A_\alpha(u-v), u-v\rangle_{\mathbb{L}^2} = - |u-v|^{2}_{\mathbb{H}^{\frac\alpha2, 2}}.
\end{eqnarray}
Therefore, there exists a constant $ c>0$ such that
\begin{eqnarray}\label{eq-last-exitence-Torus-H-1}
 -\langle A_\alpha(u-v), u-v\rangle_{\mathbb{L}^2} +  \langle B(u)-B(v), u-v\rangle_{\mathbb{L}^2} \leq
 -\frac12|u-v|^{2}_{\mathbb{H}^{\frac\alpha2, 2}} + c|v|^{\frac{2\alpha}{3\alpha-2}}_{\mathbb{H}^{1+\frac\alpha2, 2}}|u-v|^{2}_{\mathbb{L}^2}.\nonumber\\
\end{eqnarray}
Combining \eqref{eq-last-exitence-Torus-H-1} and Assumption $ (\mathcal{C})$: (\eqref{Eq-Cond-Lipschitz-Q-G} with 
$ q=2$, $ \delta=0$ and $ C_R:=C$),\del{ as mentioned in Theorem \ref{Main-theorem-strog-Torus},} 
we easily get \eqref{eq-a-alpha-B-G}. In particular, thanks to 
the contraction of $ P_n$ on $ \mathbb{L}^2(\mathbb{T}^{2})$,  Estimate \eqref{eq-a-alpha-B-G} is still valid when replacing 
$ u, v$ and $G$  by $ u_n$ (recall $ u_n$ is the solution of Equation \eqref{FSBE-Galerkin-approxi}), $v\in L^{2}(\Omega\times[0, T]; \mathbb{H}^{1+\frac\alpha2}(\mathbb{T}^{2}))$
and  $ P_nG$ respectively.\del{and thanks to 
the contraction of $ P_n$ on $ \mathbb{L}^2(O)$, we infer that 
Estimate \eqref{eq-a-alpha-B-G} is valid as well.}  Furthermore, estimating the LHS of  
\eqref{eq-def-Z-n} by  \eqref{eq-a-alpha-B-G} endowed with these latter variables, we get $ Z_n\leq 0$. Consequently $ (\mathcal{K}_1)$ and $ (\mathcal{K}_2)$ are proved.\del{with $ u= u_n(t)$ and $ v= v(t)$}
Let us also mention and recall the following two statements

\noindent (a)- If $ v\in L^{2}(\Omega\times[0, T]; \mathbb{H}^{1+\frac\alpha2}(\mathbb{T}^{2}))$,
 with $ \alpha\in [1, 2]$, then 
\begin{equation}
 \mathbb{E}\int_0^T|v(t)|^{\frac{2\alpha}{3\alpha-2}}_{\mathbb{H}^{1+\frac\alpha2, 2}}dt \leq 
 \mathbb{E}\int_0^T(1+|v(t)|^{2}_{\mathbb{H}^{1+\frac\alpha2, 2}})dt < \infty.
\end{equation}

\noindent (b)- If a sequence $(f_n)_n$ in a Hilbert space $ H$ converges weakly to $f$ then $|f|_{H}\leq \liminf_{n\rightarrow \infty}|f_n|_H$.

\vspace{0.25cm}

\noindent Now, we take $ \psi$ and $ r(t)$ as defined above. Thanks to the equality  $ u= \tilde{u}, dt\times dP- a.e.$,
statements $ (1)$ \& (b) and Fubini's theorem, we infer that 
\begin{eqnarray}\label{eq-equality-norms-1-Torus}
\int_0^T\psi(t)dt\mathbb{E}|u(s)|^2_{\mathbb{L}^2}e^{-r(t)}- \mathbb{E}|u_0|^2_{\mathbb{L}^2}
&=&\int_0^T\psi(t)dt\mathbb{E}|\tilde{u}(s)|^2_{\mathbb{L}^2}e^{-r(t)}- \mathbb{E}|u_0|^2_{\mathbb{L}^2} \nonumber\\
&\leq &\liminf_{n\rightarrow \infty}\int_0^T\psi(t)dt\mathbb{E}|u_n(s)|^2_{\mathbb{L}^2}e^{-r(t)}- \mathbb{E}|u_0|^2_{\mathbb{L}^2}.
\end{eqnarray}
By application of the Ito formula to the Ito process $ \tilde{u} $ given by \eqref{eq-def-u-tilde} 
and using\del{the first equality in \eqref{eq-equality-norms-1-Torus},} 
the equality  $ u= \tilde{u}, dt\times dP- a.e.$ and the elementary identity 
\begin{equation}\label{elme-identity-Hilbert}
 \forall f, g \in H, | f|^2_{H}= |f-g|^2_{H} +2\langle f-g, g \rangle_{H} +| g|^2_{H}
\end{equation}
with $ f= u(s)$, $ g= v(s)$ and\del{``$
|u(s)|^2_{\mathbb{L}^2} = |u(s)-v(s)|^2_{\mathbb{L}^2}
+2\langle u(s)- v(s), v(s)\rangle_{\mathbb{L}^2} + |v(s)|^2_{\mathbb{L}^2}$''  with} $ v\in 
L^{2}(\Omega\times[0, T]; \mathbb{H}^{1+\frac\alpha2}(\mathbb{T}^{2}))$, we get
\del{\begin{eqnarray}\label{eq-elementary-calculs}
\mathbb{E}\int_0^te^{-r(s)}r'(s)|u(s)|^2_{\mathbb{L}^2}ds = \mathbb{E}\int_0^te^{-r(s)}r'(s)
\big(|u(s)-v(s)|^2_{\mathbb{L}^2}
+2\langle u(s)- v(s), v(s)\rangle_{\mathbb{L}^2} + |v(s)|^2_{\mathbb{L}^2}\big)ds.\nonumber\\
\end{eqnarray}
By application of Ito formula on Ito process $ \tilde{u} $ and using the first equality in 
\eqref{eq-equality-norms-1} and the
identity \eqref{eq-elementary-calculs}, we get}
\begin{eqnarray}\label{eq-equality-norms-Torus}
\mathbb{E}|u(s)|^2_{\mathbb{L}^2}e^{-r(t)}&-&\mathbb{E}|u_0|^2_{\mathbb{L}^2}
=\mathbb{E}\int_0^t2e^{-r(s)}\langle F_1(s),  u(s)\rangle_{\mathbb{L}^2}+ \mathbb{E}\int_0^te^{-r(s)}||G_1(s)||^2_{L_Q(\mathbb{L}^2)}ds \nonumber\\
&- &  \mathbb{E}\int_0^te^{-r(s)}r'(s)\big(|u(s)-v(s)|^2_{\mathbb{L}^2}
+2\langle u(s)- v(s), v(s)\rangle_{\mathbb{L}^2} + |v(s)|^2_{\mathbb{L}^2}\big)ds.
\end{eqnarray}
Similarly, we get Identity \eqref{eq-equality-norms-Torus} for $ \mathbb{E}|u_n(s)|^2_{\mathbb{L}^2}e^{-r(t)}$ with 
$ u, F_1 $\del{in fact it is P_nF_1, but because the product <P_nF_1, u_n>, the P_n is killed} and $ G_1$ in the RHS of \eqref{eq-equality-norms-Torus} are respectively replaced by $ u_n, F(u_n), P_nG(u_n)$ (Recall that $F:= A^\frac\alpha2 + B$). Replacing Identity \eqref{eq-equality-norms-Torus} for 
$ \mathbb{E}|u(s)|^2_{\mathbb{L}^2}e^{-r(t)}$ in the LHS of the first equality in \eqref{eq-equality-norms-1-Torus} 
and Identity \eqref{eq-equality-norms-Torus} for
$ \mathbb{E}|u_n(s)|^2_{\mathbb{L}^2}e^{-r(t)}$ in the RHS of  the second Inequality \eqref{eq-equality-norms-1-Torus} and arranging terms
(in particular, we introduce the term $ G(v(s))$ and use the elementary identity \eqref{elme-identity-Hilbert}), we
infer that
\begin{eqnarray}\label{eq-equality-norms-Torus-sum-of-u}
\mathcal{E}:= \int_0^T\psi(t)dt\mathbb{E}\big\{\int_0^te^{-r(s)}\Big[\!\!\!\!&\!\!\!\!2\!\!\!\!&\!\!\!\!\langle F_1(s),  u(s)\rangle_{\mathbb{L}^2}+
||G_1(s)||^2_{L_Q(\mathbb{L}^2)}ds \nonumber\\
&- & r'(s)\big(|u(s)-v(s)|^2_{\mathbb{L}^2}
+2\langle u(s)- v(s), v(s)\rangle_{\mathbb{L}^2} \big)\Big] ds\big\}\nonumber\\
&\leq & \liminf_{n\rightarrow \infty}\big(Z_n + Y_n + X_n\big).
\end{eqnarray}
where $ Z_n$ is given by \eqref{eq-def-Z-n},
\begin{eqnarray}
Y_n:&=& \int_0^T\psi(t)dt\mathbb{E}\big\{\int_0^t e^{-r(s)}\big(-2r'(s)
\langle u_n(s)- v(s), v(s)\rangle_{\mathbb{L}^2}+
2 \langle P_nG(u_n(s)), G(v(s))\rangle_{L_Q(\mathbb{L}^2)}\nonumber\\
&+&2\langle F(u_n(s)), v(s)\rangle_{\mathbb{L}^2} +2
\langle F(v(s)), u_n(s))\rangle_{\mathbb{L}^2}-2
\langle F(v(s)), v(s))\rangle_{\mathbb{L}^2}
\big)ds\big\},\nonumber\\
\end{eqnarray}
and
\begin{eqnarray}
X_n:&=& \int_0^T\psi(t)dt\mathbb{E}\big\{\int_0^t e^{-r(s)}\big(
2 \langle P_nG(u_n(s)), P_nG(v(s))- G(v(s))\rangle_{\mathbb{L}^2}
-||P_nG(v(s))||^2_{L_Q(\mathbb{L}^2)}\big)ds\big\},\nonumber\\
\end{eqnarray}
The sequences $ (Y_n)_n$ and $ (X_n)_n$  converge to $ Y$ and $X$ respectively,
thanks to statements $(1)-(3)$ and  $(5)$, the convergence of $ P_n \rightarrow I_{\mathbb{L}^2}$,  
Assumption $(\mathcal{C})$:( \eqref{Eq-Cond-Lipschitz-Q-G} with $ q=2$, $ \delta =0$ and $ C_R:=C$), Lemma \ref{lem-unif-bound-theta-n-H-1},   
Estimate \eqref{Eq-B-L-2-est}, similar calculus as in \eqref{Eq-B-n-weak-estimation-1-Torus-deter-2step} 
and \eqref{Eq-A-alpha-weak-estimation-1-Torus-deter-2step} and the Lebesgue dominated, where
\begin{eqnarray}
Y:&=& \int_0^T\psi(t)dt\mathbb{E}\big\{\int_0^t e^{-r(s)}\big(-2r'(s)\langle u(s)- v(s), v(s)\rangle_{\mathbb{L}^2}+
2 \langle G(s), G(v(s))\rangle_{L_Q(\mathbb{L}^2)}\nonumber\\
&+&2\langle F(s), v(s)\rangle_{\mathbb{L}^2} +2
\langle F(v(s)), u(s))\rangle_{\mathbb{L}^2}-2
\langle F(v(s)), v(s))\rangle_{\mathbb{L}^2}
\big)ds\big\} \nonumber\\
\end{eqnarray}
and
\begin{eqnarray}
X:&=& -\int_0^T\psi(t)dt\mathbb{E}\big\{\int_0^t e^{-r(s)}
||G(v(s))||^2_{L_Q}\big)ds\big\}.
\end{eqnarray}
Replacing $ X$ and $ Y$ in \eqref{eq-equality-norms-Torus-sum-of-u} and taking into account \eqref{eq-def-Z-n}, we conclude that
\begin{equation}
 \mathcal{E} -X-Y \leq \liminf_{n\rightarrow \infty}Z_n \leq 0.
\end{equation}
Therefore, we get
\begin{eqnarray}\label{eq-key-leq-0-1}
\int_0^T\psi(t)dt\mathbb{E}\big\{\int_0^te^{-r(s)}\big(-r'(s)|u(s)\!\!\!&-&\!\!\! v(s)|^2_{\mathbb{L}^2}+ \langle F_1(s) -
F(v(s)), u(s)- v(s))\rangle_{\mathbb{L}^2}\nonumber\\
&+&2|| G_1(s) - G(v(s))||^2_{L_Q}\big)ds\big\}\leq 0.
\end{eqnarray}
%%%%%%%%%%%%%%%%%%%%%%%%%%%%%%%%%%%%%%%%%%%%%%%%%%%%%%%%Begin%%%%%%%%%%%%%%%%%Del
\del{Consequently, for $  v \in L^2([\Omega\times0, T], \mathbb{H}^{1+\frac\alpha2, 2}(\mathbb{T}^2))$, $ P_nv$ satisfies \eqref{eq-key-leq-0-1}. 
By approximation, \eqref{eq-key-leq-0-1} is also satisfied by  $  v \in L^2([\Omega\times0, T], \mathbb{H}^{1+\frac\alpha2, 2}(\mathbb{T}^2))$.
In deed, the passage to the limit of the first and third terms is guaranteed thanks to the convergence of the projection 
$ P_n$ to the identity $ I_{\mathbb{H}^{s, 2}}$ for all Sobeolev spaces $ \mathbb{H}^{s, 2}(O), s\geq 0$, in particular for  $ I_{\mathbb{L}^2}(O)$
and by Assumption $ (\mathcal{C})$. To prove the convergence of the second term, it is sufficient to prove and justify the convergence
of the term $ \langle F(P_nv(s)),  P_nv(s))\rangle_{\mathbb{L}^2}$. In fact, we rewrite this last as follow
\begin{eqnarray}\label{local-est-1}
 \langle F(P_nv(s)),  P_nv(s)\rangle_{\mathbb{L}^2}&-& \langle F(v(s)),  v(s)\rangle_{\mathbb{L}^2}\nonumber\\
&=& \langle F(P_nv(s))-F(v(s)),  v(s)\rangle_{\mathbb{L}^2} + \langle F(P_nv(s)),  v(s)- P_nv(s)\rangle_{\mathbb{L}^2}.
\end{eqnarray}
Using H\"older and  Minowksky inequalities, Estimation \eqref{Eq-B-L-2-est} and the contraction property of the projection 
$ P_n$ on all Sobeolev spaces $ \mathbb{H}^{s, 2}(O), s\geq 0$, we get
\begin{eqnarray}\label{local-est-2}
|\langle F(P_nv(s)),  v(s)- P_nv(s)\rangle_{\mathbb{L}^2}|&\leq & | v(s)- P_nv(s)|_{\mathbb{L}^2}|F(P_nv(s))|_{\mathbb{L}^2}\nonumber\\
&\leq &c| v(s)- P_nv(s)|_{\mathbb{L}^2}\big( |P_nv(s)|_{\mathbb{H}^{\alpha, 2}}+   |B(P_nv(s))|_{\mathbb{L}^2}  \big)\nonumber\\
\del{&\leq &c| v(s)- P_nv(s)|_{\mathbb{L}^2}\big( |v(s)|_{\mathbb{H}^{\alpha, 2}}+  
 |P_nv(s))|_{\mathbb{H}^{1-\frac\alpha2, 2}}|P_nv(s))|_{\mathbb{H}^{1+\frac\alpha2, 2}} \big)\nonumber\\}
&\leq &c| v(s)- P_nv(s)|_{\mathbb{L}^2}\big( |v(s)|_{\mathbb{H}^{\alpha, 2}}+  
 |v(s)|_{\mathbb{H}^{1-\frac\alpha2, 2}}|v(s)|_{\mathbb{H}^{1+\frac\alpha2, 2}} \big).
\end{eqnarray}
For the second term in \eqref{local-est-1}, we use H\"older and  Minoksky inequalities, \eqref{Eq-def-B-theta1-Theata2}, 
\eqref{Eq-B-H-alpha-2-est-d}, we infer 
\begin{eqnarray}\label{local-est-2}
|\langle F(P_nv(s))&-& F(v(s)),  v(s)\rangle_{\mathbb{L}^2}|
\leq  |v(s)|_{\mathbb{H}^{+1+\frac\alpha2, 2}} |F(v(s))- F(P_nv(s))|_{\mathbb{H}^{-1-\frac\alpha2, 2}}\nonumber\\
&\leq &c |v(s)|_{\mathbb{H}^{+1+\frac\alpha2, 2}}\big( |v(s)- P_nv(s)|_{\mathbb{L}^{2}}+ 
|B(v(s)-P_nv(s), P_nv(s))|_{\mathbb{H}^{-1-\frac\alpha2, 2}} \nonumber\\
&+& |B(v(s), P_nv(s)-v(s))|_{\mathbb{H}^{-1-\frac\alpha2, 2}}\big)\nonumber\\
&\leq &c |v(s)|_{\mathbb{H}^{1+\frac\alpha2, 2}}\big( |v(s)- P_nv(s)|_{\mathbb{L}^{2}}+ 
|v(s)|_{\mathbb{H}^{1-\frac\alpha4, 2}}|P_nv(s)-v(s)|_{\mathbb{H}^{1-\frac\alpha4, 2}}\big)\nonumber\\
&\leq &c (1+|v(s)|_{\mathbb{H}^{+1+\frac\alpha2, 2}})|P_nv(s)-v(s)|_{\mathbb{H}^{1-\frac\alpha4, 2}}.
\end{eqnarray}
Finally, the integrand in RHS of \eqref{eq-key-leq-0-1} with $ v(s)$ being replaced by $ P_nv(s)$, converges by application of the Lebesgue 
dominate convergence theorem.}
%%%%%%%%%%%%%%%%%%%%%%%%%%%%%%%%%%%%%%%%%%%%%%%%%%%%%%%%END%%%%%%%%%%%%%%%%%%DEL
\noindent Now, we take $ v = u$ in $ L^2([\Omega\times0, T], \mathbb{H}^{1+\frac\alpha2, 2}(\mathbb{T}^2))$, we conclude from \eqref{eq-key-leq-0-1}, that
\del{$|| G(s) - G(v(s))||^2_{L_Q}$} $ G(s)= G(u(s)), \; ds\times dP-a.e.$. To get the equality $ F(s)= F(u(s)), \; ds\times dP-a.e.$, 
we consider Estimate \eqref{eq-key-leq-0-1} without the last term and we introduce
$ \tilde{v} \in L^\infty(\Omega\times[0, T], \mathbb{H}^{1+\frac\alpha2}(\mathbb{T}^2))$ and a
parameter $ \lambda \in [-1, +1]$. Replacing $ v$  and $ r'(s)$ by $ u-\lambda\tilde{v}$ respectively
$ r'_\lambda(s):= c(1+|u-\lambda\tilde{v}|^{\frac{2\alpha}{3\alpha-2}}_{\mathbb{H}^{1+\frac\alpha2}})$, we get
\begin{eqnarray}\label{eq-equality-F-Fn-*}
\mathbb{E}\int_0^Te^{-r_\lambda(s)}\big(-r'_\lambda(s)\lambda^2|\tilde{v}(s)|^2_{\mathbb{L}^2}+ 2\lambda\langle F(s) -
F(u(s)-\lambda \tilde{v}(s)), \tilde{v}(s))\rangle_{\mathbb{L}^2}\big)ds\leq 0.\nonumber\\
\end{eqnarray}
Dividing on $ \lambda<0$ and on $ \lambda>0$, we conclude that, when $ \lambda \rightarrow 0$, the limit of
the LHS of \eqref{eq-equality-F-Fn-*} exists and vanishes. Moreover, using the fact that
$ \tilde{v} \in L^\infty(\Omega\times[0, T], \mathbb{H}^{1+\frac\alpha2}(\mathbb{T}^2))$, 
the continuity of $ r_\lambda$ and 
$ r'_\lambda$  with respect to $ \lambda$ and the Lebesgue 
dominated convergence theorem, we conclude that the first term in $ \frac1\lambda$LHS of \eqref{eq-equality-F-Fn-*} 
vanishes and also 
\begin{eqnarray}\label{eq-equality-F-Fn-1}
\mathbb{E}\int_0^Te^{-r_0(s)}\langle F(s) -
F(u(s)), \tilde{v}(s))\rangle_{\mathbb{L}^2}ds= 0.
\end{eqnarray}
The justification of the use of the Lebesgue dominated convergence theorem is due to, the positivity of 
$ r_\lambda(s)$, Inequality
\del{$ |B(f)|_{\mathbb{H}^{-1, 2}} \leq |f|_{\mathbb{H}^{1, 2}}|f|_{\mathbb{L}^{2}}$}\eqref{classical-B-H-1}, Minikowskii inequality,
the conditions $ 0< \alpha\leq 2$ and $ |\lambda|\leq 1$,
the statements $ (1) \;\& \; (3)$ and the definition of $ \tilde{v}$. In fact,
\del{\begin{eqnarray}
 e^{-r_\lambda(s)}|\langle F(s) &-&
F(u(s)-\lambda \tilde{v}(s)), \tilde{v}(s))\rangle_{\mathbb{L}^2}|\nonumber\\
&\leq&
 |\tilde{v}(s)|_{\mathbb{L}^2}|F(s) -
F(u(s)-\lambda \tilde{v}(s))|_{\mathbb{L}^2} \nonumber\\
&\leq&
 |\tilde{v}(s)|_{\mathbb{L}^2}\big(|F(s)|_{\mathbb{L}^2} + |u(s)|_{\mathbb{H}^{\frac\alpha2, 2}}+ |\tilde{v}(s)|_{\mathbb{H}^{\frac\alpha2, 2}}
|B(u(s)-\lambda \tilde{v}(s))|_{\mathbb{L}^2} \nonumber\\
&\leq&
|\tilde{v}(s)|_{\mathbb{L}^2}\big(|F(s)|_{\mathbb{L}^2} + |u(s)|_{\mathbb{H}^{\frac\alpha2, 2}}+ |\tilde{v}(s)|_{\mathbb{H}^{\frac\alpha2, 2}}
|B(u(s)-\lambda \tilde{v}(s))|_{\mathbb{L}^2} \nonumber\\
&\leq&
 |\tilde{v}(s)|_{\mathbb{L}^2}|F(s) -
F(u(s)-\lambda \tilde{v}(s))|_{\mathbb{L}^2} \nonumber\\
\end{eqnarray}}
\begin{eqnarray}\label{just-domin-Torus}
 e^{-r_\lambda(s)}|\langle F(s) &-&
F(u(s)-\lambda \tilde{v}(s)), \tilde{v}(s))\rangle_{\mathbb{L}^2}|\nonumber\\
&\leq&
 |\tilde{v}(s)|_{\mathbb{H}^{1, 2}}\big[|F(s)|_{\mathbb{L}^2}+
|u(s)|_{\mathbb{H}^{\alpha-1, 2}}+ |\tilde{v}(s)|_{\mathbb{H}^{\alpha-1, 2}}+ |B(u(s)-\lambda \tilde{v}(s))|_{\mathbb{H}^{-1, 2}}\big] \nonumber\\
&\leq& c
 |\tilde{v}(s)|_{\mathbb{H}^{1, 2}}\big[|F(s)|_{\mathbb{L}^2}+
|u(s)|_{\mathbb{H}^{\alpha-1, 2}}+ |\tilde{v}(s)|_{\mathbb{H}^{\alpha-1, 2}}+
|u(s)|_{\mathbb{H}^{1, 2}}|u(s)|_{\mathbb{L}^{ 2}} \nonumber\\
&+& |\tilde{v}(s)|_{\mathbb{H}^{1, 2}}|\tilde{v}(s)|_{\mathbb{L}^{ 2}}
+|u(s)|_{\mathbb{H}^{1, 2}}|\tilde{v}(s)|_{\mathbb{L}^{ 2}} + |\tilde{v}(s)|_{\mathbb{H}^{1, 2}}|u(s)|_{\mathbb{L}^{ 2}}\big].
\end{eqnarray}
This ends the proof of the existence of a solution $ (u(t), t\in [0, T])$ belonging to the first intersection in \eqref{eq-set-solu-weak-torus} and satisfying by construction  \eqref{cond-solu-torus-H1}.\del{ and  $ u(\cdot, \omega) \in L^\infty(0, T; \mathbb{H}^{1, 2}(\mathbb{T}^2)) \cap 
L^2(0, T; \mathbb{H}^{1+\frac\alpha2, 2}(\mathbb{T}^2)), \; P-a.s.$}

\subsection*{Proof of the time regularity.}

To prove the continuity of the trajectories of the weak solution 
$ (u(t), t\in[0, T])$, i.e. $ u(\cdot, \omega)\in C([0, T], \mathbb{L}^2(O)), \; P-a.s.$, we apply \cite[Proposition VII.3.2.2]{Metivier-book-SPDEs-88}, see also
\cite[Proposition 2.5]{Sundar-Sri-large-deviation-NS-06}. We consider the dense Gelfand Triple 
$$ \mathbb{H}^{1, 2}(\mathbb{T}^2) \hookrightarrow \mathbb{L}^{2}(\mathbb{T}^2) \hookrightarrow 
(\mathbb{H}^{1, 2}(\mathbb{T}^2))^*=\mathbb{H}^{-1, 2}(\mathbb{T}^2).$$
Using  \eqref{classical-B-H-1}, Sobolev Embedding\del{,  the trivial identity $ |u|\leq 1+|u|^2$ } and  \eqref{cond-solu-torus-H1}, we infer that 
$ B(u(\cdot, \omega))\in L^2(0, T; \mathbb{H}^{-1, 2}(\mathbb{T}^2)),\; P-a.s.$. In fact,
\begin{equation}\label{cont-1}
\mathbb{E} \int_0^T|B(u(s))|_{\mathbb{H}^{-1, 2}}^{2}ds \leq c \mathbb{E} \int_0^T|u(s)|^4_{\mathbb{H}^{1, 2}}ds
\leq c (1+\mathbb{E}\sup_{[0, T]}|u(s)|^p_{\mathbb{H}^{1, 2}})<\infty. 
\end{equation}
And that for $\alpha\leq 2$,
\begin{equation}\label{cont-1-1}
\mathbb{E} \int_0^T|A_\alpha u(s)|^2_{\mathbb{H}^{-1, 2}}ds \leq c \mathbb{E} \int_0^T|u(s)|^2_{\mathbb{H}^{\alpha-1, 2}}ds
\del{\leq c \mathbb{E}\sup_{[0, T]}|u(s)|^2_{\mathbb{H}^{1, 2}}}\leq c (1+\mathbb{E}\sup_{[0, T]}|u(s)|^p_{\mathbb{H}^{1, 2}})<\infty. 
\end{equation}
Moreover, we prove that\del{ the trajectories of the 
continuous} the martingale $ M(t):= \int_0^tG(u(s))dW(s)$ belongs to $ L^2(\Omega, \del{L^\infty}C(0, T; \mathbb{H}^{1, 2}(\mathbb{T}^2)))$. In fact, we use 
Burkholdy-Davis-Gandy inequality, Assumption $ (\mathcal{C})$: (\eqref{Eq-Cond-Linear-Q-G}
with $ q=2$ and $ \delta=1$), \eqref{cond-solu-torus-H1}, we obtain 
\begin{eqnarray}\label{cont-2}
\mathbb{E}\sup_{[0, T]} |\int_0^tG(u(s))dW(s)|^2_{\mathbb{H}^{1, 2}}
&\leq& c \mathbb{E}\int_0^T|G(u(s))|^2_{L_Q(\mathbb{H}^{1, 2})}ds
\leq c \mathbb{E}\int_0^T(1+|u(s)|^2_{\mathbb{H}^{1, 2}})ds \nonumber\\
&\leq& c (1+ \mathbb{E}\sup_{[0, T]}|u(s)|^p_{\mathbb{H}^{1, 2}})<\infty.
\end{eqnarray}
Hence from \eqref{cond-solu-torus-H1}, \eqref{cont-1}, \eqref{cont-1-1}  and \eqref{cont-2}, 
we establish the existence of a subset $ \Omega'\subset \Omega$ (independent of "t"),  such that $ P(\Omega')=0$ and
$ F(u(\cdot, \omega))\in L^2(0, T; \mathbb{H}^{-1, 2}(\mathbb{T}^2))$, $  u(\cdot, \omega) \;\text{and}\;  M(\cdot, \omega) 
\in L^\infty(0, T; \mathbb{H}^{1, 2}(\mathbb{T}^2))$, $\forall \omega \in \Omega'^c$. These ingredients are enough to apply  \cite[Proposition VII.3.2.2]{Metivier-book-SPDEs-88}\del{\cite[Proposition 2.5.]{Sundar-Sri-large-deviation-NS-06}}, hence we get the result. It is important to mention that the property $  u(\cdot, \omega) \;\text{and}\;  M(\cdot, \omega) 
\in L^\infty(0, T; \mathbb{H}^{1, 2}(\mathbb{T}^2))$ is more what we need here. In deed, it is sufficient to prove $  u(\cdot, \omega) \;\text{and}\;  M(\cdot, \omega) 
\in L^2(0, T; \mathbb{H}^{1, 2}(\mathbb{T}^2))$. By the above two subsections, the proof of $ (3.7.1)$ is achieved.

\del{ Therefore, we have
\begin{eqnarray}\label{eq-equality-F-Fn-1}
\lim_{\lambda\rightarrow 0}\mathbb{E}\int_0^Te^{-r_\lambda(s)}\big\{2\langle F(s)&-&
F(u(s)), \tilde{v}(s))\rangle_{\mathbb{L}^2}+
\big(-r'_\lambda(s)\lambda|\tilde{v}(s)|^2_{\mathbb{L}^2}\nonumber\\
&+& 2\langle F(u(s)) - F(u(s)-\lambda \tilde{v}(s), \tilde{v}(s))\rangle_{\mathbb{L}^2}\big)\big\}ds= 0.
\end{eqnarray}
\begin{eqnarray}\label{eq-equality-F-Fn-1}
\lim_{\lambda\rightarrow 0}\mathbb{E}\int_0^Te^{-r_\lambda(s)}\big\{2\langle F(s)&-&
F(u(s)), \tilde{v}(s))\rangle_{\mathbb{L}^2}+
\big(-r'_\lambda(s)\lambda|\tilde{v}(s)|^2_{\mathbb{L}^2}\big\}ds\nonumber\\
&+& \lim_{\lambda\rightarrow 0}\mathbb{E}\int_0^T2e^{-r_\lambda(s)}\langle F(u(s)) - F(u(s)-\lambda \tilde{v}(s)), \tilde{v}(s))
\rangle_{\mathbb{L}^2}\big)ds= 0.
\end{eqnarray}}

\del{WITOUT STOP TIME\subsection{Proof of pathwise uniqueness.}\label{sec-subsection-uniqueness-Torus}
Let $ u^1$ and $ u^2$ be two weak solutions of Equation \eqref{Main-stoch-eq} with $ 1<\alpha \leq 2$ as constructed in Subsection
\ref{sec-subsection-existence-Torus}. Let $ w:=  u^1- u^2$, then  $ w$ satisfied the following equation
\begin{eqnarray}
 w(t) = \int_0^t \big(-A_\alpha w(s)+ B(w(s), u^1(s))&+& B(u^2(s), w(s))\big)ds \nonumber\\
&+& \int_0^t\big(G(u^1(s))- G(u^2(s))\big)dW(s).
\end{eqnarray}
Using Ito formula to the product $ e^{-r(t)}|w(t)|^2_{\mathbb{L}^{2}}$, with $(r(t), t\in [0, T])$ is a
real positive stochastic process to be precise later, Property \eqref{Eq-3lin-propnull},
condition \eqref{Eq-Cond-Lipschitz-Q-G} with $ q=2$ and $\delta =0$ (locally Lipschitz), 
Estimation \eqref{3linear-H1-H-1}, Young inequality  and arguing as in the proof of 
\eqref{Eq-Ito-n-weak-estimation-1-Torus} by replacing
the spaces $ \mathbb{H}^{1, 2}(\mathbb{T}^2)$ and $ \mathbb{H}^{1+\frac\alpha2, 2}(\mathbb{T}^2)$ by
$ \mathbb{L}^{2}(\mathbb{T}^2)$ respectively $ \mathbb{H}^{\frac\alpha2, 2}(\mathbb{T}^2)$.  we infer that 
\begin{eqnarray}\label{Torus-uniquenss-ito-formula}
\mathbb{E}\!\!\!\!&{}&\!\!\!\!e^{-r(t)}|w(t)|^2_{\mathbb{L}^{2}}+
2\mathbb{E}\int_0^{t}e^{-r(s)}|w(s)|^2_{\mathbb{H}^{\frac\alpha2, 2}}ds \nonumber\\
&\leq& \mathbb{E}\int_0^{t}e^{-r(s)}|| G(u^1(s))- G(u^2(s))||^2_{L_Q(\mathbb{L}^2)}ds\nonumber\\
& -& \mathbb{E}\int_0^{t} e^{-r(s)} \big(2\langle B(w(s),  w(s)),  u^1(s)\rangle-
r'(s)|w(s)|^2_{\mathbb{L}^{2}}\big)ds\nonumber\\
& \leq & c_N\mathbb{E}\int_0^{t}e^{-r(s)}\big(|w(s)|^2_{\mathbb{L}^2}+
|u^1(s)|_{\mathbb{H}^{1, 2}}|w(s)|^{\frac2\alpha}_{\mathbb{H}^{\frac{\alpha}{2}, 2}}
|w(s)|^{2\frac{\alpha-1}\alpha}_{{\mathbb{L}^{2}}}-
r'(s)|w(s)|^2_{\mathbb{L}^{2}}\big)ds\nonumber\\
& \leq & c_N
\mathbb{E}\int_0^{t}e^{-r(s)}\big(|w(s)|^2_{\mathbb{L}^2}+ 2 c|w(s)|^2_{\mathbb{H}^{\frac\alpha2, 2}}+
2c_1|u^1(s)|^{\frac{\alpha}{\alpha-1}}_{\mathbb{H}^{1, 2}}|w(s)|^2_{\mathbb{L}^{2}}-
r'(s)|w(s)|^2_{\mathbb{L}^{2}}\big)ds.\nonumber\\
\end{eqnarray}
Now, we choose $ c<1$ and $ r'(s)= 2c_1|u^1(s)|^{\frac{\alpha}{\alpha-1}}_{\mathbb{H}^{1, 2}}$ and replace in
\eqref{Torus-uniquenss-ito-formula},
we end up by the simple formula
\begin{eqnarray}
\mathbb{E}e^{-r(t)}|w(t)|^2_{\mathbb{L}^{2}}&+& 2(1-c)\mathbb{E}\int_0^{t}
e^{-r(s)}|w(s)|^2_{\mathbb{H}^{\frac\alpha2, 2}}ds
\leq c_N\mathbb{E}\int_0^{t}e^{-r(s)}|w(s)|^2_{\mathbb{L}^2}ds.\nonumber\\
\end{eqnarray}
Than by application of Gronwall's lemma, we get
$ \forall t\in [0, T, \, e^{-r(t)}|w(t)|^2_{\mathbb{L}^{2}} = 0, \, P-a.s.$ as
$P( e^{-c\int_0^{t}|u^1(s)|^{\frac{\alpha}{\alpha-1}}_{\mathbb{H}^{1, 2}}ds} <\infty)= 1 $.
The proof is achieved once we remark that thanks to \eqref{cond-solu-torus-H1},
$ P( \int_0^T|u^1(s)|^{\frac{\alpha}{\alpha-1}}_{\mathbb{H}^{1, 2}}ds <\infty)= 1$.}
%%%%%%%%%%%%%%%%%%%%DEL%%%%%%%
\del{\noindent To prove the $ H^1-$continuity in \eqref{Eq-H-1-regu-2D-torus}, we use again \cite[Proposition VII.3.2.2]{Metivier-book-SPDEs-88}, with the gelfand triple \eqref{Gelfand-triple-Torus} and argue as above. In particular, we have 
\begin{itemize}
\item thanks to \eqref{est-B-estimatoion-q=2-eta-to-use} with $ \eta=1$ and by interpolation than by application \eqref{cond-solu-torus-H1} (remark that $ 4\frac{\alpha-1}{4-\alpha}<4$), we infer that $ B(u(\cdot, \omega))\in L^\frac{\alpha+2}{4-\alpha}(0, T; \mathbb{H}^{1-\frac\alpha2, 2}(\mathbb{T}^2)),\; P-a.s.$. In fact,
\begin{eqnarray}\label{cont-1-power-funct-alpha}
\mathbb{E} \int_0^T|B(u(s))|_{\mathbb{H}^{1-\frac\alpha2, 2}}^{\frac{\alpha+2}{4-\alpha}}ds &\leq & c \mathbb{E} \int_0^T|u(s)|^{2\frac{\alpha+2}{4-\alpha}}_{\mathbb{H}^{2-\frac\alpha2, 2}}ds
\leq c \mathbb{E} \int_0^T\Big(|u(s)|^{\frac{4-\alpha}{\alpha+2}}_{\mathbb{H}^{1+\frac\alpha2, 2}}
|u(s)|^{2\frac{\alpha-1}{\alpha+2}}_{\mathbb{H}^{1, 2}}\Big)^{2\frac{\alpha+2}{4-\alpha}}ds\nonumber\\
&\leq& c \mathbb{E} \int_0^T|u(s)|^{2}_{\mathbb{H}^{1+\frac\alpha2, 2}}
|u(s)|^{4\frac{\alpha-1}{4-\alpha}}_{\mathbb{H}^{1, 2}}ds
\del{\leq c \mathbb{E}\sup_{[0, T]}|u(s)|^2_{\mathbb{H}^{1, 2}}}<\infty. 
\end{eqnarray}
\item thanks to $\alpha\leq 2$, we have $ \frac{\alpha+2}{4-\alpha} \leq 2$ and
\begin{equation}\label{cont-1-1}
\mathbb{E} \int_0^T|A_\alpha u(s)|_{\mathbb{H}^{1-\frac\alpha2, 2}}^{\frac{\alpha+2}{4-\alpha}}ds \leq c \mathbb{E} \int_0^T|u(s)|_{\mathbb{H}^{1+\frac\alpha2, 2}}^{\frac{\alpha+2}{4-\alpha}}ds
\leq c (1+\mathbb{E}\int_0^T|u(s)|^2_{\mathbb{H}^{1+\frac\alpha2, 2}}ds)<\infty. 
\end{equation}

\item Now, we prove that\del{To prove that the trajectories of the 
continuous martingale $ M(t):= \int_0^tG(u(s))dW(s)$ belong $ P-a.s.$ to} $ M\in L^{\frac{2+\alpha}{2(\alpha-1)}}(\Omega, L^\infty(0, T; \mathbb{H}^{1+\frac\alpha2, 2}(\mathbb{T}^2)))$, we use 
Burkholdy-Davis-Gandy inequality, Assumption $ (\mathcal{C})$: (\eqref{Eq-Cond-Linear-Q-G}
with $ q=2$ and $ \delta=1$), \eqref{cond-solu-torus-H1}, we obtain 
\begin{eqnarray}\label{cont-2}
\mathbb{E}\sup_{[0, T]} |\int_0^tG(u(s))dW(s)|^2_{\mathbb{H}^{1, 2}}
&\leq& c \mathbb{E}\int_0^T|G(u(s))|^2_{L_Q(\mathbb{H}^{1, 2})}ds
\leq c \mathbb{E}\int_0^T(1+|u(s)|^2_{\mathbb{H}^{1, 2}})ds \nonumber\\
&\leq& c (1+ \mathbb{E}\sup_{[0, T]}|u(s)|^2_{\mathbb{H}^{1, 2}})<\infty.
\end{eqnarray}
\end{itemize}
Hence from \eqref{cond-solu-torus-H1}, \eqref{cont-1}, \eqref{cont-1-1}  and \eqref{cont-2}, 
we establish the existence of a subset $ \Omega'\subset \Omega$ (independent of "t"), 
such that $ P(\Omega')=0$ and
$ F(u(\cdot, \omega))\in L^2(0, T; \mathbb{H}^{-1, 2}(\mathbb{T}^2)),\; \text{and} \; u(\cdot, \omega),  M(\cdot, \omega) 
\in L^\infty(0, T; \mathbb{H}^{1, 2}(\mathbb{T}^2))\; \forall \omega \in \Omega'^c$. These ingredients are enough to apply  \cite[Proposition VII.3.2.2]{Metivier-book-SPDEs-88}\del{\cite[Proposition 2.5.]{Sundar-Sri-large-deviation-NS-06}}, hence we get the result. }
%%%%%%%%%%%%%%%END%%%%%%%%%%%%%

\subsection*{Proof of the pathwise uniqueness.}\label{sec-subsection-uniqueness-Torus}
Let $ u^1$ and $ u^2$ be two weak solutions of Equation \eqref{Main-stoch-eq} satisfying \eqref{eq-set-solu-weak-torus} 
and \eqref{cond-solu-torus-H1}. Let $ w:=  u^1- u^2$, then  $ w$ satisfies the following equation
\begin{eqnarray}\label{eq-uniq-w}
 w(t) = \int_0^t \big(-A_\alpha w(s)+ B(w(s), u^1(s))+ B(u^2(s), w(s))\big)ds + \int_0^t\big(G(u^1(s))- G(u^2(s))\big)dW(s).\nonumber\\
\end{eqnarray}
For $ N>0$, we define the stopping times, $ \tau_N^i: \inf\{t\in (0, T); |u^i(t)|_{\mathbb{L}^{2}}> N\}\wedge T, i=1, 2$, with the understanding that $ \inf(\emptyset)= +\infty$ and define
$ \tau_N:=\min_{i\in\{1,2\}}\{ \tau_N^i\}$.
Using Ito formula for the product $ e^{-r(t)}|w(t)|^2_{\mathbb{L}^{2}}$, with $(r(t), t\in [0, T])$ being a
 positive real stochastic process to be defined later, Property \eqref{Eq-3lin-propnull},
Assumption $(\mathcal{C})$ (\eqref{Eq-Cond-Lipschitz-Q-G} with $ q=2$ and $\delta =0$, locally Lipschitz), 
Estimate \eqref{3linear-H1+alpha2-H-1}, Young inequality  and arguing as in the proof of \eqref{Eq-Ito-n-weak-estimation-torus}\del{
\eqref{Eq-Ito-n-weak-estimation-1-Torus}} with the replacement of
the spaces $ \mathbb{H}^{1, 2}(\mathbb{T}^2)$ and $ \mathbb{H}^{1+\frac\alpha2, 2}(\mathbb{T}^2)$ by
$ \mathbb{L}^{2}(\mathbb{T}^2)$ respectively $ \mathbb{H}^{\frac\alpha2, 2}(\mathbb{T}^2)$,  we infer that for $ 1\leq \alpha< 2$ 
(here we omit to writ the proof for the dissipative regime, as it is classical.)
\del{\begin{eqnarray}\label{Torus-uniquenss-ito-formula-H-1}
\mathbb{E}\!\!\!\!&{}&\!\!\!\!e^{-r(t\wedge \tau_N)}|w(t\wedge \tau_N)|^2_{\mathbb{L}^{2}}+
2\mathbb{E}\int_0^{t\wedge \tau_N}e^{-r(s)}|w(s)|^2_{\mathbb{H}^{\frac\alpha2, 2}}ds \nonumber\\
&\leq& \mathbb{E}\int_0^{t\wedge \tau_N}e^{-r(s)}|| G(u^1(s))- G(u^2(s))||^2_{L_Q(\mathbb{L}^2)}ds\nonumber\\
& -& \mathbb{E}\int_0^{t\wedge \tau_N} e^{-r(s)} \big(2\langle B(w(s)),  u^1(s)\rangle-
r'(s)|w(s)|^2_{\mathbb{L}^{2}}\big)ds\nonumber\\
& \leq & c_N\mathbb{E}\int_0^{t\wedge \tau_N}e^{-r(s)}\big(|w(s)|^2_{\mathbb{L}^2}+
|u^1(s)|_{\mathbb{H}^{1, 2}}|w(s)|^{\frac2\alpha}_{\mathbb{H}^{\frac{\alpha}{2}, 2}}
|w(s)|^{2\frac{\alpha-1}\alpha}_{{\mathbb{L}^{2}}}-
r'(s)|w(s)|^2_{\mathbb{L}^{2}}\big)ds\nonumber\\
& \leq & c_N
\mathbb{E}\int_0^{t\wedge \tau_N}e^{-r(s)}\big(|w(s)|^2_{\mathbb{L}^2}+ 2 c|w(s)|^2_{\mathbb{H}^{\frac\alpha2, 2}}+
2c_1|u^1(s)|^{\frac{\alpha}{\alpha-1}}_{\mathbb{H}^{1, 2}}|w(s)|^2_{\mathbb{L}^{2}}-
r'(s)|w(s)|^2_{\mathbb{L}^{2}}\big)ds.\nonumber\\
\end{eqnarray}}
\begin{eqnarray}\label{Torus-uniquenss-ito-formula-H-1}
\mathbb{E}\!\!\!\!&{}&\!\!\!\!e^{-r(t\wedge \tau_N)}|w(t\wedge \tau_N)|^2_{\mathbb{L}^{2}}+
2\mathbb{E}\int_0^{t\wedge \tau_N}e^{-r(s)}|w(s)|^2_{\mathbb{H}^{\frac\alpha2, 2}}ds \nonumber\\
&\leq& \mathbb{E}\int_0^{t\wedge \tau_N}e^{-r(s)}|| G(u^1(s))- G(u^2(s))||^2_{L_Q(\mathbb{L}^2)}ds\nonumber\\
& -& \mathbb{E}\int_0^{t\wedge \tau_N} e^{-r(s)} \big(2\langle B(w(s)),  u^1(s)\rangle-
r'(s)|w(s)|^2_{\mathbb{L}^{2}}\big)ds\nonumber\\
& \leq & c_N\mathbb{E}\int_0^{t\wedge \tau_N}e^{-r(s)}\big(|w(s)|^2_{\mathbb{L}^2}+
| u^1|_{\mathbb{H}^{1+\frac\alpha2, 2}}|w|^{\frac{2-\alpha}{\alpha}}_{\mathbb{H}^{\frac{\alpha}{2}, 2}}
|w|^{\frac{3\alpha-2 }{\alpha}}_{{\mathbb{L}^{2}}}-
r'(s)|w(s)|^2_{\mathbb{L}^{2}}\big)ds\nonumber\\
& \leq & c_N
\mathbb{E}\int_0^{t\wedge \tau_N}e^{-r(s)}\big(|w(s)|^2_{\mathbb{L}^2}+ 2 c|w(s)|^2_{\mathbb{H}^{\frac\alpha2, 2}}+
2c_1|u^1(s)|^{\frac{2\alpha}{3\alpha-2}}_{\mathbb{H}^{1+\frac\alpha2, 2}}|w(s)|^2_{\mathbb{L}^{2}}-
r'(s)|w(s)|^2_{\mathbb{L}^{2}}\big)ds.\nonumber\\
\end{eqnarray}
We choose $ c<1$ and $ r'(s)= 2c_1|u^1(s)|^{\frac{2\alpha}{3\alpha-2}}_{\mathbb{H}^{1+\frac\alpha2, 2}}$ and replace in
\eqref{Torus-uniquenss-ito-formula-H-1}, we end up with the simple formula
\begin{eqnarray}
\mathbb{E}e^{-r(t\wedge \tau_N)}|w(t\wedge \tau_N)|^2_{\mathbb{L}^{2}}&+& 2(1-c)\mathbb{E}\int_0^{t\wedge \tau_N}
e^{-r(s)}|w(s)|^2_{\mathbb{H}^{\frac\alpha2, 2}}ds\nonumber\\
&\leq& c_N\mathbb{E}\int_0^{t}e^{-r(s\wedge \tau_N)}|w(s\wedge \tau_N)|^2_{\mathbb{L}^2}ds.
\end{eqnarray}
By application of Gronwall's lemma, we get
$ \forall t\in [0, T]$ the random variable $|w(t\wedge \tau_N)|^2_{\mathbb{L}^{2}} = 0, \, P-a.s.$ as much as 
$ P( \int_0^{t\wedge \tau_N }|u^1(s)|^{\frac{2\alpha}{3\alpha-2}}_{\mathbb{H}^{1+\frac\alpha2, 2}}ds <\infty)= 1$. 
This last statement is confirmed thanks to 
\eqref{cond-solu-torus-H1} and the condition $ 1\leq \alpha<2$. 
The proof is then achieved once we remark that thanks to Chebyshev inequality and \eqref{cond-solu-torus-H1},
we have $ \lim_{N\rightarrow \infty}\tau_N = T, P-a.s.$ and thanks to the $\mathbb{L}^2-$continuity of $ (w(t), t\in[0, T])$.
\subsection*{Proof of the space regularity.}
In the aim to get Estimate \eqref{propty-of-2D-global-mild-sol},\del{(the first term of this estmate),}\del{\eqref{est-u-torus-H-1-q-1}} we use the regularization effect of the vorticity. Let $ (u(t), t\in [0, T])$ be a
weak solution of Equation \eqref{Main-stoch-eq} in the sense of Definition
\ref{def-variational solution}. Thanks to Appendix \ref{sec-Passage Velocity-Vorticity}, the $ curl u \del{\in L^{2}(\mathbb{T}^2)}$ is a weak solution 
of  Equation \eqref{Eq-vorticity-Torus-2-diff}. We know from \cite{Debbi-scalar-active} that this equation\del{ \eqref{Eq-vorticity-Torus-2-diff}} admits a unique global solution  which is simultaneously weak and mild and satisfies
\begin{equation}\label{eq-est-theta-brut}
 \mathbb{E}\Big(\sup_{[0, T]}| \theta(t)|_{L^q}^q + \int_0^T
 | \theta(t)|_{H^{\frac\alpha2, 2}}^2dt\Big)<\infty,
\end{equation}
for $q_0, q $ and $ \alpha$
being characterized as in $(6.3.2.)$\del{Theorem \ref{Main-theorem-strog-Torus}} and  
provided that $ curl u_0$\del{$ curl u_0\in L^p(\Omega, \mathcal{F}_0, P; L_1^{q_0}(O))$} fulfills  \eqref{eq-curl-u-0-torus} and 
$ \tilde{G}$, defined by \eqref{inte-g-tilde}, satisfies the Lipschitz and the growth conditions, i.e.  $ \tilde{G}$ satisfies \eqref{Eq-Cond-Lipschitz-Q-G} and \eqref{Eq-Cond-Linear-Q-G}, with   
$R_Q(\mathbb{L}^2, \mathbb{H}^{\delta, q})$ in the LHSs and 
$ \mathbb{H}^{\delta, q}$ in the RHSs are replaced by 
$R_Q(L^2, L^q) $ and $ L^q$ respectively. As \eqref{eq-curl-u-0-torus} is fulfilled by assumption, we check that the two latter conditions are also satisfied. In fact, thanks to the definition of $ \tilde{G}$, Assumption $(\mathcal{C})$, with $ \delta =1$
and Lemma \ref{lem-R1-bounded},\del{the fact that the
operator $\mathcal{R}^{1}$ is bounded, on $\mathbb{H}^{s, q}(\mathbb{T}^2)$,} we get
\begin{eqnarray}
|| \tilde{G}(\theta)||_{R_\gamma(L^2, L^q)}&=&|\big[\sum_{k\in\Sigma}| curl G(\mathcal{R}^1(\theta))Q^\frac12 e_k|^2\big]^\frac12|_{L^q}
\leq c|\big[\sum_{k\in\Sigma}| \partial_j G(\mathcal{R}^1(\theta))Q^\frac12 e_k|^2\big]^\frac12|_{L_2^q}\nonumber\\
&\leq & c ||G(\mathcal{R}^1(\theta))||_{R_\gamma(\mathbb{L}^2, \mathbb{H}^{1, q})}\leq c(1+|\mathcal{R}^1(\theta)|_{\mathbb{H}^{1, q}})\leq c(1+|\theta|_{L^q}). 
\del{\sum_{k\in\Sigma}| curl \sigma_k(\mathcal{R}^1(\theta(s)))|_{L^q}\nonumber\\
&\leq &c \sum_{k\in\Sigma}| \sigma_k(\mathcal{R}^1(\theta(s)))|_{H^{1,q}}
\leq c (1+| \mathcal{R}^1(\theta(s))|_{H^{1,q}})\leq c (1+| \theta(s)|_{H^{1,q}}).}
\end{eqnarray} 
By the same way, we prove the Lipschitz condition. Estimate \eqref{propty-of-2D-global-mild-sol} follows from \eqref{eq-est-theta-brut} and Lemma \ref{lem-basic-curl-gradient}.

\noindent {\bf Preliminary Notations \& General Remarks}
Let  $ \mathbb{N}_k:=\del{\mathbb{N}-}\{j\in \mathbb{N},\; s.t.\; j>k\}$\del{, $ \mathbb{Z}^d:=\{(k_1, k_2, \cdots, k_d),\; k_j\in \}$} and 
$ \mathbb{Z}^d_0:=\mathbb{Z}^d-\{0\}$. 
For $ d \in \mathbb{N}_0$, we denote by $ \mathbb{T}^d$  the $d-$dimensional torus and by $ D(\mathbb{T}^d)$ the set of infinitely differentiable scalar-valued (complex) functions on $ \mathbb{T}^d$.\del{$ C^k(\mathbb{T}^d), k\in \mathbb{N}_0, $ is the set of  $k-$differentiable functions on $ \mathbb{T}^d$,} By a domain " $ O$" we mean an open non empty set.
For either $ O=\mathbb{T}^d $ or $ O\subset \mathbb{R}^d$ bounded, we define $ H_l^{\beta, q}(O):= (H^{\beta, q}(O))^l, l\in \mathbb{N}_0, 
\beta \in \mathbb{R}, 1<q<\infty$, in particular for $ \beta =0$,  $L^q_l(O):= (L^q(O))^l$. Recall that $ H^{\beta, q}(O)$, according to $ O$, are either the Sobolev spaces on a bounded domain or the null average periodic Sobolev spaces on the torus. $ C_0^\infty(O) $ is the set of infinitely differentiable real functions\del{ with values in $\mathbb{R}$}\del{ or in $\mathbb{C}$),} with compact support on the bounded domain $ O\subset \mathbb{R}^d(O)$, $ \mathring{H}_d^{\beta, q}(O), \beta \in \mathbb{R}_+, 1<q<\infty$ is the completion of $ C_0^\infty(O) $ in $ H_d^{\beta, q}(O)$, with $ O\subset \mathbb{R}^d$ bounded. $ \partial_{x_j}$ stands for the partial derivative
with respect to the component $ x_j$, sometimes we also use the notation  $ \partial_j$. We use the notation $ |\cdot|_{X}$ to indicate the norm in $ X$. For simplicity, we denote the norm of a matrix by the corresponding scalar  space notation of the components or by a symbol of this space. The Sobolev norms used are those defined by Riesz-potential. We denote by $\mathcal{L}(X)$ the set of  bounded linear operators on a Banach space $ X$.  
The notation $|.|_{L^{r_1}\rightarrow  L^{r_2}}$
stands for the usual norm of bounded operators from  $L^{r_1}$ to $  L^{r_2}$. 
The classification of the subcritical,  critical and
superctitical regimes corresponds to  $ \alpha \in (1, 2)$, $ \alpha =1$ and $ \alpha \in (0, 1)$ respectively.
The dissipative ( sometimes called also the Laplacian dissipation) and the hyperdissipative regimes correspond to $ \alpha =2$ respectively to 
$ \alpha >2$. \del{The abbreviations (FSNSE), (SFNSE)and (FNSE) are used respectively for 
fractional 
stochastic Navier-Stokes equation, the
stochastic fractional Navier-Stokes equation, the deterministic fractional 
stochastic Navier-Stokes equation.}The abbreviations (FSNSE), (SNSE) and (FNSE) are used respectively for 
fractional 
stochastic Navier-Stokes equation, the
stochastic Navier-Stokes equation and the deterministic fractional 
stochastic Navier-Stokes equation. The abbreviation i.i.d  means independent and identically distributed.
 $ \{a_1, a_2\}\leq_k b$ (respectively $ \{a_1, a_2\}\geq_k b$) means $ a_k\leq b$, $ a_j< b, \; j\neq k$ 
and $ a_1=a_2 < b$ (respectively $
a_k\geq b$, $ a_j> b, \; j\neq k$ and $ a_1=a_2 > b$).  The expression $ q\leq_\infty q_0$ means 
$ q\leq q_0<\infty$ and $ q< q_0=\infty$.\del{ $ \max^1_>\{a, b\} $ equals $ a$, if $ a>b$ and strictly greater 
than $ b$ if  $ a\leq b$. A mathematical notion with statement is noted by the usual notation with subscription
 "s" e.g. $ [_s=[$ if the statement "s" is satisfied and $ [_s=($ if not.}
We say that  $ q^*$ is the  conjugate of $q $, if for $ 1<q<\infty$,
 $ q^*$ satisfies the equation $ \frac 1q+\frac{1}{q^*} = 1$  and for $ q=1$ respectively 
$ q=\infty$, $ q^*=\infty$ respectively $ q^*=1$.
We define, in distribution sense, the curl of a vector \del{distribution} field 
 $ v=(v_1, v_2)$ by $ curl v := \partial_1v_2 -\partial_2v_1 $. The vorticity matrix of a $dD-$vector field $ v $ on $ \mathbb{R}^d$\del{ to $ \mathbb{R}^d$} is the null diagonal, antisymmetric matrix defined by $ \Omega(v):= ((\Omega(v))_{i, j})_{1\leq i, j\leq d}$, where $ (\Omega(v))_{i, j}:= \partial_i v_j-\partial_j v_i$. For $ d=2$, the vorticity $ \Omega(v)$ is identified to the scalar function $curl v$ and for $ d=3$ to the transpose of the $3D-$vector function $ (\partial_2 v_3-\partial_3 v_2, \partial_1 v_3-\partial_3 v_1, \partial_1 v_2-\partial_2 v_1)$. In Appendix \ref{Sobolev pointwise multiplication-Bounded-Domain}, we have proved that if a Sobolev\del{ embedding and} pointwise multiplication estimate is satisfied for Sobolev spaces on $ \mathbb{R}^d$ and if $ O\subset \mathbb{R}^d$ is a "good" bounded domain, then this pointwise multiplication estimate is also valid for Sobolev spaces on bounded domains. Therefore, in many cases, we referee directely to the source of the estimate  on $ \mathbb{R}^d$.\del{ Moreover, we
also pass from vectorial valued functions to real values functions and vise-versa without 
mentioning every time.}  We use the Einstein summation
convention. Constants vary from line to line and we often delete
their dependence on parameters.

%%%%%%%%%%%%%%%%%%%%%%
%%%%%%%%%%%%%%%%%%%%%%%%%%%%%%%
%%%%%%%%%%%%%%%%%%%%%%%%%%%%%%%%%%BIG%%%%%%%%%%%DEL
\del{\section{Global existence and uniqueness of the mild solution of the multi-dimensional FSNSEs.}\label{sec-global-mild-solution}
Here we prove (\ref{Main-theorem-mild-solution-d}.2-3). Recall that we have constructed in Section \ref{sec-1-approx-local-solution}, see also Appendix \ref{appendix-local-solution}, a maximal local mild solution $ (u, \tau_\infty)$, where $ \tau_\infty $  is defined by 
\eqref{Eq-def-tau-n-delta} and \eqref{Eq-def-tau-delta} and $ u(t)=u_N(t),$ for $ t\leq \tau_N$\footnote{we use capital N to avoid confusion with the Faedo-Galerkin approximations.}, provided that $ \alpha \in(1+\frac dq, 2]$, $ q>d$ and $u_0$ and $ G$ satisfying assumptions $ (\mathcal{B})$ and $ (\mathcal{C})$ respectively. The solution can start from an $ \mathbb{L}^q-$ initial data, therefore we take  $ \delta =0$. To prove that the local solution is global, it is sufficient to prove that the $ \mathbb{L}^q-$norm does not explode,  i.e. for $ P-a.s.$, $ \tau_\infty =T$. 

\vspace{0.25cm}

In the case $ O=\mathbb{T}^2$, thanks to Lemma \ref{lem-basic-curl-gradient} and \cite[Theorem 2.6]{Debbi-scalar-active} and according to the cases cited in $ (\ref{Main-theorem-mild-solution-d}.2)$, we infer that
\begin{equation}\label{est-nabla-u-theta-Global-existence}
 \exists c>0, s.t. \forall N\in \mathbb{N}_0, \mathbb{E}\sup_{[0, \tau_N]}|\nabla u(t)|^q_{q}\leq c \mathbb{E}\sup_{[0, \tau_N]}|\theta(t)|^q_{L^{q}}\leq c<\infty.
\end{equation}
Thus both the $\mathbb{L}^q$  and the $\mathbb{H}^{1, q}$  norms do not explode, therefore the solution is global. The estimation \eqref{est-nabla-u-theta-Global-existence} is also enough to prove the uniqueness as we shall show for the general case bellow, see also \cite{Debbi-scalar-active}. The regularity in \eqref{propty-of-2D-global-mild-sol} is a consequence of the application of \cite[Theorem 2.6]{Debbi-scalar-active} and Lemma \ref{lem-basic-curl-gradient} as we have seen in Section \ref{sec-Torus}.

\vspace{0.25cm}

\noindent For the general case, we adapt to the stochastic framework, the method used in \cite{Giga-al-Globalexistence-2001} and than apply Hasminskii's criteria \cite{BrzezniakDebbi1,  Brzez-Beam-eq, Hasminski-book-80}.\del{ It is easily seen that Thanks to Assumption $ (\mathcal{C}_b)$ the OU process, $ (z(t\wedge \tau_\infty), t\in[0, T])$ defined by  \eqref{Eq-z-t}, enjoys
\begin{equation}\label{cond-z-nabla-z}
 \mathbb{E}\sup_{[0, \tau)}(|z(s)|^2_{\mathbb{L}^q}+ |\nabla z(s)|^2_{q}) 
\leq c<\infty.
\end{equation}} Using Lemma \ref{Giga-Mikayawa-solutionLr-NS} and Lemma \ref{Lem-semigroup}, the continuity of the 
Helmholtz projection and  H\"older inequality, we get $P-a.s$ for all $ N\in \mathbb{N}_0$ and  $ t\in [0, T]$, 
\begin{eqnarray}\label{est-b-global-mild-solu-0}
|\int_0^{t\wedge\tau_N} e^{-(t\wedge\tau_N-s)A_\alpha}B(u(s))ds|_{\mathbb{L}^q}
&\leq & c \int_0^{t\wedge\tau_N} |A_\alpha^\frac d{\alpha q} e^{-(t\wedge\tau_N-s)A_\alpha}|_{\mathcal{L}(\mathbb{L}^{\frac q2})} |\Pi((u(s)\nabla)u(s))|_{\mathbb{L}^{\frac q2}}ds\nonumber\\
&\leq& c\int_0^{t\wedge\tau_N} (t\wedge\tau_N-s)^{-\frac d{\alpha q}}  |u(s)|_{\mathbb{L}^q}|\nabla u(s)|_{q}ds.
\end{eqnarray}
Therefore, as the local solution $ (u, \tau_\infty)$ satisfies Equation \eqref{Eq-Mild-Solution-stoped} up to $ \tau_\infty$, we infer that \del{for all $N \in \mathbb{N}_0 $, we have $P-a.s$ for all $ N\in \mathbb{N}_0$ and $ t\in [0, T]$,\del{ Thus for all $ t\leq \tau_\infty$ we have}
\begin{equation}\label{v-mild-sol-aux-pbm}
 |u(t\wedge\tau_N)|_{\mathbb{L}^q}= |e^{-tA_\alpha}u_0|_{\mathbb{L}^q}+ |\int_0^t e^{-(t-s)A_\alpha}B(u(s))|_{\mathbb{L}^q} + |z(s)|_{\mathbb{L}^q}.
\end{equation}}
\begin{eqnarray}\label{est-b-global-mild-solu-0}
|u(t\wedge\tau_N)|_{\mathbb{L}^q}&\leq& c\Big(|u_0|_{\mathbb{L}^q}+ c\int_0^{t\wedge\tau_N} (t\wedge\tau_N-s)^{-\frac d{\alpha q}}  |u(s)|_{\mathbb{L}^q}|\nabla u(s)|_{q}ds+ |z(t\wedge\tau_N)|_{\mathbb{L}^q}\Big),\nonumber\\
\end{eqnarray}
where the stopped Ornstein-Uhlenbeck process $( z(t\wedge\tau_N), t\in [0, T])$ is obtained as in  Remark \ref{Rem-1}. By application of  Gronwall's lemma, we conclude that  $P-a.s$ for all $ N\in \mathbb{N}_0$ and $ t\in [0, T]$, 
\del{ for all $ t\leq \tau_N$,}
\begin{eqnarray}
|u(t\wedge\tau_N)|_{\mathbb{L}^q}&\leq& c\Big(|u_0|_{\mathbb{L}^q}+ \sup_{[0, \tau_\infty)}|z(s)|_{\mathbb{L}^q}\Big)\exp{c\int_0^{t\wedge\tau_N} (t\wedge\tau_N -s)^{-\frac d{\alpha q}}|\nabla u(s)|_{q}ds}.\nonumber\\
\end{eqnarray}
\noindent We apply the increasing function $\ln^+x=\max\{0,\ln x\}$  to
 both sides of the above inequality and than we use the classical
inequalities for $ a, b > 0$, $\ln^+(a+b)\leq \ln^+(a)+ \ln^+(b)$, $
\ln^+(ab)\leq \ln^+(a)+\ln^+(b)$ and $a\leq 1+a^2$, the elementary property 
\begin{equation}\label{elementary-property}
\exists c>0,\;\;s.t.\; ln^+x \leq x +c,\;\; \forall x>0.
\end{equation}
\noindent Estimate  \eqref{Eq-nabla-z-t}\del{{cond-z-nabla-z}} and  Condition \eqref{cond-global-mild-solu}, we get for all $ N\in \mathbb{N}_0$ and $ t\in[0, T]$,
\begin{eqnarray}
\mathbb{E}\ln^+|u(t\wedge \tau_N)|_{\mathbb{L}^q}&\leq& c\Big(\mathbb{E}\ln^+|u_0|_{\mathbb{L}^q}+ \mathbb{E}\ln^+\sup_{[0, \tau_\infty)}|z(s)|_{\mathbb{L}^q} +\mathbb{E}\sup_{[0, \tau_\infty)}\int_0^t (t-s)^{-\frac d{\alpha q}}|\nabla u(s)|_{q}ds\Big)\nonumber\\
&\leq& c\Big(1+\mathbb{E}\ln^+|u_0|_{\mathbb{L}^q}\Big)<\infty.
\end{eqnarray}
Now, we introduce the function $ V: u\in \mathbb{L}^q(O) \mapsto \mathbb{R}_+ \ni V(u)=ln^+(|u|_{\mathbb{L}^q})$. 
Obviously  $V$ is uniformly continuous on bounded sets and from the 
calculus above, it is easy to check that $ V$ is a Lyapunov function.  In fact, 
\begin{eqnarray} V &\geq&  0 \; \text{on}\;\;  \mathbb{L}^q(O), \label{2.1}
\\
q_N &:=& \inf_{|u|_{\mathbb{L}^q\ge N}} ln^+(|u|_{\mathbb{L}^q}) \to \infty \text{ as
 }\;N\to\infty, \label{2.2}
\\
\mathbb{E} ln^+(|u_0|_{\mathbb{L}^q}) &<& c+ \mathbb{E}|u_0|_{\mathbb{L}^q}<\infty, \label{2.3}
\\ 
\text{and} &&\nonumber\\
\mathbb{E} V( u(t\land \tau_N)) &\leq&  c\bigl(1+\mathbb{E} V(u_0) \bigr)
% \mathbb{E} V( u(t\land\tau_k)) &\leq& \mathbb{E} V( u(0)) + C \int^t_0 \bigl(1+ \mathbb{E} V( u(s\land\tau_k))\bigr)\d s,
\text{ for all } t\ge 0,\;  N\in\mathbb{ N}_0. \label{2.5}
\end{eqnarray}
Consequently, 
\begin{eqnarray}\label{eq-prob-tau-n=0}
\lim_{N\rightarrow +\infty} P \bigl\{ \tau_N<t\bigr\} &\leq&
 \lim_{N\rightarrow +\infty} \frac1{q_N}\mathbb{E}
\bold 1_{\{\tau_N <t\}} V( u(t\land \tau_N))\nonumber\\
&\leq& \lim_{N\rightarrow +\infty} \frac1{q_N}c\bigl(1+\mathbb{E} V( u_0)\bigr)=0.
\end{eqnarray}
\noindent By this we achieve the proof of the global existence of the mild solution. The next step is to prove the uniqueness. Let  $ (u^1(t), t\in [0, T])$ and  $ (u^2(t), t\in [0, T])$  be two  mild solutions
satisfying\del{ the estimation \eqref{eq-last-est} and} $ P(u^1(0) = u^2(0)= u_0)=1$  and let $
u^0:= u^1- u^2$. We define for $ R>0$ the stopping times
\begin{equation}
 \tau^i_R := \inf\{t\in (0, T), \;\; s.t. \;\; |u^i(t) |_{\mathbb{L}^q}> R\}, \;\;\; \inf(\emptyset)=+\infty\;\;\; \text{and}\;\;\; \tau_R:= \tau^1_R\wedge \tau^2_R.
\end{equation}
\del{with the understanding that $ \inf(\emptyset)=+\infty$. } We show that for all $ R>0 $, the stopped processes  of $( u^1(t), t\in [0, T\wedge \tau_R])$ and $ ( u^2(t), t\in [0, T\wedge \tau_R]) $  are modifications of each other.
Than thanks \del{to the facts,
\begin{equation}\label{eq-lim-tau-n}
\lim_{R\rightarrow \infty}P(\tau_R<T) \leq \sum_{i=1}^2 \lim_{R\rightarrow \infty}P(\tau^i_R<T)
\leq \sum_{i=1}^2 \lim_{R\rightarrow \infty} \frac{\mathbb{E}|u^i(t)|^k_{L^q}}{R^k}=0,
\end{equation}}
to \eqref{eq-prob-tau-n=0} we conclude that the same is true for $ (u^1(t), t\in[0, T])$ and $ (u^2(t), t\in[0, T])$.
The stopped process $u^0$ satisfies the following equation $ P-a.s$ for all $ t\in [0, T]$, see \cite{BrzezniakDebbi1, Brzez-Beam-eq, Debbi-scalar-active},
\begin{equation}\label{eq-theta-0}
u^0(t\wedge \tau_R)= I^1_{\tau_R}(t\wedge \tau_R) +I^2_{\tau_R}(t\wedge \tau_R),
\end{equation}
where
\begin{equation}
 I^1_{\tau_R}(t\wedge \tau_R):=\int_0^{t\wedge \tau_R}
 e^{(t\wedge \tau_R-s)A_\alpha}(B(u^1(s\wedge \tau_R))-B(u^2(s\wedge \tau_R)))ds
\end{equation}
and
\begin{equation}
I^2_{\tau_R}(t\wedge \tau_R):= \int_0^{t}
1_{[0, \tau_R)}(s) e^{(t-s)A_\alpha}(G(u^1(s\wedge \tau_R))-G(u^2(s\wedge \tau_R)))dW(s).
\end{equation}
 First let us remark that 
\begin{equation}\label{eq-I-1-out-Tau}
1_{[0, \tau_R]}(t) I^1_{\tau_R}(t\wedge \tau_R)= 1_{[0, \tau_R]}(t)\int_0^{t}
 1_{[0, \tau_R]}(s)e^{(t-s)A_\alpha}(B(u^1(s\wedge \tau_R))-B(u^2(s\wedge \tau_R)))ds.
\end{equation}
Now let $ p>2$. Using formulas \eqref{formula-B-v1-B-v2} and \eqref{eq-I-1-out-Tau}, a similar calculation as in the proof of 
Proposition \ref{Prop-Main-I} and H\"older inequality in $ L^1(0, t; t^{-\gamma}dt)$ with $ \gamma := \frac1\alpha(1+\frac dq)<1$, 
we estimate $I^1_{\tau_R}(t\wedge \tau_R)$ as follow
\begin{eqnarray}\label{eq-est-B-theta-0}
\mathbb{E}\!\!\!\!\!&|&\!\!\!\!\!1_{[0, \tau_R]}(t)I^1_{\tau_R}(t\wedge \tau_R)|_{\mathbb{L}^q}^p
= \mathbb{E} |1_{[0, \tau_R]}(t)\int_0^{t}
 e^{(t-s)A_\alpha}1_{[0, \tau_R]}(s)(B(u^1(s\wedge \tau_R))-B(u^2(s\wedge \tau_R)))ds|^p_{\mathbb{L}^q} \nonumber\\
&\leq& c\int_0^{t}(t-s)^{-\gamma}\mathbb{E}\Big[|1_{[0, \tau_R]}(s)
(u^1(s\wedge \tau_R)-u^2(s\wedge \tau_R))|^p_{\mathbb{L}^q}(\sup_{[0, \tau_R]}(|u^1(s)|_{L^q}^p+ |u^2(s)|_{\mathbb{L}^q}^p)) \Big]ds \nonumber\\
&\leq& cR^p\int_0^{t}(t-s)^{-\gamma}\mathbb{E}|1_{[0, \tau_R]}(s)
(u^1(s\wedge \tau_R)-u^2(s\wedge \tau_R))|^p_{\mathbb{L}^q} ds. \nonumber\\
\end{eqnarray}
\noindent Furthermore, using \cite[Proposition 4.2.]{Neerven-Evolution-Eq-08} and a similar calculation 
as in Lemma \ref{lem-est-z-t}, in particular Assumption $(\mathcal{C})$, we infer the existence of
$ c_R>0, \; 0<\gamma_1<\frac12$ such that
\begin{eqnarray}\label{eq-est-G-theta-0}
\mathbb{E}\!\!\!\!\!&[&\!\!\!\!\!1_{[0, \tau_R]}(t)I^2_{\tau_R}(t\wedge \tau_R)|_{\mathbb{L}^q}^p\nonumber\\
&=&\mathbb{E} |1_{[0, \tau_R]}(t)\int_0^{t}
 e^{(t-s)A_\alpha}1_{[0, \tau_R]}(s)(G(u^1(s\wedge \tau_R))-G(u^2(s\wedge \tau_R)))dW(s)|^p_{\mathbb{L}^q} \nonumber\\
&\leq& c\mathbb{E}\Big(\int_0^{t}(t-s)^{-\gamma_1}||1_{[0, \tau_R]}(s)(
G(u^1(s\wedge \tau_R))-G(u^2(s\wedge \tau_R)))||^2_{R_\gamma(\mathbb{L}^2, \mathbb{L}^q)}ds \Big)^\frac p2 \nonumber\\
&\leq& c_R\int_0^{t}(t-s)^{-\gamma_1}\mathbb{E}|1_{[0, \tau_R]}(s)
(u^1(s\wedge \tau_R)-u^2(s\wedge \tau_R))|^p_{\mathbb{L}^q} ds. 
\end{eqnarray}
Using  \eqref{eq-est-B-theta-0} and \eqref{eq-est-G-theta-0} and \eqref{eq-theta-0}, we infer the existence of
an $L^1-$integrable function  $ \psi: (0, T]\rightarrow \mathbb{R}_+$ such that
\begin{equation}\label{eq-theta-0-with-phi}
\mathbb{E}|1_{[0, \tau_R]}u^0(t\wedge \tau_R)|_{\mathbb{L}^q}^p\leq c_R\int_0^{t}\psi(t-s)\mathbb{E}|1_{[0, \tau_R]}(s)
u^0(s\wedge \tau_R)|^p_{\mathbb{L}^q} ds.
\end{equation}
Using Gronwall Lemma, we get for all $ t\in [0, T]$,
\begin{equation}
 | u^1(t\wedge\tau_R)- u^2(t\wedge\tau_R)|_{\mathbb{L}^q}=0. \;\;\; a.s., \;\;\; \forall t\in [0, T].
\end{equation}
Thanks to Estimate \eqref{eq-prob-tau-n=0} and to the contonuity of $ (u^1(t), t\in [0, T])$ and $ (u^2(t), t\in [0, T])$ the pathwise uniqueness holds.\del{ we infer that for all $ t\in [o, T]$,}
\del{\begin{equation}
  u^1(t)= u^2(t)=0. \;\;\; a.s. \;\;\; \del{\forall t\in [0, T]}
\end{equation}}
\del{\noindent To complete the proof of the uniqueness let us recall that Estimation \eqref{eq-last-est} is satisfied by the construction of the solution ($ \mathbb{E}\sup_{[0, T]}|u(t)|^p_{\mathbb{L}^q}<\infty$).}

\del{\noindent The whole calculus above remains valid to prove the results in $(3.6.3)$, in the exception that we have not \eqref{est-nabla-u-theta-Global-existence}. Hence, to estimate \eqref{eq-lastln-u}, we use \eqref{cond-mild-3d} and \eqref{elementary-property}.} 

\section{Martingale solution of the multi-dimensional FSNSEs.}\label{sec-Marting-solution}
In this section, we prove Theorem \ref{Main-theorem-martingale-solution-d}. The main ingredients are Faedo-Galerkin approximations,
compactness, Skorokhod embedding theorem and the representation theorem. In particular, once we prove Lemma \ref{lem-bounded-W-gamma-p} bellow, we can follow the same scheme e.g. as in \cite{Capinski-peaszat-Mart-SNSE-01, Flandoli-Gatarek-95}, see also similar calculus for the fractional stochastic scalar active equation in \cite{Debbi-scalar-active}. Thus we omit to give full details.\del{ in Appendix \ref{Appendix-Martingale-solu}.}

\begin{lem}\label{lem-bounded-W-gamma-p}
The sequence $ (u_n)_n$ of solutions of the equations \eqref{FSBE-Galerkin-approxi} is uniformly bounded in the space
\begin{eqnarray}\label{Eq-W-}
L^2(\Omega, W^{\gamma, 2}(0, T; \mathbb{H}^{-\delta', 2}(O))
\del{H_d^{-\delta', 2}(O))}\cap L^2(0, T; \mathbb{H}^{\frac\alpha2, 2}(O))),
\end{eqnarray}
where  $ \delta'\geq_1\max\{\alpha, 1+\frac d{2}\}$ and $ \gamma <\frac12$.
\end{lem}
\begin{proof}
Thanks to Lemma \ref{lem-unif-bound-theta-n-H-1-domain}, it is sufficient to prove that $ (u_n(t), t\in [0, T])$ is
uniformly bounded in $L^2(\Omega, W^{\gamma, 2}(0, T; \mathbb{H}^{-\delta', 2}(O))$. We recall that the Besov-Slobodetski space $W^{\gamma,
p}(0, T; E)$, with $ E$ being a Banach space, $ \gamma \in (0, 1)$ and $ p\geq 1$, is the space
of all $ v\in L^P(0, T; E) $ such that
\begin{eqnarray}
||v||_{W^{\gamma, p}}:= \left(\int_0^T|v(t)|_E^pdt+
\int_0^T\int_0^T\frac{|v(t)-v(s)|_E^p}{|t-s|^{1+\gamma p}}
dtds\right)^{\frac1p}<\infty.
\end{eqnarray}
\noindent As  $(u_n(t), t\in [0, T])$ is the strong solution  of the finite dimensional  stochastic
differential equation \eqref{FSBE-Galerkin-approxi}, then  $u_n(t)$  is the solution of the stochastic integral equation 
\begin{equation}\label{FSBE-Integ-solu-Galerkin-approxi}
u_n(t)= P_nu_0 + \int_0^t(-A_\alpha u_n(r) + P_nB(u_n(r))dr + \int_0^tP_nG(u_n(r))\,dW_n(r),\; a.s.,\\
\end{equation}
for all $t\in [0, T]$. We denote by 
\begin{equation}\label{Eq-Drift-term}
I(t):=  \int_0^t(-A_\alpha u_n(r) + P_nB(u_n(r))dr
\end{equation}
and
\begin{equation}\label{Eq-Drift-term}
J(t):= \int_0^tP_nG
(u_n(r))\,dW_n(r).
\end{equation}
\noindent We prove that $ I(\cdot)$ is uniformly bounded in  $L^2(\Omega; W^{\gamma, 2}(0, T; \mathbb{H}^{-\delta', 2}(O))$ 
and that the stochastic term $ J(\cdot )$ is uniformly bounded in $ L^2(\Omega; W^{\gamma, 2}(0, T;  \mathbb{L}^2(O))$, for all $ \gamma<\frac12$.\del{ as the stochastic term $ J$ is more regular then the drift term, see e.g. \cite[Lemma 2.1]{Flandoli-Gatarek-95}. \\}
Let  $ \phi \in \mathbb{H}^{{\delta'}, 2}(O)$, using Identity \eqref{Eq-3lin-propsym}, we get
\begin{eqnarray}
| {}_{\mathbb{H}^{-{\delta'}, 2}}\langle P_nB(u_n(r)), \phi\rangle_{\mathbb{H}^{{\delta'}, 2}}|
&=& |\langle u_n(r) \cdot
\nabla P_n\phi, u_n(r)\rangle_{\mathbb{L}^{2}}|\nonumber\\
&\leq& |\nabla P_n\phi|_{L^\infty}| u_n(r)|^2_{\mathbb{L}^{2}}.
\end{eqnarray}
Thanks to \cite[Remark 4 p 164, Theorem 3.5.4.ps.168-169 and Theorem 3.5.5 p 170]{Schmeisser-Tribel-87-book} for $ O= \mathbb{T}^d$, to
\cite[Theorem 7.63  and point 7.66]{Adams-Hedberg-94} for $ O$ being a bounded domain and to
the condition $ {\delta'}>1+\frac d{2}$,
we deduce for  $ 0<\epsilon < \delta'-1-\frac d{2}$,
 $$ |\nabla P_n\phi|_{L^\infty} \leq c |\nabla P_n\phi|_{H^{\epsilon+\frac d2, 2}} \leq c
|\phi|_{H_d^{1+\epsilon+\frac d2, 2}} \leq c |\phi|_{\mathbb{H}^{\delta', 2}}.$$
Therefore,
\begin{eqnarray}\label{}
 |P_nB(u_n(r))|_{\mathbb{H}^{-\delta', 2}}\leq c |u_n(r)|_{\mathbb{L}^{2}}^2
\end{eqnarray}
 and
\begin{eqnarray}\label{eq-unif-int-I(t)}
\int_0^T|I(t)|_{\mathbb{H}^{-\delta', 2}}^2dt&\leq& c\int_0^T\int_0^t \big(|(-A_\alpha
u_n(r)|^2_{\mathbb{H}^{-\delta', 2}} + |P_nB(u_n(r))|^2_{\mathbb{H}^{-\delta', 2}}\big)drdt\nonumber\\
&\leq& c\int_0^T\int_0^t\big(|
u_n(r)|^2_{\mathbb{L}^{ 2}} + |u_n(r)|^4_{\mathbb{L}^{2}}\big)drdt.
\end{eqnarray}
Moreover, using H\"older inequality and arguing as before, we get for $ t\geq s > 0$,
\begin{eqnarray}\label{eq-unif-int-I(t)-I(s)}
|I(t)- I(s)|^2_{\mathbb{H}^{-\delta', 2}}&=& |\int_s^t(-A_\alpha u_n(r) +
P_nB(u_n(r))dr|^2_{\mathbb{H}^{-{\delta'}, 2}}\nonumber\\
&\leq & C(t-s)\left(\int_s^t(| u_n(r)|^2_{\mathbb{L}^{2}} +
|u_n(r)|^4_{\mathbb{L}^{2}})dr \right).
\end{eqnarray}
From \eqref{eq-unif-int-I(t)},  \eqref{eq-unif-int-I(t)-I(s)} and \eqref{Eq-Ito-n-weak-estimation-1-bounded}, 
we have for  $ \gamma <\frac12$,
\begin{eqnarray}\label{eq-unif-int-I(t)x2}
\mathbb{E}\big(\int_0^T|I(t)|_{\mathbb{H}^{-\delta', 2}}^2dt&+&\int_0^T\int_0^T
\frac{|I(t)- I(s)|^2_{\mathbb{H}^{-\delta',
2}}}{|t-s|^{1+2\gamma }} dtds\big)^{\frac12} \nonumber\\
&\leq& C\mathbb{E}\left(\int_0^T(| u_n(r)|^2_{\mathbb{L}^{2}} +
|u_n(r)|^4_{\mathbb{L}^{2}})dr
\right)^\frac12 \leq C<\infty.
\end{eqnarray}
Now, we estimate the stochastic term $ J$. 
Using the stochastic isometry, the contraction property of $ P_n$ and Assumption $(\mathcal{C})$,( Condition  \eqref{Eq-Cond-Linear-Q-G} with $ q=2$ and $ \delta =0$), we get
\begin{eqnarray}
\int_0^T\mathbb{E}|\int_0^tP_nG(u_n(r))dW_n(r)|_{\mathbb{L}^{ 2}}^2dt&\leq&
C\int_0^T\mathbb{E}\int_0^t||G(u_n(r))||^2_{L_Q(\mathbb{L}^2)}drdt\nonumber\\
&\leq&
C\int_0^T\mathbb{E}\int_0^t(1+|u_n(r)|^2_{\mathbb{L}^2})drdt \leq c<\infty.
\end{eqnarray}
Moreover, for $ t\geq s> 0$ and $ \gamma <\frac12$, the same ingredients
above yield to
\begin{eqnarray}
\mathbb{E}\int_0^T\int_0^T\frac{|J(t)-
J(s)|^2_{\mathbb{L}^{2}}}{|t-s|^{1+2\gamma }} dtds &\leq&
C\mathbb{E}\int_0^T\int_0^T\frac{\int_s^t||G(u_n(r))||^2_{L_Q(\mathbb{L}^2)}dr}{|t-s|^{1+2\gamma
}} dtds \nonumber\\
&\leq& C\mathbb{E}\sup_{[0, T]}(1+|u_n(t)|^2_{\mathbb{L}^{2}})
\int_0^T\int_0^T|t-s|^{-2\gamma } dtds \leq c <\infty.
\end{eqnarray}
The proof of the lemma is now completed.
\end{proof}

To prove the existence of a martingale solution, we use \del{consider the Gelfand triplet \eqref{Gelfand-triple-Domain} and 
 use lemmas \ref{lem-unif-bound-theta-n-H-1-domain} and \ref{lem-bounded-W-gamma-p} 
and }the following compact embedding, see \cite[Theorem 2.1]{Flandoli-Gatarek-95},
\begin{equation}
 W^{\gamma, 2}(0, T; \mathbb{H}^{-\delta', 2}(O))\cap \mathbb{L}^2(0, T; \mathbb{H}^{\frac\alpha2, 2}(O)) 
 \hookrightarrow L^2(0, T; \mathbb{L}^2(O)).
\end{equation}
\del{$ L^2(0, T; \mathbb{H}^{\frac\alpha2, 2}(O)) \hookrightarrow L^2(0, T; \mathbb{L}^2(O))$,} Therefore, we deduce that the sequence of laws $ (\mathcal{L}(u_n))_n$  is tight on $ L^2(0, T; \mathbb{L}^2(O))$. 
Thanks to Prokhorov's theorem there exists a  subsequence, still denoted $ (u_n)_n$, for which  the sequence of laws $ (\mathcal{L}(u_n))_n$ converges  weakly on $ L^2(0, T; \mathbb{L}^2(O))$  to a probability measure $ \mu$. By Skorokhod's embedding theorem, we can construct a probability basis $ (\Omega_*, F_*, \mathbb{F}_*,  P_*)$  and a sequence of $ L^2(0, T; \mathbb{L}^2(O))\cap C([0, T]; \mathbb{H}^{-\delta', 2}(O))-$random variables
$ (u^*_n)_n$ and $ u^*$ such that  $\mathcal{L}(u^*_n) = \mathcal{L}(u_n), \forall n \in \mathbb{N}_0$,  $\mathcal{L}(u^*) = \mu$ and
$ u^*_n \rightarrow u^* a.s.$ in $ L^2(0, T; \mathbb{L}^2(O))\cap C([0, T]; \mathbb{H}^{-\delta', 2}(O))$. Moreover,  $ u^*_n(\cdot, \omega) \in C([0, T]; H_n)$. Thanks to  Lemma \ref{lem-unif-bound-theta-n-H-1-domain} and to the equality in law, we infer that the sequence  $ u^*_n$ converges weakly in $  L^2(\Omega\times [0, T]; \mathbb{H}^{\frac\alpha2, 2}(O))$ and weakly-star in $ L^p(\Omega, L^\infty([0, T]; \mathbb{L}^{2}(O))$ to a limit $ u^{**}$. It is easy to see that $u^{*} = u^{**},\;  dt\times dP-a.e.$ and
\begin{eqnarray}\label{eq-bound-u-*-n-u-*}
\mathbb{E}_*\sup_{[0, T]}| u^*(s)|^p_{\mathbb{L}^2}+ \mathbb{E}_*\int_0^T| u^*(s)|^2_{ \mathbb{H}^{\frac\alpha2, 2}}ds \leq c<\infty.
\end{eqnarray}
 
\del{\begin{equation}
u^{*}(\cdot, \omega)\in L^2(0, T; \mathbb{H}^{\frac\alpha2, 2}(O))\cap L^\infty(0, T; \mathbb{L}^2(O)).
\end{equation} }
We introduce the filtration 
\begin{equation}
(\mathit{G}_n^*)_t:= \sigma\{u^{*}_n(s), s\leq t\}
\end{equation}
and construct (with respect to $ (\mathit{G}_n^*)_t$) the time continuous square integrable martingale
$ (M_n(t), t\in [0, T])$ with trajectories in
$ C([0, T]; \mathbb{L}^2(O))$ by
\begin{equation}
M_n(t):= u_n^*(t) - P_nu_0+\int_0^t A_\alpha u_n^*(s) ds -\int_0^t P_nB(u_n^*(s))ds.
\end{equation}
The equality in law yields to the fact that the quadratic variation is given by 
\begin{equation}
\langle\langle M_n\rangle\rangle_t= \int_0^tP_nG(u^*_n(s))QG(u^*_n(s))^*ds,
\end{equation}
where $ G(u^*_n(s))^*$ is the adjoint of $G(u^*_n(s))$. We prove that, for $ a.s.$, $ M_n(t)$ converges weakly in $ \mathbb{H}^{-\delta', 2}(O)$ to  the martingale $ M(t)$, for all $ t\in [0, T]$, where $ M(t)$ is given by 
\begin{equation}\label{eq-M(t)}
M(t):= u^*(t) - u_0+\int_0^t A_\alpha u^*(s) ds -\int_0^t B(u^*(s))ds.
\end{equation}
\del{In fact, for all $ v\in V_2:= \mathbb{H}^{\delta', 2}$ with $ \delta'>1+\frac d2$, we have $ P-a.s.$,  $ \langle P_nu_0, v\rangle $  converges to $ \langle u_0, v\rangle $, thanks to the fact that $ P_n \rightarrow I$ in $ \mathbb{L}^2(O)$, $ \langle u_n^*(t), v\rangle $  converges to $ \langle u^*(t), v\rangle $ as a consequence of the 
the a.s. convergence in $ L^2(0, T; \mathbb{L}^2(O))$ (in fact, we speak about the convergence of a subsequence but as usual we keep the same notation) and  the weak convergence and the continuity in $\mathbb{H}^{-\delta', 2}(O))$, the term $\int_0^t A_\alpha u_n^*(s) ds$
converges thanks to the weak convergence $ L^2(0, t; \mathbb{L}^2(O))$, for all $ t\in [0, T]$ and the elementry inequality $ \langle A_\alpha u_n^*(t), v\rangle = \langle u_n^*(t), A_\alpha v\rangle $ with $ v \in \mathbb{H}^{\delta', 2}(O)$ and  $\delta'>1+\frac d2>\alpha $. The convergence of $\int_0^t \langle B(u_n^*(s)), v\rangle ds$ is completely described in \cite[Appendix 2]{Flandoli-Gatarek-95}, in particular the condition $ \delta'>1+\frac d2$ implies that $ \partial_j v \in C^0(O)$, which we need to do the calculus. To prove that $ M(t)$ is  a quadratic martingale, we see that for all $ \phi \in C_b(L^2(0, s; \mathbb{L}^2(O)))$ and $ v\in \mathcal{D}(O)$
\begin{equation}
\mathbb{E}(\langle M(t)- M(s), v\rangle \phi(u^*|_{[0, s]}))= \lim_{n\rightarrow +\infty} \mathbb{E}(\langle M_n(t)- M_n(s), v\rangle \phi(u^*|_{[0, s]}))=0
\end{equation}
and 
\begin{eqnarray}
&{}&\mathbb{E}(\langle M(t), v\rangle \langle M(t), y\rangle - \langle M(s), v\rangle \langle M(s), y\rangle -\int_s^t\langle G^*(u^*(r)P_nv, G^*(u^*(r)P_ny \rangle dr)\phi(u^*|_{[0, r]}))\nonumber\\
&=& \lim_{n\rightarrow +\infty} \mathbb{E}(\langle M_n(t), v\rangle \langle M_n(t), y\rangle - \langle M_n(s), v\rangle \langle M_n(s), y\rangle -\int_s^t\langle G^*(u_n^*(r)P_nv, G^*(u_n^*(r)P_ny \rangle dr)\phi(u_n^*|_{[0, r]}))\nonumber\\
&=& 0
\end{eqnarray}}
Some of the main ingredients are the $a.s.$ convergence of $ u_n^*$ in $ L^2(0, T; \mathbb{L}^2)$, $ \partial_j\phi \in C^0 $ and therefore we can estimate $\int_0^t \langle B(u_n^*(s)), v\rangle ds$ by $\int_0^t|B(u_n^*(s))|_{L^1} |v|_{C^1} ds$. Now we apply the representation theorem \cite[Theorem 8.2]{DaPrato-Zbc-92}, we infer that there exists a probability basis $ (\Omega^*, \mathcal{F}^*, P^*, \mathbb{F}^*, W^*)$ such that  
\begin{equation}
M(t)=\int_0^tG(u^*(s))W^*(ds).
\end{equation}
If moreover, $ \alpha \in [\alpha_0(d):= 1+\frac{d-1}{3}, 2]$, then thanks to 
Burkholdy-Davis-Gandy inequality, Assumption \eqref{Eq-Cond-Linear-Q-G}, with $q=2, \delta =0$ and  \eqref{eq-bound-u-*-n-u-*}
\begin{eqnarray}\label{cont-2-martg-stochas}
\mathbb{E}\sup_{[0, T]} |\int_0^tG(u^*(s))dW^*(s)|^2_{\mathbb{L}^{2}}
&\leq& c \mathbb{E}\int_0^T|G^*(u^*(s))|^2_{L_Q(\mathbb{L}^{2})}ds
\leq c (1+ \mathbb{E}\sup_{[0, T]}|u^*(s)|^2_{\mathbb{L}^{2}})<\infty.\nonumber\\
\end{eqnarray}
Further more, using Estimate \eqref{B-u-v-h-alpha-2-d} with $ \eta=0$, the Sobolev embedding 
$  \mathbb{H}^{\frac{\alpha}{2}, 2}(O) \hookrightarrow \mathbb{H}^{\frac{d+2-\alpha}{4}, 2}(O)$, ( $ 1+\frac{d-1}{3}\leq \alpha \leq 2$) 
and the boundedness of the  operator $ A_\alpha: \mathbb{H}^{\frac{\alpha}{2}, 2}(O) \rightarrow \mathbb{H}^{\frac{-\alpha}{2}, 2}(O)$, we get
\begin{eqnarray}\label{cont-1-martg}
\mathbb{E}\int_0^T\big( |A_\alpha u^*(s)|_{\mathbb{H}^{-\frac\alpha2, 2}}&+& |B(u^*(s))|_{\mathbb{H}^{-\frac\alpha2, 2}}\big)ds \leq c
\mathbb{E} \int_0^T\big( |u^*(s)|_{\mathbb{H}^{\frac\alpha2, 2}} + |u^*(s)|^2_{\mathbb{H}^{\frac{d+2-\alpha}{4}, 2}}\big)ds\nonumber\\
&\leq& c (1+\mathbb{E} \int_0^T|u^*(s)|^2_{\mathbb{H}^{\frac\alpha2, 2}}ds)<\infty. 
\end{eqnarray}
Therefore, using the densely embedding $ \mathbb{H}^{\delta', 2}(O) \hookrightarrow \mathbb{H}^{\frac\alpha2, 2}(O)$, we can take the equality \eqref{Eq-weak-Solution} in the $\mathbb{H}^{\frac\alpha2, 2}- \mathbb{H}^{-\frac\alpha2, 2}-$duality.

\vspace{0.25cm}

{\bf Pathewise uniqueness  (\ref{Main-theorem-martingale-solution-d}.3).}
To prove the uniqueness of the martingale solution under the condition \eqref{uniquness-con-martg}, we follow the scheme of the uniqueness in  
Section \ref{sec-Torus} taking into account the changes of the norms. Let
$ (u^1(t), t\in [0, T])$ and $ (u^2(t), t\in [0, T])$ be two martingale solutions on the same probability basis $ (\Omega^*, \mathbb{F}^*, P^*, W^*)$ and 
such that $ (u^1(t), t\in [0, T])$ satisfies \eqref{uniquness-con-martg}.\del{\subsection*{Proof of pathwise uniqueness for the martingale solution.}
\label{sec-subsection-uniqueness-martingale}.} We define  $ \tau_N^i$, $ \tau_N$  and  $ w:=  u^1- u^2$ as in Section \ref{sec-Torus}. 
Then $ w$ satisfies\del{ in $ \mathbb{H}^{-\frac\alpha2, 2}(O)$}
Equation \eqref{eq-uniq-w}  with $ W$ replaced by $ W^*$. We use 
 Ito formula to the product $ e^{-r(t)}|w(t)|^2_{\mathbb{L}^{2}}$, with $(r(t):=
c\int_0^t|u^1(s)|^{\frac{4\alpha}{3\alpha-d-2}}_{\mathbb{H}^{\frac{d+2-\alpha}{4}, 2}}ds, t\in [0, T])$, 
Identity  \eqref{formula-B-v1-B-v2},
\del{Property \eqref{Eq-3lin-propnull},}condition \eqref{Eq-Cond-Lipschitz-Q-G}, with $ q=2, \delta=0$, Estimate \eqref{B-u-v-h-alpha-2-d} with $ \eta=0$ 
and argue as around  \eqref{Torus-uniquenss-ito-formula-H-1} we get the proof. Combinning this latter result with Yamada-Watabnabe theorem \cite{Kurtz-Yamada-07, Ondrejat-thesis-03}, the global existence of a unique weak-strong solution follows.

\del{\section{{} {d}D-Fractional stochastic Navier-Stokes equation on Bounded domain and on the Torus with no smooth data}\label{sec-Domain}}

\del{\section{Maximal local weak-strong solutions for the multi-dimensional FSNSEs.}\label{sec-Domain}

In this section, we prove \del{the first part of Theorem \ref{Main-theorem-boubded-2}}$(3.8.1)$. But first let us illustrate the fact  that the 2D-FSNSE exhibits the same difficulty to prove the existence of the global solution as the 3D-NSE. This part will be also used in Section \ref{sec-global-weak-solution}. We follow a similar calculus as 
in Section \ref{sec-Torus}, replacing Property \eqref{vanishes-bilinear-tous-H1} by Property
\eqref{Eq-3lin-propnull}\del{is intrinsic for that  calculus and is valid neither for bounded domains of $ \mathbb{R}^d$ nor for the $ 3D-$torus, we use the property
\eqref{Eq-3lin-propnull}.} and considering\del{ for $ d\in \{2, 3\}$ and $ \alpha_0(d):= 1+\frac{d-1}3\leq \alpha\leq 2$,} 
the  densely continuously embedding Gelfand triple \eqref{Gelfand-triple-Domain}. We obtain the following Lemma
\begin{lem}\label{lem-unif-bound-theta-n-H-1-domain}
Let $ d\in \{2, 3\}$, $  \alpha_0(d):= 1+\frac{d-1}3\leq \alpha\leq 2$ and $ u_0\in L^{p}(\Omega, \mathbb{L}^{2}(O)), p\geq 4$ and let $ G$ satisfying Assumption $ (\mathcal{C})$ (\eqref{Eq-Cond-Linear-Q-G} with $ q=2$ and $ \delta =0$). Then
the solutions $ (u_n(t), t\in [0, T])$ of the equations \eqref{FSBE-Galerkin-approxi},  $ n\in \mathbb{N}_0$,
satisfy the following estimates

%\begin{itemize}
% \item $  \forall p'\leq p, $
\begin{eqnarray}\label{Eq-Ito-n-weak-estimation-1-bounded}
\sup_{n}\mathbb{E}\Big(\sup_{[0, T]}|u_n(t)|^{p}_{\mathbb{L}^{ 2}}&+&
\int_0^T|u_n(t)|^{p-2}_{\mathbb{L}^{ 2}}\Big(|u_n(t)|^2_{\mathbb{H}^{\frac\alpha2, 2}} +
|u_n(t)|_{\mathbb{H}^{\beta, q_1}}^2 \Big)dt \nonumber\\
&+& \int_0^T|u_n(t)|^4_{\mathbb{L}^{ 2}}dt+ \int_0^T|u_n(t)|^{\frac{\alpha}{\eta}}_{\mathbb{H}^{\eta, 2}}dt\Big)<\infty,
\end{eqnarray}
where $ \beta\leq \frac\alpha2-\frac d2+\frac d{q_1}$, $ 2\leq q_1<\infty$ and $ \frac\alpha p<\eta\leq \frac\alpha2$.
\del{\item \begin{eqnarray}\label{Eq-B-n-weak-estimation-1-bounded}
\sup_{n}\left(\mathbb{E}\int_0^T |P_nB_n(u_n(t))|^{\frac{2\alpha}{4-\alpha}}_{\mathbb{H}^{-\frac\alpha2, 2}}+
\int_0^T |A^\frac\alpha2 u_n(t))|^{2}_{\mathbb{H}^{-\frac\alpha2, 2}} \right) dt <\infty.\nonumber\\
\end{eqnarray}}
%\item 
\begin{eqnarray}\label{Eq-B-n-weak-estimation-1-bounded}
\sup_{n}\left(\mathbb{E}\int_0^T (|P_nB(u_n(t))|_{\mathbb{H}^{-\frac\alpha2, 2}}+| A^\frac\alpha2 u_n(t))|_{\mathbb{H}^{-\frac\alpha2, 2}})^{\frac{2\alpha}{d+2-\alpha}}dt \right) <\infty.
\end{eqnarray}
%\end{itemize}
\end{lem}
\begin{proof}
The proof of \eqref{Eq-Ito-n-weak-estimation-1-bounded} follows exactly as for \eqref{Eq-Ito-n-weak-estimation-1-Torus} by replacing
the spaces $ \mathbb{H}^{1, 2}(\mathbb{T}^2)$ and $ \mathbb{H}^{1+\frac\alpha2, 2}(\mathbb{T}^2)$ respectively by
$ \mathbb{L}^{2}(O)$ and $ \mathbb{H}^{\frac\alpha2, 2}(O)$.
For the first term in the Estimate \eqref{Eq-B-n-weak-estimation-1-bounded}, we use the contraction property of $ P_n$, 
Estimate\del{ \eqref{B-u-v-h-alpha-2-d} with $ \eta =0$ (which coincides with} \eqref{Eq-B-H-alpha-2-est}\del{\eqref{Eq-B-H-alpha-2-est}}
and the Sobolev interpolation (recall that thanks to the condition $1+ \frac{d-1}3\leq  \alpha \leq 2$, we have 
the following embedding 
$ \mathbb{H}^{\frac\alpha2, 2}(O) \hookrightarrow \mathbb{H}^{\frac{d+2-\alpha}{4}, 2}(O)\hookrightarrow \mathbb{L}^2(O)$), we end up, for $1+ \frac{d-1}3<  \alpha \leq 2$,  with
\begin{eqnarray}\label{unif-estint-B-u}
\mathbb{E}\int_0^T |P_nB(u_n(t))|_{\mathbb{H}^{-\frac\alpha2}}^{\frac{2\alpha}{d+2-\alpha}}dt &\leq& c
\mathbb{E}\int_0^T |u_n(t)|_{\mathbb{H}^{\frac{d+2-\alpha}{4}, 2}}^{\frac{4\alpha}{d+2-\alpha}} dt \nonumber\\
&\leq& c
\mathbb{E}\int_0^T \big(|u_n(t)|^{\frac{d+2-\alpha}{2\alpha}}_{\mathbb{H}^{\frac\alpha2, 2}} 
|u_n(t)|^{\frac{3\alpha-d-2}{2\alpha}}_{\mathbb{L}^{2}}\big)^{\frac{4\alpha}{d+2-\alpha}} dt\nonumber\\
&\leq& c
\mathbb{E}\int_0^T |u_n(t)|^{2}_{\mathbb{H}^{\frac\alpha2, 2}} 
|u_n(t)|^{2\frac{3\alpha-d-2}{d+2-\alpha}}_{\mathbb{L}^{2}} dt.
\end{eqnarray}
The last term in the RHS of \eqref{unif-estint-B-u} is uniformly bounded thanks to \eqref{Eq-Ito-n-weak-estimation-1-bounded} and the condition
$ 2\frac{3\alpha-d-2}{d+2-\alpha}\leq p$. But this last is guaranteed thanks to $ 2\frac{3\alpha-d-2}{d+2-\alpha}\leq 4\leq p$. The case $1+ \frac{d-1}3=\alpha$ is easily obtained by application of Estimation \eqref{Eq-B-H-alpha-2-est}.
The second term in the RHS of \eqref{Eq-B-n-weak-estimation-1-bounded} is uniformly bounded thanks to
the fact that $ A: V:= D(A^\frac\alpha4) \rightarrow V^*$ is bounded, the condition $ \alpha \leq 1+\frac d2$ which yileds to 
$ \frac{2\alpha}{d+2-\alpha} \leq 2$ and thus we get
\begin{eqnarray}
\mathbb{E}\int_0^T |A^\frac\alpha2 u_n(t)|^{\frac{2\alpha}{d+2-\alpha}}_{\mathbb{H}^{-\frac\alpha2, 2}} dt
\leq c\mathbb{E}\int_0^T |u_n(t)|^{\frac{2\alpha}{d+2-\alpha}}_{\mathbb{H}^{\frac\alpha2, 2}} dt\leq c
\mathbb{E}\int_0^T (1+| u_n(t)|^{2}_{\mathbb{H}^{\frac\alpha2, 2}} )dt <\infty.\nonumber\\
\end{eqnarray}
Finaly we apply  Estimate \eqref{Eq-Ito-n-weak-estimation-1-bounded}.
\end{proof}

\noindent {\bf Existence of the solution.}
Assume that $1+ \frac{d-1}3<\alpha \leq 2$. Thanks to \eqref{Eq-Ito-n-weak-estimation-1-bounded}  and \eqref{Eq-B-n-weak-estimation-1-bounded}, we conclude the existence of
a subsequence, which is still denoted by $(u_n)_n$,\del{ and adapted processes $ u, F_2, G_2 $, such that}
\begin{equation}\label{eq-u-first-belonging}
 u\in L^2(\Omega\times [0, T]; \mathbb{H}^{\frac\alpha2, 2}(O))\cap
L^p(\Omega, L^\infty([0, T]; \mathbb{L}^{2}(O))),
\end{equation}
\begin{equation}
F_2 \in L^{\frac{2\alpha}{d+2-\alpha}}(\Omega\times [0, T];
\mathbb{H}^{-\frac\alpha2, 2} (O))\;\; \text{and}\;\; 
G_2\in L^2(\Omega\times [0, T]; L_Q(\mathbb{L}^{ 2} (O))), s.t.
\end{equation}
\begin{itemize}
\item (1') $u_n \rightarrow u$ weakly in $ L^2(\Omega\times [0, T]; \mathbb{H}^{\frac\alpha2, 2}(O)))$.
\item (2') $u_n \rightarrow u$ weakly-star in $ L^p(\Omega, \mathbb{L}^\infty([0, T]; \mathbb{L}^{ 2}(O)))$,
\item (3') $P_nF(u_n):= A^\frac\alpha2 u_n + P_nB(u_n)\rightarrow F_2$ weakly in $ L^{\frac{2\alpha}{d+2-\alpha}}(\Omega\times [0, T];
\mathbb{H}^{-\frac\alpha2, 2} (O))$.
\item (4')$u_n \rightarrow u$ weakly in $ L^{\frac{\alpha}{\eta}}(\Omega\times [0, T]; \mathbb{H}^{\eta, 2}(O))$, for all
$ \frac\alpha p<\eta \leq \frac\alpha2 $. \del{\footnote{Remark that $ \frac43 < \alpha \leq 2 \Leftrightarrow 1\leq \frac{2\alpha}{4-\alpha}\leq 2$}}
\item (5') $P_nG(u_n)\rightarrow G_2$ weakly in $ L^2(\Omega\times [0, T]; L_Q(\mathbb{L}^{ 2} (O)))$.

\end{itemize}
To prove the existence of a weak-strong solution of \eqref{Main-stoch-eq}, we can follow the same scheme as in Section \ref{sec-Torus} with the replacement of the spaces $ \mathbb{H}^{1, 2}(\mathbb{T}^2)$ and $ \mathbb{H}^{1+\frac\alpha2, 2}(\mathbb{T}^2)$ by
$ \mathbb{L}^{2}(O)$ and $ \mathbb{H}^{\frac\alpha2, 2}(O)$ respectively.  We construct a
process $ \tilde{\tilde {u}}$  as in \eqref{eq-def-u-tilde}, with $ F_1$ and $ G_1$ are replaced by $ F_2$ respectively  $ G_2$.
The proof of the statement $ u= \tilde{\tilde{u}},\; dt\times dP-a.e.$ can be done exactly as in Section \ref{sec-Torus}
with the brackets now stand for the $ V-V^*$-duality.
To check the  main key estimates, we use \eqref{formula-B-v1-B-v2}, \eqref{Eq-3lin-propnull}, H\"older inequality, 
\eqref{Eq-B-H-alpha-2-est}\del{{Eq-B-H-alpha-2-est}}, Sobolev interpolation\del{(recall that
$ 1+\frac{d-1}{3}< \alpha <2 \Rightarrow \mathbb{H}^{\frac\alpha2, 2}(O) \hookrightarrow \mathbb{H}^{\frac{d+2-\alpha}{4}, 2}(O) $)}
and Young inequality, we get
\begin{eqnarray}\label{ineq-B-u-v-v-local}
|{}_{V^*}\langle B(u)-B(v), u-v\rangle_{V} |&= & |{}_{V^*}\langle B(u-v, v), u-v\rangle_{V}|\leq
| B(u-v, v)|_{\mathbb{H}^{-\frac\alpha2, 2}}|u-v|_{\mathbb{H}^{\frac\alpha2, 2}}\nonumber\\
&\leq& c|v|_{\mathbb{H}^{\frac{d+2-\alpha}{4}, 2}} |u-v|_{\mathbb{H}^{\frac\alpha2, 2}}|u-v|_{\mathbb{H}^{\frac{d+2-\alpha}{4}, 2}}\nonumber\\
&\leq & c|v|_{\mathbb{H}^{\frac{d+2-\alpha}{4}, 2}} |u-v|^{\frac{d+2+\alpha}{2\alpha}}_{\mathbb{H}^{\frac\alpha2, 2}}
|u-v|_{\mathbb{L}^2}^{\frac{3\alpha-d-2}{2\alpha}}\nonumber\\
&\leq& c|v|^{\frac{4\alpha}{3\alpha-d-2}}_{\mathbb{H}^{\frac{d+2-\alpha}{4}, 2}}|u-v|^{2}_{\mathbb{L}^2} + \frac12|u-v|^{2}_{\mathbb{H}^{\frac\alpha2, 2}}.
%\leq c (1+ | v|_{\mathbb{H}^{1+\frac\alpha2, 2}})| u-v|^2_{\mathbb{L}^2}.
\end{eqnarray}
Using the semigroup property of $ (A^\beta)_{\beta\geq0}$ and Assumption $ (\mathcal{C})$ ( \eqref{Eq-Cond-Lipschitz-Q-G}, with $ \delta =0$, $ q=2$ and $ C_R:=c$), we confirm\del{ 
\begin{eqnarray}\label{eq-monoto-local}
 -2{}_{V^*}\langle A_\alpha(u-v), u-v\rangle_{V} &+&  2{}_{V^*}\langle B(u)-B(v), u-v\rangle_{V} + 
|| G(u) - G(v)||_{L_Q(\mathbb{L}^2)}\nonumber\\
&\leq&
 -|u-v|^{2}_{\mathbb{H}^{\frac\alpha2, 2}} + c(1+|v|^{\frac{4\alpha}{3\alpha-d-2}}_{\mathbb{H}^{\frac{d+2-\alpha}{4}, 2}})
|u-v|^{2}_{\mathbb{L}^2}.
\end{eqnarray}
Consequently, we get}
\begin{itemize}
 \item $ (\mathcal{K}'_1)$- The local monotonicity property: There exists a constant $ c>0$ such that $\forall u, v \in \mathbb{H}^{\frac\alpha2}(O)$,
\begin{eqnarray}\label{eq-a-alpha-B-G-d}
 -2{}_{V^*}\langle A_\alpha(u-v), u-v\rangle_{V} &+&  2{}_{V^*}\langle B(u)-B(v), u-v\rangle_{V} +
|| G(u) - G(v)||_{L_Q(\mathbb{L}^2)}\nonumber\\
&\leq & r'(t)| u-v|^2_{\mathbb{L}^2}.
\end{eqnarray}
\end{itemize}
where $ r'(t):= c(1+ |v(t)|^{\frac{4\alpha}{3\alpha-d-2}}_{\mathbb{H}^{\frac{d+2-\alpha}{4}, 2}})$ and $c>0$ is a constant relevantly chosen.

%%%%%%%%%%%%%%%%%%%%BEGIN%%%%%%%%DEL%%%%%%%%
\del{The main obstacle which prevent us in this stage to follow  the same steps as in Section \ref{sec-Torus} is the fact that we can not prove 
 for  $ v\in L^2(\Omega\times[0, T], \mathbb{H}^{\frac\alpha2, 2}(O))\cap  L^p(\Omega, L^\infty([0, T]; \mathbb{L}^{2}(O))$, that
$ v \in L^{\frac{4\alpha}{3\alpha-d-2}}(\Omega \times[0, T]; \mathbb{H}^{\frac{d+2-\alpha}{4}, 2}(O))$, unless $ \alpha \geq 1+\frac d2$. 
In fact, by interpolation,  we get
\begin{equation}\label{eq-impossible-local}
\mathbb{E}\int_0^T |v(t)|^{\frac{4\alpha}{3\alpha-d-2}}_{\mathbb{H}^{\frac{d+2-\alpha}{4}, 2}}dt \leq 
\mathbb{E}\int_0^T |v(t)|^{2\frac{d+2-\alpha}{3\alpha-d-2}}_{\mathbb{H}^{\frac\alpha2, 2}}|v(t)|^{2}_{\mathbb{L}^{2}} dt.
\end{equation}
Our claim here is that thanks to \eqref{Eq-Ito-n-weak-estimation-1-bounded}, a sufficient condition for the convergence of the integral in the RHS of 
\eqref{eq-impossible-local} is that $ 2\frac{d+2-\alpha}{3\alpha-d-2} \leq 2 \Leftrightarrow  \alpha \geq 1+\frac d2$. 
Remark that the classical values $ d=2, \alpha =2$ and $ d=3, \alpha=\frac52$ known in the literature for the
dD-NSEs are special cases of our condition above. 
The obstacle mentioned in \eqref{eq-impossible-local} is well known for the classical 3D-NSE  but not for the 2D-NSE. 
As we are using completely different calculus than the classical one, we need here to prove that our technique is optimal. Simultaneousily, we shall prove that our claim above is true.
In fact,  let $ \alpha =2=d$, then by interpolation, we get
\begin{equation}\label{eq-interp-alpha=2}
 |v|^{\frac{4\alpha}{3\alpha-d-2}}_{\mathbb{H}^{\frac{d+2-\alpha}{4}, 2}}\leq c  
( |v|^{\frac12}_{\mathbb{H}^{1, 2}}|v|^{\frac12}_{\mathbb{L}^{2}})^{4}
\leq  c|v|^{2}_{\mathbb{H}^{1, 2}}|v|^{2}_{\mathbb{L}^{2}}.
\end{equation}
Replacing \eqref{eq-interp-alpha=2} in \eqref{eq-impossible-local} and using the fact that $ u \in L^2(\Omega\times[0, T], \mathbb{H}^{1, 2}(O))\cap  L^p(\Omega, L^\infty([0, T]; \mathbb{L}^{2}(O))\cap L^4(\Omega\times[0, T], \mathbb{H}^{\frac{d+2-\alpha2}{4}, 2}(O))$ and the interpolation , thanks to \eqref{Eq-Ito-n-weak-estimation-1-bounded},  the fact that 

Follow the same machenery as in Section \ref{sec-Torus},  we prove the existence and uniquness of global solution for the 2D-NSE.}
%%%%%%%%%%%%%%%%%%%%%%%%%%END%%%%%%%DEL

\vspace{0.25cm}
The main obstacle which prevent us in this stage to follow  the same steps as in Section \ref{sec-Torus} is the fact that we are unable to prove that the solution \del{  $ u\in L^2(\Omega\times[0, T], \mathbb{H}^{\frac\alpha2, 2}(O))\cap  L^p(\Omega, L^\infty([0, T]; \mathbb{L}^{2}(O))$ is not enough to get} 
$ u \in L^{\frac{4\alpha}{3\alpha-d-2}}(\Omega \times[0, T]; \mathbb{H}^{\frac{d+2-\alpha}{4}, 2}(O))$, unless we suppose that $ \alpha \geq 1+\frac d2$.  In fact, under the condition $ 2\frac{d+2-\alpha}{3\alpha-d-2} \leq 2 \Leftrightarrow  \alpha \geq 1+\frac d2$ and using the interpolation and Estimate \eqref{Eq-Ito-n-weak-estimation-1-bounded},  we conclude that 
\begin{equation}\label{eq-impossible-local}
\sup_{n}\mathbb{E}\int_0^T |u_n(t)|^{\frac{4\alpha}{3\alpha-d-2}}_{\mathbb{H}^{\frac{d+2-\alpha}{4}, 2}}dt \leq 
c\sup_{n}\mathbb{E}\int_0^T |u_n(t)|^{2\frac{d+2-\alpha}{3\alpha-d-2}}_{\mathbb{H}^{\frac\alpha2, 2}}|u_n(t)|^{2}_{\mathbb{L}^{2}} dt<\infty.
\end{equation}
Remak that under the condition $\alpha \geq 1+\frac d2$, the regime is either dissipative or hyperdissipative. The proof of the existence and the uniqueness of the global solution for the dD-FSNSE under these two regimes is  classical.\del{for which we know that the calculus is easier to get  $ (3)'$. Than following} In particular, one can follow the same machinery as in Section \ref{sec-Torus} with the relevant changes mentioned above. The obstacle mentioned in \eqref{eq-impossible-local} is similar to the well known one for the classical 3D-NSE  but not for the 2D-NSE. To support more our claim mentioned in the begining of this section and in Section \ref{sec-intro}, we emphasize that the  2D-SNSE is\del{or 2D-NSE are} covered by our technique and this proves that this latter is optimal. Moreover, we can remark also that the values,  ($ \alpha\geq 1+\frac d2$),  ($ d=2, \alpha =2$) and ($ d=3, \alpha\geq\frac52$), known in the literature for the
dD-NSEs emerge in our setting in a natural way.\del{are special cases of our condition above.}

\del{one can see that the classical  2D-SNSE or 2D-NSE are 
covered by our method and this proves that our techniquethe optimality of our technique.}

\del{t is of great importance here to emphasis the optimality of our technique as  the classical  2D-SNSE or 2D-NSE are 
covered. by our method, this proves that our technique is optimal.}

\vspace{0.25cm}

%%%%%%%NEW%%%%%%%%%%%%%%%%%
\del{To skirt this difficulty,\del{ for the fractional case,} we use an approximation approach.\del{ as we have done for the local mild solution.} We consider Equation \eqref{Main-stoch-eq} with $ B$  replaced by $ B\pi_m$ and $ \pi_m$ is defined in Lemma \ref{lem-Lipschitz-pi-n} with $  X:=\mathbb{H}^{\frac{d+2-\alpha}{4}, 2}(O)$. i.e. we consider, for $m \in \mathbb{N}_0$, the equations\del{   
Otherwise, we consider Equation \eqref{Eq-approx-n} with $ G$ globally Lipschitz, therefore, we do not need to use $G(\pi_m) $,}
\begin{equation}\label{Eq-approx-m}
\Bigg\{
\begin{array}{lr}
 du^m= \big(-
A_{\alpha}u^m(t) + B(\pi_m u^m(t))\big)dt+ G(u^m(t))dW(t), \; 0< t\leq T,\\
u_m(0)= u_0.
\end{array}
\end{equation}
First, let us mention that, using  Formula \eqref{construction-of-fract-bounded} and the fact that $ div \pi_m u=0$, it is easy to see that  $ \pi_m$ is well defined. Moreover, we have for all  $(u, v)\in D(B(\cdot, \cdot))$,
\begin{eqnarray}\label{eq-relation-B-B-m}
B(\pi_m(u), \pi_m(v)) &=& \mathcal{X}_m(u, v)B(u, v), 
\end{eqnarray}
where 
\begin{eqnarray}\label{eq-x-m}
\mathcal{X}_m(u, v):&=&
\min\{1, m{|u|^{-1}_{\mathbb{H}^{\frac{d+2-\alpha}{4}, 2}}},
 m{|v|^{-1}_{\mathbb{H}^{\frac{d+2-\alpha}{4}, 2}}}, m^2{|u|^{-1}_{\mathbb{H}^{\frac{d+2-\alpha}{4}, 2}}|v|^{-1}_{\mathbb{H}^{\frac{d+2-\alpha}{4}, 2}}}\}
\end{eqnarray}
In the aim to prove the existence and the uniqueness of the global weak solution of Equation \eqref{Eq-approx-m}, for all $ m\in \mathbb{N}_0$,
we use the variational approach, see e.g. \cite{Krylov-Rozovski-monotonocity-2007, Metivier-book-SPDEs-88, Rockner-Pevot-06}. Or equivalently we can follow either the scheme above or the scheme in Section \ref{sec-Torus} with the relevant changes mentioned before.
Let  $ u, v, \theta \in \mathbb{H}^{\frac\alpha2, 2}(O)$, we have the following properties

{\bf $(a_1)$  The monotonicity.} Using \eqref{formula-B-v1-B-v2}, H\"older inequality, \eqref{B-u-v-h-alpha-2-d} with $ \eta =0$ (or \eqref{Eq-B-H-alpha-2-est}), Lemma \ref{lem-Lipschitz-pi-n}, Sobolev interpolation (recall that
$ 1+\frac{d-1}{3}< \alpha <2$)\del{(recall that
$ 1+\frac{d-1}{3}< \alpha <2 \Rightarrow \mathbb{H}^{\frac\alpha2, 2}(O) \hookrightarrow \mathbb{H}^{\frac{d+2-\alpha}{4}, 2}(O) $)}
and Young inequality, we infer that
%%%%%%%%%%%%%%%%%%%%%%BEGIN%%%%%%%%%%
\del{\begin{eqnarray}\label{ineq-B-u-v-v-local-n}
|{}_{V^*}\langle B(\pi_nu)&-&B(\pi_nv), u-v\rangle_{V} |\nonumber\\
&= & |{}_{V^*}\langle B(\pi_nu-\pi_nv, \pi_nv), u-v\rangle_{V}
+{}_{V^*}\langle B(\pi_nu, \pi_nu-\pi_nv), u-v\rangle_{V}|\nonumber\\
&\leq & |u-v|_{\mathbb{H}^{\frac\alpha2, 2}}\big(
| B(\pi_nu-\pi_nv, \pi_nv)|_{\mathbb{H}^{-\frac\alpha2, 2}}+| B(\pi_nu-\pi_nv, \pi_nv)|_{\mathbb{H}^{-\frac\alpha2, 2}}\big)\nonumber\\
&\leq & c|u-v|_{\mathbb{H}^{\frac\alpha2, 2}}
|\pi_nu-\pi_nv|_{\mathbb{H}^{\frac{d+2-\alpha}{4}, 2}}
\big(|\pi_nu|_{\mathbb{H}^{\frac{d+2-\alpha}{4}, 2}}+|\pi_nv|_{\mathbb{H}^{\frac{d+2-\alpha}{4}, 2}}\big)\nonumber\\
&\leq& c|v|_{\mathbb{H}^{\frac{d+2-\alpha}{4}, 2}} |u-v|_{\mathbb{H}^{\frac\alpha2, 2}}|u-v|_{\mathbb{H}^{\frac{d+2-\alpha}{4}, 2}}\nonumber\\
&\leq & c|v|_{\mathbb{H}^{\frac{d+2-\alpha}{4}, 2}} |u-v|^{\frac{d+2+\alpha}{2\alpha}}_{\mathbb{H}^{\frac\alpha2, 2}}
|u-v|_{\mathbb{L}^2}^{\frac{3\alpha-d-2}{2\alpha}}\nonumber\\
&\leq& c|v|^{\frac{4\alpha}{3\alpha-d-2}}_{\mathbb{H}^{\frac{d+2-\alpha}{4}, 2}}|u-v|^{2}_{\mathbb{L}^2} + \frac12|u-v|^{2}_{\mathbb{H}^{\frac\alpha2, 2}}.
%\leq c (1+ | v|_{\mathbb{H}^{1+\frac\alpha2, 2}})| u-v|^2_{\mathbb{L}^2}.
\end{eqnarray}}
%%%%%%%%%%%%%%%%%%%%%%%END%%%
\begin{eqnarray}\label{ineq-B-u-v-v-local-m}
{}_{V^*}\langle B(\pi_mu)&-&B(\pi_mv), u-v\rangle_{V} \nonumber\\
&= & {}_{V^*}\langle B(\pi_mu, \pi_mu-\pi_mv), u-v\rangle_{V}+{}_{V^*}\langle B(\pi_mu-\pi_mv, \pi_mv), u-v\rangle_{V} \nonumber\\
&\leq & |u-v|_{\mathbb{H}^{\frac\alpha2, 2}}\big(
| B(\pi_mu, \pi_mu-\pi_mv)|_{\mathbb{H}^{-\frac\alpha2, 2}}+| B(\pi_mu-\pi_mv, \pi_mv)|_{\mathbb{H}^{-\frac\alpha2, 2}}\big)\nonumber\\
&\leq & c|u-v|_{\mathbb{H}^{\frac\alpha2, 2}}
|\pi_mu-\pi_mv|_{\mathbb{H}^{\frac{d+2-\alpha}{4}, 2}}
\big(|\pi_mu|_{\mathbb{H}^{\frac{d+2-\alpha}{4}, 2}}+|\pi_mv|_{\mathbb{H}^{\frac{d+2-\alpha}{4}, 2}}\big)\nonumber\\
&\leq& cm |u-v|_{\mathbb{H}^{\frac\alpha2, 2}}|u-v|_{\mathbb{H}^{\frac{d+2-\alpha}{4}, 2}}\nonumber\\
\del{&\leq & cn|u-v|^{\frac{d+2+\alpha}{2\alpha}}_{\mathbb{H}^{\frac\alpha2, 2}}
|u-v|_{\mathbb{L}^2}^{\frac{3\alpha-d-2}{2\alpha}}\nonumber\\}
&\leq& cm^{\frac{4\alpha}{3\alpha-d-2}}|u-v|^{2}_{\mathbb{L}^2} + \frac12|u-v|^{2}_{\mathbb{H}^{\frac\alpha2, 2}}.
%\leq c (1+ | v|_{\mathbb{H}^{1+\frac\alpha2, 2}})| u-v|^2_{\mathbb{L}^2}.}
\end{eqnarray}
Therefore, using the semigroup property of $ (A_\beta)_{\beta\geq 0}$, 
\eqref{ineq-B-u-v-v-local-m} and Assumption $ (\mathcal{C})$ (with $ q=2$, $ \delta=0$ and $c_R=c$), we get the monotonicity property\del{we conclude the monotonicity property of the term  $ -A_\alpha+B\pi_m$. In particular, thanks to Assumption $ (\mathcal{C})$,  Estimation \eqref{eq-monoto-local} with $ B\pi_m$ and $ m$ are in the place of $ B$ and $ |v|_{\mathbb{H}^{\frac{d+2-\alpha}{4}, 2}}$ respectively, holds. }
\begin{eqnarray}
-2{}_{V^*}\langle A_\alpha (u-v), u-v\rangle_{V} &+& 2{}_{V^*}\langle B(\pi_m u)- B(\pi_m v), u-v\rangle_{V} \nonumber\\ 
&+& ||G(u)- G(v)||^2_{HS(\mathbb{L}^2)}\leq  c_m|u-v|^2_{\mathbb{L}^{2}}.
\end{eqnarray}

{\bf $(a_2)$  The coercitivity.} \del{ Thanks to  H\"older inequality, \eqref{B-u-v-h-alpha-2-d} with $ \eta =0$ (or \eqref{Eq-B-H-alpha-2-est}), Lemma \ref{lem-Lipschitz-pi-n}, interpolation and Young inequality, we infer that 
\begin{eqnarray}\label{eq-coercive}
{}_{V^*}\langle B(\pi_m u), u\rangle_{V}&\leq &
|u|_{\mathbb{H}^{\frac\alpha2, 2}}|B(\pi_m u)|_{\mathbb{H}^{-\frac\alpha2, 2}} \leq  
|u|_{\mathbb{H}^{\frac\alpha2, 2}}|\pi_m u|^2_{\mathbb{H}^{\frac{d+2-\alpha}4, 2}}\nonumber\\
&\leq & m |u|_{\mathbb{H}^{\frac\alpha2, 2}}| u|_{\mathbb{H}^{\frac{d+2-\alpha}4, 2}}\leq 
m |u|^{\frac{\alpha+d+2}{2\alpha}}_{\mathbb{H}^{\frac\alpha2, 2}}|u|^ {\frac{3\alpha-d-2}{2\alpha}}_{\mathbb{L}^{2}}
\nonumber\\
&\leq & \frac12|u|^2_{\mathbb{H}^{\frac\alpha2, 2}} + m^{\frac{4\alpha}{3\alpha-d-2}} 2^{\frac{\alpha}{\alpha+d+2}}|u|^ {2}_{\mathbb{L}^{2}}.
\end{eqnarray}
Now i}It is easy, using the semigroup property of $ (A_\beta)_{\beta\geq 0}$,\del{\eqref{eq-coercive}}\eqref{eq-relation-B-B-m}, \eqref{eq-x-m},  \eqref{Eq-3lin-propnull} and Assumption $ (\mathcal{C})$(with $q=2$, $ \delta=0$ and $c_R=c$) that  
\begin{eqnarray}
-2{}_{V^*}\langle A_\alpha u, u\rangle_{V} + 2{}_{V^*}\langle B(\pi_m u), u\rangle_{V} + ||G(u)||^2_{L_Q(\mathbb{L}^2)}+ 2|u|^2_{\mathbb{H}^{\frac\alpha2, 2}}&\leq & c_m(1+|u|^2_{\mathbb{L}^{2}}).\nonumber\\
\end{eqnarray}

{\bf $(a_3)$  The growth.} Thanks to  \eqref{Eq-B-H-alpha-2-est}, Lemma \ref{lem-Lipschitz-pi-n} and Sobolev embedding, we infer that 
\begin{eqnarray}\label{eq-growth}
| A_\alpha u|_{\mathbb{H}^{-\frac\alpha2, 2}} + |B(\pi_m u)|_{\mathbb{H}^{-\frac\alpha2, 2}} &\leq & (m+1)|u|_{\mathbb{H}^{\frac\alpha2, 2}}. 
\end{eqnarray}

{\bf $(a_4)$  The hemicontinuity.} The hemicintinuity is obtained thanks to the continuity of the following real functions 
on $ \mathbb{R}$
\begin{equation}
s\mapsto |u+sv|_{\mathbb{H}^{\frac{d+2-\alpha}{4}, 2}}\;\;\text{and}\;\;\; 
s\mapsto \mathcal{X}_m(u+sv, u+sv).
\end{equation}
\del{and
\begin{equation}
s\mapsto \phi_{u, v, \theta}(s):=
\Big\{
\begin{array}{lr}
{}_{V^*}\langle B(u+sv), \theta\rangle_{V}, \;\; |u+sv|_{\mathbb{H}^{\frac{d+2-\alpha}{4}, 2}}\leq m\nonumber\\
\frac{m^2}{|u+sv|^2_{\mathbb{H}^{\frac{d+2-\alpha}{4}, 2}}}\;\;{}_{V^*}\langle B(u+sv), \theta\rangle_{V},\geq m \;\; |u+sv|_{\mathbb{H}^{\frac{d+2-\alpha}{4}, 2}}. \nonumber\\
\end{array}
\end{equation}}
\del{\begin{equation}
s\mapsto {}_{V^*}\langle B(\pi_m (u+sv)), \theta\rangle_{V}= 
\phi_{u, v}(s) ,
\end{equation}}
\del{Now it is easy to check, using \eqref{Eq-B-H-alpha-2-est}, that in addition to the monotonicity property, the coefficients of the approximated equation satisfy the hemicontinuity, the coercitivity and the growth properties.} 
\noindent Consequently, for all $ m \in \mathbb{N}_0$, there exists a unique global solution $ (u^m(t), t\in [0, T])$ of Equation \eqref{Eq-approx-m} in the sense of Definition \ref{def-variational solution},  i.e. $ (u^m(t), t\in [0, T])$ satisfies, Equation \eqref{Eq-weak-Solution}, with $ V_2= V=\mathbb{H}^{\frac\alpha2, 2}(O)$, $ V_1= \mathbb{H}^{-\delta', 2}(O)$, $ \delta'>1+\frac d2$, \eqref{eq-set-solu-weak} and \eqref{eq-mart-l-2solu} with the constant $ c$ in the RHS of the later estimate depends on $m$ (or equivalently $ (u^m(t), t\in [0, T])$ satisfies \eqref{eq-weak-l-2solu} up to $ T$. \del{
\begin{equation}
\mathbb{E}\sup_{[0, T]}|u^m(t)|^p_{\mathbb{L}^2}+ \mathbb{E}\int_0^T\del{|u^m(t)|^p_{\mathbb{L}^2}}|u^m(t)|^2_{\mathbb{H}^{\frac\alpha2, 2}}dt\leq c_m<\infty.
\end{equation}}The $\mathbb{H}^{-\delta', 2}$-continuity is obtained by a standard way using Section \ref{sec-nonlinear-prop}. In particular,  one of the main ingredients of the proof  is Formula \eqref{eq-B-estimatoion-q=2}\del{{eq-est-H-1-d-q}}. Thanks to Remark \ref{Rem-1}, we deduce that the trajectories of 
$ (u^m(t), t\in [0, T])$ are $\mathbb{L}^2-$weakly continuous, thus  the random time  
\begin{equation}\label{eq-stop-time-weak-solu-L2}
 \xi_m:=\inf\{t\in (0, T), s.t.\;\; |u^m(t)|_{\mathbb{H}^{\frac{d+2-\alpha}{4}, 2}} > m\}\wedge T,
\end{equation}
with the understanding that $ \inf(\emptyset)=+\infty$, is a stopping time, see complete proof in Appendix \ref{append-stop-time}. The sequence $(\xi_m)_m $ is an increasing sequence, hence the following random time exists and is then a predictable stopping time.
\begin{equation}
 \xi := \lim_{m\rightarrow +\infty}\xi_m.
\end{equation}
Thanks to the uniqueness of $ (u^m(t), t\in [0, T])$, we can define the process \del{( as for the mild solution)} $ (u(t), \;  t\in [0, \xi) )$ by 
$ (u(t):=u^m(t), t\in [0, \xi_m))$. Remark that as much as $ u(t)$ stays in the ball $ B_{\mathbb{H}^{\frac{d+2-\alpha}{4}, 2}}(0, m)$, it is a solution of our main equation \eqref{Main-stoch-eq}, hence  $ (u, \xi)$ is a maximal local weak-strong solution.}
%%%%%%%%END%%%%%%%NEW%%%%%%
\del{\noindent Now, we assume that $ u\in L^2(\Omega\times[0, T], \mathbb{H}^{\frac\alpha2, 2}(O)) $,  
$ v\in L^2(\Omega\times[0, T], \mathbb{H}^{\frac\alpha2, 2}(O)) \cap  
L^{\frac{4\alpha}{3\alpha-d-2}}(\Omega\times[0, T], \mathbb{H}^{\frac{d+2-\alpha}{4}, 2}(O))$ and define 
$ r'(t):= c(1+ |v(t)|^{\frac{4\alpha}{3\alpha-d-2}}_{\mathbb{H}^{\frac{d+2-\alpha}{4}, 2}})$. Let $ \tilde{\xi}$ be any predictable stopping time.
We follow a similar calculus as in 
Section \ref{sec-Torus}, taking in consideration the changes mentioned in the beginning of this section and integrating on $ (0, \tilde{\xi})$.
Then, we end up with the estimation
\begin{eqnarray}\label{eq-key-leq-0}
\int_0^T\psi(t)dt\mathbb{E}\big\{\int_0^{t\wedge \tilde{\xi}}\!\!\!\!\!\!&{}&\!\!\!\!\!\! e^{-r(s)}
\big(-r'(s\wedge \tilde{\xi})|u(s\wedge \tilde{\xi})- v(s\wedge \tilde{\xi})
|^2_{\mathbb{L}^2}+ 2|| G_2(s\wedge \tilde{\xi}) - G(v(s\wedge \tilde{\xi}))||^2_{L_Q(\mathbb{L}^2)}\nonumber\\
&+& {}_{V^*}\langle F_2(s\wedge \tilde{\xi}) -
F(v(s\wedge \tilde{\xi})), u(s\wedge \tilde{\xi})- v(s\wedge \tilde{\xi}))\rangle_{V}\big)ds\big\}\leq 0.
\end{eqnarray}
\noindent We define, for $ N\in \mathbb{N}_0$, the predictable stopping time 
\begin{equation}
 \xi_N:=\inf\{t\in [0, T], s.t. |u(t)|_{\mathbb{H}^{\frac{d+2-\alpha}{4}, 2}} \geq N\}\wedge T,
\end{equation}
with the understanding that $ \inf(\emptyset)=+\infty$. Let 
\begin{equation}
 \xi := \lim_{N\rightarrow +\infty}\xi_N.
\end{equation}
It is well known that $ \xi$ exists thanks to the monotonicity of $ \xi_N$.
We replace in \eqref{eq-key-leq-0},  $ \tilde{\xi}$ by  $ \xi_N$ and take $ v(t\wedge \xi_N)= u(t\wedge \xi_N)$, 
\del{in $ L^2([\Omega\times0, T], \mathbb{H}^{\frac\alpha2, 2}(O))$,}we conclude from \eqref{eq-key-leq-0}, that for all fixed $ N$,
\del{$|| G(s) - G(v(s))||^2_{L_Q}$}$ G_2(s\wedge \xi_N)= G(u(s\wedge \xi_N)), \; ds\times dP-a.e.$. 
To get the equality $ F_2(s\wedge \xi_N)= F(u(s\wedge \xi_N)))$, we consider Estimation \eqref{eq-key-leq-0} without the last term and introduce
$ \tilde{v} \in L^\infty(\Omega\times[0, T], \mathbb{H}^{\frac\alpha2}(\mathbb{T}^2))$, the
parameter $ \lambda \in [-1, +1]$ and argue as in Section \ref{sec-Torus}, we get \del{. Then, replacing $ v$  and $ r(s)$ by 
$ u-\lambda\tilde{v}$ respectively
$ r'_\lambda(s):= c(1+|u-\lambda\tilde{v}|^{\frac{2\alpha}{3\alpha-2}}_{\mathbb{H}^{1+\frac\alpha2}})$, we get
\begin{eqnarray}\label{eq-equality-F-Fn}
\mathbb{E}\int_0^Te^{-r_\lambda(s)}\big(-r'_\lambda(s)\lambda^2|\tilde{v}(s)|^2_{\mathbb{L}^2}+ 2\lambda\langle F(s) -
F(u(s)-\lambda \tilde{v}(s)), \tilde{v}(s))\rangle_{\mathbb{L}^2}\big)ds\leq 0.\nonumber\\
\end{eqnarray}
Dividing on $ \lambda<0$ and on $ \lambda>0$, we conclude that if the limit of
the LHS of \eqref{eq-equality-F-Fn} exists, when $ \lambda \rightarrow 0$, then it vanishes.
For the first term in the LHS of \eqref{eq-equality-F-Fn}, we use the fact that
$ \tilde{v} \in L^\infty(\Omega\times[0, T], \mathbb{H}^{1+\frac\alpha2}(\mathbb{T}^2))$ and then we calculate the integral. Then it is easy that
the limit of this term vanishes. For the second term we use the dominated convergence theorem, we get}

\begin{eqnarray}\label{eq-equality-F-Fn-1}
\mathbb{E}\int_0^{T\wedge \xi_N}e^{-r_0(s\wedge \xi_N)}{}_{V^*}\langle F_2(s\wedge \xi_N) -
F(u(s\wedge \xi_N)), \tilde{v}(s\wedge \xi_N))\rangle_{V}ds= 0.
\end{eqnarray}
The justification of the application of the dominated convergence theorem  follows as in Section \ref{sec-Torus} using \eqref{Eq-B-H-alpha-2-est}.}\del{for $ s\leq \xi_N$
\begin{eqnarray}
|{}_{V^*}\langle F_2(s) &-&
F(u(s)-\lambda \tilde{v}(s)), \tilde{v}(s))\rangle_{V}|\leq
 |\tilde{v}(s)|_{\mathbb{H}^{\frac\alpha2, 2}}\big( |F_2(s)|_{\mathbb{H}^{-\frac\alpha2, 2}}+
|u(s)|_{\mathbb{H}^{\frac\alpha2, 2}}+ |\tilde{v}(s)|_{\mathbb{H}^{\frac\alpha2, 2}}\nonumber\\
&+& |B(u(s)-\lambda \tilde{v}(s))|_{\mathbb{H}^{-\frac\alpha2, 2}} \nonumber\\
&\leq&
 |\tilde{v}(s)|_{\mathbb{H}^{\frac\alpha2, 2}}\big( |F_2(s)|_{\mathbb{H}^{-\frac\alpha2, 2}}+
|u(s)|_{\mathbb{H}^{\frac\alpha2, 2}}+ |\tilde{v}(s)|_{\mathbb{H}^{\frac\alpha2, 2}}+
|u(s)|_{\mathbb{H}^{1, 2}}|u(s)|_{\mathbb{L}^{ 2}} \nonumber\\
&+& |\tilde{v}(s)|_{\mathbb{H}^{1, 2}}|\tilde{v}(s)|_{\mathbb{L}^{ 2}}
+|u(s)|_{\mathbb{H}^{1, 2}}|\tilde{v}(s)|_{\mathbb{L}^{ 2}} + |\tilde{v}(s)|_{\mathbb{H}^{1, 2}}|u(s)|_{\mathbb{L}^{ 2}}.
\end{eqnarray}By this procedure, we have constructed a sequence of progressively measurable processes $ (u_N)_{N\in \mathbb{N}_1}$ resolving up to a stoping time $ \xi_n$ te dD-FSNSE} 
\del{\section{Global existence and uniqueness of solution for 2D-FSNSEs.}\label{sec-global-mild-weak-solution}
We assume that $d=2$, $2< q<\infty$, $ p\geq 4$ and $ u_0\in L^p(\Omega, \mathbb{H}^{1, q}(O))$.  
We prove the global existence and the uniqueness of  mild  and weak solutions for multiplicative stochastic 2D-FSNSE.}}

\section{Global existence and uniqueness of a weak solution of the multi-dimensional FSNSEs.}\label{sec-global-weak-solution}

In this section, we prove  Theorem \ref{Main-theorem-boubded-2}. \del{the global existence and the uniqueness of the weak solution for the dD-FSNSE \eqref{Main-stoch-eq}.} But first let us illustrate the fact  that the 2D-FSNSE exhibits the same difficulty to prove the existence of the global solution as the 3D-NSE.  We follow a similar calculus as 
in Section \ref{sec-Torus}, replacing Property \eqref{vanishes-bilinear-tous-H1} by Property
\eqref{Eq-3lin-propnull}\del{is intrinsic for that  calculus and is valid neither for bounded domains of $ \mathbb{R}^d$ nor for the $ 3D-$torus, we use the property
\eqref{Eq-3lin-propnull}.} and considering\del{ for $ d\in \{2, 3\}$ and $ \alpha_0(d):= 1+\frac{d-1}3\leq \alpha\leq 2$,} 
the  densely continuously embedding Gelfand triple \eqref{gelfant-triple-eta}, with $ \eta=0$.\del{{Gelfand-triple-Domain}} We obtain the following Lemma
\begin{lem}\label{lem-unif-bound-theta-n-H-1-domain}
Let $ d\in \{2, 3\}$, $  \alpha_0(d):= 1+\frac{d-1}3\leq \alpha\leq 2$ and $ u_0\in L^{p}(\Omega, \mathbb{L}^{2}(O)), p\geq 4$ and let $ G$ satisfying Assumption $ (\mathcal{C})$ (\eqref{Eq-Cond-Linear-Q-G} with $ q=2$ and $ \delta =0$). Then
the solutions $ (u_n(t), t\in [0, T])$ of the equations \eqref{FSBE-Galerkin-approxi},  $ n\in \mathbb{N}_0$,
satisfy the following estimates

%\begin{itemize}
% \item $  \forall p'\leq p, $
\begin{eqnarray}\label{Eq-Ito-n-weak-estimation-1-bounded}
\sup_{n}\mathbb{E}\Big(\sup_{[0, T]}|u_n(t)|^{p}_{\mathbb{L}^{ 2}}&+&
\int_0^T|u_n(t)|^{p-2}_{\mathbb{L}^{ 2}}\Big(|u_n(t)|^2_{\mathbb{H}^{\frac\alpha2, 2}} +
|u_n(t)|_{\mathbb{H}^{\beta, q_1}}^2 \Big)dt \nonumber\\
&+& \int_0^T|u_n(t)|^4_{\mathbb{L}^{ 2}}dt+ \int_0^T|u_n(t)|^{\frac{\alpha}{\eta}}_{\mathbb{H}^{\eta, 2}}dt\Big)<\infty,
\end{eqnarray}
where $ \beta\leq \frac\alpha2-\frac d2+\frac d{q_1}$, $ 2\leq q_1<\infty$ and $ \frac\alpha p<\eta\leq \frac\alpha2$.
\del{\item \begin{eqnarray}\label{Eq-B-n-weak-estimation-1-bounded}
\sup_{n}\left(\mathbb{E}\int_0^T |P_nB_n(u_n(t))|^{\frac{2\alpha}{4-\alpha}}_{\mathbb{H}^{-\frac\alpha2, 2}}+
\int_0^T |A^\frac\alpha2 u_n(t))|^{2}_{\mathbb{H}^{-\frac\alpha2, 2}} \right) dt <\infty.\nonumber\\
\end{eqnarray}}
%\item 
\begin{eqnarray}\label{Eq-B-n-weak-estimation-1-bounded}
\sup_{n}\left(\mathbb{E}\int_0^T (|P_nB(u_n(t))|_{\mathbb{H}^{-\frac\alpha2, 2}}+| A^\frac\alpha2 u_n(t))|_{\mathbb{H}^{-\frac\alpha2, 2}})^{\frac{2\alpha}{d+2-\alpha}}dt \right) <\infty.
\end{eqnarray}
%\end{itemize}
\end{lem}
\begin{proof}
The proof of \eqref{Eq-Ito-n-weak-estimation-1-bounded} follows exactly as for \eqref{Eq-Ito-n-weak-estimation-1-Torus} by replacing
the spaces $ \mathbb{H}^{1, 2}(\mathbb{T}^2)$ and $ \mathbb{H}^{1+\frac\alpha2, 2}(\mathbb{T}^2)$ respectively by
$ \mathbb{L}^{2}(O)$ and $ \mathbb{H}^{\frac\alpha2, 2}(O)$.
For the first term in the Estimate \eqref{Eq-B-n-weak-estimation-1-bounded}, we use the contraction property of $ P_n$, 
Estimate\del{ \eqref{B-u-v-h-alpha-2-d} with $ \eta =0$ (which coincides with} \eqref{Eq-B-H-alpha-2-est}\del{\eqref{Eq-B-H-alpha-2-est}}
and the Sobolev interpolation (recall that thanks to the condition $1+ \frac{d-1}3\leq  \alpha \leq 2$, we have 
the following embedding 
$ \mathbb{H}^{\frac\alpha2, 2}(O) \hookrightarrow \mathbb{H}^{\frac{d+2-\alpha}{4}, 2}(O)\hookrightarrow \mathbb{L}^2(O)$), we end up, for $1+ \frac{d-1}3<  \alpha \leq 2$,  with
\begin{eqnarray}\label{unif-estint-B-u}
\mathbb{E}\int_0^T |P_nB(u_n(t))|_{\mathbb{H}^{-\frac\alpha2}}^{\frac{2\alpha}{d+2-\alpha}}dt &\leq& c
\mathbb{E}\int_0^T |u_n(t)|_{\mathbb{H}^{\frac{d+2-\alpha}{4}, 2}}^{\frac{4\alpha}{d+2-\alpha}} dt \nonumber\\
&\leq& c
\mathbb{E}\int_0^T \big(|u_n(t)|^{\frac{d+2-\alpha}{2\alpha}}_{\mathbb{H}^{\frac\alpha2, 2}} 
|u_n(t)|^{\frac{3\alpha-d-2}{2\alpha}}_{\mathbb{L}^{2}}\big)^{\frac{4\alpha}{d+2-\alpha}} dt\nonumber\\
&\leq& c
\mathbb{E}\int_0^T |u_n(t)|^{2}_{\mathbb{H}^{\frac\alpha2, 2}} 
|u_n(t)|^{2\frac{3\alpha-d-2}{d+2-\alpha}}_{\mathbb{L}^{2}} dt.
\end{eqnarray}
The last term in the RHS of \eqref{unif-estint-B-u} is uniformly bounded thanks to \eqref{Eq-Ito-n-weak-estimation-1-bounded} and the condition
$ 2\frac{3\alpha-d-2}{d+2-\alpha}\leq p$. But this last is guaranteed thanks to $ 2\frac{3\alpha-d-2}{d+2-\alpha}\leq 4\leq p$. The case $1+ \frac{d-1}3=\alpha$ is easily obtained by application of Estimation \eqref{Eq-B-H-alpha-2-est}.
The second term in the RHS of \eqref{Eq-B-n-weak-estimation-1-bounded} is uniformly bounded thanks to
the fact that $ A: V:= D(A^\frac\alpha4) \rightarrow V^*$ is bounded, the condition $ \alpha \leq 1+\frac d2$ which yileds to 
$ \frac{2\alpha}{d+2-\alpha} \leq 2$ and thus we get
\begin{eqnarray}
\mathbb{E}\int_0^T |A^\frac\alpha2 u_n(t)|^{\frac{2\alpha}{d+2-\alpha}}_{\mathbb{H}^{-\frac\alpha2, 2}} dt
\leq c\mathbb{E}\int_0^T |u_n(t)|^{\frac{2\alpha}{d+2-\alpha}}_{\mathbb{H}^{\frac\alpha2, 2}} dt\leq c
\mathbb{E}\int_0^T (1+| u_n(t)|^{2}_{\mathbb{H}^{\frac\alpha2, 2}} )dt <\infty.\nonumber\\
\end{eqnarray}
Finaly we apply  Estimate \eqref{Eq-Ito-n-weak-estimation-1-bounded}.
\end{proof}

\noindent {\bf Existence of the solution.}
Assume that $1+ \frac{d-1}3<\alpha \leq 2$. Thanks to \eqref{Eq-Ito-n-weak-estimation-1-bounded}  and \eqref{Eq-B-n-weak-estimation-1-bounded}, we conclude the existence of
a subsequence, which is still denoted by $(u_n)_n$,\del{ and adapted processes $ u, F_2, G_2 $, such that}
\begin{equation}\label{eq-u-first-belonging}
 u\in L^2(\Omega\times [0, T]; \mathbb{H}^{\frac\alpha2, 2}(O))\cap
L^p(\Omega, L^\infty([0, T]; \mathbb{L}^{2}(O))),
\end{equation}
\begin{equation}
F_2 \in L^{\frac{2\alpha}{d+2-\alpha}}(\Omega\times [0, T];
\mathbb{H}^{-\frac\alpha2, 2} (O))\;\; \text{and}\;\; 
G_2\in L^2(\Omega\times [0, T]; L_Q(\mathbb{L}^{ 2} (O))), s.t.
\end{equation}
\begin{itemize}
\item (1') $u_n \rightarrow u$ weakly in $ L^2(\Omega\times [0, T]; \mathbb{H}^{\frac\alpha2, 2}(O)))$.
\item (2') $u_n \rightarrow u$ weakly-star in $ L^p(\Omega, \mathbb{L}^\infty([0, T]; \mathbb{L}^{ 2}(O)))$,
\item (3') $P_nF(u_n):= A^\frac\alpha2 u_n + P_nB(u_n)\rightarrow F_2$ weakly in $ L^{\frac{2\alpha}{d+2-\alpha}}(\Omega\times [0, T];
\mathbb{H}^{-\frac\alpha2, 2} (O))$.
\item (4')$u_n \rightarrow u$ weakly in $ L^{\frac{\alpha}{\eta}}(\Omega\times [0, T]; \mathbb{H}^{\eta, 2}(O))$, for all
$ \frac\alpha p<\eta \leq \frac\alpha2 $. \del{\footnote{Remark that $ \frac43 < \alpha \leq 2 \Leftrightarrow 1\leq \frac{2\alpha}{4-\alpha}\leq 2$}}
\item (5') $P_nG(u_n)\rightarrow G_2$ weakly in $ L^2(\Omega\times [0, T]; L_Q(\mathbb{L}^{ 2} (O)))$.

\end{itemize}
To prove the existence of a weak-strong solution of \eqref{Main-stoch-eq}, we can follow the same scheme as in Section \ref{sec-Torus} with the replacement of the spaces $ \mathbb{H}^{1, 2}(\mathbb{T}^2)$ and $ \mathbb{H}^{1+\frac\alpha2, 2}(\mathbb{T}^2)$ by
$ \mathbb{L}^{2}(O)$ and $ \mathbb{H}^{\frac\alpha2, 2}(O)$ respectively.  We construct a
process $ \tilde{\tilde {u}}$  as in \eqref{eq-def-u-tilde}, with $ F_1$ and $ G_1$ are replaced by $ F_2$ respectively  $ G_2$.
The proof of the statement $ u= \tilde{\tilde{u}},\; dt\times dP-a.e.$ can be done exactly as in Section \ref{sec-Torus}
with the brackets now stand for the $ V-V^*$-duality.
To check the  main key estimates, we use \eqref{formula-B-v1-B-v2}, \eqref{Eq-3lin-propnull}, H\"older inequality, 
\eqref{Eq-B-H-alpha-2-est}\del{{Eq-B-H-alpha-2-est}}, Sobolev interpolation\del{(recall that
$ 1+\frac{d-1}{3}< \alpha <2 \Rightarrow \mathbb{H}^{\frac\alpha2, 2}(O) \hookrightarrow \mathbb{H}^{\frac{d+2-\alpha}{4}, 2}(O) $)}
and Young inequality, we get
\begin{eqnarray}\label{ineq-B-u-v-v-local}
|{}_{V^*}\langle B(u)-B(v), u-v\rangle_{V} |&= & |{}_{V^*}\langle B(u-v, v), u-v\rangle_{V}|\leq
| B(u-v, v)|_{\mathbb{H}^{-\frac\alpha2, 2}}|u-v|_{\mathbb{H}^{\frac\alpha2, 2}}\nonumber\\
&\leq& c|v|_{\mathbb{H}^{\frac{d+2-\alpha}{4}, 2}} |u-v|_{\mathbb{H}^{\frac\alpha2, 2}}|u-v|_{\mathbb{H}^{\frac{d+2-\alpha}{4}, 2}}\nonumber\\
&\leq & c|v|_{\mathbb{H}^{\frac{d+2-\alpha}{4}, 2}} |u-v|^{\frac{d+2+\alpha}{2\alpha}}_{\mathbb{H}^{\frac\alpha2, 2}}
|u-v|_{\mathbb{L}^2}^{\frac{3\alpha-d-2}{2\alpha}}\nonumber\\
&\leq& c|v|^{\frac{4\alpha}{3\alpha-d-2}}_{\mathbb{H}^{\frac{d+2-\alpha}{4}, 2}}|u-v|^{2}_{\mathbb{L}^2} + \frac12|u-v|^{2}_{\mathbb{H}^{\frac\alpha2, 2}}.
%\leq c (1+ | v|_{\mathbb{H}^{1+\frac\alpha2, 2}})| u-v|^2_{\mathbb{L}^2}.
\end{eqnarray}
Using the semigroup property of $ (A^\beta)_{\beta\geq0}$ and Assumption $ (\mathcal{C})$ ( \eqref{Eq-Cond-Lipschitz-Q-G}, with $ \delta =0$, $ q=2$ and $ C_R:=c$), we confirm\del{ 
\begin{eqnarray}\label{eq-monoto-local}
 -2{}_{V^*}\langle A_\alpha(u-v), u-v\rangle_{V} &+&  2{}_{V^*}\langle B(u)-B(v), u-v\rangle_{V} + 
|| G(u) - G(v)||_{L_Q(\mathbb{L}^2)}\nonumber\\
&\leq&
 -|u-v|^{2}_{\mathbb{H}^{\frac\alpha2, 2}} + c(1+|v|^{\frac{4\alpha}{3\alpha-d-2}}_{\mathbb{H}^{\frac{d+2-\alpha}{4}, 2}})
|u-v|^{2}_{\mathbb{L}^2}.
\end{eqnarray}
Consequently, we get}
\begin{itemize}
 \item $ (\mathcal{K}'_1)$- The local monotonicity property: There exists a constant $ c>0$ such that $\forall u, v \in \mathbb{H}^{\frac\alpha2}(O)$,
\begin{eqnarray}\label{eq-a-alpha-B-G-d}
 -2{}_{V^*}\langle A_\alpha(u-v), u-v\rangle_{V} &+&  2{}_{V^*}\langle B(u)-B(v), u-v\rangle_{V} +
|| G(u) - G(v)||_{L_Q(\mathbb{L}^2)}\nonumber\\
&\leq & r'(t)| u-v|^2_{\mathbb{L}^2}.
\end{eqnarray}
\end{itemize}
where $ r'(t):= c(1+ |v(t)|^{\frac{4\alpha}{3\alpha-d-2}}_{\mathbb{H}^{\frac{d+2-\alpha}{4}, 2}})$ and $c>0$ is a constant relevantly chosen.

\vspace{0.25cm}
The main obstacle which prevent us in this stage to follow  the same steps as in Section \ref{sec-Torus} is the fact that we are unable to prove that the solution \del{  $ u\in L^2(\Omega\times[0, T], \mathbb{H}^{\frac\alpha2, 2}(O))\cap  L^p(\Omega, L^\infty([0, T]; \mathbb{L}^{2}(O))$ is not enough to get} 
$ u \in L^{\frac{4\alpha}{3\alpha-d-2}}(\Omega \times[0, T]; \mathbb{H}^{\frac{d+2-\alpha}{4}, 2}(O))$, unless we suppose that $ \alpha \geq 1+\frac d2$.  In fact, under the condition $ 2\frac{d+2-\alpha}{3\alpha-d-2} \leq 2 \Leftrightarrow  \alpha \geq 1+\frac d2$ and using the interpolation and Estimate \eqref{Eq-Ito-n-weak-estimation-1-bounded},  we conclude that 
\begin{equation}\label{eq-impossible-local}
\sup_{n}\mathbb{E}\int_0^T |u_n(t)|^{\frac{4\alpha}{3\alpha-d-2}}_{\mathbb{H}^{\frac{d+2-\alpha}{4}, 2}}dt \leq 
c\sup_{n}\mathbb{E}\int_0^T |u_n(t)|^{2\frac{d+2-\alpha}{3\alpha-d-2}}_{\mathbb{H}^{\frac\alpha2, 2}}|u_n(t)|^{2}_{\mathbb{L}^{2}} dt<\infty.
\end{equation}
Remak that under the condition $\alpha \geq 1+\frac d2$, the regime is either dissipative or hyperdissipative. The proof of the existence and the uniqueness of the global solution for the dD-FSNSE under these two regimes is  classical.\del{for which we know that the calculus is easier to get  $ (3)'$. Than following} In particular, one can follow the same machinery as in Section \ref{sec-Torus} with the relevant changes mentioned above. The obstacle mentioned in \eqref{eq-impossible-local} is similar to the well known one for the classical 3D-NSE  but not for the 2D-NSE. To support more our claim mentioned in the begining of this section and in Section \ref{sec-intro}, we emphasize that the  2D-SNSE is\del{or 2D-NSE are} covered by our technique and this proves that this latter is optimal. Moreover, we can remark also that the values,  ($ \alpha\geq 1+\frac d2$),  ($ d=2, \alpha =2$) and ($ d=3, \alpha\geq\frac52$), known in the literature for the
dD-NSEs emerge in our setting in a natural way. In addition, we can see here the importance of the Sobolev space $ \mathbb{H}^{\frac{d+2-\alpha}{4}, 2}(O)$. To the best knowledge of the author, this space emerges here for the first time.\del{To the best knowledge of the author, the role of this space appears here. }

\vspace{0.25cm}

Now, we return to our main goal in this section and let us start by the case 
 $ O=\mathbb{T}^2$.  Thanks to the  conditions in (\ref{Main-theorem-boubded-2}.1) and arguing as in the proof of the regularity in Section \ref{sec-Torus}, we infer that the maximal solution  $ (u, \xi)$ satisfies 
\begin{equation}\label{eq-2D-weak-sol-up-xi}
 \mathbb{E}\sup_{[0, \xi)}|u(t)|^q_{\mathbb{H}^{1, q}}+ \mathbb{E}\int_0^{T\wedge \xi}|u(t)|^2_{\mathbb{H}^{1+\frac\alpha2, 2}}dt\leq c<\infty.
\end{equation}
We denote by $\mathscr{E}$ the set of predictable stochastic processes $ (v(t), t\in [0, T])$ (or the extsension of $ v$ in the case $v$ is defined up to a stopping time)\del{(we can also take  an extension of $v$ if this latter is only defined up to a stopping time)} satisfying that there exists a stopping time $ \tau$ such that 
$ v \in  L^2(\Omega\times[0, \tau); \mathbb{H}^{\frac\alpha2, 2}(\mathbb{T}^2))$ and the process $ (\nabla v(t), t\in [0, \tau)) $ can be extended (we keep the some notation) to
$ \nabla v \in L^{(1-\frac{2d}{\alpha q})^{-1}}(\Omega\times[0, T]; L^{q}(\mathbb{T}^2))$, with the norm of $ \nabla v$ in this space is uniformly bounded, i.e. independently of the extension. We claim that  
 $\mathscr{E} \neq  \varnothing$. In fact, let us define,
 $ (\tilde{v}(t), t\in [0, T])$, by 
  $\tilde{v}(t):= u(t\wedge \xi), \forall t\in [0, T])$, where $ (u, \xi)$ is our maximal local solution. We have, for $ q$ characterized as in $(\ref{Main-theorem-mild-solution-d}.2)$, (bellow d=2)
\begin{eqnarray}
\del{\mathbb{E}\int_0^T|\nabla v_N(t)|^{\frac{1}{1-\frac{2d}{\alpha q}}}_{L^q_{2\times 2}} dt &\leq &}
 \mathbb{E}\int_0^{T}|\nabla u(t\wedge \xi_N)|^{\frac{1}{1-\frac{2d}{\alpha q}}}_{q} dt
 \leq c\mathbb{E}\int_0^T|\theta(t\wedge \xi_N)|^{\frac{1}{1-\frac{2d}{\alpha q}}}_{L^q})dt \leq c\mathbb{E}\int_0^T(1+|\theta(t)|^q_{L^q})dt<\infty.\nonumber\\
\end{eqnarray}
Therfore\del{it is easy to check thanks to Estimate  \eqref{eq-bale-kato-majda-con} that} $ \tilde{v}\in \mathscr{E}$. Remark that the condition $ \frac{1}{1-\frac{2d}{\alpha q}}\leq q\Leftrightarrow 1+\frac{2d}{\alpha}\leq q$, see Remark \ref{Rem-2}. Now, we shall look for a solution in the set $\mathscr{E}$. We can go back to the calculus above in this section \del{of Section \ref{sec-Domain}} and we repeat the same calculus until Estimate \eqref{ineq-B-u-v-v-local}, which we treat now  as follow. Using H\"older twice ($ 1/q+1/q'=1/2$), 
Gaglairdo-Nirenberg and than Young inequalities, we get (recall $V:=\mathbb{H}^{\frac\alpha2, 2} (O)$)
\begin{eqnarray}\label{ineq-B-u-v-v-global}
|{}_{V^*}\langle B(u)&-&B(v), u-v\rangle_{V} |\leq
| (u-v)\nabla v|_{L^2_2}|u-v|_{\mathbb{L}^{2}}
\leq
|u-v|_{L^{q'}_2}|\nabla v|_{q}|u-v|_{\mathbb{L}^{2}}\nonumber\\
&\leq& c|\nabla v|_{q}|u-v|^{2-\frac{2d}{\alpha q}}_{\mathbb{L}^{2}}|u-v|^{\frac{2d}{\alpha q}}_{\mathbb{H}^{\frac\alpha2, 2}}
\leq  c|\nabla v|^{\frac{1}{1-\frac{2d}{\alpha q}}}_{q}|u-v|^{2}_{\mathbb{L}^{2}}+ c|u-v|^{2}_{\mathbb{H}^{\frac\alpha2, 2}}.
\end{eqnarray}
We take $ r'(t):= c(1+ |\nabla v(t)|^{\frac{1}{1-\frac{2d}{\alpha q}}}_{q})$ with relevant constant $ c>0$. Than, we apply the whole machinery as in Section \ref{sec-Torus}\del{ and  the estimations as in Section \ref{sec-Domain}} to get the existence of the global solution.\del{, we prove then the existence of global weak solution.}\del{to prove the existence of the weak (strong in probability) solution of Equation \eqref{Main-stoch-eq} in the set $\mathscr{E}$. We justify the application of the dominated convergence theorem as follow,
The only remained calculus is to give the justification to the 
application of the dominated convergence theorem as in \eqref{just-domin-Torus}} To prove the uniqueness of the solution in the set 
$\mathscr{E}$, we follow the steps as in Section \ref{sec-Torus}. In particular, in Formula \eqref{Torus-uniquenss-ito-formula-H-1}, 
we estimate the term $ \langle B(w(s)), u^1(s)\rangle = - \langle B(w(s), u^1(s)), w(s)\rangle$ using \eqref{ineq-B-u-v-v-global}. 
The existence and the uniqueness hold, therefore the local solution \del{constructed in Section \ref{sec-Domain}} is global and unique. The estimate \eqref{propty-of-2D-global-mild-sol} is obtained from \eqref{eq-2D-weak-sol-up-xi}.\del{as in the proof of the regularity in Section \ref{sec-Torus}.}

\vspace{0.25cm}

For the general case $ (\ref{Main-theorem-boubded-2}.2) $,\del{we know from Section \ref{sec-Domain}, that there exists at least one maximal local solution. I} if a maximal local weak solution enjoys \eqref{eq-bale-kato-majda-con}, then we have $\mathscr{E} \neq  \varnothing $ and thus we follow the proof above (for $ O=\mathbb{T}^2$) to get the results. If a maximal local weak solution enjoys Condition \eqref{eq-other-bale-kato-majda-con}, then the set $ \mathscr{E}_1 \neq  \varnothing$, where $ \mathscr{E}_1$ is the set of predictable  stochastic processes $ (v(t), t\in [0, T])$ (or the extsension of $ v$ in the case $v$ is defined up to a stopping time) satisfying that there exists a predictable stopping time $ \tau$ such that $ v \in  L^2(\Omega\times[0, \tau); \mathbb{H}^{\frac\alpha2, 2}(\mathbb{T}^2))$ and can be extended (we keep the some notation) to
$ v \in L^{\frac{4\alpha}{3\alpha-d-2}}(\Omega\times[0, T]; \mathbb{H}^{\frac{d+2-\alpha}{4}, 2}(O))$ uniformly, i.e. with the norm of $ v$ in this space is uniformly bounded independently of the extension. Now, we can continue from Estimate \eqref{ineq-B-u-v-v-local}\del{{eq-monoto-local}} and follow the proof as above and as in Section \ref{sec-Torus}.

\del{   Now, for $ O=\mathbb{T}^2$, the condition \eqref{eq-bale-kato-majda-con} follows from Lemma \ref{lem-basic-curl-gradient} and \cite[Theorem 2.6.]{Debbi-scalar-active}. In fact, 
\noindent The verification of the application of \cite[Theorem 2.6.]{Debbi-scalar-active} and of the obtention of the regularity  \eqref{propty-of-2D-global-mild-sol} are done exactly as in Section \ref{sec-global-mild-solution}.}

%%%%%%%%%%%%%%%%%%%%%BEGIN%%%%%%%%%%%DEL
\del{We start doing the calculus in a general setting. 
In Section \ref{sec-Domain}, we have constructed  a local maximal weak solution $ (u, \xi)$ satisfying \eqref{eq-weak-l-2solu} up to $ \xi_N$ for all $ N\in \mathbb{N}_0$. We assume that $ (u, \xi)$ enjoys also Estimate  \eqref{eq-bale-kato-majda-con}.  We go back to the firts part of Section \ref{sec-Domain} and we repeat the same calculus until Estimate \eqref{ineq-B-u-v-v-local}, which we treat now  as follow. Using H\"older twice ($ 1/q+1/q'=1/2$), 
Gaglairdo-Nirenberg and than Young inequalities, we get (recall $V:=\mathbb{H}^{\frac\alpha2, 2} (O)$)
\begin{eqnarray}\label{ineq-B-u-v-v-global}
|{}_{V^*}\langle B(u)&-&B(v), u-v\rangle_{V} |\leq
| (u-v)\nabla v|_{L^2_2}|u-v|_{\mathbb{L}^{2}}
\leq
|u-v|_{L^{q'}_2}|\nabla v|_{q}|u-v|_{\mathbb{L}^{2}}\nonumber\\
&\leq& c|\nabla v|_{q}|u-v|^{2-\frac{2d}{\alpha q}}_{\mathbb{L}^{2}}|u-v|^{\frac{2d}{\alpha q}}_{\mathbb{H}^{\frac\alpha2, 2}}
\leq  c|\nabla v|^{\frac{1}{1-\frac{2d}{\alpha q}}}_{q}|u-v|^{2}_{\mathbb{L}^{2}}+ c|u-v|^{2}_{\mathbb{H}^{\frac\alpha2, 2}}.
\end{eqnarray}
We take $ r'(t):= c(1+ |\nabla v(t)|^{\frac{1}{1-\frac{2d}{\alpha q}}}_{q})$ with relevant constant $ c>0$ and 
$ v\in \mathscr{E}$, where $\mathscr{E}$ is the set of progressively measurable stochastic processes satisfying 
$ v \in  L^2(\Omega\times[0, T]; \mathbb{H}^{\frac\alpha2, 2}(\mathbb{T}^2))$ and 
$ \nabla v \in L^{(1-\frac{2}{\alpha q})^{-1}}(\Omega\times[0, T]; L^{q}(\mathbb{T}^2))$. We claim that  
 $\mathscr{E} \neq  \varnothing$. In fact, let us define for $ N$ fixed,
 $ (v_N(t), t\in [0, T])$, by 
  $v_N(t):= u(t\wedge \xi_N), \forall t\in [0, T])$, where 
$ (u, \xi)$ is our local solution\del{ (constructed in Section \ref{sec-Domain}).} Therfore, it is easy to check thanks to Estimate  \eqref{eq-bale-kato-majda-con} that $ v_N\in \mathscr{E}$. 
Than, we apply the whole machinery as in Section \ref{sec-Torus} and  estimation as in Section \ref{sec-Domain} 
to prove the existence of the weak (strong in probability) solution of 
Equation \eqref{Main-stoch-eq} in the set $\mathscr{E}$.\del{We justify the 
application of the dominated convergence theorem as follow,
The only remained calculus is to give the justification to the 
application of the dominated convergence theorem as in \eqref{just-domin-Torus},} To prove the uniqueness of the solution in the set 
$\mathscr{E}$, we follow the steps as in Section \ref{sec-Torus}. In particular, in Formula \eqref{Torus-uniquenss-ito-formula-H-1}, 
we estimate the term $ \langle B(w(s)), u^1(s)\rangle = - \langle B(w(s), u^1(s)), w(s)\rangle$ using \eqref{ineq-B-u-v-v-global}. 
The existence and the uniqueness hold, therefore the local solution constructed in Section \ref{sec-Domain} is global and unique.

Now, for $ O=\mathbb{T}^2$, the condition \eqref{eq-bale-kato-majda-con} follows from Lemma \ref{lem-basic-curl-gradient} and \cite[Theorem 2.6.]{Debbi-scalar-active}. In fact, for $ q\geq 4$,
\begin{eqnarray}
\del{\mathbb{E}\int_0^T|\nabla v_N(t)|^{\frac{1}{1-\frac{2}{\alpha q}}}_{L^q_{2\times 2}} dt &\leq &}
 \mathbb{E}\int_0^{T}|\nabla u(t\wedge \xi_N)|^{\frac{1}{1-\frac{2}{\alpha q}}}_{q} dt
 \leq c\mathbb{E}\int_0^T|\theta(t\wedge \xi_N)|^{\frac{1}{1-\frac{2}{\alpha q}}}_{L^q})dt \leq c\mathbb{E}\int_0^T(1+|\theta(t)|^q_{L^q})dt<\infty.\nonumber\\
\end{eqnarray}
\noindent The verification of the application of \cite[Theorem 2.6.]{Debbi-scalar-active} and of the obtaintion of the regularity  \eqref{propty-of-2D-global-mild-sol} are done exactly as in Section \ref{sec-global-mild-solution}.}

%%%%%%%%%%%%%
%%%%%%%%%%%%%%%%%%%%
%%%%%%%%%%%%%%%%%%%%%%%%%%%%%BEGIN%%%%%%%
\del{We denote by\del{ $ \mathfrak{V}$,  } $ \mathscr{E}$ the set of progressively measurable stochastic processes $ v$, such that 
$(v, \tau)$ is a maximal weak solution for  \eqref{Main-stoch-eq} with $\tau$ being a stopping time. 
Thanks to Appendix \ref{sec-Passage Velocity-Vorticity}, $ \theta := curl u$ is a local weak solution for the scalar active equation 
\eqref{Eq-vorticity-Torus-2-diff}

In this section, we prove the global existence and the uniqueness of the weak solution for the 2D-FNSE \eqref{Main-stoch-eq}.  
Thanks to Section \ref{sec-Domain}, we have   constructed a local weak solution $ (u, \xi)$ satisfying \eqref{est-local-u-tau-principle} and up to $ \xi_N$ for $ N\in \mathbb{N}_0$.
Under hypothesis in $ 8.3.2.$\del{ of Theorem \ref{Main-theorem-boubded-2} for this case ($d=2$) and} and using Appendix \ref{sec-Passage Velocity-Vorticity}
and \cite{Debbi-scalar-active}, we 
infer that (here we take, for simplicity, $ q= q_0\geq 6$)  
\begin{equation}\label{est-nabla-u-theta-Global-existence}
 \exists c>0, s.t. \forall N\in \mathbb{N}_0,  \mathbb{E}\sup_{[0, \xi_N)}|\nabla u(t)|^q_{L_{2\times 2}^q}\leq c \mathbb{E} \sup_{[0, \xi_N]}
|\theta(t)|^q_{L^q}\leq c <\infty.
\end{equation}
Recall $ \theta:= curl u$. Now, we change the calculus in \eqref{ineq-B-u-v-v-local} as follow. Using H\"older twice ($ 1/q+1/q'=1/2$), 
Gaglairdo-Nirenberg and than Young inequalities, we get
\begin{eqnarray}\label{ineq-B-u-v-v-global}
|{}_{V^*}\langle B(u)&-&B(v), u-v\rangle_{V} |\leq
| (u-v)\nabla v|_{L^2_2}|u-v|_{\mathbb{L}^{2}}
\leq
|u-v|_{L^{q'}_2}|\nabla v|_{L^q_2}|u-v|_{\mathbb{L}^{2}}\nonumber\\
&\leq& c|\nabla v|_{L^q_{2\times 2}}|u-v|^{2-\frac4{\alpha q}}_{\mathbb{L}^{2}}|u-v|^{\frac4{\alpha q}}_{\mathbb{H}^{\frac\alpha2, 2}}
\leq  c|\nabla v|^{\frac{1}{1-\frac{2}{\alpha q}}}_{L^q_{2\times 2}}|u-v|^{2}_{\mathbb{L}^{2}}+ c|u-v|^{2}_{\mathbb{H}^{\frac\alpha2, 2}}.
\end{eqnarray}
We take $ r'(t):= c(1+ |\nabla v(t)|^{\frac{1}{1-\frac{2}{\alpha q}}}_{L^q_{2\times 2}})$ with relevant constant $ c>0$ and 
$ v\in \mathscr{E}$, where $\mathscr{E}$ is the set of progressively measurable stochastic processes satisfying 
$ v \in  L^2(\Omega\times[0, T]; \mathbb{H}^{\frac\alpha2, 2}(\mathbb{T}^2))$ and 
$ \nabla v \in L^{(1-\frac{2}{\alpha q})^{-1}}(\Omega\times[0, T]; L_{2\times 2}^{q}(\mathbb{T}^2))$. We claim that  
 $\mathscr{E} \neq  \varnothing$. In fact, let us define for $ N$ fixed,
 $ (v_N(t), t\in [0, T])$, by 
  $v_N(t):= u(t\wedge \xi_N), \forall t\in [0, T])$, where 
$ (u, \xi)$ is the local solution constructed in Section \ref{sec-Domain}. It is easy to check, thanks to 
\eqref{est-nabla-u-theta-Global-existence} and the fact that $ q\geq 4$ and $ \frac43\leq \alpha \leq 2$, that $ v_N\in \mathscr{E}$. Indeed, we get
\begin{eqnarray}
\mathbb{E}\int_0^T|\nabla v_N(t)|^{\frac{1}{1-\frac{2}{\alpha q}}}_{L^q_{2\times 2}} dt &\leq &
 \mathbb{E}\int_0^{T}|\nabla u(t\wedge \xi_N)|^{\frac{1}{1-\frac{2}{\alpha q}}}_{L^q_{2\times 2}} dt
 \leq c\mathbb{E}\int_0^T|\theta(t\wedge \xi_N)|^{\frac{1}{1-\frac{2}{\alpha q}}}_{L^q})dt \nonumber\\
& \leq & c\mathbb{E}\int_0^T(1+|\theta(t)|^q_{L^q})dt<\infty.
\end{eqnarray}
\del{\begin{equation}
 \mathbb{E}\int_0^T|\nabla u(t)|^{\frac{1}{1-\frac{2}{\alpha q}}}_{L^q_{2\times 2}} dt
\leq c\mathbb{E}\int_0^T|\theta(t)|^{\frac{1}{1-\frac{2}{\alpha q}}}_{L^q})dt \leq c\mathbb{E}\int_0^T(1+|\theta(t)|^q_{L^q})dt<\infty.
\end{equation}}
Than, we apply the whole machinery as in Section \ref{sec-Torus} and  estimation as in Section \ref{sec-Domain} 
to prove the existence of the weak (strong in probability) solution of 
Equation \eqref{Main-stoch-eq} in the set $\mathscr{E}$.\del{We justify the 
application of the dominated convergence theorem as follow,
The only remained calculus is to give the justification to the 
application of the dominated convergence theorem as in \eqref{just-domin-Torus},} To prove the uniqueness of the solution in the set 
$\mathscr{E}$, we follow the steps as in Section \ref{sec-Torus}. In particular, in Formula \eqref{Torus-uniquenss-ito-formula-H-1}, 
we estimate the term $ \langle B(w(s)), u^1(s)\rangle = - \langle B(w(s), u^1(s)), w(s)\rangle$ using \eqref{ineq-B-u-v-v-global}. 
The existence and the uniqueness hold, therefore the local solution constructed in Section \ref{sec-Domain} is global and unique.

\begin{remark}
For $ d=3$, we can follow \cite{Giga-al-Globalexistence-2001}......
\end{remark}}
%%%%%%%%%%%%%%%%%%%%%%%%%%%%%%%%%%%%%%%END%%%%%%%%%%%%

%%%%%%%%%%%%%%%%
%%%%%%%%%%%%%%%%%%%%%%%%
%%%%%%%%%%%%%%%%%%%DEL

\del{
In Section \ref{sec-Domain}, we have constructed a maximal local weak solution $ (u, \xi)$ satisfying, for all $ N\in \mathbb{N}_0$, \eqref{eq-weak-l-2solu} up to $ \xi_N$. 
Now, for $ O=\mathbb{T}^2$, thanks to the  conditions in $(3.8.2)$ and arguing as in the proof of the regularity in Section \ref{sec-Torus}, we infer that the maximal solution  $ (u, \xi)$ satisfies 
\begin{equation}\label{eq-2D-weak-sol-up-xi}
 \mathbb{E}\sup_{[0, \xi)}|u(t)|^q_{\mathbb{H}^{1, q}}+ \mathbb{E}\int_0^{T\wedge \xi}|u(t)|^2_{\mathbb{H}^{1+\frac\alpha2, 2}}dt\leq c<\infty.
\end{equation}
We denote by $\mathscr{E}$ the set of progressively measurable stochastic processes $ (v(t), t\in [0, T])$ (we can take also an extension of $v$ if this latter is only defined up to a stopping time) satisfying that there exists a predictable stopping time $ \tau$ such that 
$ v \in  L^2(\Omega\times[0, \tau); \mathbb{H}^{\frac\alpha2, 2}(\mathbb{T}^2))$ and the process $ (\nabla v(t), t\in [0, \tau)) $ can be extended (we keep the some notation) to
$ \nabla v \in L^{(1-\frac{2d}{\alpha q})^{-1}}(\Omega\times[0, T]; L^{q}(\mathbb{T}^2))$, with the norm of $ \nabla v$ in this space is uniformly bounded, i.e. independently of the extension. We claim that  
 $\mathscr{E} \neq  \varnothing$. In fact, let us define for $ N$ fixed,
 $ (v_N(t), t\in [0, T])$, by 
  $v_N(t):= u(t\wedge \xi_N), \forall t\in [0, T])$, where $ (u, \xi)$ is our maximal local solution. We have, for $ q$ characterized as in $(3.6.2)$,(here d=2)
\begin{eqnarray}
\del{\mathbb{E}\int_0^T|\nabla v_N(t)|^{\frac{1}{1-\frac{2d}{\alpha q}}}_{L^q_{2\times 2}} dt &\leq &}
 \mathbb{E}\int_0^{T}|\nabla u(t\wedge \xi_N)|^{\frac{1}{1-\frac{2d}{\alpha q}}}_{q} dt
 \leq c\mathbb{E}\int_0^T|\theta(t\wedge \xi_N)|^{\frac{1}{1-\frac{2d}{\alpha q}}}_{L^q})dt \leq c\mathbb{E}\int_0^T(1+|\theta(t)|^q_{L^q})dt<\infty.\nonumber\\
\end{eqnarray}
Therfore\del{it is easy to check thanks to Estimate  \eqref{eq-bale-kato-majda-con} that} $ v_N\in \mathscr{E}$. Remark that the condition $ \frac{1}{1-\frac{2d}{\alpha q}}\leq q\Leftrightarrow 1+\frac{2d}{\alpha}\leq q$, see Remark \ref{Rem-2}. Now, we shall look for a solution in the set $\mathscr{E}$. We can go back to the first part of Section \ref{sec-Domain} and we repeat the same calculus until Estimate \eqref{ineq-B-u-v-v-local}, which we treat now  as follow. Using H\"older twice ($ 1/q+1/q'=1/2$), 
Gaglairdo-Nirenberg and than Young inequalities, we get (recall $V:=\mathbb{H}^{\frac\alpha2, 2} (O)$)
\begin{eqnarray}\label{ineq-B-u-v-v-global}
|{}_{V^*}\langle B(u)&-&B(v), u-v\rangle_{V} |\leq
| (u-v)\nabla v|_{L^2_2}|u-v|_{\mathbb{L}^{2}}
\leq
|u-v|_{L^{q'}_2}|\nabla v|_{q}|u-v|_{\mathbb{L}^{2}}\nonumber\\
&\leq& c|\nabla v|_{q}|u-v|^{2-\frac{2d}{\alpha q}}_{\mathbb{L}^{2}}|u-v|^{\frac{2d}{\alpha q}}_{\mathbb{H}^{\frac\alpha2, 2}}
\leq  c|\nabla v|^{\frac{1}{1-\frac{2d}{\alpha q}}}_{q}|u-v|^{2}_{\mathbb{L}^{2}}+ c|u-v|^{2}_{\mathbb{H}^{\frac\alpha2, 2}}.
\end{eqnarray}
We take $ r'(t):= c(1+ |\nabla v(t)|^{\frac{1}{1-\frac{2d}{\alpha q}}}_{q})$ with relevant constant $ c>0$. Than, we apply the whole machinery as in Section \ref{sec-Torus} and  the estimations as in Section \ref{sec-Domain} to get the existence of the global solution.\del{, we prove then the existence of global weak solution.}\del{to prove the existence of the weak (strong in probability) solution of Equation \eqref{Main-stoch-eq} in the set $\mathscr{E}$. We justify the application of the dominated convergence theorem as follow,
The only remained calculus is to give the justification to the 
application of the dominated convergence theorem as in \eqref{just-domin-Torus}} To prove the uniqueness of the solution in the set 
$\mathscr{E}$, we follow the steps as in Section \ref{sec-Torus}. In particular, in Formula \eqref{Torus-uniquenss-ito-formula-H-1}, 
we estimate the term $ \langle B(w(s)), u^1(s)\rangle = - \langle B(w(s), u^1(s)), w(s)\rangle$ using \eqref{ineq-B-u-v-v-global}. 
The existence and the uniqueness hold, therefore the local solution constructed in Section \ref{sec-Domain} is global and unique. The estimate \eqref{propty-of-2D-global-mild-sol} is obtained from \eqref{eq-2D-weak-sol-up-xi}.\del{as in the proof of the regularity in Section \ref{sec-Torus}.}

\vspace{0.25cm}

For the general case $ (3.8.3)$,\del{we know from Section \ref{sec-Domain}, that there exists at least one maximal local solution. I} if a maximal local weak solution enjoys \eqref{eq-bale-kato-majda-con}, then we have $\mathscr{E} \neq  \varnothing $ and thus we follow the proof above (for $ O=\mathbb{T}^2$) to get the results. If a maximal local weak solution enjoys Condition \eqref{eq-other-bale-kato-majda-con}, then the set $ \mathscr{E}_1 \neq  \varnothing$, where $ \mathscr{E}_1$ is the set of progressively measurable stochastic processes $ (v(t), t\in [0, T])$ (or the extsension of $ v$ in the case $v$ is defined up to a stopping time) satisfying that there exists a predictable stopping time $ \tau$ such that $ v \in  L^2(\Omega\times[0, \tau); \mathbb{H}^{\frac\alpha2, 2}(\mathbb{T}^2))$ and can be extended (we keep the some notation) to
$ v \in L^{\frac{4\alpha}{3\alpha-d-2}}(\Omega\times[0, T]; \mathbb{H}^{\frac{d+2-\alpha}{4}, 2}(O))$ uniformly, i.e. with the norm of $ v$ in this space is uniformly bounded independently of the extension. Now, we can continue from Estimate \eqref{ineq-B-u-v-v-local}\del{{eq-monoto-local}} and follow the proof as above and as in Section \ref{sec-Torus}.

\del{   Now, for $ O=\mathbb{T}^2$, the condition \eqref{eq-bale-kato-majda-con} follows from Lemma \ref{lem-basic-curl-gradient} and \cite[Theorem 2.6.]{Debbi-scalar-active}. In fact, 
\noindent The verification of the application of \cite[Theorem 2.6.]{Debbi-scalar-active} and of the obtention of the regularity  \eqref{propty-of-2D-global-mild-sol} are done exactly as in Section \ref{sec-global-mild-solution}.}

%%%%%%%%%%%%%%%%%%%%%BEGIN%%%%%%%%%%%DEL
\del{We start doing the calculus in a general setting. 
In Section \ref{sec-Domain}, we have constructed  a local maximal weak solution $ (u, \xi)$ satisfying \eqref{eq-weak-l-2solu} up to $ \xi_N$ for all $ N\in \mathbb{N}_0$. We assume that $ (u, \xi)$ enjoys also Estimate  \eqref{eq-bale-kato-majda-con}.  We go back to the firts part of Section \ref{sec-Domain} and we repeat the same calculus until Estimate \eqref{ineq-B-u-v-v-local}, which we treat now  as follow. Using H\"older twice ($ 1/q+1/q'=1/2$), 
Gaglairdo-Nirenberg and than Young inequalities, we get (recall $V:=\mathbb{H}^{\frac\alpha2, 2} (O)$)
\begin{eqnarray}\label{ineq-B-u-v-v-global}
|{}_{V^*}\langle B(u)&-&B(v), u-v\rangle_{V} |\leq
| (u-v)\nabla v|_{L^2_2}|u-v|_{\mathbb{L}^{2}}
\leq
|u-v|_{L^{q'}_2}|\nabla v|_{q}|u-v|_{\mathbb{L}^{2}}\nonumber\\
&\leq& c|\nabla v|_{q}|u-v|^{2-\frac{2d}{\alpha q}}_{\mathbb{L}^{2}}|u-v|^{\frac{2d}{\alpha q}}_{\mathbb{H}^{\frac\alpha2, 2}}
\leq  c|\nabla v|^{\frac{1}{1-\frac{2d}{\alpha q}}}_{q}|u-v|^{2}_{\mathbb{L}^{2}}+ c|u-v|^{2}_{\mathbb{H}^{\frac\alpha2, 2}}.
\end{eqnarray}
We take $ r'(t):= c(1+ |\nabla v(t)|^{\frac{1}{1-\frac{2d}{\alpha q}}}_{q})$ with relevant constant $ c>0$ and 
$ v\in \mathscr{E}$, where $\mathscr{E}$ is the set of progressively measurable stochastic processes satisfying 
$ v \in  L^2(\Omega\times[0, T]; \mathbb{H}^{\frac\alpha2, 2}(\mathbb{T}^2))$ and 
$ \nabla v \in L^{(1-\frac{2}{\alpha q})^{-1}}(\Omega\times[0, T]; L^{q}(\mathbb{T}^2))$. We claim that  
 $\mathscr{E} \neq  \varnothing$. In fact, let us define for $ N$ fixed,
 $ (v_N(t), t\in [0, T])$, by 
  $v_N(t):= u(t\wedge \xi_N), \forall t\in [0, T])$, where 
$ (u, \xi)$ is our local solution\del{ (constructed in Section \ref{sec-Domain}).} Therfore, it is easy to check thanks to Estimate  \eqref{eq-bale-kato-majda-con} that $ v_N\in \mathscr{E}$. 
Than, we apply the whole machinery as in Section \ref{sec-Torus} and  estimation as in Section \ref{sec-Domain} 
to prove the existence of the weak (strong in probability) solution of 
Equation \eqref{Main-stoch-eq} in the set $\mathscr{E}$.\del{We justify the 
application of the dominated convergence theorem as follow,
The only remained calculus is to give the justification to the 
application of the dominated convergence theorem as in \eqref{just-domin-Torus},} To prove the uniqueness of the solution in the set 
$\mathscr{E}$, we follow the steps as in Section \ref{sec-Torus}. In particular, in Formula \eqref{Torus-uniquenss-ito-formula-H-1}, 
we estimate the term $ \langle B(w(s)), u^1(s)\rangle = - \langle B(w(s), u^1(s)), w(s)\rangle$ using \eqref{ineq-B-u-v-v-global}. 
The existence and the uniqueness hold, therefore the local solution constructed in Section \ref{sec-Domain} is global and unique.

Now, for $ O=\mathbb{T}^2$, the condition \eqref{eq-bale-kato-majda-con} follows from Lemma \ref{lem-basic-curl-gradient} and \cite[Theorem 2.6.]{Debbi-scalar-active}. In fact, for $ q\geq 4$,
\begin{eqnarray}
\del{\mathbb{E}\int_0^T|\nabla v_N(t)|^{\frac{1}{1-\frac{2}{\alpha q}}}_{L^q_{2\times 2}} dt &\leq &}
 \mathbb{E}\int_0^{T}|\nabla u(t\wedge \xi_N)|^{\frac{1}{1-\frac{2}{\alpha q}}}_{q} dt
 \leq c\mathbb{E}\int_0^T|\theta(t\wedge \xi_N)|^{\frac{1}{1-\frac{2}{\alpha q}}}_{L^q})dt \leq c\mathbb{E}\int_0^T(1+|\theta(t)|^q_{L^q})dt<\infty.\nonumber\\
\end{eqnarray}
\noindent The verification of the application of \cite[Theorem 2.6.]{Debbi-scalar-active} and of the obtaintion of the regularity  \eqref{propty-of-2D-global-mild-sol} are done exactly as in Section \ref{sec-global-mild-solution}.}

 \del{we conclude that $ \nabla u$ enjoys \eqref{est-nabla-u-theta-Global-existence} for $ \tau_N$ here is replaced by $ \xi_m$.Appendix \ref{sec-Passage Velocity-Vorticity}
and \cite{Debbi-scalar-active}, we  infer that (here we take, for simplicity, $ q= q_0\geq 6$)  
\begin{equation}\label{est-nabla-u-theta-Global-existence}
 \exists c>0, s.t. \forall N\in \mathbb{N}_0,  \mathbb{E}\sup_{[0, \xi_N)}|\nabla u(t)|^q_{L_{2\times 2}^q}\leq c \mathbb{E} \sup_{[0, \xi_N]}
|\theta(t)|^q_{L^q}\leq c <\infty.
\end{equation}
Recall $ \theta:= curl u$.}
%%%%%%%%%%%%%%%END%%%%%%%%%DEL

%%%%%%%%%%%%%%%%%%%%%%%%%%%%%%%%%%%%%%%%%%%%%%%%%%%%%%%%%%%%%%%%%%%%%%%%%%%%%%%%%%%%%%
%%%%%%%%%%%%%%%%%%%%%%%%%%%%%%%%%%%%%%%%%%%%%%%%%%%%%%%%%%%%%%%%%%%%%%%%%%%%%%%%%%%%
%%%%%%%%%%%%%%%%%%%%%%%%%%%%%%%%%%%%%%%%%%%%%%%%%%%%%%%%%%%%%%%%%%%%%%%%%%%%%%%%%%%%%55

%%%%%%%%%%%%%%%%%%%%%%%%%%%%%%BEGIN%%%%%%%%%%%%DEL%%%%%%%%%%%%%%%%
\del{In this section, we prove Theorem \ref{Main-theorem-martingale-solution-d}.\del{ The method is quiet standard, see for Banach spaces 
\cite{Debbi-scalar-active} and for Hilbert space \cite{Flandoli-Gatarek-95}.} The main ingredients are Faedo-Galerkin approximations,
compactness and Skorokhod embedding theorem, see for similar calculus in
\cite{Debbi-scalar-active, Flandoli-Gatarek-95}.

\begin{lem}\label{lem-bounded-W-gamma-p}
The sequence $ (u_n)_n$ of solutions of Equations \eqref{FSBE-Galerkin-approxi} is uniformly bounded in the space
\begin{eqnarray}\label{Eq-W-}
L^2(\Omega, W^{\gamma, 2}(0, T; \del{\mathbb{H}^{-\delta', 2}(O))}
H_d^{-\delta', 2}(O))\cap L^2(0, T; \mathbb{H}^{\frac\alpha2, 2}(O))),
\end{eqnarray}
where  $ \delta'\geq_1\max\{\alpha, 1+\frac d{2}\}$ and $ \gamma <\frac12$.
\end{lem}
\begin{proof}
Thanks to Lemma \ref{lem-unif-bound-theta-n-H-1-domain}, it is sufficient to prove that $ (u_n(t), t\in [0, T])$ is
uniformly bounded in $L^2(\Omega, W^{\gamma, 2}(0, T; \mathbb{H}^{-\delta', 2}(O))$. We recall that the Besov-Slobodetski space $W^{\gamma,
p}(0, T; E)$, with $ E$ being a Banach space, $ \gamma \in (0, 1)$ and $ p\geq 1$, is the space
of all $ v\in L^P(0, T; E) $ such that
\begin{eqnarray}
||v||_{W^{\gamma, p}}:= \left(\int_0^T|v(t)|_E^pdt+
\int_0^T\int_0^T\frac{|v(t)-v(s)|_E^p}{|t-s|^{1+\gamma p}}
dtds\right)^{\frac1p}<\infty.
\end{eqnarray}
\noindent As  $(u_n(t), t\in [0, T])$ is the strong solution  of the finite dimensional  stochastic
differential equation \eqref{FSBE-Galerkin-approxi}, then  $u_n(t)$  is the solution of the stochastic integral equation 
\begin{equation}\label{FSBE-Integ-solu-Galerkin-approxi}
u_n(t)= P_nu_0 + \int_0^t(-A_\alpha u_n(r) + P_nB(u_n(r))dr + \int_0^tP_nG(u_n(r))\,dW_n(r),\; a.s.,\\
\end{equation}
for all $t\in [0, T]$. We denote by 
\begin{equation}\label{Eq-Drift-term}
I(t):=  \int_0^t(-A_\alpha u_n(r) + P_nB(u_n(r))dr
\end{equation}
and
\begin{equation}\label{Eq-Drift-term}
J(t):= \int_0^tP_nG
(u_n(r))\,dW_n(r).
\end{equation}
\noindent We prove that $ I(\cdot)$ is uniformly bounded in  $L^2(\Omega; W^{\gamma, 2}(0, T; H_d^{-\delta', 2}(O))$ 
and that the stochastic term $ J(\cdot )$ is uniformly bounded in $ L^2(\Omega; W^{\gamma, 2}(0, T;  \mathbb{L}^2(O))$, for all $ \gamma<\frac12$.\del{ as the stochastic term $ J$ is more regular then the drift term, see e.g. \cite[Lemma 2.1]{Flandoli-Gatarek-95}. \\}
Let  $ \phi \in H_d^{{\delta'}, 2}(O)$, using Identity \eqref{Eq-3lin-propsym}, we get
\begin{eqnarray}
| {}_{H_d^{-{\delta'}, 2}}\langle P_nB(u_n(r)), \phi\rangle_{H_d^{{\delta'}, 2}}|
&=& |\langle u_n(r) \cdot
\nabla P_n\phi, u_n(r)\rangle_{L_d^{2}}|\nonumber\\
&\leq& |\nabla P_n\phi|_{L^\infty}| u_n(r)|^2_{\mathbb{L}^{2}}.
\end{eqnarray}
Thanks to \cite[Remark 4 p 164, Theorem 3.5.4.ps.168-169 and Theorem 3.5.5 p 170]{Schmeisser-Tribel-87-book} for $ O= \mathbb{T}^d$, to
\cite[Theorem 7.63  and point 7.66]{Adams-Hedberg-94} for $ O$ being a bounded domain and to
the condition $ {\delta'}>1+\frac d{2}$,
we deduce for  $ 0<\epsilon < \delta'-1-\frac d{2}$,
 $$ |\nabla P_n\phi|_{L^\infty} \leq c |\nabla P_n\phi|_{H^{\epsilon+\frac d2, 2}} \leq c
|\phi|_{H_d^{1+\epsilon+\frac d2, 2}} \leq c |\phi|_{H_d^{\delta', 2}}.$$
Therefore,
\begin{eqnarray}\label{}
 |P_nB(u_n(r))|_{H_d^{-\delta', 2}}\leq c |u_n(r)|_{\mathbb{L}^{2}}^2
\end{eqnarray}
 and
\begin{eqnarray}\label{eq-unif-int-I(t)}
\int_0^T|I(t)|_{H_d^{-\delta', 2}}^2dt&\leq& c\int_0^T\int_0^t \big(|(-A_\alpha
u_n(r)|^2_{\mathbb{H}^{-\delta', 2}} + |P_nB(u_n(r))|^2_{H_d^{-\delta', 2}}\big)drdt\nonumber\\
&\leq& c\int_0^T\int_0^t\big(|
u_n(r)|^2_{\mathbb{L}^{ 2}} + |u_n(r)|^4_{\mathbb{L}^{2}}\big)drdt.
\end{eqnarray}
Moreover, using H\"older inequality and arguing as before, we get for $ t\geq s > 0$,
\begin{eqnarray}\label{eq-unif-int-I(t)-I(s)}
|I(t)- I(s)|^2_{H_d^{-\delta', 2}}&=& |\int_s^t(-A_\alpha u_n(r) +
P_nB(u_n(r))dr|^2_{\mathbb{H}^{-{\delta'}, 2}}\nonumber\\
&\leq & C(t-s)\left(\int_s^t(| u_n(r)|^2_{\mathbb{L}^{2}} +
|u_n(r)|^4_{\mathbb{L}^{2}})dr \right).
\end{eqnarray}
From \eqref{eq-unif-int-I(t)},  \eqref{eq-unif-int-I(t)-I(s)} and \eqref{Eq-Ito-n-weak-estimation-1-bounded}, 
we have for  $ \gamma <\frac12$,
\begin{eqnarray}\label{eq-unif-int-I(t)x2}
\mathbb{E}\big(\int_0^T|I(t)|_{H_d^{-\delta', 2}}^2dt&+&\int_0^T\int_0^T
\frac{|I(t)- I(s)|^2_{H_d^{-\delta',
2}}}{|t-s|^{1+2\gamma }} dtds\big)^{\frac12} \nonumber\\
&\leq& C\mathbb{E}\left(\int_0^T(| u_n(r)|^2_{\mathbb{L}^{2}} +
|u_n(r)|^4_{\mathbb{L}^{2}})dr
\right)^\frac12 \leq C<\infty.
\end{eqnarray}
Now, we estimate the stochastic term $ J$. 
Using the stochastic isometry, the contraction property of $ P_n$ and\del{ Assumption $
(\mathcal{C})$} Condition  \eqref{Eq-Cond-Linear-Q-G}, we get
\begin{eqnarray}
\int_0^T\mathbb{E}|\int_0^tP_nG(u_n(r))dW_n(r)|_{\mathbb{L}^{ 2}}^2dt&\leq&
C\int_0^T\mathbb{E}\int_0^t||G(u_n(r))||^2_{L_Q(\mathbb{L}^2)}drdt\nonumber\\
&\leq&
C\int_0^T\mathbb{E}\int_0^t(1+|u_n(r)|^2_{\mathbb{L}^2})drdt \leq c<\infty.
\end{eqnarray}
Moreover, for $ t\geq s> 0$ and $ \gamma <\frac12$, the same ingredients
above yield to
\begin{eqnarray}
\mathbb{E}\int_0^T\int_0^T\frac{|J(t)-
J(s)|^2_{\mathbb{L}^{2}}}{|t-s|^{1+2\gamma }} dtds &\leq&
C\mathbb{E}\int_0^T\int_0^T\frac{\int_s^t||G(u_n(r))||^2_{L_Q(\mathbb{L}^2)}dr}{|t-s|^{1+2\gamma
}} dtds \nonumber\\
&\leq& C\mathbb{E}\sup_{[0, T]}(1+|u_n(t)|^2_{\mathbb{L}^{2}})
\int_0^T\int_0^T|t-s|^{-2\gamma } dtds \leq c <\infty.
\end{eqnarray}
The proof of the Lemma is now completed.
\end{proof}
\begin{remark}
It is also possible, see e.g. \cite{Flandoli-Gatarek-95}, to 
\del{replace in \eqref{Eq-W-}, }prove the boundedness of the sequence $ (u_n)_n$ in 
\begin{eqnarray}
L^2(\Omega, W^{\gamma, 2}(0, T; D(A^{-\delta'})\cap L^2(0, T; \mathbb{H}^{\frac\alpha2, 2}(O))),
\end{eqnarray}

\end{remark}
\del{\noindent {\bf Proof of the existence of martingale solution
\del{Theorem \ref{Main-theorem-martingale-solution-d}}}}
To prove the existence of a martingale solution, we consider the Gelfand triplet \eqref{Gelfand-triple-Domain} and 
 use lemmas \ref{lem-unif-bound-theta-n-H-1-domain} and \ref{lem-bounded-W-gamma-p} 
and the compact embedding, see \cite[Theorem 2.1]{Flandoli-Gatarek-95}
\begin{equation}
 W^{\gamma, 2}(0, T; H_d^{-\delta', 2}(O))\cap \mathbb{L}^2(0, T; \mathbb{H}^{\frac\alpha2, 2}(O)) 
 \hookrightarrow L^2(0, T; \mathbb{L}^2(O)).
\end{equation}
\del{$ L^2(0, T; \mathbb{H}^{\frac\alpha2, 2}(O)) \hookrightarrow L^2(0, T; \mathbb{L}^2(O))$,}   Then we deduce
 that the sequence of laws $ (\mathcal{L}(u_n))_n$  is tight on $ L^2(0, T; \mathbb{L}^2(O))$. 
 Thanks to Prokhorov's theorem
there exists a  subsequence, still denoted $ (u_n)_n$, for which  the sequence of laws $ (\mathcal{L}(u_n))_n$ converges  weakly, on
$ L^2(0, T; \mathbb{L}^2(O))$,  to a probability measure $ \mu$. By Skorokhod's embedding theorem, we can construct a probability basis
$ (\Omega^*, F^*, \mathbb{F}^*,  P^*)$  and a sequence of $ L^2(0, T; \mathbb{L}^2(O))\cap C([0, T]; H_d^{-\delta', 2}(O))-$random variables
$ (u^*_n)_n$ and $ u^*$ such that  $\mathcal{L}(u^*_n) = \mathcal{L}(u_n), \forall n \in \mathbb{N}_0$,  $\mathcal{L}(u^*) = \mu$ and
$ u^*_n \rightarrow u^* a.s.$ in $ L^2(0, T; \mathbb{L}^2(O))\cap C([0, T]; H_d^{-\delta', 2}(O))$. Therefore, we conclude that
 for all $ n\in \mathbb{N}$,
\begin{eqnarray}\label{eq-bound-u-*-n-u-*}
\mathbb{E}\sup_{[0, T]}| u^*_n(s)|^p_{\mathbb{L}^2}+ \mathbb{E}\int_0^T| u^*_n(s)|_{ \mathbb{H}^{\frac\alpha2, 2}}ds
+\mathbb{E}\sup_{[0, T]}| u^*(s)|^p_{\mathbb{L}^2}+ \mathbb{E}\int_0^T| u^*(s)|^2_{ \mathbb{H}^{\frac\alpha2, 2}}ds \leq c<\infty.\nonumber\\
\end{eqnarray}
Consequently, the sequence  $ u^*_n$ converges weakly in $  L^2(\Omega\times [0, T]; \mathbb{H}^{\frac\alpha2, 2}(O))$ to a limit $ u^{**}$.
It is easy to see that $u^{*} = u^{**},  P\times dt a.e.$. We construct, as in \cite{Flandoli-Gatarek-95}, the process $ (M_n(t), t\in [0, T])$ with trajectories in
$ C([0, T]; \mathbb{L}^2(O))$ by
\begin{equation}
M_n(t):= u_n^*(t) - P_nu_0+\int_0^t A_\alpha u_n^*(s) ds -\int_0^t P_nB(u_n^*(s))ds.
\end{equation}
...>>>>>>>>>>>>>>>>>>>>>>>>>>>>>>>
We follow the same steps as in \cite{Flandoli-Gatarek-95} (hence we omit here the details), we end up with the statement that $ u^*$ is solution
in $ V^*:= \mathbb{H}^{\frac\alpha2, 2}(O)$ to Equation \eqref{Eq-weak-Solution} with $ W$ being replaced by $ W^*$.

\del{It is easy to see, thanks to Equation \eqref{FSBE-Galerkin-approxi} and to the equality in Law of $ u_n $ and $ u_n^*$,
that $ (M_n(t), t\in [0, T])$ is a square integrable martingale with respect to the filtration 
$(\mathcal{G}_n)_t:=\sigma\{u_n^*(s), s\leq t\}$. The remain part of the proof follows the same steps as in \cite{Flandoli-Gatarek-95},
hence we omit here.
\del{$ \mathbb{G}^* := (\mathcal{G}^*_t)_t$,  with $ \mathcal{G}^*_t $ is the $ \sigma-$algebra generated by
$ \cup_n (\mathcal{G}_n)_t:=\sigma\{u_n^*(s), s\leq t\}$.}}

The continuity of the trajectories of the solution $ u $ follows by a similar calculus as in 
Section \ref{sec-Torus} with the Gelfand triplet in \eqref{Gelfand-triple-Domain} and application of 
\cite[Proposition 2.5.]{Sundar-Sri-large-deviation-NS-06}. In deed,  we have thanks to 
Burkholdy-Davis-Gandy inequality, Assumption \eqref{Eq-Cond-Linear-Q-G}, \eqref{eq-bound-u-*-n-u-*}, we get
\begin{eqnarray}\label{cont-2-martg-stochas}
\mathbb{E}\sup_{[0, T]} |\int_0^tG(u(s))dW^*(s)|^2_{\mathbb{L}^{2}}
&\leq& c \mathbb{E}\int_0^T|G^*(u^*(s))|^2_{L_Q(\mathbb{L}^{2})}ds
\leq c (1+ \mathbb{E}\sup_{[0, T]}|u^*(s)|^2_{\mathbb{L}^{2}})<\infty.\nonumber\\
\end{eqnarray}
Moreover, using \eqref{Eq-B-H-alpha-2-est}, the Sobolev embedding 
$ \mathbb{H}^{\frac{d+2-\alpha}{4}, 2}(O)\subset \mathbb{H}^{\frac{\alpha}{2}, 2}(O)$, ( $ 1+\frac{d-1}{3}\leq \alpha \leq 2$) 
and the boundedness of the  operator $ A_\alpha: \mathbb{H}^{\frac{\alpha}{2}, 2}(O) \rightarrow \mathbb{H}^{\frac{-\alpha}{2}, 2}(O)$, we get
\begin{eqnarray}\label{cont-1-martg}
\mathbb{E}\int_0^T\big( |A_\alpha u^*(s)|_{\mathbb{H}^{-\frac\alpha2, 2}}&+& |B(u^*(s))|_{\mathbb{H}^{-\frac\alpha2, 2}}\big)ds \leq c
\mathbb{E} \int_0^T\big( |u^*(s)|_{\mathbb{H}^{\frac\alpha2, 2}} + |u^*(s)|^2_{\mathbb{H}^{\frac{d+2-\alpha}{4}, 2}}\big)ds\nonumber\\
&\leq& c (1+\mathbb{E} \int_0^T|u^*(s)|^2_{\mathbb{H}^{\frac\alpha2, 2}}ds)<\infty. 
\end{eqnarray}
>>>>>>>>>>>>>>>>>>>>>>>>>>>>>>>>>>>>>>>>>>>>}
%%%%%%%%%%%%%%%%%%%%%%%%%%%%%%%%%%%%END%%%%%%%%%%%DEL%%%%%%%%%%%%%
%%%%%%%%%%%%%%%%%%%%%%%%%
\del{\begin{equation}\label{uniquness-con-martg}
 P^*( u (\cdot, \omega)\in L^{\frac{4\alpha}{3\alpha-d-2}}(0, T; \mathbb{H}^{\frac{d+2-\alpha}{4}, 2}(O)))=1.
\end{equation} \eqref{3linear-H1-H-1}, Young inequality,  we infer that
\begin{eqnarray}\label{Torus-uniquenss-ito-formula}
\mathbb{E}\!\!\!\!&{}&\!\!\!\!e^{-r(t\wedge \tau)}|w(t\wedge \tau)|^2_{\mathbb{L}^{2}}+
2\mathbb{E}\int_0^{t\wedge \tau}e^{-r(s)}|w(s)|^2_{\mathbb{H}^{\frac\alpha2, 2}}ds \nonumber\\
&\leq& \mathbb{E}\int_0^{t\wedge \tau}e^{-r(s)}|| G(u^1(s))- G(u^2(s))||^2_{HS_Q(\mathbb{L}^2)}\nonumber\\
& -& \mathbb{E}\int_0^{t\wedge \tau} e^{-r(s)} \big(2\langle B(w(s),  w(s)),  u^1(s)\rangle-
r'(s)|w(s)|^2_{\mathbb{L}^{2}}\big)ds\nonumber\\
& \leq & c_N\mathbb{E}\int_0^{t\wedge \tau}e^{-r(s)}\big(|w(s)|^2_{\mathbb{L}^2}+
|u^1(s)|_{\mathbb{H}^{?, 2}}|w(s)|^{\frac2\alpha}_{\mathbb{H}^{\frac{\alpha}{2}, 2}}
|w(s)|^{2\frac{1-\alpha}\alpha}_{{\mathbb{L}^{2}}}-
r'(s)|w(s)|^2_{\mathbb{L}^{2}}\big)ds\nonumber\\
& \leq & c_N
\mathbb{E}\int_0^{t\wedge \tau}e^{-r(s)}\big(|w(s)|^2_{\mathbb{L}^2}+ 2 c|w(s)|^2_{\mathbb{H}^{\frac\alpha2, 2}}+
2c_1|u^1(s)|^{\frac{\alpha}{\alpha-1}}_{\mathbb{H}^{1, 2}}|w(s)|^2_{\mathbb{L}^{2}}-
r'(s)|w(s)|^2_{\mathbb{L}^{2}}\big)ds.\nonumber\\
\end{eqnarray}
Now, we choose $ c<1$ and $ r'(s)= 2c_1|u^1(s)|^{\frac{\alpha}{\alpha-1}}_{\mathbb{H}^{1, 2}}$ and replace in
\eqref{Torus-uniquenss-ito-formula},
we end up by the simple formula
\begin{eqnarray}
\mathbb{E}e^{-r(t\wedge \tau)}|w(t\wedge \tau)|^2_{\mathbb{L}^{2}}&+& 2(1-c)\mathbb{E}\int_0^{t\wedge \tau}
e^{-r(s)}|w(s)|^2_{\mathbb{H}^{\frac\alpha2, 2}}ds
\leq c_N\mathbb{E}\int_0^{t\wedge \tau}e^{-r(s)}|w(s)|^2_{\mathbb{L}^2}ds.\nonumber\\
\end{eqnarray}
Than by application of Gronwall's lemma, we get
$ \forall t\in [0, T, \, e^{-r(t\wedge \tau)}|w(t\wedge \tau)|^2_{\mathbb{L}^{2}} = 0\, P-a.s.$ as
$P( e^{-c\int_0^{t\wedge \tau}|u^1(s)|^{\frac{\alpha}{\alpha-1}}_{\mathbb{H}^{1, 2}}ds} <\infty)= 1 $.
The proof is achieved once we remark that thanks to Chebyshev inequality and \eqref{cond-on the solution-Torus},
we have $ \lim_{N\rightarrow \infty}\tau_N = T a.s.$  and
$ P( \int_0^T|u^1(s)|^{\frac{\alpha}{\alpha-1}}_{\mathbb{H}^{1, 2}}ds <\infty)= 1$
\del{$ P( e^{-2c_1|u^1(s)|^{\frac{\alpha}{\alpha-1}}_{\mathbb{H}^{1, 2}}}<\infty)= 1$,
we conclude that
$\forall t\in [0, T] w(t)= 0, P-a.s.$}}

%%%%%%%%%%%%%%%%%%%%%%%%%%%%%%%%%%%%%%%%%%%%%%%%%%%%%%%%%%%%%%%%%%%%%%%%%%%%%%%%%%%%%%%%%%%%%%%%%%%%%%%%%%%%%%%%%%%%%%%%%%%%%%%%%%%%%%%%%%%%%%%%%%%%
\del{\begin{lem}\label{lem-bounded-W-gamma-p}
The sequence $ (u_n)_n$ of solutions of Equations \eqref{FSBE-Galerkin-approxi} is uniformly bounded in the space
\begin{eqnarray}\label{Eq-W-}
L(\Omega, W^{\gamma, 2}(0, T; \mathbb{H}^{-\delta', 2}(O))\cap \mathbb{L}^2(0, T; \mathbb{H}^{\frac\alpha2, 2}(O)),
\end{eqnarray}
where  $ \delta'\geq_1\max\{\alpha, 1+\frac d{2}\}$.
\end{lem}
\begin{proof}
Thanks to Lemma \ref{lem-unif-bound-theta-n-l-2}, it is sufficient to prove that $ (u_n(t), t\in [0, T])$ is
uniformly bounded in $L(\Omega, W^{\gamma, 2}(0, T; \mathbb{H}^{-\delta', 2}(O))$. We recall that the Besov-Slobodetski space $W^{\gamma,
p}(0, T; E)$, with $ E$ being a Banach space, $ \gamma \in (0, 1)$ and $ p\geq 1$, is the space
of all $ v\in L^P(0, T; E) $ such that
\begin{eqnarray}
||v||_{W^{\gamma, p}}:= \left(\int_0^T|v(t)|_E^pdt+
\int_0^T\int_0^T\frac{|v(t)-v(s)|_E^p}{|t-s|^{1+\gamma p}}
dtds\right)^{\frac1p}<\infty.
\end{eqnarray}
\noindent As  $(u_n(t), t\in [0, T])$ is the solution  of the finite dimensional  stochastic
differential equation then, for $t\in [0, T]$,  we rewrite  $u_n(t)$  as
\begin{equation}\label{FSBE-Integ-solu-Galerkin-approxi}
u_n(t)= P_nu_0 + \int_0^t(-A_\alpha u_n(r) + P_nB_n(u_n(r))dr + \int_0^tP_nG_n(u_n(r))\,dW_n(r)\; a.s.\\
\end{equation}
We denote
\begin{equation}\label{Eq-Drift-term}
I(t):=  \int_0^t(-A_\alpha u_n(r) + P_nB_n(u_n(r))dr
\end{equation}
and
\begin{equation}\label{Eq-Drift-term}
J(t):= \int_0^tP_nG_n(u_n(r))\,dW^n(r).
\end{equation}
\noindent We prove that $ I(\cdot)$ is uniformly bounded in  $ W^{\gamma, 2}(0, T; \mathbb{H}^{-\delta', 2}(O))$
and that the stochastic term $ J(\cdot )$
is uniformly bounded in $ W^{\gamma, 2}(0, T;  L^2(\mathbb{T}^d)$, for all $ \gamma<\frac12$. \\
Let  $ \phi \in H^{{\delta'}, q^*}(\mathbb{T}^d)$. Recall that thanks to the choice of $ \delta'$ and to
\cite[Remark 4 p 164, Theorem 3.5.4.ps.168-169 and Theorem 3.5.5 p 170]{Schmeisser-Tribel-87-book}, we have
$  H^{{\delta'}, q^*}(\mathbb{T}^d) \hookleftarrow  L^2(\mathbb{T}^d)$. Therefore by use of Identity \eqref{Eq-3lin-propsym},
 we get
\begin{eqnarray}
| {}_{H^{-{\delta'}, q}}\langle P_nB_n(u_n(r)), \phi\rangle_{H^{{\delta'}, q^*}}|
&=& \mathcal{X}(|u_n(r)|_{L^2})|{}_{L^{2}}\langle \mathcal{R}^{\gamma, \sigma}u_n(r) \cdot \nabla P_n\phi, u_n(r)\rangle_{L^{2}}|\nonumber\\
&\leq& |\nabla P_n\phi|_{L^\infty}| \mathcal{R}^{\gamma, \sigma}u_n(r)|_{L^{2}}|u_n(r)|_{L^2}.
\end{eqnarray}
Thanks to \cite[Remark 4 p 164, Theorem 3.5.4.ps.168-169 and Theorem 3.5.5 p 170]{Schmeisser-Tribel-87-book} and to
the condition $ {\delta'}>1+\frac d{q^*}$,
we deduce for  $ 0<\epsilon < \delta'-1-\frac d{q^*}$,
 $$ |\nabla P_n\phi|_{L^\infty} \leq c |\nabla P_n\phi|_{H^{\epsilon+\frac d2, 2}} \leq c |\phi|_{H^{1+\epsilon+\frac d2, 2}}
\leq c |\phi|_{H^{{\delta'}, q^*}}.$$
Using Lemma \ref{lem-R-bounded-H-s} and the fact that $ 2\leq q <\infty$, we infer
\begin{eqnarray}\label{}
 |P_nB_n(u_n(r))|_{H^{-{\delta'}, q}}\leq c |u_n(r)|_{L^{q}}^2.
\end{eqnarray}
Hence,
\begin{eqnarray}\label{eq-unif-int-I(t)}
\int_0^T|I(t)|_{H^{-{\delta'}, q}}^2dt&\leq& c\int_0^T\int_0^t \big(|(-A_\alpha
u_n(r)|^2_{H^{-{\delta'}, q}} + |P_nB(u_n(r))|^2_{H^{-{\delta'}, q}}\big)drdt\nonumber\\
&\leq& c\int_0^T\int_0^t\big(|
u_n(r)|^2_{L^{ q}} + |u_n(r)|^4_{L^{q}}\big)drdt.
\end{eqnarray}
Moreover, using H\"older inequality and arguing as before, we get for $ t\geq s > 0$,
\begin{eqnarray}\label{eq-unif-int-I(t)-I(s)}
|I(t)- I(s)|^2_{H^{-{\delta'}, q}}&=& |\int_s^t(-A_\alpha u_n(r) +
P_nB_n(u_n(r))dr|^2_{H^{-{\delta'}, q}}\nonumber\\
&\leq & C(t-s)\left(\int_s^t(| u_n(r)|^2_{L^{ q}} +
|u_n(r)|^4_{L^{q}})dr \right).
\end{eqnarray}
Hence, from \eqref{eq-unif-int-I(t)} and \eqref{eq-unif-int-I(t)-I(s)}, we have for  $ \gamma <\frac12$,
\begin{eqnarray}\label{eq-unif-int-I(t)x2}
\mathbb{E}(\int_0^T|I(t)|_{H^{-{\delta'}, q}}^2dt&+&\int_0^T\int_0^T\frac{|I(t)- I(s)|^2_{H^{-\delta',
q}}}{|t-s|^{1+2\gamma }} dtds)^{\frac12} \nonumber\\
&\leq& C\mathbb{E}\left(\int_0^T(| u_n(r)|^2_{L^{ q}} +
|u_n(r)|^4_{L^{q}})dr
\right)^\frac12 \leq C<\infty.
\end{eqnarray}
The RHS of \eqref{eq-unif-int-I(t)x2} is uniformly bounded thanks to Estimation \eqref{Eq-Ito-n-weak-estimation-Lq}.
Now, we estimate the stochastic term $ J$. This last is more regular
then the drift term. We  prove $ J \in L(\Omega, W^{\gamma, 2}(0, T;
L^{q}(\mathbb{T}^d))$.
Using the stochastic isometry and Assumption $
(\mathcal{A})$, we get
\begin{eqnarray}
\int_0^T\mathbb{E}|\int_0^tP_nG_n(u_n(r))dW^n(r)|_{L^{ q}}^2dt&\leq&
C\int_0^T\mathbb{E}\int_0^t||G(u_n(r))Q^\frac12||^2_{R_\gamma(L^2, L^q)}drdt\nonumber\\
&\leq&
C\int_0^T\mathbb{E}\int_0^t(1+|u_n(r)|^2_{L^q})drdt <\infty.
\end{eqnarray}
Moreover, for $ t\geq s> 0$ and $ \gamma <\frac12$, the same ingredients
above yield to
\begin{eqnarray}
\mathbb{E}\int_0^T\int_0^T\frac{|J(t)-
J(s)|^2_{L^{2}}}{|t-s|^{1+2\gamma }} dtds &\leq&
C\mathbb{E}\int_0^T\int_0^T\frac{\int_s^t||G(u_n)Q^\frac12||^2_{R_\gamma(L^2, L^q)}}{|t-s|^{1+2\gamma
}} dtds \nonumber\\
&\leq& C\mathbb{E}\sup_{[0, T]}(1+|u_n(t)|^2_{L^{q}})
\int_0^T\int_0^T|t-s|^{-2\gamma } dtds <\infty.
\end{eqnarray}
The proof is now completed.
\end{proof}
\noindent {\bf Proof of  Theorem \ref{Main-theorem-mild-solution-3}.}
\noindent To prove  Theorem \ref{Main-theorem-mild-solution-3}, we can proceed by tow methods which seem, following our technique,
equivalent.
\begin{itemize}
 \item {\bf Method 1. Hilbert space setting.}  Thanks to Lemmas  \ref{lem-unif-bound-theta-n-L-2} and
\ref{lem-bounded-W-gamma-p}  with $ q=2$, we prove the first statement of Theorem \ref{Main-theorem-mild-solution-3}, i.e.
we prove the existence of a martingale solution
$(u^*(t), t\in [0, T])$ for Equation  \eqref{Main-stoch-eq} with $ (\sigma, \gamma)\in C_a$, in the sense of
Definition \eqref{def-martingle-solution}. Indeed, one can follow
\cite{Flandoli-Gatarek-95} combined with Lemma \ref{lem-R-bounded-H-s} and Proposition \ref{prop-Ben-q=2}.
To prove the second statement, we prove that $(u^*(t), t\in [0, T])$ satisfies
Equation \eqref{Eq-weak-Solution},  with $ \varphi \in H^{{\eta}, q^*}(\mathbb{T}^d)$ with $ q^*$ being the conjugate of $ q>2$.
To get the estimations \eqref{eq-set-solu-weak} and \eqref{Eq-reg-theta-mild}, we use Lemma \ref{lem-unif-bound-theta-n-l-2}.

\item {\bf Method 2. Banach space setting.} The idea here is to  extend the techniques used for the Hilbert space setting to Banach
spaces $ L^q(\mathbb{T}^d)$ with $ q>2 $. Let $ 2\leq q\leq_\infty q_0$. We prove that the
family of Laws of solutions $ (\mathcal{L}(u_n))_n$ is tight in $
L^2(0, T; L^{q}(\mathbb{T}^d))$. In fact, this latter is deduced thanks to Lemmas
 \ref{lem-unif-bound-theta-n-l-2} and  \ref{lem-bounded-W-gamma-p} and  the following embedding, see \cite[Theorem 2.1]{Flandoli-Gatarek-95} and
\cite[Remark 4 p 164, Theorem 3.5.4.ps.168-169 and Theorem 3.5.5 p 170]{Schmeisser-Tribel-87-book},
\begin{equation}
 L^2(0, T; H^{\frac\alpha2, 2})\cap W^{\gamma,
2}(0, T; H^{-{\delta'}, q}) \hookrightarrow L^2(0, T; L^{q}),
\end{equation}
provided $ d(1-\frac2q)\leq \alpha \leq 2$. This means that $ q $ and $d$ should enjoy the relation
$ d-2\leq \frac{2d}{q}$. This last is trivial for  $ d\leq \alpha$ and $ q\in[2, +\infty)$.
For $ d>\alpha$, we assume $  q\in [2, \frac{2d}{d-\alpha}]$. Therefore, under the conditions in Theorem \ref{Main-theorem-mild-solution-3} and
thanks to Skorokhod Theorem, we infer the existence of a probabilistic basis
$ (\Omega^*, \mathcal{F}^*, \mathbb{P}^*, \mathbb{F}^*, W^*)$ and  $L^2(0, T; L^{q})\cap C([0, T]; H^{-{\delta'}, q})$-valued random variables
$u^*$  and $(u^*_n)_n$ defined on this basis such that   $
\mathcal{L}(u_n^*) = \mathcal{L}(u_n) $ and  $u^*_n \rightarrow u^*, \;\; P-a.s. $
in $ L^2(0, T; L^{q})\cap C([0, T]; H^{-{\delta'}, q})$. Moreover, thanks to the above equality in law,
$(u_n^*(t), t\in[0, T])$ satisfies \eqref{Eq-Ito-n-weak-estimation-1} for all $ n\in \mathbb{N}_0$,
\eqref{Eq-Ito-n-weak-estimation-Lq} and \eqref{Eq-Ito-n-weak-estimation}.
Hence we deduce that $(u^*(t), t\in[0, T])$ satisfies  \eqref{eq-set-solu-weak} and \eqref{Eq-reg-theta-mild}.
To prove that $(u^*(t), t\in[0, T])$ is a weak solution of Equation \eqref{Main-stoch-eq} in the sense
of Definition \ref{def-variational solution}, i.e.  $ u^*$ satisfies Equation \eqref{Eq-weak-Solution}, we use Lemma \ref{lem-R-bounded-H-s} and
proceed as in \cite{Flandoli-Gatarek-95}.
\end{itemize}

\noindent {\bf Proof of Part 3. of Theorem \ref{Main-theorem-mild-solution}}

\noindent Let us remark, that thanks to Lemma  \ref{lem-unif-bound-theta-n-L-2} and  Lemma
\ref{lem-bounded-W-gamma-p}  with $ q=2$ in this later, we can prove Part 3. of
Theorem \ref{Main-theorem-mild-solution}, see, for this technique in Hilbert space, e.g. \cite{Flandoli-Gatarek-95, Rockner-quasi-geostro-1}.
As our method to prove the existence of the global mild solution is to use the result of the existence of the martingale solution,
we are then interested to extend this result for $ L^q-$spaces with $ q \geq 2$. We can proceed by two ways.
One way, is to prove that the $ L^2$-martingale solution  satisfies equation \eqref{Eq-weak-Solution} with $ \phi \in H^{\delta, q}$.
The second way is to prove the existence of martingale solution in the $ L^2(0, T; L^q).$ The techniques we are using seems equivalent in this task,
hence we will follow the second way.  It is obvious that our technique to prove
the martingale solution in $ L^q-$spaces is still valid for $ q=2$ without any conditions on the space dimension.

\noindent  We prove that the
family of laws of the solutions $ (\mathcal{L}(u_n))_n$ is tight in $
L^2(0, T; \mathbb{L}^{q}(O))$. In fact, the tightness of  $(\mathcal{L}(u_n))_n$ is deduced thanks to Lemmas
 \ref{lem-unif-bound-theta-n-l-2} and  \ref{lem-bounded-W-gamma-p} and  the following embedding
\begin{equation}
 L^2(0, T; H^{\frac\alpha2, 2})\cap W^{\gamma,
2}(0, T; H^{-\delta, q^*}) \hookrightarrow L^2(0, T; \mathbb{L}^{q}),
\end{equation}
provided $ \alpha \geq d(1-\frac2q)$*, see, e.g. \cite[Theorem 2.1]{Flandoli-Gatarek-95},
\cite[Theorem 7.63. p 221 + 7.66, p 222]{Adams-Hedberg-94}.   Therefore, we infer the existence of a probabilistic basis
$ (\Omega^*, \mathcal{F}^*, \mathbb{P}^*, \mathbb{F}^*)$ and an $L^2(0, T; \mathbb{L}^{q})\cap C([0, T]; H^{-\delta, q^*})$-valued random variables
$u^*$  and $u^*_n$ defined on this basis and such that \del{$
\mathcal{L}(u^*) = \mu$,}  $
\mathcal{L}(u_n^*) = \mathcal{L}(u_n) $ and  $u^*_n \rightarrow u^*, \;\; P-a.s. $
in $ L^2(0, T; \mathbb{L}^{q})\cap C([0, T]; H^{-\delta, q^*})$. Moreover, thanks to the above equality in law, the sequence,
$(u_n^*)_n$ satisfies \eqref{Eq-Ito-n-weak-estimation-1}, \eqref{Eq-Ito-n-weak-estimation-Lq} and \eqref{Eq-Ito-n-weak-estimation}.
Hence, we deduce that $u^*_n \rightarrow u^*, \;\; \mathbb{P}^*-a.s. $ weakly in $ L^2(\Omega\times [0, T]; H^{\frac\alpha2, 2})$
\footnote{In fact,$u^*_n \rightarrow u^*$ weakly to other limit which could be easily identified with $u^*$} and
\begin{equation}
 u^*(\cdot, \omega)\in L^2(0, T;H^{\frac\alpha2, 2}(O))\cap  L^\infty(0, T;\mathbb{L}^{q}(O)), \;\;\; P-a.s..
\end{equation}
Using the classical scheme in particular Skorokhod Theorem see \cite{Flandoli-Gatarek-95},  we prove $ u^*$ is a weak solution of the equation \eqref{Main-stoch-eq} in the sense
of Definition \ref{def-variational solution}, i.e.  $ u^*$ satisfies equation \eqref{Eq-weak-Solution}.

\noindent {\bf End of the proof} }

%%%%%%%%%%%%%%%%%%%%%%%%%%%%%%%%%%%%%%%%%%%%%%%%%%%%%%%%%%%%%%%%%%%%%%%%%%%%%%%%%%%%%%%%%%%%%%%%%%%%%%%%%%%%%%%%%%%%%%%%%%%%%%%%%

\del{\section{special Section Resolution of an auxiliary problem}\label{sec-2-approxim}

This definition is a simulation of Definition 4.4 in \cite{Daprato-Debussche-Martingale-Pbm-2-3-NS08}
\begin{defn}
We say that an $\mathcal{F}-$adapted process is a local weak (strong) solution for Equation \eqref{Main-stoch-eq}, if  there exists an increasing
sequence of stopping time $ (\tau_N)_N$ such that Equation \eqref{Eq-weak-Solution} is satisfied for all $ t\leq \tau_N$.
\end{defn}

\begin{lem}
 \begin{equation}
  |\rangle B(u-v, v), u-v\langle|\leq c|u-v|_{\mathbb{H}^{\frac\alpha2, 2}}^2 + |u-v|_{\mathbb{L}^2}^2|v|^{\frac{2}{\theta}}_{\mathbb{H}^{\frac\alpha2, q}}
 \end{equation}
with $ \theta := 2+\frac{2d}{\alpha}(\frac{1q} +\frac{1-s}{d})$, s>>>>>>>>>>>
\end{lem}

In this section, we introduce the following auxiliary problem SPDE
\begin{equation}\label{FSB-approx-Xi}
\Bigg\{
\begin{array}{lr}
dv(t)= (-A_\alpha v(t) + B(\xi(t), v(t))dt + G(v(t))\,dW(t), \; 0< t\leq T,\\
v(0) = u_0,
\end{array}
\end{equation}
where $ \xi_t$ is an $ \mathcal{F}-$adapted process satisfying for some $ \delta \geq  \frac{2d-1-\alpha}{4}$,
\begin{equation}
 \mathbb{E}[\sup_{[0, T]}|\xi(t)|_{H^{\delta, 2}}^p]<\infty.
\end{equation}
We will use the monotonicity method to prove the existence of a variational solution to , for more details about this method see e.g.
\cite{Krylov-Rozovski-monotonocity-2007, Pardoux-thesis, Rockner-Pevot-06} and  Appendix \ref{appendix-Monotonocity}. But, we will only give the
following definition of a variational solution of equation \eqref{FSB-approx-Xi}, see \cite[Definition 3.4.]{Krylov-Rozovski-monotonocity-2007}.
Let us give the Gelfant triple
\begin{equation}\label{Gelfant-triple}
 V \hookrightarrow H\tilde{=}  H^*\hookrightarrow  V^*,
\end{equation}
where $ H $ is a Hilbert space.
\begin{defn}\label{Def-solution-variational}
A strong continuous $ L^2-$valued $ \mathcal{F}_t-$adapted process $ (u(t))_{t\in [0, T]}$ is called a solution of equation \eqref{FSB-approx-Xi} iff

\begin{itemize}
 \item $ u \in V $ a.e. in $ (t, \omega)$ and for some given $ p>0$,
\begin{equation}\label{cond-Def-Kry-Rozo}
 \mathbb{E}\left(| u(t)|_{L^2}^p +\int_0^T|u(t)|_V^2dt \right).
\end{equation}
\item there exists a set $ \Omega'\subset \Omega $ of probability one on which for all $ t\in [0, T]$, the following equality holds in $ V^*$,
\begin{equation}\label{Eq-Def-Kry-Rozo}
 u(t)= u_0+ \int_0^t \left(-A_\alpha u(s) + B(\xi(s), u(s)) \right) ds + \int_0^t G(u(s))\,dW(s).
\end{equation}
\end{itemize}
\end{defn}

Let us here make precise the specificity of the application of the monotonicity method on the fractional dissipative problems, relatively
to the classical onces.  It is well known that the main ingredient of this
method is first to consider a  Gelfant triple \eqref{Gelfant-triple},  such that the drift term (let us denote by $ A(t, \omega, v)$) is well
defined and bounded as an operator from $V$ to $ V^*$.  In the case of the second order differential equations (classical case)
(e. g.  $ A= \Delta$ with linearity $ B(v)$ given by the gradient such as in our case, the Gelfant triplet is taken as
 \begin{equation}
 V= D((A^\frac12))=H_0^{1, 2}(\mathbb{T}^d) \hookrightarrow  H:= L^2(\mathbb{T}^d))^d \tilde{=}  H^*\hookrightarrow  V^*:= H^{-1, 2}(\mathbb{T}^d).
 \end{equation}
 It is obvious that $ V\subset D(B) =H^{1, 2}(\mathbb{T}^d)$. Hence,  the restriction of
$ B $ on $ V$ denoted also by $ B$ and the operator $ A$ are both  well defined and bounded from $V$ to $ V^*$.

In the fractional case, we can consider the Gelfant triple
\begin{equation}
 V=D(A_\alpha^\frac12)=H_0^{\frac\alpha2, 2} \hookrightarrow H^{0, 2}\tilde{=}  H^*\hookrightarrow  V^*:= H^{-\frac\alpha2, 2}.
\end{equation}
Hence, it is also obvious that $A_\alpha: V=H_0^{\frac\alpha2, 2} \rightarrow V^*=H^{-\frac\alpha2, 2}$  is bounded, however for $ \alpha<2$,
the space $ V $ is larger than the domain of definition ob $ B$. In particular,
$ H^{1}_0(\mathbb{T}^d) \subsetneq V$. Consequently,  to be able to apply the  monotonicity theorem,
we need to extend uniquely the operator $ B $ to a  bounded operator on $ V$. This will be  the content of the following proposition \del{This will be  the content of the following two propositions.
The first proposition is more relevant for the small regularization effect of the operator $ \mathcal{R}^{\gamma, \sigma}$, i.e. small $ \gamma $,
in particular for $ \gamma=1 $. In the second proposition, we investigate more the large regularization effect}
\begin{prop}
 Let $ d\in \{1, 2, 3\}$ and  $ \alpha > \frac{d+2}{3}$. Then the operator $ B $ is extended uniquely to a bounded operator
(we keep the same notation)  $ B:V \rightarrow V^*$.

\end{prop}

\begin{proof}
For generality reasons, we consider the bilinear form $ B( u, v)$ as defined in \eqref{B-theta1-theta2-divergence}.
Thanks to the fact that $ \sigma \in \sum_d^0$ and Lemma \ref{Lem-classic}, it is easy to deduce that  there exists a constant $ c>0$, s.t.
\begin{equation}
|B(u, v)|_{V^*} \leq \ c\sum_{j =1}^d| \mathcal{R}_j^{\gamma, \sigma}u \cdot  v|_{H^{\frac{2- \alpha}2}}.
\end{equation}
Using the pointwise multiplication in Theorem \ref{theor-1-SR},  Lemma \ref{lem-R-bounded-H-s} and the condition that $ \alpha > \frac{d+2}{3} $,
we  get
\begin{equation}\label{Eq-B-u-v-V-dual}
|B(u, v)|_{V^*} \leq \ c|u |_{H^{\frac{d+2- \alpha}4, 2}} |v|_{H^{\frac{d+2- \alpha}4, 2}}\leq c |u|_V|v|_V<\infty.
\end{equation}

\end{proof}
\begin{remark}
Let us remark that the above method is also applied for $ d>3$, but then $ \alpha >2$. Let us summarize:
\begin{itemize}
\item $ d=1 $ and $ \alpha >1$,
\item $ d=2 $ and $ \frac43\leq  \alpha \leq 2$,
\item $ d=3$ and $ \frac53\leq  \alpha \leq 2$,
\item $ d=4 $ and  $ \alpha = 2$,
\end{itemize}
\end{remark}

Our main result in this section is the following Theorem,
\begin{theorem}\label{Main-theorem-approxi-2}
 Equation \eqref{FSB-approx-Xi} admits a unique solution $ u$ (in the sense of Definition \ref{Def-solution}).
Moreover,  for all $ p>1$, there exists $ c>0$  independent of $ \xi$, such that
\begin{equation}\label{est-solution-variational}
 \mathbb{E}\left(\sup_{[0, T]}| u(t)|_{L^2}^p +\int_0^T|u(t)|_V^2dt \right)\leq c<\infty.
\end{equation}
\end{theorem}

\begin{proof}
\noindent It is easy to see that the coefficients of equation \eqref{FSB-approx-Xi} satisfy the assumptions  $ (H_1)-(H_3)$, with $ p'=2$
and $ \lambda =1$.  We are enable to get  $ (H_4')$. Then we try to get other estimation. Indeed, using the first inequality in
\eqref{Eq-B-u-v-V-dual}  and Young inequality, we get
\begin{eqnarray}
 | A(v)|_{V^*} &\leq & c( | v|_{H^{\frac\alpha2, 2}}+ |\xi|_{H^{\frac{d+2-\alpha}{4}}} | v|_{H^{\frac{d+2-\alpha}{4}}})\nonumber\\
&\leq & c\left( 2|v|_{H^{\frac\alpha2, 2}}+ |v|_{L^2} |\xi_t|_{H^{\frac{d+2-\alpha}{4}}}^{\frac{2\alpha}{3\alpha-(d+2)}} \right).\nonumber\\
\end{eqnarray}
We replace the assumption $ (H_4')$ in Theorem \ref{Teorem-Krylov-Rozovsky} by  the assumption
\begin{itemize}
\item  $(H_4)$ (Boundedness of the growth of $ A(t, .)$),
\begin{equation}
 | A(v)|^2_{V^*}\leq  c(N_t^{2}+ K|v|_V^{2})(1+ |v|_H^{2}),
\end{equation}
\end{itemize}
where here $ N_t := |\xi_t|_{H^{\frac{d+2-\alpha}{4}}}^{\frac{2\alpha}{3\alpha-(d+2)}} \in L^2([0, T]\times\Omega; dt\times P)$.
Using the same proof as in  \cite{Lui-Roekner-nonlocal-monotonicity-10}, we infer the existence of a unique strong solution of equation
\eqref{FSB-approx-Xi} in the sense of definition \ref{Def-solution-variational} satisfying the estimation
\begin{equation}\label{est-solution-variational-c-xi}
 \mathbb{E}\left(\sup_{[0, T]}| v(t)|_{L^2}^p +\int_0^T|v(t)|_V^2dt \right)\leq c_\xi<\infty.
\end{equation}

One also can apply the result in  \cite{Zhang-random-coe-09}, to check all the details.

The last step is to prove that the LHS in \eqref{est-solution-variational-c-xi} is uniformly bounded (independently of $ \xi$). In fact, by
application of Ito formula,
see, e.g. \cite[Theorem 4.2.5]{ Roeckner-Pevot-06},  we get,

\begin{equation}
|u(t)|_{H} ^p= |u_0|_H^p+ 2\int_0^t\left(|u(s)|_H^{p-2}({}_{V^*}\langle -A_\alpha u(s)+ \xi_s\cdots \nabla u(s), u(s)\rangle_V)+ 2\int_0^t
|u(s)|_H^{p-2}||G(u(s))||_Q^2\right)ds + 2 \int_0^t{}_{V^*}\langle  u(s), G(u(s)) dW(s)\rangle_V.
\end{equation}
 Thanks to  property  \eqref{B-v-v-v} (the divergence free  property of $ \xi$), we infer
\begin{equation}
\mathbb{E}\left[\sup_{[0, T]}|u(t)|_{H} ^p +  \mathbb{E}\int_0^T|u(t)|_{H} ^{p-2}| u(s)|^2_Vds \right] \leq C(\mathbb{E}|u_0|_H^2+ 1).
\end{equation}

\end{proof}
%%%%%%%%%%%%%%%%%%%%%%%%%%%%%%%%%%%%%%%
  Thanks to our hypothesis $ $
as it is announced above.
The proof is a mixture of the proofs in \cite{Krylov-Rozovski-monotonocity-2007, Lui-Roekner-nonlocal-monotonicity-10, Roeckner-Pevot-06}, hence
some standard facts will be only omitted.

\noindent We use the Galerkin approximation. Let $ P_n$ be the projection of $ V^*$ onto the span $ \{e_1, e_2, \cdots, e_n\}$ and let
$ P_n$ be the projection of $\mathbb{E} $ onto the span $ \{h_1, h_2, \cdots, h_n\}$.
We set $ u^n:= \sum_{j=1}^n{}_{V^*}\langle u^n, e_j\rangle_Ve_j $. Then the following equation admits a unique solution,

\begin{equation}\label{Eq-Def-Kry-Rozo}
 u^n(t)= P_nu_0+ \int_0^t \left(-P_nA_\alpha u^n(s) + P_n(B(\xi(s), u^n(s))) \right) ds + \int_0^t \tilde{P}_nG(u^n(s))\,dW(s).
\end{equation}}}
%%%%%%%%%%%%%%%%%%%%%%%%%%%%%%%%%%%%%%%%
%%%%%%%%%%%%%%%%%%%%%%%%%%%%%%%%%%%%%%%%%%%%%
%%%%%%%%%%%%%%%%%%%%%%%%%%%%%%%%%%%%%%%%%%%%%%%%%%

\appendix
\section{Equivalence between FSNS and SFNS equations.}\label{Appendix-Equivalence}
Recall that we have proved in Section \ref{sec-formulation} that Equation \eqref{Main-stoch-eq} with $ A_\alpha:= (A^S)^{\frac{\alpha}{2}}$ is well defined. This equation can be seen as the fractional version of the stochastic Navier-Stokes equation (FSNSE). 
A stochastic version of the fractional Navier-Stokes equation (SFNSE)
 can also be constructed by taking 
$ A_\alpha:= \Pi(-\Delta)^{\frac{\alpha}{2}}$ on $ \mathbb{L}^q(O)$. 
For simplicity, let us keep in mind for a short time that the two equations, FSNSE and SFNSE, are different. Later on, we shall prove that they are equivalent. By a fractional Navier-Stokes equation (FNSE), we mean Equation 
 \eqref{Eq-classical-SNSE-O-p}, with $ \Delta$ replaced by $ -(-\Delta)^\frac\alpha2$. The SFNSE is a 
stochastic perturbation of FNSE. Thanks to theorems \ref{Prop-1-Laplace-Stokes}-\ref{Lem-semigroup} 
and to the calculus above, the SFNSE is also well defined.\del{The main question now is whether or not the two equations FSNSE and SFNSE are equivalent. Let us  before moving to this last question,  
clarify some practical and theoretical matters for the two versions. 
Indeed,} As the derivation of equations describing physical phenomena is mainely based on the deterministic case it is intuitively seen that the stochastic version of the fractional Navier-Stokes equation is more suitable for physical modeling, rather than the fractional version of the stochastic Navier-Stokes equation, see e.g. \cite{Caffarelli-2009, SugKak, Sug, Sug-Frac-cal-89}. As we shall prove the equivalence of the two versions, we conclude that the FSNSE, seems intuitively more theoretical, is  of practical interest as well.

\noindent  In the case $ O= \mathbb{T}^d$, the operators  $ \Delta $ and $ div$ are commuting. 
Therefore, the Stokes operator $A^S$
is minus the Laplacian $ \Delta$ on $ \mathbb{L}^q( \mathbb{T}^d)$, see e.g.
\cite{Gigaweak-strong83},
\cite[p. 48]{Foias-book-2001}, \cite[p. 9]{Temam-NS-Functional-95} and \cite[p 105]{Temam-Inf-dim-88} for the torus,
 \cite{Kato-Ponce-86} for $ O=\mathbb{R}^d$  and see also a direct  proof in Section \ref{sec-Torus}. Combining this statement with the results in Theorem \ref{theorem-domains-A-D-A-S}, we conclude that  
\begin{eqnarray}\label{eq-A-S-alpha-Delta-alpha}
D((A^S)^\frac\alpha2) &=& D(\Pi_q(-\Delta)^\frac\alpha2\Pi_q) = H_d^{\alpha, q}(\mathbb{T}^d)\cap \mathbb{L}^q(\mathbb{T}^d),\nonumber\\
(A^S)^\frac\alpha2 u &=& \Pi(-\Delta)^\frac\alpha2 u= (-\Delta)^\frac\alpha2 u, \;\; \forall u \in D((A^S)^\frac\alpha2).
\end{eqnarray}
This proves that the FSNSE and the SFNSE  defined on the torus are ''equivalent''. 
In the case $ O\subset \mathbb{R}^d$  being bounded, the Stokes operator $ A^S$ is not equal to $ -\Delta$.  In fact,
 as  we can not in general expect that if $ u \in D(A^S)$ we also have
$ \Delta u\cdot \vec{n} =0 $ on $\partial O$ it is not obvious whether or not $ \Delta u \in \mathbb{L}^q(O)$. Our claim here is that $ (A^S)^\frac\alpha2 =  \Pi(A^D)^\frac\alpha2 \Pi$.
In deed, thanks to \eqref{def-A-D} and \eqref{def-A-S}, it is easy to deduce that 
 $ A^S = \Pi A^D\Pi$, see also \cite{Fujiwara-Morimoto-L-r-Helmholtz-decomposition-77}.\del{In fact, this latter is a simple 
consequence of \eqref{def-A-D} and \eqref{def-A-S}.
using \eqref{def-A-D} and \eqref{def-A-S}, we get
\begin{eqnarray}
 D(\Pi A^D\Pi)&=& D(A^D)\cap \mathbb{L}^q(O) = D(A^S)\nonumber\\
\Pi A^D u &=& -\Pi \Delta u = A^Su, \;\;\;  \forall u \in  D(\Pi A^D)\cap \mathbb{L}^q(O).
\end{eqnarray}
In the second step, we prove that $ (A^S)^\frac\alpha2 =  \Pi(A^D)^\frac\alpha2 \Pi$.}
Using Theorem \ref{theorem-domains-A-D-A-S}, we infer that
\begin{equation}
 D(\Pi (A^D)^\frac\alpha2\Pi)=  D((A^D)^\frac\alpha2)\cap \mathbb{L}^q(O) = D((A^S)^\frac\alpha2).
\end{equation}
\del{Recall the following definition of the negative power of $ A$, with  
$ A$ could be either $ A^S$ or $ A^D$,  see e.g. 
\cite{Giga-Doamian-fract-Stokes-Laplace, Pazy-83}, 
\begin{equation}\label{negative-fractional-A}
 A^{-\frac\alpha2} u = \frac1{2\pi i}\int_\Gamma z^{-\frac\alpha2}(A-zI_{L_d^q})^{-1} u dz, \; \forall u \in \mathbb{L}^q(O).
\end{equation}
where  $ \Gamma $ is the path running the resolvent set from $ \infty e^{-i\theta}$ to  $ \infty e^{i\theta}$, $ <0\theta <\pi$,  avoiding the 
negative real axis and the origin and such that the branch $  z^{-\frac\alpha2}$ is taken to be positive for real for real positive values of $z$ and $ I_X$ is the identity on the space $ X$.
The integral in the RHS of \eqref{negative-fractional-A-D} converges in the uniform operator topology.}
Moreover, using the definition of the negative power of $ A^S$ and $ A^D$ via the resolvent, see e.g. 
\cite{Giga-Doamian-fract-Stokes-Laplace, Pazy-83} and the definition of the Helmholtz projection, we infer that 
\begin{equation}\label{negative-fractional-A-D}
 (A^D)^{-\frac\alpha2} \Pi^{-1}u = \frac1{2\pi i}
 \int_\Gamma z^{-\frac\alpha2}(A^D-zI_{L_d^q})^{-1}\Pi^{-1}u dz,\;\;\; \forall u \in \mathbb{L}^q(O),
\end{equation}
where  $ \Gamma $ is the path running the resolvent set from $ \infty e^{-i\theta}$ to  
$ \infty e^{i\theta}$, $ 0<\theta <\pi$,  avoiding the 
negative real axis and the origin and such that the branch $  z^{-\frac\alpha2}$ is taken to be positive
for real  positive values of $z$ and $ I_X$ is the identity on the space $ X$.
The integral in the RHS of \eqref{negative-fractional-A-D} 
converges in the uniform operator topology. Furthermore, we have for all $ u \in \mathbb{L}^q(O)$,
\begin{eqnarray}\label{negative-fractional-A-D-last}
 (A^D)^{-\frac\alpha2} \Pi^{-1}u &=& \frac1{2\pi i}\int_\Gamma z^{-\frac\alpha2}(\Pi A^D-z\Pi)^{-1}u dz
 =\frac1{2\pi i}\int_\Gamma z^{-\frac\alpha2}(A^S-zI_{\mathbb{L}^q})^{-1}u dz
= :(A^S)^{-\frac\alpha2}.\nonumber\\
\end{eqnarray}
As the operators $(A^S)^{-1}$ and $(A^D)^{-1}$ are one-to-one this achieved the proof of the equivalence between the FSNSE and the 
SFNSE.

\section{Local  mild solution of the multidimensional FSNSE.}\label{appendix-local-solution}

\noindent\del{ In this Section, we prove Theorem \ref{Main-theorem-mild-solution-d}.} Let us first recall the following result,
\begin{lem}\label{lem-Lipschitz-pi-n}
 \noindent  Let $ n$ be fixed and let $ X$ be a Banach space.
We define the application $ \pi_n: X \rightarrow  B(0, n)\subset X$, by
\begin{equation}\label{Eq-Pi-n-X}
\pi_n(x)=
\Bigg\{
\begin{array}{lr}
x, \;\;  |x|_{X}\leq n,\\
nx|x|^{-1}_{X}, \;\;  |x|_{X}> n.
\end{array}
\end{equation}
\noindent Then $  \pi_n$ satisfies the following estimates,
\begin{equation}\label{inea-Pi-n}
 |\pi_n x- \pi_n y|_{X}\leq 2 |x-y|_{X}\;\;\; \text{and} \;\;\;
 |\pi_n x|_{X}\leq  \min\{n, |x|_{X}\}.
\end{equation}
\end{lem}
\noindent  For $ n\in \mathbb{N}_0$, we introduce the following sequence of equations
\begin{equation}\label{Eq-approx-n}
\Bigg\{
\begin{array}{lr}
 du_n= \left(-
A_{\alpha}u_n(t) + B(\pi_nu_n(t))\right)dt+ G(\pi_n(u_n(t)))dW(t), \; 0< t\leq T,\\
u_n(0)= u_0.
\end{array}
\end{equation}
\noindent For the fixed parameters $ 2< p <\infty$, $ 2< q\leq q_0$ and $ T_1\leq T$ (for simplicity reasons, we omit the subscription of all  these parameters),  we define the spaces $ \mathcal{E}_T$ and $ E_T$ as in \eqref{E-cal-T} and \eqref{E-T} respectively with $ X:= D(A_q^{\frac\delta2})$. First let us mention that, using  Formula \eqref{construction-of-fract-bounded} and the fact that $ div \pi_m u=0$, it is easy to see that  $ \pi_n$ is well defined\del{Recall that as we have seen in Section \ref{sec-Domain}, $ \pi_n$ is well defined} on $D(A_q^{\frac\delta2})$. Furthermore, we define the map  $ \Phi_{n}$ on $ \mathcal{E}_T$ by\del{ We denote by $ \mathcal{M}_{q, \delta, p}$
the space of adapted  stochastic
processes with values in $ D(A_q^{\frac {\delta}2})$,  (later, we use for simplicity, the
notation $\mathcal{M}_{\delta} $),  endowed  by the norm
\begin{equation}\label{norm-M}
|\theta |_{\mathcal{M}_{\delta}}:= \left(\mathbb{E}\sup_{[0,T_1]}|\theta(t)|^p_{D(A_q^{\frac{\delta}2})}\right)^\frac 1p<\infty.
\end{equation}}

\begin{equation}\label{Eq-def-Phi}
\Phi_{n}(u)(t) := e^{-A_\alpha t}u_0 + \int_0^te^{-A_\alpha (t-s)}B(\pi_n(u(s)))ds + \int_0^te^{-A_\alpha (t-s)}
G(\pi_n(u(s)))W(ds).
\end{equation}
\del{The aim now  is to prove that the map $ \Phi_{n} $ is a contraction on $ \mathcal{E}_{T}$ and than we apply the
fixed point Theorem. The claim is}

\noindent {\bf Claim}
\noindent Under the conditions of Theorem \ref{Main-theorem-mild-solution-d} the map $\Phi_{n}$  is well defined and it is a
contraction on $ \mathcal{E}_{T}$.

\noindent {\bf Proof of the claim}
Let us denote by $ I_0,\; I_1(u), \; I_2(u)$ respectively the terms in the RHS of \eqref{Eq-def-Phi} and prove that $I_j(u)
\in \mathcal{E}_{T}$, for $ j \in\{0, 1, 2\}$.
In fact, $ I_0 \in \mathcal{E}_{T} $ thanks to \eqref{Eq-initial-cond} and to the boundedness of the operators of the semigroup. From Lemma \ref{lem-est-z-t} and the second
estimate in \eqref{inea-Pi-n}, we conclude that
$ I_2(u)  \in \mathcal{E}_{T}$.  In particular, there exists $ c_n >0$, such that
\begin{eqnarray}\label{est-2-term-B-0-2}
|I_2 (u)|_{\mathcal{E}_{T_1}} & \leq & c_nT_1 (1+|u|_{\mathcal{E}_{T_1}}).
\end{eqnarray}
Moreover, thanks to Proposition \ref{Prop-Main-I}, with $ \beta =\eta=\delta$ and
the second estimate in  \eqref{inea-Pi-n}, we infer the existence of $ c, \mu>0$ such that
\begin{eqnarray}\label{est-2-term-B-0-1}
|I_1(u)|_{\mathcal{E}_{T_1}} &\leq& CT_1^{\mu}\left(\mathbb{E}\sup_{[0, T_1]}|\pi_nu(s)|^{2p}_{D(A_q^\frac{\delta}2)} \right)^\frac1p
\leq  ncT_1^{\mu}|u|_{\mathcal{E}_{T_1}}<\infty. \nonumber\\
\end{eqnarray}
The $ D(A^\frac\delta2)$-time continuity of $ \Phi_{n}(u)(\cdot) $ (i.e. $\Phi_{n}(u) \in C([0, T]; D(A^\frac\delta2))$)  follows from the continuity of the terms $ I_j, \; j\in \{0,1,2\}$ in \eqref{Eq-def-Phi}. In fact, the continuity of these latter \del{$ I_0 \& I_2$} are consequences of the regularization effect of the semi group respectively of Proposition \ref{Prop-Main-I}, with $ \beta =\eta=\delta$, \eqref{inea-Pi-n} and \cite[Lemma 2]{Daprato-Kwapien-Zab-Regu-Sol-88} respectively of Lemma \ref{lem-est-z-t}. \del{To show the continuity of the term $ I_1$, we follow the proof of \cite[Lemma 2]{Daprato-Kwapien-Zab-Regu-Sol-88}, use}Thus $ \Phi_{n} $ is well defined. To prove that $ \Phi_{n} $ is a contraction, we first remark that
\begin{equation}\label{formula-B-v1-B-v2}
 B(u_1, u_1) - B(u_2, u_2) = B(u_1, u_1-u_2)+ B(u_1-u_2, u_2).
\end{equation}
Thanks to Proposition \ref{Prop-Main-I}, with $ \beta =\eta=\delta$ and the first estimate in \eqref{inea-Pi-n}, we get\del{ ( see also a similar calculus in the proof of Theorem
\ref{prop-global-mild-solution} bellow)}
\begin{equation}\label{eq-lipschitz-B}
\Big(\mathbb{E} \sup_{[0, T_1]}|\int_0^te^{-A_\alpha (t-s)}\big(B(\pi_n(u_1(s)))-B(\pi_n(u_2(s)))\big)
ds|^p_{D(A_q^{\frac\delta2})}\Big)^\frac1p\leq cnT_1^{1-\frac1\alpha(1+\frac dq)}
|u_1-u_2|_{\mathcal{E}_{T_1}}.
\end{equation}
Using Assumption $(\mathcal{C})$, \cite[Proposition 4.2]{Neerven-Evolution-Eq-08}, the calculus as in Lemma \ref{lem-est-z-t}
 and the fact that $ \pi_n$ is globally Lipschitz, we get
\begin{equation}\label{eq-lipschitz-G}
\Big(\mathbb{E} \sup_{[0, T_1]}|\int_0^te^{-A_\alpha (t-s)}\big(G(\pi_n(u_1(s))) - G(\pi_n(u_2(s)))\big)W(ds)|^p_{D(A_q^{\frac\delta2})}\Big)^\frac1p
\leq c_n T_1|u_1-u_2|_{\mathcal{E}_{T_1}}.
\end{equation}
Consequently, thanks to \eqref{eq-lipschitz-B} and \eqref{eq-lipschitz-G} we infer that
\begin{equation}
 | \Phi_{n}(u_1) -\Phi_{n}(u_2)|_{\mathcal{E}_{T_1}}\leq C_nT_1^\mu | u_1 -u_2|_{\mathcal{E}_{T_1}},\;\; \mu>0.
\end{equation}
The time interval $ [0, T_1]$ is chosen such that $ C_nT_1^\mu <1$. Therefore by application of the fixed point theorem, we infer the existence
 of $ (u_n(t), t\in [0, T_1]) \in \mathcal{E}_{T}$  solution of \eqref{Eq-approx-n}, for all $ t \leq T_1$. Using the semigroup property of
$ (e^{-tA_\alpha}, t \in [0, T])$, we extend $  (u_n(t), t\in [0, T_1]) $ to be defined on $ [0, T]$.
\noindent We define the following stopping time, see Appendix \ref{append-stop-time} for the proof,
\begin{equation}\label{Eq-def-tau-n-delta}
 \tau_n := \inf\{t\in (0, T), \; s.t. |u_n(t) |_{D(A^{\frac\delta2})}> n\}\wedge T,
\end{equation}
with the understanding that $ \inf(\emptyset)=+\infty$.\del{and prove that $ \tau_n $ is a stopping time. Indeed, it is easy to check,  using Lemma \ref{lem-est-z-t}, Estimate \eqref{Eq-Pi-n-X} and a similar calculus as in \eqref{est-1-semi-group-second-term-1}, that the real  
$ \mathcal{F}_t-$adapted stochastic processes $ (\langle u_n(s), e_k\rangle_{\mathbb{L}^2})_{k\in\Sigma}$ are continuous for all $ k\in \Sigma$, see also Remark \ref{Rem-1}.
Therefore, \del{ the positive $ \mathcal{F}_t-$adapted stochastic real processes
\begin{equation}
X_j(\omega, s):= |\sum_{k\in \Sigma_j}|k|^{\delta} \langle u_n(\omega, s), e_k\rangle_{\mathbb{L}^2}e_k|_{\mathbb{L}^q},
\end{equation} }the positive $ \mathcal{F}_t-$adapted stochastic real processes
\begin{equation}
X_j(\omega, s):= |\sum_{k\in \Sigma_j}|k|^{\delta} \langle u_n(\omega, s), e_k\rangle_{\mathbb{L}^2}e_k|_{\mathbb{L}^q},
\end{equation} 
are also continuous for all $ j\in \mathbb{N}_1$, with $ (\Sigma_j)_j\subset \Sigma$  being an increasing sequence of finite subsets converging to $ \Sigma$. Therefore, see e.g. \cite[Propositions 4.5 or 4.6 ]{Revuz-Yor}, (Recall that we have assumed that the filtration $ (\mathcal{F}_t)_{t\in[0, T]}$ is right continuous), \del{therefore an optional time is also a stopping time.}  
\del{\begin{eqnarray}
\{\tau_n> t\} &=& \cap_{s\leq t}\{\omega, |u_n(s, \omega)|_{D(A_q^\frac\delta2)}\leq n\}\nonumber\\
&=& \cap_{s\leq t}\{\omega, |\sum_{k\in \Sigma}|k|^{\delta} \langle u_n(\omega, s), e_k\rangle_{\mathbb{L}^2}e_k|_{\mathbb{L}^q}\leq n\}
\nonumber\\
&=& \cap_{s\leq t}\cup_{m\in\mathbb{N}}\cap_{j\geq m}\{\omega, |\sum_{k\in \Sigma_j}|k|^{\delta} \langle u_n(\omega, s), e_k\rangle_{\mathbb{L}^2}e_k|_{\mathbb{L}^q}\leq n\}.
\end{eqnarray}}
\begin{eqnarray}
\{\tau_n\leq t\} &=& \cup_{s\leq t}\{\omega, |u_n(s, \omega)|_{D(A_q^\frac\delta2)}\geq n\}\nonumber\\
&=& \cup_{s\leq t}\{\omega, |\sum_{k\in \Sigma}|k|^{\delta} \langle u_n(\omega, s), e_k\rangle_{\mathbb{L}^2}e_k|_{\mathbb{L}^q}\geq n\}
\nonumber\\
&=& \cup_{s\leq t}\cap_{m\in\mathbb{N}}\cup_{j\geq m}\{\omega, |\sum_{k\in \Sigma_j}|k|^{\delta} \langle u_n(\omega, s), e_k\rangle_{\mathbb{L}^2}e_k|_{\mathbb{L}^q}\geq n\}
\nonumber\\
&=& \cup_{s\leq t}\cap_{m\in\mathbb{N}}\cup_{j\geq m}\{\omega, X_j(\omega, s):= \sum_{k\in \Sigma_j}|k|^{\delta} \langle u_n(\omega, s), e_k\rangle_{\mathbb{L}^2}e_k \in (B_{\mathbb{L}^q}(0, n))^c\}
\nonumber\\
&=& \cup_{s\in \mathbb{Q} \&\leq t}\cap_{m\in\mathbb{N}}\cup_{j\geq m}\{\omega, X_j(\omega, s) \in (B_{\mathbb{L}^q}(0, n))^c\} \in \mathcal{F}_t.
\end{eqnarray}
\del{Thanks to the continuity of the positive $ \mathcal{F}_t-$adapted process $X_j(\omega, s):=  \sum_{k\in \Sigma_j}|k|^{\delta} \langle u_n(\omega, s), e_k\rangle_{\mathbb{L}^2}e_k|_{\mathbb{L}^q}$ 
As the filtration $\mathcal{F}_t$ is right continuous, it is sufficient to prove that for all $ t\in [0, T]$, $ \{\tau_n\geq t\}\in \mathcal{F}_t$.}} The sequence $ (\tau_n)_n $ increases, hence there exists a stopping time $ \tau_\infty$ such that
\begin{equation}\label{Eq-def-tau-delta}
 \tau_\infty = \lim_{n\nearrow +\infty}\tau_n\leq T.
\end{equation}
Thanks to the uniqueness of the fixed point in $ \mathcal{E}_{T}$, we can construct $ (u(t), t< \tau_\infty)$,
by $ u(t)= u_n(t), \; \forall t < \tau_n$.
Then   $ (u(t), \; t < \tau_\infty)$  is a mild solution of Equation \eqref{Main-stoch-eq} up to $ \tau_\infty$. Moreover, as  $(u(t\wedge \tau_n), t\in [o, T])$ is an increasing sequence, then  $ (u(t), \; t < \tau_\infty)$ is maximal local mild solution.
\del{The estimate \eqref{eq-cond-norm-mild-solution} follows from the construction of the solution as an element of $ \mathcal{E}_{T}$ with $ X= D(A_q^\frac\delta2)$. The regularity of the trajectories in \eqref{eq-cond-cont-mild-solution} is deduced,
by a standard way, thanks to Lemma \ref{lem-est-z-t}, Proposition \ref{Prop-Main-I}, Assumption \eqref{Eq-initial-cond} and the strong continuity
of the semigroup $ (e^{-tA^{\frac{\alpha}{2}}})_{t\geq 0}$ with $ X_1:= H^{-\delta'', q}(O)$ and $ \delta'' \geq \alpha+1+\frac{d}{q}-\delta$.}

\del{\section{Passage Velocity-Vorticity forms of 2D-FSNSEs.}\label{sec-Passage Velocity-Vorticity}}
%%%%%%%%%%%%%%%%%%%%%%%%%%%%%%%%BEGIN%%%%DEL%%%%%%%%%%%%
\del{We define the operator "curl" as follow
\begin{eqnarray}
 curl: v\in H_2^{s, q}(O) \rightarrow  H_1^{s-1, q}(O) \ni curlv:= \partial_1v_2-  \partial_2v_1,
\end{eqnarray}
with $ O $ being either $ \mathbb{T}^2$ or a bounded domain from $ \mathbb{R}^2$, $ 1<q<\infty$ and $ s\in \mathbb{R}$. 
The following result characterizes an intrinsic property between $ curlv$ and the Sobolev regularity of $ v$, 
\begin{lem}\label{lem-basic-curl-gradient}
 Let  $ \mathbb{R}^2$, $ 1<q<\infty$ and $ s\in \mathbb{R}$. Then there exists a constant $ c>0$ such that for all $ v \in H_2^{s+1, q}(O)$
\begin{equation}
 |\nabla v|_{H_2^{s, q}}\leq c |curl v|_{H_1^{s, q}}.
\end{equation}
\end{lem}
\begin{proof}
Let \begin{equation}
     \zeta_{jk}:= \partial_jv_k- \partial_kv_j.
    \end{equation}
By a straightforward calculus, we get
\begin{equation}
     \partial_jv_k=  \partial_j \partial_i\Delta^{-1}\zeta_{ik}.
    \end{equation}
For $ O=\mathbb{T}^2$, it is easy to see, using Marcinkiewicz's theorem, that the  pseudodifferential operators
operator $  \partial_j \partial_i\Delta^{-1}$, with symbol
$ k_ik_j|k|^{-2}$ are bounded  on 
$ H^{s, q}(\mathbb{T}^2)_1$, $ 1<q<\infty $ and $ s\in \mathbb{R}$.  Therefore,
\begin{eqnarray}
 | \nabla v|_{H_2^{s, q}}&\leq& c|\partial_jv_{k}|_{H_1^{s, q}} \leq c|\partial_j \partial_i\Delta^{-1}\zeta_{ik}|_{H_1^{s, q}}\nonumber\\
&\leq& c|\zeta_{ik}|_{H_1^{s, q}} \leq c |curl v|_{H_1^{s, q}}.
\end{eqnarray}
 See also \cite[Lemma 3.1]{Kato-Ponce-86} for the proof for $ O=\mathbb{R}^2$.
\end{proof}}
\del{ Later on, we unify the representation of $ u $ and $ \theta $ as follow
\begin{eqnarray}
\mathcal{R}^{1}:H_1^{s, q}(O) &\rightarrow & \mathbb{H}^{s, q}(O)\nonumber\\
\theta &\mapsto& \mathcal{R}^{1}\theta:= \nabla^\perp\cdot \Delta^{-1}\theta = \int_O \nabla^\perp_xg_{\mathbb{T}^2}(\cdot, y)\theta(t, y)dy.
\end{eqnarray}
Let us now separate the discussion of the cases $ O$ bounded and  $ O=\mathbb{T}^2$.
{\bf The case $ O=\mathbb{T}^2$.} 
In this case, the 
\begin{equation}
\left\{
 \begin{array}{lr}
  \Delta \psi = \theta
 \end{array}
\right.
\end{equation}
\begin{equation}\label{eq-def-u-theta}
u = \mathcal{R}^{1} \theta.
\end{equation}
{\bf The case $ O\subset \mathbb{R}^2$ bounded.} 
\begin{equation}
\left\{
 \begin{array}{lr}
  \Delta \psi = \theta
 \end{array}
\right.
\end{equation}
The following result characterizes an intrinsic property between $ curlv$ and the Sobolev regularity of $ v$, 
\begin{lem}\label{lem-basic-curl-gradient}
 Let  $ \mathbb{R}^2$, $ 1<q<\infty$ and $ s\in \mathbb{R}$. Then there exists a constant $ c>0$ such that for all $ v \in H_2^{s+1, q}(O)$
\begin{equation}
 |\nabla v|_{H_2^{s, q}}\leq c |curl v|_{H_1^{s, q}}.
\end{equation}
\end{lem}
\begin{proof}
Let \begin{equation}
     \zeta_{jk}:= \partial_jv_k- \partial_kv_j.
    \end{equation}
By a straightforward calculus, we get
\begin{equation}
     \partial_jv_k=  \partial_j \partial_i\Delta^{-1}\zeta_{ik}.
    \end{equation}
For $ O=\mathbb{T}^2$, it is easy to see, using Marcinkiewicz's theorem, that the  pseudodifferential operators
operator $  \partial_j \partial_i\Delta^{-1}$, with symbol
$ k_ik_j|k|^{-2}$ are bounded  on 
$ H^{s, q}(\mathbb{T}^2)_1$, $ 1<q<\infty $ and $ s\in \mathbb{R}$.  Therefore,
\begin{eqnarray}
 | \nabla v|_{H_2^{s, q}}&\leq& c|\partial_jv_{k}|_{H_1^{s, q}} \leq c|\partial_j \partial_i\Delta^{-1}\zeta_{ik}|_{H_1^{s, q}}\nonumber\\
&\leq& c|\zeta_{ik}|_{H_1^{s, q}} \leq c |curl v|_{H_1^{s, q}}.
\end{eqnarray}
 See also 
 \cite[Lemma 3.1]{Kato-Ponce-86} for the proof for $ O=\mathbb{R}^2$.
\end{proof}

In this section we consider only the case $ d=2$. \del{The passage from velocity to vorticity forms and vise versa is done in 4 steps. 

{\bf Step 1. General framework.} } We defined the operator "curl" as follow
\del{\begin{eqnarray}
 curl: v\in \mathbb{H}^{s, q}(O) \rightarrow  H_1^{s-1, q}(O) \ni \theta = curlv:= \partial_1v_2-  \partial_2v_1,
\end{eqnarray}}
\begin{eqnarray}
 curl: v\in H_2^{s, q}(O) \rightarrow  H_1^{s-1, q}(O) \ni \theta = curlv:= \partial_1v_2-  \partial_2v_1,
\end{eqnarray}
with $ O $ being either $ \mathbb{T}^2$ or a bounded domain from $ \mathbb{R}^2$, $ 1<q<\infty$ and $ s\in \mathbb{R}$.
To recuperate the  velocity $ u$ of the fluid once the vorticity $ \theta$ is known, we introduce the stream function $ \psi$,
which is the solution of the 
Poisson equation endowed with relevant boundary conditions in the case $ O\subset \mathbb{R}^2$ being bounded. In particular, 
for  $ O =\mathbb{T}^2$, the problem is formulated as follow,
\begin{equation}\label{recuper-eq-u-theta-Torus}
\left\{
 \begin{array}{lr}
 \Delta \psi = \theta,\\
u= \nabla^\perp \psi, \;\;\; \text{and} \;\; \nabla^\perp:=(-\frac{\partial }{\partial x_2}, \frac{\partial
}{\partial x_1}).
 \end{array}
\right.
\end{equation}
\del{\begin{equation}\label{recuper-eq-u-theta}
\left\{
 \begin{array}{lr}
 \Delta \psi = \theta,\\
\psi/\partial O =0,\\
u= \nabla^\perp \psi, \;\;\; \text{and} \;\; \nabla^\perp:=(-\frac{\partial }{\partial x_2}, \frac{\partial
}{\partial x_1}).
 \end{array}
\right.
\end{equation}
\begin{equation}\label{psi-theta-u}
 \Delta \psi = \theta, \;\;\; \text{and} \;\; u= \nabla^\perp \psi, 
\end{equation}}
Thanks to the vanishing average condition, Poisson equation is well posed. The velocity $ u $ is then obtained by a direct calculus using the 
second equation in \eqref{recuper-eq-u-theta-Torus}.
For  $ O\subset \mathbb{R}^2$ bounded, we consider \eqref{recuper-eq-u-theta-Torus} and we conclude from Dirichlet boundary conditions of the 
velocity $ u$ that 
$ \psi$ should satisfy vanishing Neumann boundary conditions. Therefore, $ \psi = const.$ on $ \partial O$. We suppose that this constant is null, 
for further discussion, see e.g. \cite{Marchioro-Puvirenti-Vortex-84}. Let us denote by $ A_1$ either the Laplacian on $ O \subset \mathbb{R}^2 $ 
with Dirichlet boundary conditions or  a component of Stokes operator in the case $O= \mathbb{T}^2$,  then we formulate the recuperation  problem 
for both cases as 
\begin{equation}\label{recuper-eq-u-theta}
\left\{
\begin{array}{lr}
 A_1 \psi = \theta,\\
u= \nabla^\perp \psi.
\end{array}
\right.
\end{equation}
The problem, as mentioned before, is well posed, see also Section \ref{sec-formulation} and we have
\begin{equation}\label{repres-u-theta}
 u(t, x) = \nabla^\perp A_1^{-1}\theta(t, x)= \int_O \nabla^\perp_xg_O(x, y)\theta(t, y)dy,
\end{equation}
where $ g_O$ is the Green function corresponding to the Poisson equation with Dirichlet boundary conditions. 
In the case $  O= \mathbb{T}^2 $, the Green function  $g_{\mathbb{T}^2}$ is explicitly given by
\begin{equation}
 g_{\mathbb{T}^2}(x, y):= -\frac1{(2\pi)^2}\sum_{k\in\mathbb{Z}^2_0}\frac{1}{|k|^2}e^{k\cdot(x-y)},\;\; x, y \in \mathbb{T}^2.
\end{equation}
Moreover,  the operator 
\del{\begin{eqnarray}
\mathcal{R}^{1}:H_1^{s, q}(\mathbb{T}^2) &\rightarrow & H_2^{s, q}(\mathbb{T}^2)\nonumber\\
\theta &\mapsto& \mathcal{R}^{1}\theta:= \nabla^\perp\cdot \Delta^{-1}\theta
\end{eqnarray}}
\begin{eqnarray}
\mathcal{R}^{1}:H_1^{s, q}(O) &\rightarrow & H_2^{s+1, q}(O)\nonumber\\
\theta &\mapsto& \mathcal{R}^{1}\theta:= u= \nabla^\perp\cdot A_1^{-1}\theta = \int_O \nabla^\perp_{\cdot}g_{O}(\cdot, y)\theta(y)dy.
\end{eqnarray}
is well defined and bounded for all $ 1<q<\infty$ and $ s\in \mathbb{R}$. In fact,
\begin{eqnarray}
\del{ |u|_{\mathbb{H}^{s+1, q}}&\leq& }|u|_{H_2^{s+1, q}} &\leq& c | \nabla^\perp A_1^{-1}\theta|_{H_2^{s+1, q}}
\leq c|\partial_j A_1^{-1}\theta|_{H_2^{s+1, q}} \leq c|A_1^{-1}\theta|_{H_2^{s+2, q}}\leq c|\theta|_{H_2^{s, q}}.
\end{eqnarray}
The proof of this statement 
for a more large class of operators on  $\mathbb{T}^d, d\in \mathbb{N}_0$ which includes 
 $ \mathcal{R}^{1}$,  can be found in \cite{Debbi-scalar-active} and for the case $ O=\mathbb{R}^2$, one can  see \cite[Lemma 3.1]{Kato-Ponce-86}.
 For the convenience of the reader, let us mention here that  $ \mathcal{R}^{1} $ is a 
pseudoodifferential operator of Calderon-Zygmund Reisz type. Otherwise, we can rewrite  
$\mathcal{R}^{1} = -\mathcal{R}^\perp (-\Delta)^{-\frac12}$, where $ \mathcal{R}$ is Riesz transform and $ v^\perp := (-v_2, v_1)$. 
 One can also use the representation of $ u$ via Green function (recall that $ \Delta_xg_O(x, y)= \delta_x(y)$). \\}
%%%%%%%%%%%%%%%%%%%%%%%%%%%%%%%%%%%%%%%%%%%%%%%%%%%%%%%%%%%%%%%%%%%%%%%%%%%%%
\section{The Biot-Savart's law and the corresponding fractional stochastic vorticity equation.}\label{sec-Passage Velocity-Vorticity}
In this appendix we consider only the case $ d=2$, for the multidimensional case see e.g. \cite[Chap.3]{Chemin-Book-98}, \cite[Chap.2]{Majda-Bertozzi-02}, \cite{Marchioro-Puvirenti-Vortex-84} and \cite{Mikulevicius-H1-NS-solution-2004}. The Biot-Savart law determines the  velocity $ u$ from the vorticity $ \theta$. This law is given as a pseudo-differential operator of order $-1$ in the cases $ O=\mathbb{R}^d$ and $ O=\mathbb{T}^d$. The case $ O\subsetneq\mathbb{R}^d$, as mentioned before is much involved. here we give a survey and some results about this law in the cases  $ O\subsetneq\mathbb{R}^d$ and $ O=\mathbb{T}^d$ than we move to the derivation of the stochastic vorticity equation for the case $ O=\mathbb{T}^d$. A generalization of the Biot-Savart's law to a nonlocal pseudo-differential operators of fractional order $ \gamma \leq 0$ has been investigated in \cite{Debbi-scalar-active}.\del{The passage from velocity to vorticity forms and vise versa is done in 4 steps. 

{\bf Step 1. General framework.} } We define the operator "curl" as follow, see Preliminary Notations,
\del{\begin{eqnarray}
 curl: v\in \mathbb{H}^{s, q}(O) \rightarrow  H_1^{s-1, q}(O) \ni \theta = curlv:= \partial_1v_2-  \partial_2v_1,\;\; 1<q<\infty\;\; s\in \mathbb{R}.
\end{eqnarray}}
\begin{eqnarray}
 curl: v\in H_2^{\beta, q}(O) \rightarrow  H_1^{\beta-1, q}(O) \ni \theta = curlv:= \partial_1v_2-  \partial_2v_1,\; \beta\in \mathbb{R}, \; 1<q<\infty.\nonumber\\
\end{eqnarray}
\del{with $ O $ being either $ \mathbb{T}^2$ or a bounded domain from $ \mathbb{R}^2$, $ 1<q<\infty$ and $ s\in \mathbb{R}$.
To recuperate the  velocity $ u$ of the fluid once the vorticity $ \theta$ is known, }We introduce the stream function $ \psi$,
 as the solution of the 
Poisson equation endowed with a relevant boundary condition in the case $ O\subset \mathbb{R}^2$ being bounded. In deed,
we conclude from Dirichlet boundary condition of the 
velocity $ u$ and the third equation in \eqref{recuper-eq-u-theta-Torus} bellow, that 
$ \psi$ should satisfy vanishing Neumann boundary condition. Therefore, $ \psi/ \partial O= const.$\del{ on $ \partial O$.} We suppose that this constant is null, 
for further discussion, see e.g. \cite{ Marchioro-Puvirenti-Vortex-84}. The problem is then formulated as follow,
\begin{equation}\label{recuper-eq-u-theta-Torus}
\left\{
 \begin{array}{lr}
 \Delta \psi = \theta,\\
\psi/\partial O =0,\\
u= \nabla^\perp \psi, \;\;\; \text{and} \;\; \nabla^\perp:=(-\frac{\partial }{\partial x_2}, \frac{\partial
}{\partial x_1}).
 \end{array}
\right.
\end{equation}
The formulation  \eqref{recuper-eq-u-theta-Torus} is still valid for  $O= \mathbb{T}^2$ without the boundary condition.
\del{\begin{equation}\label{recuper-eq-u-theta}
\left\{
 \begin{array}{lr}
 \Delta \psi = \theta,\\
\psi/\partial O =0,\\
u= \nabla^\perp \psi, \;\;\; \text{and} \;\; \nabla^\perp:=(-\frac{\partial }{\partial x_2}, \frac{\partial
}{\partial x_1}).
 \end{array}
\right.
\end{equation}
\begin{equation}\label{psi-theta-u}
 \Delta \psi = \theta, \;\;\; \text{and} \;\; u= \nabla^\perp \psi, 
\end{equation}}Let us denote by $ A_1$ either the Laplacian on $O= \mathbb{T}^2$ or the Laplacian  
with Dirichlet boundary condition on $ O \subset \mathbb{R}^2 $,  then we formulate the recuperation  problem 
for both cases as 
\begin{equation}\label{recuper-eq-u-theta}
\left\{
\begin{array}{lr}
 A_1 \psi = \theta,\\
u= \nabla^\perp \psi.
\end{array}
\right.
\end{equation}
Problem \eqref{recuper-eq-u-theta} is well posed, see also Section \ref{sec-formulation}. Recall that for  $ O=\mathbb{T}^2$, the wellposdness is guaranteed   
thanks to the vanishing average condition for the torus.\\ The velocity $ u $ is obtained by a direct calculus,\del{ using the 
second equation in \eqref{recuper-eq-u-theta-Torus}. We get}
\begin{equation}\label{repres-u-theta}
 u(t, x) = \nabla^\perp A_1^{-1}\theta(t, x)= \int_O \nabla^\perp_xg_O(x, y)\theta(t, y)dy,
\end{equation}
where $ g_O$ is the Green function corresponding to the Poisson equation with Dirichlet boundary conditions for $ O$ bounded. 
In the case $  O= \mathbb{T}^2 $, the Green function  $g_{\mathbb{T}^2}$ is explicitly given by
\begin{equation}
 g_{\mathbb{T}^2}(x, y):= -\frac1{(2\pi)^2}\sum_{k\in\mathbb{Z}^2_0}\frac{1}{|k|^2}e^{k\cdot(x-y)},\;\; x, y \in \mathbb{T}^2.
\end{equation}
Moreover,
\begin{lem}\label{lem-R1-bounded}
The operator 
\del{\begin{eqnarray}
\mathcal{R}^{1}:H^{s, q}(\mathbb{T}^2) &\rightarrow & H_2^{s, q}(\mathbb{T}^2)\nonumber\\
\theta &\mapsto& \mathcal{R}^{1}\theta:= \nabla^\perp\cdot \Delta^{-1}\theta
\end{eqnarray}}
\begin{eqnarray}
\mathcal{R}^{1}:H_1^{\beta, q}(O) &\rightarrow & H_2^{\beta+1, q}(O)\nonumber\\
\theta &\mapsto& \mathcal{R}^{1}\theta:= u= \nabla^\perp\cdot A_1^{-1}\theta = \int_O \nabla^\perp_{\cdot}g_{O}(\cdot, y)\theta(y)dy
\end{eqnarray}
is well defined and bounded for all $ 1<q<\infty$ and $ \beta\in \mathbb{R}$. 
\end{lem}
\begin{proof}
In fact,
\begin{eqnarray}
\del{ |u|_{\mathbb{H}^{s+1, q}}&\leq& }|u|_{H_2^{\beta+1, q}} &\leq& c | \nabla^\perp A_1^{-1}\theta|_{H_2^{\beta+1, q}}
\leq c|\partial_j A_1^{-1}\theta|_{H_1^{\beta+1, q}} \leq\del{ c|A_1^{-1}\theta|_{H^{\beta+2, q}}\leq} c|\theta|_{H^{\beta, q}}.
\end{eqnarray}
One can also use the representation of $ u$ via Green function (recall that $ \Delta_xg_O(x, y)= \delta_x(y)$). For 
$ O=\mathbb{T}^2$, it is also convenient to remark that $ \mathcal{R}^{1} $ is a 
pseudo-differential operator of Calderon-Zygmund Reisz type, see 
definitions, results and further discussions in \cite{Debbi-scalar-active}. In deed, we can rewrite  
$\mathcal{R}^{1} = -\mathcal{R}^\perp (-\Delta)^{-\frac12}$, where $ \mathcal{R}$ is Riesz transform and $ \mathcal{R}^\perp := (-\mathcal{R}_2, \mathcal{R}_1)$. 
The proof of the above statement 
for a larger class of operators on  $\mathbb{T}^d, d\in \mathbb{N}_0$ which includes 
 $ \mathcal{R}^{1}$  can be found in \cite{Debbi-scalar-active}.\del{ For the case $ O=\mathbb{R}^2$ see e.g. \cite[Lemma 3.1]{Kato-Ponce-86}.}
\end{proof}

The following result characterizes an intrinsic property between $ curlv$ and the Sobolev regularity of $ v$.\del{ Recall that $ \nabla v$ is a matrix and we use the following notation for the norm matrix
$ | \nabla v|_{H_{2\times d}^{s, q}}:= | \partial_j v_i|_{H^{s, q}}$.}
\del{It is an easy consequence of the above discussion.} 
\begin{lem}\label{lem-basic-curl-gradient}
 Let  $O \subset \mathbb{R}^2$ bounded or $O = \mathbb{T}^2$, $ 1<q<\infty$ and $ \beta\in \mathbb{R}$. 
Then there exists a constant $ c>0$ such that for all $ v \in H_2^{\beta+1, q}(O)$
\begin{equation}
 c|\nabla v|_{H^{\beta, q}}\leq |curl v|_{H^{\beta, q}}\leq |\nabla v|_{H^{\beta, q}}.
\end{equation}
\del{Moreover, for $O \subset \mathbb{R}^2$ bounded or $O = \mathbb{T}^2$, $ 1<q<\infty$ and $ \beta\in \mathbb{R}$, there exists a constant $ c>0$ such that for all $ v \in \mathbb{H}_d^{\beta+1, q}(O)$
\begin{equation}
 |\nabla v|_{H_{2\times d}^{\beta, q}}\leq c |curl v|_{H^{\beta, q}}.
\end{equation}}
\end{lem}
\begin{proof}
Using the definition of the curl operator, the sobolev spaces and Lemma 
\ref{lem-R1-bounded}, we infer that there exists $ c>0$ such that 
\begin{eqnarray}
|\nabla v|_{H^{\beta, q}}&\leq & |\partial_j v_i|_{H^{\beta, q}}
\leq c|v|_{H_{2}^{\beta+1, q}}\leq c|curl v|_{H^{\beta, q}}.
\end{eqnarray}
Moreover, we have, 
\begin{eqnarray}
|curl v|_{H^{s, q}} &\leq & |\partial_j v_i|_{H^{\beta, q}}
\leq |\nabla v|_{H^{\beta, q}}.
\end{eqnarray}
\end{proof}
\begin{remark}
If we assume that $ v $ is of divergence free, i.e. $ v \in \mathbb{H}^{\beta, q}(\mathbb{T}^d)$, $d\in \mathbb{N}_1$, $O = \mathbb{T}^2$, $ 1<q<\infty$ and $ \beta \in \mathbb{R}_+$, then it is easy to adapt the proof of  \cite[Lemma 3.1]{Kato-Ponce-86}. 
\end{remark}

%%%%%%%%%%%%%%%%%%%%%%%%%%%%%%%%%BEGIN%%%%%%%%%%%%%%%%%%%%%%%DEL
 \del{Now, we turn to the stochastic term. Let $(u(t), t \in [0, T])$ be a solution of FSNSE satisfying 
 \eqref{cond-solu-torus-H1}.  We denote by
\begin{equation}\label{eq-def-sigma-k}
\sigma^k(u):= G(u)Q^{\frac12}e_k= q_{k}^\frac12G(u)e_k,  for\;\;  k\in\Sigma.
\end{equation}
First, we claim that for  $P-a.s.$ the following stochastic integral $ \int_0^t  \sum_{k\in\Sigma}curl \sigma^k(u(s))d\beta_k(s)$ 
is well defined and
\begin{eqnarray}
 curl \int_0^tG(u(s))dW(s)= \int_0^t  \sum_{k\in\Sigma} curl \sigma^k(u(s))d\beta_k(s), \forall  t \in [o, T].
\end{eqnarray}

\noindent In fact, using the stochastic isometry,\del{ Lemma \ref{lem-basic-curl-gradient},} Assumption $ (\mathcal{C})$
and \eqref{cond-solu-torus-H1} we infer that for $ \beta$ either equals $ 1$, or $0$.
%%%%%%%%%%
\del{\begin{eqnarray}\label{eq-def-sima-integ}
 \mathbb{E}\!\!\!\!&|&\!\!\!\!\int_0^t \sum_{k\in\Sigma} curl\sigma^k(u(s))d\beta_k(s)|_{L^2}^2\leq
\mathbb{E}\int_0^t\sum_{k\in\Sigma} |curl \sigma^k(u(s))|_{L^2}^2ds \nonumber\\
&\leq&
c\mathbb{E}\int_0^t\sum_{k\in\Sigma}|\partial_j \sigma_i^k(u(s))|_{L^2}^2ds
\del{\leq
c\mathbb{E}\int_0^t |G(u(s))Q^\frac12|_{HS(H^{1,2}) }^2ds \nonumber\\}
\leq c\mathbb{E}\int_0^t\sum_{k\in\Sigma}  
|\sigma^k(u(s))|_{H^{1,2}}^2ds\nonumber\\
&\leq&
c\mathbb{E}\int_0^t |G(u(s))|_{L_Q(H^{1,2}) }^2ds 
\leq
c\int_0^t(1+ \mathbb{E}|u(s)|_{\mathbb{H}^{1, 2}}^2)ds <\infty.
\end{eqnarray}}
%%%%%%%%%%%%
\begin{eqnarray}\label{eq-def-sima-integ}
 \mathbb{E}\!\!\!\!&|&\!\!\!\!\int_0^t \sum_{k\in\Sigma} curl\sigma^k(u(s))d\beta_k(s)|_{H^{\beta-1, 2}}^2\leq
\mathbb{E}\int_0^t\sum_{k\in\Sigma} |curl \sigma^k(u(s))|_{H^{\beta-1, 2}}^2ds \nonumber\\
&\leq&
c\mathbb{E}\int_0^t\sum_{k\in\Sigma}|\partial_j \sigma_i^k(u(s))|_{H^{\beta-1, 2}}^2ds
\del{\leq
c\mathbb{E}\int_0^t |G(u(s))Q^\frac12|_{HS(H^{1,2}) }^2ds \nonumber\\}
\leq c\mathbb{E}\int_0^t\sum_{k\in\Sigma}  
|\sigma^k(u(s))|_{H^{\beta,2}}^2ds\nonumber\\
&\leq&
c\mathbb{E}\int_0^t |G(u(s))|_{L_Q(H^{\beta,2}) }^2ds 
\leq
c\int_0^t(1+ \mathbb{E}|u(s)|_{\mathbb{H}^{\beta, 2}}^2)ds <\infty.
\end{eqnarray}
\noindent Moreover, thanks to  \eqref{eq-W-n} and \eqref{eq-def-sigma-k}, we infer on one hand that 
\begin{eqnarray}
curl \sum_{k\in\Sigma_n} \int_0^t  \sigma^k(u(s))d\beta_k(s) \rightarrow
curl  \int_0^t \sum_{k\in\Sigma_n} \sigma^k(u(s))d\beta_k(s),\,\,  in \, L^2(\Omega, \mathcal{D}'(O)),  \nonumber\\
\end{eqnarray}
where  $ (\Sigma_n)_n$ is a sequence of subsets converging to $\Sigma$ and $ \mathcal{D}'(O)$ is the dual of $ \mathcal{D}(O)$. 
On the other hand, using the linearity of the operator $ curl$, the stochastic isometry identity, 
\eqref{eq-W-n},  Assumption $ (\mathcal{C})$, \eqref{cond-solu-torus-H1} and \eqref{eq-def-sima-integ}, we end up with
\begin{eqnarray}
curl \sum_{k\in\Sigma_n} \int_0^t  \sigma^k(u(s))d\beta_k(s) &=&\sum_{k\in\Sigma_n} \int_0^t curl \sigma^k(u(s))d\beta_k(s)\nonumber\\
&{}& \rightarrow
 \int_0^t \sum_{k\in\Sigma}curl  \sigma^k(u(s))d\beta_k(s),\,\,  in \, L^2(\Omega, \mathcal{D}'(O)). 
\end{eqnarray}
The uniqueness of the limit confirm the result. We use the following notation for the stochastic integral
\begin{eqnarray}\label{inte-g-tilde}
\int_0^t \tilde{G}(\theta(s))dW(s) := \sum_{k\in\Sigma} \int_0^t curl \sigma_k(u(s))d\beta_k(s)
& =& \sum_{k\in\Sigma} \int_0^t curl \sigma_k(\mathcal{R}^{1}(\theta(s))d\beta_k(s).\nonumber\\
\end{eqnarray}
The same calculus above is still valid for Orstein-Uhlenbeck stochastic process.\\
\noindent Using the definition of Helmholtz projection, in particular, the fact that $ \mathcal{Y}_q \subset Ker(Curl)$, we prove 
\begin{equation}
 curl B(u) = u\cdot \nabla \theta.
\end{equation}}
%%%%%%%%%%%%%%%%%%%%%%%%%%%%%%%%%%%%%%%%%%%%%%%%%%END%%%%%%%%%%%%%%%%%%%DEL
Now, we derive the stochastic vorticity equation. Let $(u, \tau)$ be a maximal weak solution of FSNSE satisfying \eqref{cond-solu-torus-H1}, 
 up to the stopping time $ \tau $.  
First, we claim that for  $P-a.s.$ the following stochastic integral 
$ \int_0^{t\wedge \tau}  \sum_{k\in\Sigma}curl \sigma^k(u(s))d\beta_k(s)$, with  $\sigma^k(u(s))$ given by
\eqref{eq-def-sigma-k}, is well defined and
\begin{eqnarray}
 \int_0^{t\wedge \tau} \sum_{k\in\Sigma} curl \sigma^k(u(s))d
 \beta_k(s)= curl \int_0^{t\wedge \tau}G(u(s))dW(s),\; \forall  t \in [o, T].
\end{eqnarray}

\noindent In fact, using the stochastic isometry,\del{ Lemma \ref{lem-basic-curl-gradient}} Assumption $ (\mathcal{C})$ (\eqref{Eq-Cond-Linear-Q-G}, with $ 2\leq q<\infty$ and $ \delta \in\{0, 1\}$)
and \eqref{cond-solu-torus-H1}, we infer that for $ \beta$  equals either $ 1$ or $0$,
\begin{eqnarray}\label{eq-def-sima-integ}
 \mathbb{E}\!\!\!\!&|&\!\!\!\!\int_0^{t\wedge \tau} \sum_{k\in\Sigma} curl\sigma^k(u(s))d\beta_k(s)|_{H^{\beta-1, q}}^2\leq
\del{\mathbb{E}\int_0^t\sum_{k\in\Sigma} |curl \sigma^k(u(s))|_{H^{\beta-1, q}}^2ds \nonumber\\
&\leq&}
c\mathbb{E}\int_0^{t\wedge \tau}\sum_{k\in\Sigma}|\partial_j \sigma_i^k(u(s))|_{H^{\beta-1, q}}^2ds\nonumber\\
\del{\leq
c\mathbb{E}\int_0^t |G(u(s))Q^\frac12|_{HS(H^{1,2}) }^2ds \nonumber\\}
&\leq& c\mathbb{E}\int_0^{t\wedge \tau}\sum_{k\in\Sigma}  
|\sigma^k(u(s))|_{H^{\beta, q}}^2ds
\leq
c\mathbb{E}\int_0^{t\wedge \tau} |G(u(s))|_{R_Q(H^{\beta, q}) }^2ds \nonumber\\
&\leq&
c\int_0^{t\wedge \tau}(1+ \mathbb{E}|u(s)|_{\mathbb{H}^{\beta, q}}^2)ds <\infty.
\end{eqnarray}
\noindent Moreover, thanks to  \eqref{eq-W-n} and \eqref{eq-def-sigma-k}, we infer on one hand that 
\begin{eqnarray}
curl \sum_{k\in\Sigma_n} \int_0^{t\wedge \tau}  \sigma^k(u(s))d\beta_k(s) \rightarrow
curl  \int_0^{t\wedge \tau} \sum_{k\in\Sigma} \sigma^k(u(s))d\beta_k(s),\,\,  in \, L^2(\Omega, \mathcal{D}'(O)),  \nonumber\\
\end{eqnarray}
where  $ (\Sigma_n)_n$ is a sequence of subsets converging to $\Sigma$ and $ \mathcal{D}'(O)$ is the dual of $ \mathcal{D}(O)$. 
On the other hand, using the linearity of the operator $ curl$, the stochastic isometry identity, 
\eqref{eq-W-n},  Assumption $ (\mathcal{C})$, \eqref{cond-solu-torus-H1} and \eqref{eq-def-sima-integ}, we end up with
\begin{eqnarray}
curl \sum_{k\in\Sigma_n} \int_0^{t\wedge \tau}  \sigma^k(u(s))d\beta_k(s) &=&\sum_{k\in\Sigma_n} \int_0^{t\wedge \tau} 
curl \sigma^k(u(s))d\beta_k(s)\nonumber\\
&{}& \rightarrow
 \int_0^{t\wedge \tau} \sum_{k\in\Sigma}curl  \sigma^k(u(s))d\beta_k(s),\,\,  in \, L^2(\Omega, \mathcal{D}'(O)). 
\end{eqnarray}
The uniqueness of the limit confirm the result. We use the following notation\del{ for the stochastic integral
\begin{eqnarray}\label{inte-g-tilde}
\int_0^{t\wedge \tau} \tilde{G}(\theta(s))dW(s) := \sum_{k\in\Sigma} \int_0^{t\wedge \tau} curl \sigma^k(u(s))d\beta_k(s)
& =& \sum_{k\in\Sigma} \int_0^{t\wedge \tau} curl \sigma^k(\mathcal{R}^{1}(\theta(s))d\beta_k(s).\nonumber\\
\end{eqnarray}}
\begin{eqnarray}\label{inte-g-tilde}
\tilde{G}(\theta):= curl G(\mathcal{R}^{1}(\theta)).
\end{eqnarray}
\noindent Using the definition of Helmholtz projection, in particular, the fact that $ \mathcal{Y}_q \subset Ker(Curl)$, an elementary calculus
yields to 
\begin{equation}
 curl B(u) = u\cdot \nabla \theta.
\end{equation}

 Now, we  assume that $ O= \mathbb{T}^2$,  using Fourier transform, it is easy to prove that 
\begin{equation}
 curl A_\alpha u = (-\Delta)^{\frac\alpha2}curl u, \; \; \forall u\in D(A_\alpha).
\end{equation}
In fact the relation above is also true for all $ u\in \mathbb{H}^{\beta+\alpha}(\mathbb{T}^2),  \beta\in \mathbb{R}$.
Applying the operator curl on the integral representation of Equation \eqref{Main-stoch-eq} stopped at the  stopping time $ \tau$
 and using the calculus above, we infer that if $ (u, \tau)$ is a local weak solution of \eqref{Main-stoch-eq}, then
 $ \theta:= curl u$ is a weak (strong in probability) solution of 
\begin{equation}\label{Eq-vorticity-Torus-2-diff}
\left\{
\begin{array}{lr}
 d\theta(t)= \left(- A_{\alpha}\theta(t) + u(t)\cdot \nabla \theta(t)\right)dt+ \tilde{G}(\theta(t))dW(t), \; 0< t\leq \tau.\\
\theta(t) = curl u_0. 
\end{array}
\right.
\end{equation}
By the same way, we can  prove that if $ (u, \tau)$ is a local mild solution of \eqref{Main-stoch-eq}, then the same calculus above is still valid\del{ for Orstein-Uhlenbeck stochastic process and \del{$ u$ is solution of \eqref{Eq-Mild-Solution}}} and $ \theta:= curl u$ is a mild solution 
to equation \eqref{Eq-vorticity-Torus-2-diff}. In the proof of this case, we use the commutativity property between the operators $ \partial_j$ and the 
semigroup $ (e^{-tA_\alpha})_{t\geq0}$. A general formula of Equation \eqref{Eq-vorticity-Torus-2-diff} has been studied in \cite{Debbi-scalar-active}.
\del{For the case $ O=\mathbb{R}^2$, we use the integral formula of the fractional Laplacian. } 

\del{For the case of bounded domain $ O\subset \mathbb{R}^2$, we prove, roughly speaking, a weak commutativity  between the the operator $ A_\alpha$
and the curl. It is easy to see that 
\begin{equation}
 \langle curl A^S u, \phi\langle =  \langle A^S curl u, \phi\langle, \;\;  \forall \phi \in C_0^\infty,
\end{equation}
with the notation  $ A^S$ stands for both 1D and 2D  Stokes operator.}

%%%%%%
\del{the local solution $ (u, \tau)$ of \eqref{Eq-weak-Solution}. Using
the commutativity of $ A^\frac\alpha2$ and $ \partial_j, j=1,2,$, which is inherited from the commutativity of
the Laplacian $ A^S= -\Delta$ and $ \partial_j, j=1,2,$}
\del{satisfies in $ V^*$
\begin{equation}\label{Eq-vorticity-Torus-2}
\theta(t)= curl u_0 + \int_0^t\left(-A_{\alpha}\theta(s) + u(s)\cdot \nabla \theta(s)\right)ds+ \int_0^t \tilde{G}(\theta(s))dW(s), \; 0< t\leq T.\\
\end{equation}
Or equivalently, $ \theta $ is a weak (strong in probability) solution\del{ (in distribution sense)} of
\begin{equation}\label{Eq-vorticity-Torus-2-diff}
\left\{
\begin{array}{lr}
 d\theta(t)= \left(- A_{\alpha}\theta(t) + u(t)\cdot \nabla \theta(t)\right)dt+ \tilde{G}(\theta(t))dW(t), \; 0< t\leq T.\\
\theta(t) = curl u_0. 
\end{array}
\right.
\end{equation}}
%%%%%

\del{\section{Stopping times.}\label{append-stop-time}
%%%%%%%%%%%%%%%%%%%%%%%%%%%%%%BEGIN%%%%%%%%
\del{In this paper, we have defined in many places random times and claimed that they are stopping time. Some of them are easily seen that 
Here, we give the proof. Recall that  Remark \ref{Rem-1} confirms that the mild and weak solutions are weakly continuous.

\begin{lem}
Let $(Y(t), t\in [0, T])$ be a weakly continuous process.  We define the following random time
\begin{equation}\label{Eq-def-tau-n-delta-Y}
 \tau_n := \inf\{t\in (0, T), \; s.t. |Y(t)|_{D(A_q^{\frac\beta2})}\geq n\}\wedge T,
\end{equation}
with the understanding that $ \inf(\emptyset)=+\infty$. Then that $ \tau_n $ is a stopping time. 
\end{lem}

\begin{proof}
\del{Indeed, it is easy to check,  using Lemma \ref{lem-est-z-t}, Estimation \eqref{Eq-Pi-n-X} and a similar calculus as in \eqref{est-1-semi-group-second-term-1}, that the real  
$ \mathcal{F}_t-$adapted stochastic processes $ (\langle u_n(s), e_k\rangle_{\mathbb{L}^2})_{k\in\Sigma}$ are continuous for all $ k\in \Sigma$, see also Remark \ref{Rem-1}.}

Thanks to the weak continuity of $ Y$, the real $ \mathcal{F}_t-$adapted stochastic processes $ (\langle u_n(s), e_k\rangle_{\mathbb{L}^2})_{k\in\Sigma}$ are continuous for all $ k\in \Sigma$. Therefore, \del{the positive $ \mathcal{F}_t-$adapted stochastic real processes
\begin{equation}
X_j(\omega, s):= |\sum_{k\in \Sigma_j}|k|^{\delta} \langle u_n(\omega, s), e_k\rangle_{\mathbb{L}^2}e_k|_{\mathbb{L}^q},
\end{equation} }the positive $ \mathcal{F}_t-$adapted stochastic real processes
\begin{equation}
X_j(\omega, s):= |\sum_{k\in \Sigma_j}|k|^{\delta} \langle u_n(\omega, s), e_k\rangle_{\mathbb{L}^2}e_k|_{\mathbb{L}^q},
\end{equation} 
are also continuous for all $ j\in \mathbb{N}_1$, with $ (\Sigma_j)_j\subset \Sigma$  being an increasing sequence of finite subsets converging to $ \Sigma$. Therefore, see e.g. \cite[Propositions 4.5 or 4.6 ]{Revuz-Yor}, (Recall that we have assumed that the filtration $ (\mathcal{F}_t)_{t\in[0, T]}$ is right continuous), \del{therefore an optional time is also a stopping time.}  
\del{\begin{eqnarray}
\{\tau_n> t\} &=& \cap_{s\leq t}\{\omega, |u_n(s, \omega)|_{D(A_q^\frac\delta2)}\leq n\}\nonumber\\
&=& \cap_{s\leq t}\{\omega, |\sum_{k\in \Sigma}|k|^{\delta} \langle u_n(\omega, s), e_k\rangle_{\mathbb{L}^2}e_k|_{\mathbb{L}^q}\leq n\}
\nonumber\\
&=& \cap_{s\leq t}\cup_{m\in\mathbb{N}}\cap_{j\geq m}\{\omega, |\sum_{k\in \Sigma_j}|k|^{\delta} \langle u_n(\omega, s), e_k\rangle_{\mathbb{L}^2}e_k|_{\mathbb{L}^q}\leq n\}.
\end{eqnarray}}
\begin{eqnarray}
\{\tau_n\leq t\} &=& \cup_{s\leq t}\{\omega, |u_n(s, \omega)|_{D(A_q^\frac\delta2)}\geq n\}\nonumber\\
&=& \cup_{s\leq t}\{\omega, |\sum_{k\in \Sigma}|k|^{\delta} \langle u_n(\omega, s), e_k\rangle_{\mathbb{L}^2}e_k|_{\mathbb{L}^q}\geq n\}
\nonumber\\
&=& \cup_{s\leq t}\cap_{m\in\mathbb{N}}\cup_{j\geq m}\{\omega, |\sum_{k\in \Sigma_j}|k|^{\delta} \langle u_n(\omega, s), e_k\rangle_{\mathbb{L}^2}e_k|_{\mathbb{L}^q}\geq n\}
\nonumber\\
&=& \cup_{s\leq t}\cap_{m\in\mathbb{N}}\cup_{j\geq m}\{\omega, X_j(\omega, s):= \sum_{k\in \Sigma_j}|k|^{\delta} \langle u_n(\omega, s), e_k\rangle_{\mathbb{L}^2}e_k \in (B_{\mathbb{L}^q}(0, n))^c\}
\nonumber\\
&=& \cup_{s\in \mathbb{Q} \&\leq t}\cap_{m\in\mathbb{N}}\cup_{j\geq m}\{\omega, X_j(\omega, s) \in (B_{\mathbb{L}^q}(0, n))^c\} \in \mathcal{F}_t.
\end{eqnarray}
\del{Thanks to the continuity of the positive $ \mathcal{F}_t-$adapted process $X_j(\omega, s):=  \sum_{k\in \Sigma_j}|k|^{\delta} \langle u_n(\omega, s), e_k\rangle_{\mathbb{L}^2}e_k|_{\mathbb{L}^q}$ 
As the filtration $\mathcal{F}_t$ is right continuous, it is sufficient to prove that for all $ t\in [0, T]$, $ \{\tau_n\geq t\}\in \mathcal{F}_t$.}

\end{proof}}
%%%%%%%%%%%%%%%%%%%%%%%%%%%%%END%%%%%%%%%%%%%%%%%%%%
In this work, we have defined in several places hitting times and we have claimed that they are stopping times.  The proofs of some of them are easily deduced thanks to the continuity of the norm-process and the right continuity of the filtration. This is the case for example for the stopping times\del{ $\tau_N^i$} defined in the proof of the uniqueness of the solution in Section \ref{sec-Torus}.\del{ the stopping times
$ \tau_N^i: \inf\{t\leq T; |u^i(t)|_{\mathbb{L}^{2}}\geq N\}\wedge T, i=1, 2$} Other proofs are consequences of the right continuity of the filtration and the $X-$weak continuity of the process.  Recall that Remark \ref{Rem-1} confirms that the local mild and weak solutions are $X-$weakly continuous. This is the case for the stopping time defined by \eqref{Eq-def-tau-n-delta} in Section \ref{appendix-local-solution}. Recall that this  stopping time is used implicitly in the proof of the local mild solution in Section \ref{sec-1-approx-local-solution}. Other proofs are more sophisticated as is the case for  $ \xi_N$ defined by \eqref{eq-stop-time-weak-solu-L2} in  Section \ref{sec-Domain}. In deed, in this case, the weak continuity concerns the $\mathbb{L}^2-$norm and the hitting time is defined via the $ \mathbb{H}^{\frac{d+2-\alpha}{4}, 2}-$norm. Here, we give the proof for this case.\del{ For simplicity, we consider the case $ q=2$. This is a direct proof of}   

\begin{lem}
Let $(Y(t), t\in [0, T])$ be an $\mathbb{L}^2-$weakly continuous  $ \mathcal{F}_t-$adapted $\mathbb{L}^2-$valued process.  We define the following random time
\begin{equation}\label{Eq-def-tau-n-delta-Y}
 \tau_n := \inf\{t\in (0, T), \; s.t.\;\;  |Y(t)|_{D(A_2^{\frac\delta2})}> n\}\wedge T,
\end{equation}
with the understanding that $ \inf(\emptyset)=+\infty$. Then $ \tau_n $ is a stopping time. 
\end{lem}

\begin{proof}
\del{Indeed, it is easy to check,  using Lemma \ref{lem-est-z-t}, Estimation \eqref{Eq-Pi-n-X} and a similar calculus as in \eqref{est-1-semi-group-second-term-1}, that the real  
$ \mathcal{F}_t-$adapted stochastic processes $ (\langle u_n(s), e_k\rangle_{\mathbb{L}^2})_{k\in\Sigma}$ are continuous for all $ k\in \Sigma$, see also Remark \ref{Rem-1}.}

Thanks to the weak continuity of $ Y$,\del{ the real $ \mathcal{F}_t-$adapted stochastic processes $ (\langle u_n(s), e_k\rangle_{\mathbb{L}^2})_{k\in\Sigma}$ are continuous for all $ k\in \Sigma$. Therefore, \del{the positive $ \mathcal{F}_t-$adapted stochastic real processes
\begin{equation}
X_j(\omega, s):= \sum_{k\in \Sigma_j}\lambda_k^{\beta} \langle u_n(\omega, s), e_k\rangle_{\mathbb{L}^2}^2,
\end{equation} }} the positive $ \mathcal{F}_t-$adapted stochastic processes
\begin{equation}
X_j(\omega, s):= \sum_{k\in \Sigma_j}\lambda_k^{\delta} \langle Y(\omega, s), e_k\rangle_{\mathbb{L}^2}^2
\end{equation} 
are continuous for all $ j\in \mathbb{N}_1$, with $ (\Sigma_j)_j\subset \Sigma$  being an increasing sequence of finite subsets converging to $ \Sigma$. Therefore,  
\begin{eqnarray}
\{\tau_n> t\} &=& \cap_{s\leq t}\{\omega, |Y(s, \omega)|_{D(A_2^\frac\delta2)}\leq n\}
\del{= \cap_{s\leq t}\{\omega, \sum_{k\in \Sigma}|k|^{2\beta} \langle Y(\omega, s), e_k\rangle_{\mathbb{L}^2}^2< n^2\}
\nonumber\\
=\cap_{j}\cap_{s\leq t}\{\omega, \sum_{k\in \Sigma_j}|k|^{2\beta} \langle Y(\omega, s), e_k\rangle_{\mathbb{L}^2}^2<\leq n^2\}.\nonumber\\}
=\cap_{j}\cap_{s\leq t}\{\omega, X_j(\omega, s)< n^2\}
= \cap_{j}\{ T_n^j>t\},\nonumber\\
\del{&=& \cap_{s\leq t}\cap_{j\geq m}\{\omega, |\sum_{k\in \Sigma_j}|k|^{2\beta} \langle u_n(\omega, s), e_k\rangle_{\mathbb{L}^2}^2\leq n\}.}
\end{eqnarray}
\del{\begin{eqnarray}
\{\tau_n\leq t\} &=& \cup_{s\leq t}\{\omega, |Y(s, \omega)|_{D(A^\frac\beta2)}\geq n\}\nonumber\\
&=& \cup_{s\leq t}\{\omega, |\sum_{k\in \Sigma}|k|^{\delta} \langle u_n(\omega, s), e_k\rangle_{\mathbb{L}^2}e_k|_{\mathbb{L}^q}\geq n\}
\nonumber\\
&=& \cup_{s\leq t}\cap_{m\in\mathbb{N}}\cup_{j\geq m}\{\omega, |\sum_{k\in \Sigma_j}|k|^{\delta} \langle u_n(\omega, s), e_k\rangle_{\mathbb{L}^2}e_k|_{\mathbb{L}^q}\geq n\}
\nonumber\\
&=& \cup_{s\leq t}\cap_{m\in\mathbb{N}}\cup_{j\geq m}\{\omega, X_j(\omega, s):= \sum_{k\in \Sigma_j}|k|^{\delta} \langle u_n(\omega, s), e_k\rangle_{\mathbb{L}^2}e_k \in (B_{\mathbb{L}^q}(0, n))^c\}
\nonumber\\
&=& \cup_{s\in \mathbb{Q} \&\leq t}\cap_{m\in\mathbb{N}}\cup_{j\geq m}\{\omega, X_j(\omega, s) \in (B_{\mathbb{L}^q}(0, n))^c\} \in \mathcal{F}_t.
\end{eqnarray}}
where\del{$ T_n^j:=\inf\{t, X_j \notin B_{\mathbb{R}_+}(0, n^2)\}$  (open ball)} $ T_n^j:=\inf\{t, X_j >n^2\}$. As $ X_j$ is an $ \mathcal{F}_t$-adapted and continuous and thanks to \cite[Proposition 4.6]{Revuz-Yor}, we conclude that $ T_n^j$ is an optional time with respect to the filteration $ \sigma(X_j(s), s\leq t)\subset \mathcal{F}_t$. Using the right continuity property of the filtration $\mathcal{F}_t$, see the assumption in Section \ref{sec-formulation}, we conclude that $ T_n^j$ is a stopping time with respect to this latter. Therefore, $ \{\tau_n\leq t\} = \{\tau_n> t\}^c \in \mathcal{F}_t$. 

\del{We can also use other tools like, \cite[Problems 2.6 \& 2.7]{Karatzas-Book},  \cite[Proposition 4.14]{Metivier-book-Mart-82} and \cite[Proposition 4.5]{Revuz-Yor}. In particualar, we can also use \cite[Theorem 1.6]{Metivier-book-Mart-82} to  prove that $X_j $, as an adapted continuous process, is progressively measurable therefore, thanks to \cite[Proposition 4.15]{Metivier-book-Mart-82} and to the right continuity the filtration $\mathcal{F}_t$, we confirm that for all $ n, j$ the random time $ T_n^j$ is a stopping time with respect to $\mathcal{F}_t$. }

%%%%%%%%%%%%%%%%%%BEGIN%%%%%
\del{Thanks to the continuity of the trajectories of  $X_j$\del{$X_j(\omega, s):=  \sum_{k\in \Sigma_j}|k|^{\delta} \langle u_n(\omega, s), e_k\rangle_{\mathbb{L}^2}e_k|_{\mathbb{L}^q}$} 
As the filtration $\mathcal{F}_t$ is right continuous, it is sufficient to prove that for all $ t\in [0, T]$, $ \{\tau_n\geq t\}\in \mathcal{F}_t$.}
%%%%%%%%%%%%%%%%%%%END%%%%%%
\end{proof} }

%%%%%%%%%%%%%%%%%%%%%BEGIN%%%%%%%%%%%%DEL
\del{\section{Complementary of the proof of the martingale solution.}\label{Appendix-Martingale-solu}
To prove the existence of a martingale solution, we use \del{consider the Gelfand triplet \eqref{Gelfand-triple-Domain} and 
 use lemmas \ref{lem-unif-bound-theta-n-H-1-domain} and \ref{lem-bounded-W-gamma-p} 
and }the following compact embedding, see \cite[Theorem 2.1]{Flandoli-Gatarek-95},
\begin{equation}
 W^{\gamma, 2}(0, T; \mathbb{H}^{-\delta', 2}(O))\cap \mathbb{L}^2(0, T; \mathbb{H}^{\frac\alpha2, 2}(O)) 
 \hookrightarrow L^2(0, T; \mathbb{L}^2(O)).
\end{equation}
\del{$ L^2(0, T; \mathbb{H}^{\frac\alpha2, 2}(O)) \hookrightarrow L^2(0, T; \mathbb{L}^2(O))$,} Therefore, we deduce that the sequence of laws $ (\mathcal{L}(u_n))_n$  is tight on $ L^2(0, T; \mathbb{L}^2(O))$. 
Thanks to Prokhorov's theorem there exists a  subsequence, still denoted $ (u_n)_n$, for which  the sequence of laws $ (\mathcal{L}(u_n))_n$ converges  weakly on $ L^2(0, T; \mathbb{L}^2(O))$  to a probability measure $ \mu$. By Skorokhod's embedding theorem, we can construct a probability basis
$ (\Omega^*, F^*, \mathbb{F}^*,  P^*)$  and a sequence of $ L^2(0, T; \mathbb{L}^2(O))\cap C([0, T]; \mathbb{H}^{-\delta', 2}(O))-$random variables
$ (u^*_n)_n$ and $ u^*$ such that  $\mathcal{L}(u^*_n) = \mathcal{L}(u_n), \forall n \in \mathbb{N}_0$,  $\mathcal{L}(u^*) = \mu$ and
$ u^*_n \rightarrow u^* a.s.$ in $ L^2(0, T; \mathbb{L}^2(O))\cap C([0, T]; \mathbb{H}^{-\delta', 2}(O))$. Moreover,  $ u^*_n(\cdot, \omega) \in C([0, T]; H_n)$. Thanks to  Lemma \ref{lem-unif-bound-theta-n-H-1-domain} and to the equality in law, we infer that
 for all $ n\in \mathbb{N}$,
\begin{eqnarray}\label{eq-bound-u-*-n-u-*}
\mathbb{E}\sup_{[0, T]}| u_n^*(s)|^p_{\mathbb{L}^2}+ \mathbb{E}\int_0^T| u_n^*(s)|^2_{ \mathbb{H}^{\frac\alpha2, 2}}ds \leq c<\infty.
\end{eqnarray}
Consequently, the sequence  $ u^*_n$ converges weakly in $  L^2(\Omega\times [0, T]; \mathbb{H}^{\frac\alpha2, 2}(O))$ to a limit $ u^{**}$. It is easy to see that $u^{*} = u^{**},  P\times dt a.e.$ and 
\begin{equation}
u^{*}(\cdot, \omega)\in L^2(0, T; \mathbb{H}^{\frac\alpha2, 2}(O))\cap L^\infty(0, T; \mathbb{L}^2(O)).
\end{equation} 
We introduce the filtration 
\begin{equation}
(\mathit{G}_n^*)_t:= \sigma\{u^{*}_n(s), s\leq t\}
\end{equation}
and construct with respect to the filtration $ (\mathit{G}_n^*)_t$ the time continuous square integrable martingale
$ (M_n(t), t\in [0, T])$ with trajectories in
$ C([0, T]; \mathbb{L}^2(O))$ by
\begin{equation}
M_n(t):= u_n^*(t) - P_nu_0+\int_0^t A_\alpha u_n^*(s) ds -\int_0^t P_nB(u_n^*(s))ds
\end{equation}
with the quadratic variation 
\begin{equation}
\langle\langle M_n\rangle\rangle_t= \int_0^tP_nG(u^*_n(s))QG(u^*_n(s))^*ds,
\end{equation}
where $ G(u^*_n(s))^*$ is the adjoint of $G(u^*_n(s))^*$. The proof yields as a consequence of the equality in law. The main task now is to prove that for $ a.s.$, $ M_n(t)$ converges weakly in $ \mathbb{H}^{-\delta', 2}(O)$ to a martingale $ M(t)$, for all $ t\in [0, T]$, where $ M(t)$ given by 
\begin{equation}\label{eq-M(t)}
M(t):= u^*(t) - u_0+\int_0^t A_\alpha u^*(s) ds -\int_0^t B(u^*(s))ds.
\end{equation}
In fact, for all $ v\in V_2:= \mathbb{H}^{\delta', 2}$ with $ \delta'>1+\frac d2$, we have $ P-a.s.$,  $ \langle P_nu_0, v\rangle $  converges to $ \langle u_0, v\rangle $, thanks to the fact that $ P_n \rightarrow I$ in $ \mathbb{L}^2(O)$, $ \langle u_n^*(t), v\rangle $  converges to $ \langle u^*(t), v\rangle $ as a consequence of the 
the a.s. convergence in $ L^2(0, T; \mathbb{L}^2(O))$ (in fact, we speak about the convergence of a subsequence but as usual we keep the same notation) and  the weak convergence and the continuity in $\mathbb{H}^{-\delta', 2}(O))$, the term $\int_0^t A_\alpha u_n^*(s) ds$
converges thanks to the weak convergence $ L^2(0, t; \mathbb{L}^2(O))$, for all $ t\in [0, T]$ and the elementry inequality $ \langle A_\alpha u_n^*(t), v\rangle = \langle u_n^*(t), A_\alpha v\rangle $ with $ v \in \mathbb{H}^{\delta', 2}(O)$ and  $\delta'>1+\frac d2>\alpha $. The convergence of $\int_0^t \langle B(u_n^*(s)), v\rangle ds$ is completely described in \cite[Appendix 2]{Flandoli-Gatarek-95}, in particular the condition $ \delta'>1+\frac d2$ implies that $ \partial_j v \in C^0(O)$, which we need to do the calculus. To prove that $ M(t)$ is  a quadratic martingale, we see that for all $ \phi \in C_b(L^2(0, s; \mathbb{L}^2(O)))$ and $ v\in \mathcal{D}(O)$
\begin{equation}
\mathbb{E}(\langle M(t)- M(s), v\rangle \phi(u^*|_{[0, s]}))= \lim_{n\rightarrow +\infty} \mathbb{E}(\langle M_n(t)- M_n(s), v\rangle \phi(u^*|_{[0, s]}))=0
\end{equation}
and 
\begin{eqnarray}
&{}&\mathbb{E}(\langle M(t), v\rangle \langle M(t), y\rangle - \langle M(s), v\rangle \langle M(s), y\rangle -\int_s^t\langle G^*(u^*(r)P_nv, G^*(u^*(r)P_ny \rangle dr)\phi(u^*|_{[0, r]}))\nonumber\\
&=& \lim_{n\rightarrow +\infty} \mathbb{E}(\langle M_n(t), v\rangle \langle M_n(t), y\rangle - \langle M_n(s), v\rangle \langle M_n(s), y\rangle -\int_s^t\langle G^*(u_n^*(r)P_nv, G^*(u_n^*(r)P_ny \rangle dr)\phi(u_n^*|_{[0, r]}))\nonumber\\
&=& 0
\end{eqnarray}
The main ingredeints are formula \eqref{eq-M(t)}, $ \partial_j\phi \in C^0 $ and therefore we can estimate $\int_0^t \langle B(u_n^*(s)), v\rangle ds$ by $\int_0^t|B(u_n^*(s))|_{L^1} |v|_{C^1} ds$.

\del{We follow the same steps as in \cite{Flandoli-Gatarek-95} (hence we omit here the details), we end up with the statement that $ u^*$ is a solution in $ V_2:= \mathbb{H}^{1+\frac d2, 2}(O)$ of Equation \eqref{Eq-weak-Solution} with $ W$ being replaced by $ W^*$. 

It is easy to see, thanks to Equation \eqref{FSBE-Galerkin-approxi} and to the equality in Law of $ u_n $ and $ u_n^*$,
that $ (M_n(t), t\in [0, T])$ is a square integrable martingale with respect to the filtration 
$(\mathcal{G}_n)_t:=\sigma\{u_n^*(s), s\leq t\}$. The remain part of the proof follows the same steps as in \cite{Flandoli-Gatarek-95},
hence we omit here.
\del{$ \mathbb{G}^* := (\mathcal{G}^*_t)_t$,  with $ \mathcal{G}^*_t $ is the $ \sigma-$algebra generated by
$ \cup_n (\mathcal{G}_n)_t:=\sigma\{u_n^*(s), s\leq t\}$.}}
}

%%%%%%%%%%%%%%%%%%END%%%%%%%%%%%%%%DEL 

\section{Some Sobolev inequalities.}\label{Appendix-Sobolev}
\del{\subsection{Sobolev pointwise multiplication}\label{Sobolev pointwise multiplication}
Let us recall the following classical result, see e.g. \cite[Theorem 1.4.6.1, p 190-191]{R&S-96}
\begin{theorem}\label{theor-1-SR}
 Let   $ s_1, s_2 \in \mathbb{R}$, satisfy  $ s_1\leq  s_2$ and  $ s_1+ s_2>0$. Then
\begin{itemize}
 \item (i) if $ s_2> \frac dq$, $ H^{s_1, 2}\cdot H^{s_2, 2} \hookrightarrow H^{s_1, 2}$.
\item (ii) if $ s_2< \frac dq$, $ H^{s_1, 2}\cdot H^{s_2, 2} \hookrightarrow H^{s_1+s_2-\frac dq, 2}$.
\end{itemize}
\end{theorem}}

\subsection{Sobolev pointwise multiplication on bounded sets}\label{Sobolev pointwise multiplication-Bounded-Domain}
Assume that $ O \subset \mathbb{R}^d$ is a bounded  $ C^\infty$ domain,(recall, domain means an open subset, see e.g.
\cite[5.2 p43]{Triebel-Structure-Function-vol97}). The notation
$A^s_{pq}(\mathbb{R}^d), s\in \mathbb{R}, 0<q\leq \infty, 0<p<\infty,$ stands either for Triebel-Lizorkin spaces $F^s_{pq}(\mathbb{R}^d) $ or for
Besov spaces $B^s_{pq}(\mathbb{R}^d)$, see
the definition in \cite[p.8]{R&S-96}. We know that, see e.g. \cite[Proposition Tr.6, 2.3.5, p 14]{R&S-96}, 
\begin{eqnarray}\label{main-relation-spaces}
F^s_{p2}(\mathbb{R}^d) &=& H^{s, p}(\mathbb{R}^d), \; 1<p<\infty, \; s\in \mathbb{R}, \nonumber\\
F^s_{pp}(\mathbb{R}^d) &=& B^s_{pp}(\mathbb{R}^d) =  W^{s, p}(\mathbb{R}^d), \; 1\leq p<\infty, \; 0<s\neq\; integer,
\end{eqnarray}
where $ H^{s, p}(\mathbb{R}^d)$ is the Bessel potential spaces or called also Sobolev spaces of fractional order and
$ W^{s, p}(\mathbb{R}^d), \; 1\leq p<\infty, \; 0<s\neq\; integer$  is Slobodeckij spaces. We define Triebel-Lizorkin and Besov spaces $ A^s_{pq}(O)$
on bounded sets by, see e.g. \cite[Definition 5.3. p 44]{Triebel-Structure-Function-vol97}
\begin{equation}
 A^s_{pq}(O)= \{ f \in D'(O); \;\; \text{there is a } g \in A^s_{pq}(\mathbb{R}^d),\; \text{with}\; g/O =f \; \text{in distribution sense}\},
\end{equation}
endowed with the norm
\begin{equation}
 |f|_{A^s_{pq}(O)}= \inf_{g\in A^s_{pq}(\mathbb{R}^d),\; g/O=f}|g |_{A^s_{pq}(\mathbb{R}^d)}.
\end{equation}
The relations in \eqref{main-relation-spaces} still also valid for
bounded sets, see e.g. \cite[5.8 p 52]{Triebel-Structure-Function-vol97}. Our main theorem is the following
\del{Let us recall the following classical result, see e.g. \cite[Theorem 1.4.6.1, p 190-191]{R&S-96}}
\begin{theorem}\label{Theo-pointwiseMulti-Bounded-Domain}
 Let\del{   $ s_1, s_2, s_3 \in \mathbb{R}$ and} $p, s, q, p_i, s_i, q_i, i=1,2$, such that the following pointwise multiplication is satisfied for
$ A^{s_i}_{p_iq_i}(\mathbb{R}^d)$
\begin{equation}\label{Eq-pointwise-R-d}
 |f_1f_2|_{A^{s}_{pq}}\leq c |f_1|_{A^{s_1}_{p_1q_1}} |f_2|_{A^{s_2}_{p_2q_2}}.
\end{equation}
Then Inequality \eqref{Eq-pointwise-R-d} is also valid for $ O \subset \mathbb{R}^d$ being a bounded open  $ C^\infty$ set.
\end{theorem}
\begin{proof}
Let $f_i\in A^{s_i}_{p_iq_i}(O)$, \del{and $g_i\in A^{s_i}_{p_iq_i}(\mathbb{R}^d)$, such that  $ g_i/O =f_i$}
then
\begin{equation}
 |f_1f_2|_{A^s_{pq}(O)}= \inf_{g\in A^s_{pq}(\mathbb{R}^d), g/O=(f_1f_2)}|g |_{A^s_{pq}(\mathbb{R}^d)}\leq
\inf_{g_i\in A^{s_i}_{p_iq_i}(\mathbb{R}^d), g_i/O=f_i} |g_1g_2|_{A^s_{pq}(\mathbb{R}^d)}.
\end{equation}
 Applying Estimate  \eqref{Eq-pointwise-R-d}, we infer that
\begin{equation}
 |f_1f_2|_{A^s_{pq}(O)}\leq c
\inf_{g_i\in A^{s_i}_{p_iq_i}(\mathbb{R}^d), g_i/O=f_i} (|g_1|_{A^{s_1}_{p_1q_1}(\mathbb{R}^d)}|g_2|_{A^{s_2}_{p_2q_2}(\mathbb{R}^d)})
\leq c|f_1|_{A^{s_1}_{p_1q_1}(O)}|f_2|_{A^{s_2}_{p_2q_2}(O)}
\end{equation}

\end{proof}

\subsection{Sobolev embedding }\label{lem-appendix-sobolev-embedding}
\begin{theorem} Let  $ O$ be  either the whole space $ \mathbb{R}^d$, or the torus $ \mathbb{T}^d$, or an arbitrary domain 
 $O\subset \mathbb{R}^d$.
 If $ t\leq s$ and $ 1<p\leq q\leq \frac{dp}{d-(s-t)p}<\infty$, then
\begin{equation}
 H^{s, p}(O) \hookrightarrow  H^{t, q}(O).
\end{equation}
\end{theorem}

\begin{proof}
For the proof see \cite[Theorem 7.63. p221 + 7.66 p222]{Adams-Hedberg-94}. For   $O= \mathbb{R}^d $ and $ q=\frac{dp}{d-sp}$, see \cite[Theorem 1, p 119 or Theorem 2 p 124]{Stein} and
\cite[Proposition 6.4. p 24]{Taylor-PDE-III}. For  $O= \mathbb{T}^d$, see e.g. \cite[pp 23-24]{Taylor-PDE-III}. 
\end{proof}
As a consequence, we have
\begin{equation}
 H^{\frac\alpha2, 2}(O) \hookrightarrow H^{\frac\alpha2-\frac d2+\frac dq, q}( O), \;\;\; \forall q \geq 2.
\end{equation}
See also the above result for  the Sobolev solenoidal spaces in \cite[Theorem 3.10]{Amann-solvability-NSE-2000}.

\del{\subsection{Sobolev embedding }\label{lem-appendix-sobolev-embedding}
\begin{theorem} We take  $ O$ to be  the whole space $ \mathbb{R}^d$, or the torus $ \mathbb{T}^d$,
or an arbitrary regular (see \cite{Adams-Hedberg-94} for the definition of the regularity) domain (open set)
 $O\subset \mathbb{R}^d$.
 If $ t\leq s$ and $ 1<p\leq q\leq \frac{dp}{d-(s-t)p}<\infty$, then
\begin{equation}
 H^{s, p}( O) \hookrightarrow  H^{t, q}(O).
\end{equation}
\end{theorem}

\begin{proof}
For the proof, in the case  $O= \mathbb{R}^d $,  see \cite[Theorem 7.63]{Adams-Hedberg-94}
and in the case $ O$ open regular domain, see \cite[Theorem 7.63. p221 + 7.66 p222]{Adams-Hedberg-94}.
Moreover, for   $O= \mathbb{R}^d $ and $ q=\frac{dp}{d-sp}$, see \cite[Theorem 1, p 119 or Theorem 2 p 124]{Stein} and
\cite[Proposition 6.4. p 24]{Taylor-PDE-III}. For  $O= O $, see e.g. \cite[pp 23-24]{Taylor-PDE-III}.
\end{proof}
As a consequence, we have
\begin{equation}
  H^{\frac\alpha2, 2}(O) \hookrightarrow H^{\frac\alpha2-\frac d2+\frac dq, q}( O), \;\;\; \forall q \geq 2.
\end{equation}}

\del{\eqref{B-u-v-h-alpha-2-d} with $ \eta =0$ (or \eqref{Eq-B-H-alpha-2-est})}}}

%\vspace{-0.15cm}

\subsection*{Acknowledgement} This work is supported by Leverhulme trust.

%\vspace{-0.3cm}

\end{document}